\pgfplotsset{compat=1.16}
  \renewcommand{\nomgroup}[1]{%
  \vspace{10pt}
  \item[\bfseries
  \ifthenelse{\equal{#1}{A}}{\ref{SectionIntroduction} Introduction}{%
  \ifthenelse{\equal{#1}{B}}{\ref{SectionPatchwork} Viro's patchworking method}{%
  \ifthenelse{\equal{#1}{C}}{\ref{SectionConstructionMethod} The construction method}{%
  \ifthenelse{\equal{#1}{D}}{\ref{SectionComputingAsymptoticBettiNumbers} Computing asymptotic Betti numbers}{%
  \ifthenelse{\equal{#1}{E}}{\ref{SectionAsymptoticallyLargeBettiNumbers} Asymptotically large Betti numbers}{}}}}}%
  ]}
\newcommand{\Z}{{\mathbb Z}}
\newcommand{\NN}{{\mathbb N}}
\newcommand{\PP}{{\mathbb P}}
\newcommand{\C}{{\mathbb C}}
\newcommand{\R}{{\mathbb R}}
\DeclareMathOperator{\Conv}{Conv}
\newtheorem{thm}{Theorem}[section]
\newtheorem{proposition}[thm]{Proposition}
\newtheorem{lemma}[thm]{Lemma}
\newtheorem{remark}[thm]{Remark}
\theoremstyle{definition}
\theoremstyle{definition}
 \numberwithin{equation}{section}
\tikzset{%
  add/.style args={#1 and #2}{to path={%
 ($(\tikztostart)!-#1!(\tikztotarget)$)--($(\tikztotarget)!-#2!(\tikztostart)$)%
  \tikztonodes}}
} 
\newcommand{\comment}[1]{}
\begin{document}
\title[]{Patchworking real algebraic hypersurfaces with asymptotically large Betti numbers}

\author[Charles Arnal]{Charles Arnal}

\address{Charles Arnal, Univ. Paris 6, IMJ-PRG, France.}
\email{charles.a.arnal@gmail.com} 

\maketitle

\begin{abstract}
In this article, we describe a recursive method for constructing a family of real projective algebraic hypersurfaces in ambient dimension $n$ from families of such hypersurfaces in ambient dimensions $k=1,\ldots,n-1$. The asymptotic Betti numbers of real parts of the resulting family can then be described in terms of the asymptotic Betti numbers of the real parts of the families used as ingredients.

The algorithm is based on Viro's Patchwork \cite{ViroPatchworking} and inspired by I. Itenberg's and O. Viro's construction of asymptotically maximal families in arbitrary dimension \cite{IV}.

Using it, we prove that for any $n$ and $i=0,\ldots,n-1$, there is a family of asymptotically maximal real projective algebraic hypersurfaces $\{Y^n_d\}_d$ in $\R \PP ^n$ (where $d$ denotes the degree of $Y^n_d$) such that the $i$-th Betti numbers $b_i(\R Y^n_d)$ are asymptotically strictly greater than the $(i,n-1-i)$-th Hodge numbers $h^{i,n-1-i}(\C Y^n _d)$.
We also build families of real projective algebraic hypersurfaces whose real parts have asymptotic (in the degree $d$) Betti numbers that are asymptotically (in the ambient dimension $n$) very large.

\end{abstract}

\tableofcontents

\newcommand\blfootnote[1]{%
  \begingroup
  \renewcommand\thefootnote{}\footnote{#1}%
  \addtocounter{footnote}{-1}%
  \endgroup
}
\blfootnote{The author was supported by the DIM Math Innov de la r\'egion Ile-de-France.

\begin{center}
\includegraphics[width = 10mm]{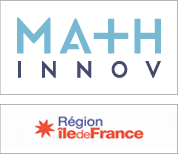} 
\end{center} 
}


\nomenclature[A,1]{\(a_i^n\)}{Leading coefficient of $h^{i,n-1-i}(\C X)$ seen as a polynomial in $d$, where $\C X$ is the complex part of a smooth real projective algebraic hypersurface of degree $d$ and dimension $n-1$.}
\nomenclature[A,3]{\(b_i(M)\)}{The $i$-th Betti number $  \dim_{\Z _2}H_i(M,\Z_2)$ of topological space $M$.}
\nomenclature[A,4]{\(b_*(M)\)}{The sum $\sum_{i\in\mathbb{N}}b_i(M) $ of all Betti numbers of topological space $M$.}
\nomenclature[A,5]{\(\Delta (P)\)}{Newton polytope of real Laurent polynomial $P$.}
\nomenclature[A,6]{\(H_i(M)\)}{\sloppy The $i$-th homology group $H_i(M,\Z_2)$ with coefficients in $\Z_2 = \Z/2\Z$ of topological space $M$.}
\nomenclature[A,71]{\( h^{p,q}(\C X)\)}{The $(p,q)$-th Hodge number of smooth complex projective variety $\C X$.}
\nomenclature[A,72]{\(P^k_d\) and  \(Q^n_d\)}{Throughout the text, $P^k_d$ usually denotes polynomials used as ingredients for the Cooking Theorem, while $Q^n_d$ usually denotes the polynomials obtained by applying it, with $n$ being the number of variables and $d$ the degree.}
\nomenclature[A,8]{\(\R X \)}{The real part of real algebraic variety $X$.}
\nomenclature[A,90]{\( \C X\)}{The complex part of real algebraic variety $X$.}
\nomenclature[A,911]{\( V_{(K^*)^n}(P)\)}{Zero locus of real polynomial $P$ in the algebraic torus $(K^*)^n$, where $K\in\{\R,\C\}$.}
\nomenclature[A,912]{\( V_{KY}(P)\)}{Closure of $V_{(K^*)^n}(P)$ in the toric variety $KY$ defined via the inclusion $(K^*)^n\subset KY$, where $K\in\{\R,\C\}$.}
\nomenclature[A,92]{\( f \overset{n}{\geq} g\)}{Indicates that $f(d)\leq g(d)+\mathcal{O}(d^{n-1})$ for $f,g:\NN \longrightarrow \NN$.}
\nomenclature[A,93]{\( f \overset{n}{=} g\)}{Indicates that $f \overset{n}{\geq} g$ and $f \overset{n}{\leq} g$.}

\nomenclature[B,1]{\(\mathring{A}\)}{The relative interior of $A\subset \R^n$.}
\nomenclature[B,2]{\(K\Delta\)}{The toric variety associated with the full-dimensional polytope with integer vertices $\Delta \subset \R^n$, for $K\in \{\R,\C\}$.}
\nomenclature[B,3]{\(P^\Gamma\)}{Truncation $P(X)=\sum_{\lambda = ( \lambda_1,\ldots,\lambda_n) \in \Gamma \cap \Lambda } c_\lambda X_1^{\lambda_1},\ldots, X_n^{\lambda_n}$ of real Laurent polynomial $P(X)=\sum_{\lambda = ( \lambda_1,\ldots,\lambda_n) \in \Lambda } c_\lambda X_1^{\lambda_1},\ldots, X_n^{\lambda_n} $ for $\Gamma \subset \Z^n$.}
\nomenclature[B,4]{\(U^n_K\)}{$U^n_k=(U^1_K)^n$ for $K\in\{\R,\C\}$,   where $U^1_\C\subset\C$ is the unit circle and $U^1_\R = \{1,-1\}$. }
\nomenclature[B,5]{\(Chart_{\Delta \times U^n_K}(P)\)}{Chart of $P$  in $\Delta \times U^n_K$ - see Section \ref{SectionPatchwork} for details.}

\nomenclature[C,1]{\(e_i\)}{The $i$-th vector of the standard basis of $\R^n$.}
\nomenclature[C,2]{\(S^n_{d}\)}{The simplex $\{(x_1,\ldots,x_n) \in \R^n \text{ s.t. } x_i\geq 0 \; \text{for } i=1,\ldots,n \text{ and } \allowbreak \sum_{i=1}^n x_i\leq d \}$.}
\nomenclature[C,3]{\(S^n_{d,m}\)}{The ($n-1$)-dimensional simplex $S^n_{d,m} = S^n_d\cap\{x_n=m\}$.}
\nomenclature[C,5]{\(\tilde{\Phi}(f)\)}{Given a finite set $\Lambda \subset \Z^n$ and a function $f:\Lambda  \longrightarrow \R$, we define  $\tilde{\Phi}(f):\Conv(\Lambda) \longrightarrow \R$ as the function whose graph is the lower convex hull in $\R^{n+1}$ of the graph of $f$.}
\nomenclature[C,4]{\(R^n_{d,m,i}\)}{For $m=1,\ldots,d-n$ odd and $i=1,\ldots,n-2$, we have $R^{n}_{d,m,i}=\{(x_1,\ldots,x_i,0,\ldots,0,m) \in \R^{n} \text{ s.t. }  x_j\geq 1 \; \text{for } j=1,\ldots,i, \allowbreak \; \sum_{j=1}^{n-1} x_j  \leq d-m-1 \} $. For $m=0,\ldots,d-n$ even and $i=1,\ldots,n-2$, we have $R^{n}_{d,m,i}=\{(0,\ldots,0,x_{n-1-i},\ldots,x_{n-1},m) \in \R^{n} \text{ s.t. }  x_j\geq 1 \; \text{for } j= n-i-1,\ldots,n-1, \; \sum_{j=1}^{n-1} x_j= d-m \},$ and for $m=0,\ldots,d-n-1$ and $i=n-1$, $R^{n}_{d,m,n-1}=\{(x_1,\ldots,x_{n-1},m) \in \R^{n} \text{ s.t. }  x_j\geq 1 \; \text{for } j=1,\ldots,n-1, \; \sum_{j=1}^{n-1} x_j\leq d-m-1 \} $. }
\nomenclature[C,0]{\(A\star B\)}{The topological join of topological spaces $A$ and $B$.}

\nomenclature[D]{\(\tilde{H}_i(M)\)}{The $i$-th reduced homology group $\tilde{H}_i(M,\Z_2)$ with coefficients in $\Z_2 = \Z/2\Z$ of topological space $M$.}
\nomenclature[D]{Cycles and axes}{\sloppy For a given closed submanifold  $M\subset\R^n$, the classes $\beta_1,\ldots,\beta_r $ in $ H_{n-1-i}(\R^n\backslash M)$ (respectively, in $\ker (H_0 (\R^n \backslash M) \longrightarrow H_0(\R^n))$ if $ n-1-i=0$) are axes to the cycles $\alpha_1,\ldots,\alpha_r$ in $ H_{i}(M)$ if their linking numbers verify $l(\alpha_s,\beta_t) = \delta_{s,t}\in\Z_2$.}

\nomenclature[E]{\(D^2a_p^n\)}{The second order central finite difference $D^2a^n_p = a^n_{p+1} -2a^n_p +a^n_{p-1}$ of the coefficients $a^n_p$.}
\nomenclature[E]{\(t^n_i\)}{Given some coefficients $x^n_i\geq 0$ such that a family $\{P^n_d\}_{d\in\Z}$ of real Laurent polynomials in $n$ variables verifies $b_i(V_{\R\PP ^n}(P^n_d))  \overset{n}{\geq} x_i^n\cdot d^n$, we sometimes consider the difference $t^n_i := x^n_i -a^n_i$.  }


\section{Introduction}\label{SectionIntroduction}

Our goal in this article is to describe a new construction method for real projective algebraic hypersurfaces, and use it to further our understanding of the possible asymptotic behavior of the Betti numbers of families of real projective algebraic hypersurfaces in the spirit of Hilbert's 16-th problem.

Many constraints on the topology of real algebraic varieties are known, such as the famous Smith-Thom inequality
\begin{equation}\label{SmithThom}
    \sum_i \dim_{\Z _2}H_i(\R X) \leq \sum_i \dim_{\Z _2}H_i(\C X) ,
\end{equation}
where $\R X$ (respectively, $\C X$) denotes the real part (respectively, complex part) of a real algebraic variety $X$, and homology is taken with coefficients in $\Z_2$ (this is assumed from now on, unless specified otherwise). If inequality (\ref{SmithThom}) is an equality, $X$ is said to be \textit{Smith-Thom maximal}, or simply \textit{maximal}.

Nonetheless, very little is currently known for varieties of dimension higher than $3$.

Besides searching for new constraints, one can try to build "exotic" or "extreme" examples in order to further our understanding of topology of real algebraic varieties. O. Viro's Patchwork, described in full details in \cite{ViroPatchworking}, has proved a powerful tool in that regard; it allows one to build varieties with prescribed topology by patchworking, \textit{i.e.} gluing together simpler varieties. We make extensive use of the Patchwork in this article, and relevant definitions and results are stated in Section \ref{SectionPatchwork}.

The patchworking method was for example used by Viro himself to  classify, up to isotopy, the non-singular curves of degree $7$ in the real projective plane in \cite{Viro80}, as well as to disprove the famous \textit{Ragsdale conjecture} (from \cite{Rags}) regarding real algebraic curves (also in \cite{Viro80}), and later in \cite{Itenberg93} by I. Itenberg to show that this same conjecture was asymptotically wrong as well. It was also one of the starting points of tropical geometry.

More recently, the Patchwork has been successfully used to build families of real algebraic toric hypersurfaces with interesting asymptotic properties, in the following sense.

In ambient dimension $n$, the dimension of the total homology of the complex part of a smooth real projective algebraic hypersurface $X$ of degree $d$ satisfies
\begin{equation}\label{ComplexPartTotalHomology}
\sum_i \dim_{\Z _2}H_i(\C X)= \frac{(d-1)^{n+1}-(-1)^{n+1}}{d}+n+(-1)^{n+1}.
\end{equation}
In particular, it is a polynomial of degree $n$ in $d$, with $1$ as its leading coefficient (see \cite{DanilovKhovansky}).
Moreover, for $i=0,\ldots,n-1$, the $(i,n-1-i)$-th Hodge number $h^{i,n-1-i}(\C X)$ is also a polynomial of degree $n$ in $d$ (the same for any such hypersurface). Denote its leading coefficient by $a_i^n$. 

If $f,g:\NN \longrightarrow \NN$ are such that $f(d)\leq g(d)+\mathcal{O}(d^{n-1})$, using the usual convention for the $\mathcal{O}$ notation, we write $f\overset{n}{\leq} g$. If both $f\overset{n}{\leq}g$ and $f\overset{n}{\geq}g$, we say that $f\overset{n}{=}g$. We naturally extend this notation to the case where $f$ and $g$ are both defined on the same infinite subset of $\NN$.
Using that notation, we already know from (\ref{SmithThom}) and (\ref{ComplexPartTotalHomology}) that 
$$\max_{X^n_d \in \mathcal{X}^n_d}\left[ \sum _i \dim_{\Z _2}  H_i(\R X^n_d) \right]\overset{n}{\leq}d^n,$$
where $\mathcal{X}_d^n$ is the set of all smooth real algebraic hypersurfaces of degree $d$ in $\PP^n$.

It is then natural to ask what the maximal possible value $B_i^n(d)$ of the $i$-th Betti number $b_i(\R X^n_d):=\dim_{\Z _2}(H_i(\R X^n_d))$ of the real part of a smooth real algebraic hypersurface $X^n_d\subset \PP^n$ of degree $d$ is. The asymptotic value in the degree $d$ of $B^n_i(d)$ is also of interest. Using Viro's Patchwork, F. Bihan showed in \cite{BihanAsymptotic} (in dimension $3$, but the proof easily generalizes to arbitrary dimension)
that there exists $\beta_i^n \in\R_{>0}$ such that $B_i^n(d)= \beta_i^n\cdot d^n +o(d^n)$. The same question can be asked about linear combinations of Betti numbers, or under additional conditions.

There is a principle of sorts that suggests that for a real projective algebraic hypersurface $X$ in ambient dimension $n$, we should have for all $i=0,\ldots,n-1$
\begin{equation} \label{EquationPrinciple}
b_i(\R X)   \leq \sum_p h^{i,p}(\C X) = h^{i,n-1-i}(\C X)+ 1 -\delta_{i,\frac{n-1}{2}} ,
\end{equation}
\sloppy where $\delta_{i,\frac{n-1}{2}}$ is $1$ if $i=\frac{n-1}{2}$ and $0$ otherwise, and the equality $\sum_p h^{i,p}(\C X) = h^{i,n-1-i}(\C X)+ 1-\delta_{i,\frac{n-1}{2}}$ is a consequence of Lefschetz's hyperplane theorem.
\sloppy Inequality (\ref{EquationPrinciple}) does not hold in general; however, it does in many reasonable cases, as illustrated below.

As a correction to the Ragsdale conjecture mentioned above and that he had disproved, Viro conjectured in \cite{Viro80} that  any smooth compact real algebraic surface $X$ whose complex part is simply connected verifies 
\begin{equation}\label{ViroConjecture}
 b_1(\R X)   \leq h^{1,1}(\C X). 
\end{equation}
Though it turned out to be wrong, Itenberg showed in \cite{Itenberg1997} that Inequality (\ref{ViroConjecture}) does hold for any smooth real algebraic surface $X$ in $\PP^2$ obtained using the combinatorial Patchwork and a so-called \textit{maximal triangulation} (this is NOT synonymous with $X$ being Smith-Thom maximal).

Under stricter conditions but in arbitrary dimension, A. Renaudineau and K. Shaw proved in \cite{RS} and \cite{ARS} that any smooth real algebraic hypersurface $X$ in $\PP^n$ obtained using the combinatorial Patchwork and a  \textit{primitive triangulation}  verifies Inequality (\ref{EquationPrinciple}).
Itenberg and Viro had previously shown in \cite{IV} an asymptotic version of that inequality, \textit{i.e.} that any smooth real algebraic hypersurface $X$ of degree $d$ in $\PP^n$ obtained using the combinatorial Patchwork and a  primitive triangulation satisfies $b_i(\R X)   \leq \sum_p h^{i,p}(\C X) +\mathcal{O}(d^{n-1})$, where the term $\mathcal{O}(d^{n-1})$ is independent from the choice of $X$.

A family of real algebraic hypersurfaces $\{X^n_d\}_{d\in\NN}$ in $\PP^n$ is \textit{asymptotically maximal} if $\sum_i b_i(\R X^n_d) \overset{n}{=} \sum_i b_i(\C X^n_d)  (  \overset{n}{=} d^n)$.
We also say that a family of real algebraic hypersurfaces $\{Y^n_d\}_{d\in\NN}$ in $\PP^n$ is \textit{asymptotically standard} (this is, ironically, non-standard terminology) if it verifies $b_i(\R Y_n^d)  \overset{n}{=} a_i^n\cdot d^n$ for all $i=1,\ldots,n-1$ (which implies asymptotic maximality) - in particular, such a family asymptotically obeys the principle enounced above in Inequality (\ref{EquationPrinciple}).

Asymptotically standard families are in a sense the baseline examples of asymptotically maximal families, since they are the easiest to build and the "least singular".
It is natural to compare the asymptotic Betti numbers of any asymptotic family of real projective hypersurfaces to the asymptotically standard case.

In \cite{IV}, Itenberg and Viro constructed for any $n$ an asymptotically standard family of real algebraic hypersurfaces $\{X^n_d\}_{d\in\NN}$ in $\PP^n$.
B. Bertrand achieved similar results with general toric varieties, as well as complete intersections, in \cite{BertrandAsymptotic}. In \cite{BihanAsymptotic}, Bihan gave good lower bounds on the values of $\beta_i^n$ for $n=3$, which E. Brugall\'e further improved in \cite{Brugalle2006} using the same method. Renaudineau also worked on related problems in his thesis \cite{Renaudineauthesis}.
All of these results made use of the patchworking method.

In the same spirit, we develop a construction technique based on Viro's method and inspired by \cite{IV} such that, given for each $k=1,\ldots,n-1$ a family of projective smooth real algebraic hypersurfaces in $\PP^k$, which we call "ingredients", we can use them to "cook" (construct) a family $\{Y^n_{d}\}_{d\in \NN}$ of smooth real algebraic hypersurfaces in $\PP^n$ such that the asymptotic Betti numbers of $\{\R Y^n_{d}\}_{d\in \NN}$ can be computed from those of the real parts of the hypersurfaces used as ingredients.

A real Laurent polynomial $P\in \R[X_0^\pm, \ldots, X_n^\pm]$ gives rise, \textit{via} its zero locus, to a real algebraic hypersurface with complex points in the algebraic torus $(\C^*)^n$ and real points in $(\R^*)^n$, which we denote as $V_{(\C^*)^n}(P)$ and $V_{(\R^*)^n}(P)$, respectively.

For any toric variety $Y$ (see W. Fulton's book \cite{FultonToric} for more on toric varieties, or Viro's article \cite{ViroPatchworking} for an exposition focused on the questions with which we concern ourselves in this text), we define $V_{\C Y}(P)$ (respectively, $V_{\R Y}(P)$) as the closure of $V_{(\C^*)^n}(P) \subset (\C^*)^n \subset \C Y$ (respectively, $V_{(\R^*)^n}(P)\subset (\R^*)^n \subset \R Y$) in $\C Y$ (respectively, $\R Y$) in the Zariski topology.
The sets $V_{\C Y}(P)$ and $V_{\R Y}(P)$ can be seen as the complex and real points of the same real algebraic object $V_{Y}(P)$, the \textit{real algebraic hypersurface in $Y$}  associated to $P$.

Given a real Laurent polynomial $P(X)=\sum_{\lambda = ( \lambda_1,\ldots,\lambda_n) \in \Lambda } c_\lambda X_1^{\lambda_1},\ldots, X_n^{\lambda_n} $, where $\Lambda$ is a finite subset of $M\cong \Z ^n$ and $c_\lambda \in \R^*$ for all $\lambda \in \Lambda$, we call the convex hull in $M_\R \cong \R^n$ of $\Lambda$ the \textit{Newton polytope} of $P$, and denote it by $\Delta(P)$. Let $\Gamma \subset  \R^n$. We define the \textit{truncation} $P^\Gamma $ as the polynomial $P(X)=\sum_{\lambda = ( \lambda_1,\ldots,\lambda_n) \in \Gamma \cap \Lambda } c_\lambda X_1^{\lambda_1},\ldots, X_n^{\lambda_n} $.

The real Laurent polynomial $P$ is \textit{completely nondegenerate} over $K$ if $V_{(K^*)^n}(P^\Gamma)$ is a nonsingular hypersurface for any face $\Gamma$ of its Newton polytope $\Delta(P)$ (including $\Delta(P)$ itself).

\sloppy We have the following "cooking" theorem, where we let $S_d^k := \{(x_1,\ldots,x_k) \in \R^k |\sum_{i=1}^k x_i\leq d, \; x_i\geq 0 \; \text{for } i=1,\ldots,n \}$ be the simplex of side $d$ and dimension $k$:

\begin{thm}[Cooking Theorem]\label{TheoremConstructionsMainTheoremConstructions}
Let $n\geq 2$. For $k=1,\ldots,n-1$, let $\{P^k_{d}\}_{d\in \NN}$ be a family of completely nondegenerate real Laurent polynomials in $k$ variables, such that $P^k_d$ is of degree $d$ and that the Newton polytope $\Delta(P^k_d)$ of $P^k_d$ is $S^k_d$.
Suppose additionally that for $k=1,\ldots,n-1$ and $i=0,\ldots,k-1$,
 $$b_i(V_{\R\PP ^k}(P^k_d))\overset{k}{\geq} x_i^k \cdot d^k$$
for some $x_i^k \in \R_{\geq 0}$.
Then there exists a family $\{Q^n_d\}_{d\in\NN}$ of completely nondegenerate real Laurent polynomials in $n$ variables such that $\Delta(Q^n_d)=S^n_d$ and such that for $i=0,\ldots,n-1$ 
\begin{equation}\label{FormulaConstructionsMainFormula}
b_i(V_{\R\PP ^n}(Q^n_d))   \overset{n}{\geq}   \frac{1}{n}\left(x_i^{n-1}+x_{i-1}^{n-1} +\sum_{k=1}^{n-2}\sum_{j=0}^{i-1} x_j^k \cdot x_{i-1-j}^{n-1-k}\right) \cdot d^n,
\end{equation}
where $x_j^k$ is set to be $0$ for $j\not \in \{0,\ldots,k-1\}$.

Moreover, if the families $\{P^k_{d}\}_{d\in \NN}$ were obtained using a combinatorial patchworking for all $k$, then the family $\{Q^n_{d}\}_{d\in \NN}$ can also be obtained by combinatorial patchworking.

If each family $\{P^k_{d}\}_{d\in \NN}$ (for $k=1,\ldots,n-1$) is such that the associated family of projective hypersurfaces is asymptotically maximal, then the family of projective hypersurfaces associated to $\{Q^n_{d}\}_{d\in \NN}$ is also asymptotically maximal.
\end{thm}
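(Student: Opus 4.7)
I would construct $Q^n_d$ by explicit combinatorial patchworking, following the blueprint laid out by Itenberg and Viro in \cite{IV}. First, I would subdivide $S^n_d$ into horizontal slabs $S^n_d \cap \{m \le x_n \le m+1\}$ for $m=0,\ldots,d-1$, and then refine each slab further using the regions $R^n_{d,m,i}$ of the Index of Notation. The refinement is calibrated so that on the $(n-1)$-dimensional main piece of each slab (the region $R^n_{d,m,n-1}$) one glues in a rescaled copy of the ingredient $P^{n-1}_{d-m}$, while each product-type region of "bi-degree" $(k,n-1-k)$ hosts a polynomial $P^k_a(x_1,\ldots,x_k)\cdot P^{n-1-k}_b(x_{k+1},\ldots,x_{n-1})$ with $a,b$ chosen so that the pieces tile $S^n_d$ exactly. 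Assigning signs at the integer points of $S^n_d$ compatibly with the patchworks that produced the ingredients, and lifting the polyhedral subdivision to a convex piecewise-linear Viro function, Viro's theorem then yields a completely nondegenerate polynomial $Q^n_d$ realising the prescribed chart.

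The next step is to locate $i$-cycles in the resulting patchwork chart $V_{\R\PP^n}(Q^n_d)$. The cycles of $V_{\R\PP^{n-1}}(P^{n-1}_{d-m})$ embedded in each main region contribute in two ways: directly, as $i$-cycles of $V_{\R\PP^n}(Q^n_d)$ (giving the $x_i^{n-1}$ term), and, by "tubing" an $(i-1)$-cycle vertically across a slab through the two adjacent copies, as additional $i$-cycles (giving the $x_{i-1}^{n-1}$ term). Each product region contributes, through the topological join identity $\tilde H_{i}(A\star B) \cong \bigoplus_{p+q=i-1}\tilde H_p(A)\otimes \tilde H_q(B)$, one $i$-cycle per pair $(\alpha,\beta)$ consisting of a $j$-cycle $\alpha$ of $V_{\R\PP^k}(P^k_a)$ and an $(i-1-j)$-cycle $\beta$ of $V_{\R\PP^{n-1-k}}(P^{n-1-k}_b)$: these produce the $x_j^k \cdot x_{i-1-j}^{n-1-k}$ summands.

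The heart of the argument, and what I expect to be the main obstacle, is to show that all these cycles are homologically independent in $H_i(V_{\R\PP^n}(Q^n_d))$. The strategy is to exhibit, for each constructed cycle, an \emph{axis} in the complement $\R\PP^n\setminus V_{\R\PP^n}(Q^n_d)$ (a class in $H_{n-1-i}$ of the complement, in the sense of the Index of Notation) such that the matrix of $\Z_2$-linking numbers is the identity. On product regions such axes arise as joins of axes in the complements of the ingredient hypersurfaces, supplied inductively; on main regions they are transported from the known axes of $V_{\R\PP^{n-1}}(P^{n-1}_{d-m})$ via the inclusion of a slice into $\R\PP^n$. The difficulty lies in performing this bookkeeping uniformly across all slabs, respecting Viro's sign and orientation conventions, and ensuring that axes attached to different cycles remain clear of each other's supports so as to avoid spurious linking numbers.

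Once independence is established, the asymptotic count is elementary: each slab at height $m$ contributes a number of cycles polynomial of degree $n-1$ in $(d-m)$, and $\sum_{m=0}^{d}(d-m)^{n-1}\sim \frac{d^n}{n}$ produces the prefactor $\frac{1}{n}$ in Formula (\ref{FormulaConstructionsMainFormula}). The combinatorial-patchworking assertion is then immediate from the construction, and the asymptotic-maximality assertion follows by summing Formula (\ref{FormulaConstructionsMainFormula}) over $i$ and comparing with $\sum_i a_i^n = 1$, the leading coefficient in (\ref{ComplexPartTotalHomology}).
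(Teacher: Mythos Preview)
Your outline matches the paper's strategy closely: slab decomposition of $S^n_d$, suspension pieces yielding the $x_i^{n-1}+x_{i-1}^{n-1}$ terms, join pieces yielding the bilinear terms, and linking numbers with axes in the complement to certify independence. Two points deserve correction.

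First, the polynomial placed on a ``product-type'' region is not the product $P^k_a(x_1,\ldots,x_k)\cdot P^{n-1-k}_b(x_{k+1},\ldots,x_{n-1})$. The zero locus of a product is a union, not a join, and its Newton polytope is a Minkowski sum rather than the $n$-simplex you need. What the paper actually uses (Construction Proposition~\ref{PropositionConstructionsMethod}, item~2) is the \emph{sum}
\[
(x_1,\ldots,x_{n-1},x_n)\longmapsto \tilde P^k_a(x_1,\ldots,x_k)+x_n\cdot \tilde P^{n-1-k}_b(x_{k+1},\ldots,x_{n-1}),
\]
whose Newton polytope is the join $R^n_{d,m,k}\star R^n_{d,m+1,n-1-k}$ and whose real chart is (up to the technicalities of Proposition~\ref{PropositionConstructionsHomologyJoin}) a topological join of the two ingredient charts. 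Your use of the join homology formula is correct; only the algebraic description needs fixing.

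Second, the independence step requires one ingredient you do not mention. The axes you inherit from the ingredients live a priori in $\R\PP^k$ (or $\R\PP^{n-1-k}$), and to assemble them into joins or suspensions sitting in the interior of a single $n$-simplex of the subdivision you need representatives confined to a single quadrant $\mathring S^k_d\times\{\epsilon\}$, together with membranes in that same quadrant. The paper secures this via Lemma~\ref{LemmaConstructionCyclesEtAxesBienInclus}, an Alexander-duality argument showing that all but $O(d^{k-1})$ of the $i$-cycles of $V_{(\R^*)^k}(P^k_d)$ admit such localized representatives and axes. Without this preliminary step your bookkeeping across slabs cannot be carried out, since the axes need not clear the boundaries of the pieces, and spurious linking numbers may arise. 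Once Lemma~\ref{LemmaConstructionCyclesEtAxesBienInclus} is in place, the disjointness of the interiors of the simplices $\Delta(F^k_m)$, $\Delta(G^\pm_m)$ makes the linking matrix block-diagonal automatically, which is how the paper avoids the cross-term difficulties you anticipate.
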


\begin{remark}
In light of Lemma \ref{LemmaConstructionsEquivalenceOuvertFerme} and Remark \ref{RemarkConstructionsEquivalenceOuvertFerme} below, the expressions $b_i(V_{\R\PP ^k}(P^k_d))$ for $k=1,\ldots, n-1$ and $b_i(V_{\R\PP ^n}(Q^n_d))$ can be indifferently (and independently) replaced in the statement by $b_i(V_{(\R^*)^k }(P^k_d))$ and $b_i(V_{(\R^*)^n }(Q^n_d))$ respectively.

For the same reasons, one can see that the polynomials $P^k_d$ do not actually need to be completely nondegenerate; we only need the associated hypersurfaces $V_{(\C^*)^k}(P^k_d)$ in the complex torus to be smooth.
\end{remark}

One can get varying, and potentially interesting, families of real projective algebraic hypersurfaces in high ambient dimension by starting with various low-dimensional families of hypersurfaces and applying the Cooking Theorem \ref{TheoremConstructionsMainTheoremConstructions} recursively: each application yields a new family in some dimension $n$, which can then serve as an ingredient for higher dimensional constructions.
One advantage of that method is that each new family in ambient dimension $N$ with "good" asymptotic Betti numbers obtained using other means can potentially automatically give rise, through repeated applications of the Cooking Theorem, to new interesting families in all dimensions greater than $N$.

In particular, we make use of already existing families of projective smooth real algebraic hypersurfaces in $\PP^3$ designed using Bihan's results from \cite{BihanAsymptotic} by Brugall\'e in \cite{Brugalle2006} to build asymptotic counter-examples to Inequality (\ref{EquationPrinciple}), \textit{i.e.}  families of real projective algebraic hypersurfaces such that the $i$-th Betti numbers (for a given $i$) of their real parts are asymptotically much larger than the sum over $p$ of the ($p,i$)-th Hodge numbers of their complex parts - in other words, asymptotically non-standard families.
We prove the two following theorems.

\begin{thm}\label{TheoremConstructionsApplicationThm}
For any $n\geq 3$ and any $i=0,\ldots,n-1$, there exists  $b^n_i>a^n_i$ and a family $\{Q^n_d\}_{d\in\NN}$ of completely nondegenerate real Laurent polynomials in $n$ variables such that $\Delta(Q^n_d)=S^n_d$, that the associated family of real projective hypersurfaces is asymptotically maximal and that 
\begin{equation*}
b_i(V_{\R\PP^n }(Q^n_d))   \overset{n}{\geq}  b^n_i \cdot d^n.   
\end{equation*}
\end{thm}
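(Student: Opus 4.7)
The strategy is to apply the Cooking Theorem \ref{TheoremConstructionsMainTheoremConstructions} recursively, using as a seed a family in $\PP^3$ whose real part has strictly larger asymptotic Betti numbers than the standard values $a^3_i$. First I would verify that the coefficients $a^n_i$ satisfy the recursion
$$a^n_i = \frac{1}{n}\left(a_i^{n-1}+a_{i-1}^{n-1} +\sum_{k=1}^{n-2}\sum_{j=0}^{i-1} a_j^k \cdot a_{i-1-j}^{n-1-k}\right),$$
with $a^k_j = 0$ for $j \notin \{0,\ldots,k-1\}$. This can be obtained either from the closed formula for the Hodge numbers $h^{i,n-1-i}$ of a smooth degree-$d$ hypersurface in $\PP^n$, or more conceptually by plugging Itenberg--Viro's asymptotically standard family from \cite{IV} into the Cooking Theorem as ingredients for all $k \leq n-1$: the resulting family is asymptotically maximal, which combined with the asymptotic Smith--Thom upper bound forces each inequality (\ref{FormulaConstructionsMainFormula}) to be tight. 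Crucially, the right-hand side of (\ref{FormulaConstructionsMainFormula}) is linear in each individual variable $x^k_j$, so replacing a single ingredient by one with a strictly larger asymptotic Betti number strictly increases the lower bound on $b_i$.

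Next I would invoke Brugall\'e's families in $\PP^3$ from \cite{Brugalle2006}, built on Bihan's method from \cite{BihanAsymptotic}. These supply completely nondegenerate polynomials $\{\tilde P^3_d\}_d$ of Newton polytope $S^3_d$, asymptotically maximal, with $b_j(V_{\R\PP^3}(\tilde P^3_d)) \overset{3}{\geq} \tilde x^3_j \cdot d^3$ where $\tilde x^3_j > a^3_j$ for at least one value of $j$. Poincar\'e duality with $\Z_2$ coefficients yields the same strict improvement for the dual index $2-j$, and variations of Bihan's method (or of the ingredients used in his construction) allow one to prescribe the improved index. This settles the base case $n=3$ for every $i$.

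For $n \geq 4$ I would proceed by induction on $n$. Given a target pair $(n,i)$, select ingredients $\{P^k_d\}_d$ for $k = 1, \ldots, n-1$ as follows: for $k \neq 3$, take an asymptotically standard family produced recursively from the Cooking Theorem starting from the Itenberg--Viro seed; for $k = 3$, take a Brugall\'e--Bihan family with strict gain at a carefully chosen index. By the linearity and positivity noted above, the cooked family then satisfies $b_i(V_{\R\PP^n}(Q^n_d)) \overset{n}{\geq} b^n_i \cdot d^n$ with $b^n_i > a^n_i$, as long as at least one summand in (\ref{FormulaConstructionsMainFormula}) actually involves a strictly improved $x^3_j$ (or, in the case $n=4$, one of the first two terms $x^{n-1}_i$, $x^{n-1}_{i-1}$). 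The asymptotic maximality statement in Theorem \ref{TheoremConstructionsMainTheoremConstructions} ensures the output family remains asymptotically maximal throughout. The main obstacle is the combinatorial case analysis guaranteeing that for every $(n,i)$ such a summand exists and is nonzero; extremal indices $i \in \{0, n-1\}$ and small $n$ need particular care, and one may need to use several different Brugall\'e--Bihan families (improved at different indices) according to $i$. Once the construction is set up for $n = 4$, the strict gain propagates to all higher dimensions by reusing previously cooked strictly improved families as ingredients in subsequent applications of the Cooking Theorem.
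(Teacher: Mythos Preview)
Your overall strategy---use the Cooking Theorem with standard Itenberg--Viro families in all dimensions except $k=3$, where you plug in a Brugall\'e family---is exactly the paper's approach, and you correctly note the recursion for the $a^n_i$. However, your ``linearity'' argument hides a genuine gap. When you replace the ingredient $\{P^3_d\}$, you change \emph{all} of $x^3_0,x^3_1,x^3_2$ at once, and asymptotic maximality forces $\sum_j (x^3_j-a^3_j)=0$: some $t^3_j:=x^3_j-a^3_j$ are positive and others are negative. With $n\geq 8$ and standard ingredients for $k\neq 3$, the gain at index $i$ is
\[
\frac{2}{n}\sum_{j=0}^{2} t^3_j\, a^{n-4}_{i-1-j}
\;=\;\frac{2}{n}\cdot t^3_0\cdot\bigl(a^{n-4}_{i-1}-2a^{n-4}_{i-2}+a^{n-4}_{i-3}\bigr)
\;=\;\frac{2}{n}\cdot t^3_0\cdot D^2a^{n-4}_{i-2},
\]
a signed second finite difference of the coefficients $a^{n-4}_p$. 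It is not at all obvious that this is nonzero, let alone positive; for $i\in\{0,n-1\}$ it in fact vanishes. So ``at least one summand involves a strictly improved $x^3_j$'' is not the right criterion.

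What the paper does is prove a sign lemma for $D^2a^m_p$ (Lemma~\ref{LemmaConstructionsSigneDifferenceFinie}): for each $m\geq 3$ the sequence $\{D^2a^m_p\}_p$ is positive, then possibly zero at a single symmetric pair of indices, then negative in the middle. One then chooses $B^+$ or $B^-$ (which give $t^3_0=\tfrac{5}{24}$ or $t^3_0=-\tfrac{1}{24}$) according to the sign of $D^2a^{n-4}_{i-2}$. The edge cases $i\in\{0,n-1\}$ require a separate propagation argument using only the $x^{n-1}_i$ term (your recursive idea does work cleanly there, since for $i=0$ only $t^{n-1}_0$ enters), and the isolated index where $D^2a^{n-4}_{i-2}$ might vanish requires a one-step bootstrap via the $(n-1)$-dimensional construction. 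Your proposal sketches the right picture but does not supply this sign analysis, which is the heart of the proof.
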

Hence $b_i(V_{\R\PP^n }(Q^n_d))$ grows asymptotically strictly faster than the corresponding Hodge number $h^{i,n-1-i}(V_{\C\PP^n }(Q^n_d))$, though $b^n_i$ cannot be expected to be particularly large compared to $a^n_i$ (see the end of Subsection \ref{SubsectionConstructionsFirstConstruction} for more details on that).
As far as the author is aware, this had not yet been achieved.

The second theorem, which we prove using probabilistic methods, allows us to find asymptotic (in the degree $d$) results that are asymptotically (as the ambient dimension $n$ goes to infinity) much better.

\begin{thm}\label{TheoremConstructionsBonneAsymptotique}

Let $N\geq 1$. For $k=1,\ldots,N$, let $\{P^k_{d}\}_{d\in \NN}$ be a family of completely nondegenerate real Laurent polynomials in $k$ variables such  that the Newton polytope $\Delta(P^k_d)$ of $P^k_d$ is $S^k_d$.
Suppose additionally that for $k=1,\ldots,N$ and $i=0,\ldots,k-1$,
 $$b_i(V_{\R\PP ^k}(P^k_d))\overset{k}{=} x_i^k \cdot d^k$$
for some $x_i^k \in \R_{\geq 0}$ such that $\sum_{i=0}^{k-1}x_i^k =1$ (in particular, the family of projective hypersurfaces associated to each family $\{P^k_d\}_{d\in \NN}$ is asymptotically maximal). Set also $x_i^k$  to be $0$ for $i\not \in \{0,\ldots,k-1\}$.

Define
$$\sigma ^2 :=\frac{2}{(N+1)(N+2)} \left(\frac{1}{4} +\sum_{k=1}^N\sum_{i=0}^{k-1}x^k_i\left(i- \frac{k-1}{2}\right)^2 \right). $$

Then for every $n\geq N+1$ and any $i\in \Z$, there exist $x_i^n\in \R_{\geq 0}$  and a family $\{Q^n_d\}_{d\in\NN}$ of completely nondegenerate real Laurent polynomials in $n$ variables such that $\Delta(Q^n_d)=S^n_d$, that for $i\in \Z$ 
\begin{equation*}
b_i(V_{\R\PP ^n}(Q^n_d))   \overset{n}{=}   x_i^n \cdot d^n
\end{equation*}
and such that for any $m\in  \Z$ we have
\begin{equation}
x^n_m = \frac{1}{\sigma\sqrt{2\pi}}\frac{1}{\sqrt{n}} \exp\left(-\frac{\left(\frac{n-1}{2} - m\right)^2}{2n\sigma^2}\right) +o\left(n^{-\frac{1}{2}}\right),
\end{equation}
where the $o(1)$ error term is uniform in $m$.
The family of projective hypersurfaces associated to each family $\{Q^n_d\}_{d\in \NN}$ is also asymptotically maximal.
\end{thm}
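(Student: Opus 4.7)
My plan is to build the families $\{Q^n_d\}$ by iterated application of Theorem \ref{TheoremConstructionsMainTheoremConstructions}, extract from the Cooking Theorem a recursion satisfied by the coefficients $x^n_i$, reinterpret it probabilistically, and finally establish the Gaussian pointwise asymptotic by saddle-point analysis of an associated bivariate generating function. The main technical difficulty will lie in the \emph{local} (as opposed to merely integral) central limit theorem at the end.

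\emph{Step 1: construction and extraction of a recursion.} I apply Theorem \ref{TheoremConstructionsMainTheoremConstructions} inductively: use the ingredients $\{P^k_d\}$ ($k\leq N$) to build $\{Q^{N+1}_d\}$, then reuse this together with the original ingredients to build $\{Q^{N+2}_d\}$, and so on; the last assertion of the Cooking Theorem propagates asymptotic maximality, so every resulting family is asymptotically maximal, giving $\sum_i b_i(V_{\R\PP^n}(Q^n_d))\overset{n}{=}d^n$. Summing the right-hand side of (\ref{FormulaConstructionsMainFormula}) over $i$ and using the hypothesis $\sum_i x^k_i=1$ for $k\leq N$, an easy induction on $n$ gives $\sum_i x^n_i=1$ identically. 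The sum of the inequalities (\ref{FormulaConstructionsMainFormula}) is therefore an equality, forcing each individual inequality to be one; this produces coefficients $x^n_i \geq 0$ with $b_i(V_{\R\PP^n}(Q^n_d))\overset{n}{=}x^n_i d^n$ and satisfying, for $n>N$,
$$n x^n_i = x^{n-1}_i + x^{n-1}_{i-1} + \sum_{k=1}^{n-2}\sum_{j=0}^{i-1} x^k_j\, x^{n-1-k}_{i-1-j}.$$

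\emph{Step 2: probabilistic interpretation and moment computation.} Regard $(x^n_i)_i$ as the law of a random variable $I_n$ on $\{0,\ldots,n-1\}$. The recursion then reads $I_n \overset{d}{=} I_K + I'_{n-1-K} + \mathbf{1}_{K\geq 1}$, with $K$ uniform on $\{0,\ldots,n-1\}$, all variables independent, and the convention $I_0\equiv 0$. Poincar\'e duality applied to the real parts of the ingredient hypersurfaces gives the symmetry $x^k_i = x^k_{k-1-i}$ for $k\leq N$; this propagates by induction and yields $E[I_n]=(n-1)/2$. Applying the law of total variance to the recursion reduces it to
$$n\,\sigma_n^2 \;=\; 2\sum_{k=1}^{n-1}\sigma_k^2 \;+\; \tfrac12,$$
and setting $S_n := 2\sum_{k=1}^{n}\sigma_k^2+\tfrac12$ this becomes the telescoping identity $S_n=\frac{n+2}{n}\,S_{n-1}$, which integrates explicitly into $\sigma_n^2 = \frac{n+1}{(N+1)(N+2)}\left(2\sum_{k=1}^N \sigma_k^2 + \tfrac12\right) = n\sigma^2 + O(1)$, with $\sigma^2$ exactly the constant defined in the statement.

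\emph{Step 3: Gaussian asymptotic via a bivariate generating function.} Let $P_n(z):=\sum_i x^n_i z^i$ and $G(z,x):=\sum_{n\geq 0} P_n(z) x^n$ (with $P_0\equiv 1$). The recursion of Step 1 translates into the Riccati-type ODE $x\,G_x = (1-z)\,x\,G + z\,x\,G^2 + R(z,x)$, where $R$ is an explicit polynomial in $x$ of degree $\leq N$ depending only on the ingredient data. The solution has, for $z$ near $1$, a single dominant simple pole in $x$ at some $x_*(z)$ with $x_*(1)=1$ and Taylor expansion $-\log x_*(e^{i\theta}) = i\theta/2 + (\sigma^2/2)\theta^2 + O(\theta^3)$ (consistent with the moments of Step 2). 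Standard singularity analysis gives $P_n(z)\sim c(z)\,x_*(z)^{-n}$ uniformly near $z=1$, and a saddle-point evaluation of
$$x^n_m = \frac{1}{2\pi}\int_{-\pi}^{\pi} e^{-im\theta}\,P_n(e^{i\theta})\,d\theta$$
concentrated on a neighborhood of $\theta=0$ of width $\sim n^{-2/5}$ produces the claimed Gaussian density uniformly in $m$. The delicate point — and the main obstacle — is the uniform tail estimate $|P_n(e^{i\theta})|=o(n^{-1/2})$ for $\theta$ outside this shrinking neighborhood, which requires verifying the strict inequality $|x_*(e^{i\theta})|>1$ for $\theta\neq 0$ together with the absence of secondary singularities of comparable modulus: a spectral-gap-type property of the Riccati equation (which could alternatively be tackled via the contraction method of Neininger--R\"uschendorf combined with an Edgeworth-type refinement to upgrade distributional convergence to a local CLT).
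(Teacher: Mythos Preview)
Your Steps 1 and 2 match the paper's argument closely: the recursive construction via the Cooking Theorem, the upgrade from inequalities to equalities using asymptotic maximality, the symmetry $x^k_i=x^k_{k-1-i}$, and the variance recursion $nS_n=(n+2)S_{n-1}$ are exactly what the paper uses (your variance computation is essentially Lemma~\ref{LemmaConstructionsVAIndices} reorganised around $I_n$ rather than the block-counting variables).

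In Step 3 you diverge from the paper, and this is where the proposal has a genuine gap. You package the recursion into the Riccati ODE $xG_x=(1-z)xG+zxG^2+R(z,x)$ and propose singularity analysis followed by a saddle-point argument. You correctly identify the obstruction: to get a \emph{local} limit theorem you need $|P_n(e^{i\theta})|=o(n^{-1/2})$ uniformly on $|\theta|\gg n^{-1/2}$, which in your framework means controlling the full singularity structure of $G(e^{i\theta},\cdot)$ on the unit circle and ruling out secondary poles of modulus $\leq |x_*(e^{i\theta})|$. For a nonlinear ODE with arbitrary polynomial forcing $R$ depending on the ingredient data, this is not ``standard singularity analysis'': you would need an aperiodicity argument specific to this equation, and you do not supply one (the Neininger--R\"uschendorf contraction method you mention gives distributional convergence, not a local CLT, and Edgeworth refinements require moment and smoothness control you have not established).

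The paper avoids this entirely by a probabilistic decomposition you do not use. It introduces an artificial distribution $x^0$ concentrated on $\{-1,0\}$ (so that the $(1+z)P_{n-1}$ term becomes one more convolution) and then unfolds the recursion: the centered variable $Z_{n+1}$ is written as $\sum_{i=1}^{N+1}\sum_{r=1}^{\alpha_i^{n+1}}\tilde X_i^r$, where the $\tilde X_i^r$ are i.i.d.\ within each $i$ and $\alpha_i^{n+1}$ counts how many blocks of size $i$ appear in the random recursive partition of $\{1,\dots,n+1\}$. A separate computation shows $\alpha_i^n/n\to 2/((N+1)(N+2))$ in probability with variance $O(1/n)$. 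The tail estimate then becomes elementary: conditioning on $\alpha^n$, the characteristic function factorises, and since $\tilde X_1$ is a fair $\pm\tfrac12$ coin one has $|\mathbb E[e^{it\tilde X_1}]|<1$ strictly for $0<|t|\leq\pi$, giving exponential decay in $\alpha_1^n\asymp n$ on the whole range $\delta\leq|t|\leq\pi$. Dominated convergence then finishes the Fourier inversion. This block decomposition is the idea your proposal is missing; with it, the local limit theorem requires no analysis of the Riccati ODE at all.
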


As it is known (see Formula (\ref{FormulaConstructionsPolya})) that 
\begin{equation}\label{FormulaConstructionsPolyaIntroduction}
a^{n}_{\left \lfloor {\frac{n-1}{2} + x\sqrt{n}} \right \rfloor}= \sqrt{\frac{6}{\pi(n+1)}} \exp\left(-6x^2\right) + \mathcal{O}\left({n^{-\frac{3}{2}}}\right),
\end{equation}
this direct corollary of the theorem clearly shows its usefulness.
\begin{thm}\label{CorollaryConstructionsBonneAsymptotique}
For any $n\geq 3$, there exists families $\{F^{+,n}_{d}\}_{d\in \NN}$ and $\{F^{-,n}_{d}\}_{d\in \NN}$ of completely nondegenerate real Laurent polynomials in $n$ variables such that the Newton polytope $\Delta(F^{\pm,n}_{d})$ is $S^n_d$, as well as reals $c^n_i,d^n_i\in \R_{\geq 0}$ for any $i\in\Z$, such that for $i=0,\ldots,n-1$, we have
$$b_i(V_{\R\PP ^n}(F^{+,n}_{d}))\overset{n}{=} c^n_i\cdot d^n$$
and
$$b_i(V_{\R\PP ^n}(F^{-,n}_{d}))\overset{n}{=} d^n_i \cdot d^n$$
and such that we have, for all $x\in \R$, that 
$$ c^n_{\left \lfloor {\frac{n-1}{2} + x\sqrt{n}} \right \rfloor} = \frac{2}{\sqrt{\pi}}\frac{1}{\sqrt{n}}\exp\left(-4x^2\right) + o\left({n^{-\frac{1}{2}}}\right)$$
and 
$$ d^n_{\left \lfloor {\frac{n-1}{2} + x\sqrt{n}} \right \rfloor} = \frac{\sqrt{20}}{\sqrt{3\pi}}\frac{1}{\sqrt{n}}\exp\left(\frac{-20x^2}{3}\right) + o\left({n^{-\frac{1}{2}}}\right),$$
where the error terms $o\left({n^{-\frac{1}{2}}}\right)$ are uniform in $x$.
The family of projective hypersurfaces associated to each family $\{F^{\pm,n}_{d}\}_{d\in \NN}$ is also asymptotically maximal.

\end{thm}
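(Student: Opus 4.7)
The corollary is advertised as a direct application of Theorem \ref{TheoremConstructionsBonneAsymptotique}, so my plan is, for each of the two target families, to exhibit an integer $N$ and ingredient families $\{P^k_d\}_{d\in\NN}$ in dimensions $k=1,\ldots,N$ whose asymptotic Betti coefficients $x_i^k$ force the quantity
\[
\sigma^2 = \frac{2}{(N+1)(N+2)} \left(\frac{1}{4} + \sum_{k=1}^N \sum_{i=0}^{k-1} x_i^k \left(i - \frac{k-1}{2}\right)^2\right)
\]
to equal $1/8$ (for $F^+$) or $3/40$ (for $F^-$). Substituting $m = \lfloor (n-1)/2 + x\sqrt{n}\rfloor$ into the Gaussian conclusion of Theorem \ref{TheoremConstructionsBonneAsymptotique} and checking that $\sigma^2 = 1/8$ produces the prefactor $\tfrac{2}{\sqrt{\pi}}$ and exponent $-4x^2$, while $\sigma^2 = 3/40$ produces $\tfrac{\sqrt{20}}{\sqrt{3\pi}}$ and $-\tfrac{20x^2}{3}$, is a one-line calculation that pins down the two target values.

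The sanity benchmark is the asymptotically standard case $x_i^k = a_i^k$. A direct check (using that the cumulative second-moment sum satisfies $\sum_{k=1}^N \sum_{i=0}^{k-1} a_i^k (i-(k-1)/2)^2 = (N-1)(N+4)/24$) shows that the standard choice yields $\sigma^2 = 1/12$ for every $N$, reproducing Formula (\ref{FormulaConstructionsPolyaIntroduction}). Since $3/40 < 1/12 < 1/8$, the $-$ family requires ingredients whose asymptotic Betti numbers are more concentrated around the middle index than the Hodge numbers, while the $+$ family requires ingredients more spread toward the extremes.

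For $F^-$ I would take $N = 3$, keep standard ingredients in dimensions $1$ and $2$, and in dimension $3$ use an asymptotically maximal family of real surfaces in $\PP^3$ with enlarged middle Betti coefficient $x_1^3$, as built by Bihan in \cite{BihanAsymptotic} and refined by Brugall\'e in \cite{Brugalle2006}; solving the resulting linear equation forces $x_0^3 + x_2^3 = 1/4$ and thus $\sigma^2 = 3/40$. For $F^+$ I would reverse the bias in dimension $3$, using an asymptotically maximal family with $x_0^3 + x_2^3 = 3/4$ (so $x_1^3 = 1/4$), obtained either by a combinatorial patchwork favoring components of extremal topological type or by applying the Cooking Theorem iteratively to simpler seeds; this yields $\sigma^2 = 1/8$. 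In both cases, the final clause of Theorem \ref{TheoremConstructionsBonneAsymptotique} automatically delivers the asymptotic maximality of $\{F^{\pm,n}_d\}_{d\in\NN}$.

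The main obstacle is not the application of the theorem, which is mechanical, but rather verifying that dimension-$3$ ingredient families with the demanded Betti distributions are indeed available: the target $x_1^3 = 3/4$ for $F^-$ in particular is quantitatively strong, and should the sharpest published constructions fall short, one may either enlarge $N$ and distribute the required second-moment mass across higher-dimensional ingredients (themselves manufactured by recursive use of the Cooking Theorem) or refine the patchworking step to reach the needed bounds.
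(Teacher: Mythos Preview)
Your approach is correct and matches the paper's proof exactly: apply Theorem~\ref{TheoremConstructionsBonneAsymptotique} with $N=3$, standard Itenberg--Viro families in dimensions $1$ and $2$, and Brugall\'e's families in dimension $3$, then compute $\sigma^2$. The ``main obstacle'' you worry about is not one: Brugall\'e's paper \cite{Brugalle2006} (quoted in the present paper as Theorem~\ref{TheoremConstructionsFamilleAsymptotiqueBrugalle}) supplies \emph{both} families on the nose---$B^+$ with $(x_0^3,x_1^3,x_2^3)=(\tfrac{3}{8},\tfrac{1}{4},\tfrac{3}{8})$ giving $\sigma^2=\tfrac{1}{8}$, and $B^-$ with $(x_0^3,x_1^3,x_2^3)=(\tfrac{1}{8},\tfrac{3}{4},\tfrac{1}{8})$ giving $\sigma^2=\tfrac{3}{40}$---so no fallback to larger $N$ or further patchworking is needed.
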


\begin{remark}
Equation (\ref{FormulaConstructionsPolyaIntroduction}) tells us that for large $n$, the coefficients $\{a^n_i\}_{i\in\Z}$ are well approximated by a discrete Gaussian of sorts, while the coefficients $\{c^n_i\}_{i\in\Z}$ (respectively  $\{d^n_i\}_{i\in\Z}$) from  Theorem \ref{CorollaryConstructionsBonneAsymptotique} correspond to a more spread out Gaussian (respectively, to a Gaussian that is more concentrated around $\frac{n-1}{2}$).

In particular, for $x=0$, compare with Formula (\ref{FormulaConstructionsPolyaIntroduction}) and see that for $n$ odd,
$$\frac{d^n_{\frac{n-1}{2}}}{a^n_{\frac{n-1}{2}}}=\frac{\sqrt{10}}{3} +o(1),$$
which is strictly greater than $1$ for $n$ large enough.
\end{remark}

These results are currently the only known "counterexamples" in general dimension to the principle presented above, which suggested that real projective algebraic hypersurfaces should be expected to  verify
\begin{equation*}
\dim_{\Z_2} H_i(\R X;\Z_2)  \leq  h^{i,n-1-i}(\C X)+1 -\delta_{i,\frac{n-1}{2}}.
\end{equation*}

This article is organized as follows: a brief exposition of Viro's patchworking method is to be found in Section \ref{SectionPatchwork}, including Viro's Main Patchwork Theorem, some elementary results on toric varieties, and a few useful related definitions (convex subdivisions, charts, primitive triangulations, etc.).

The construction method is detailed in Section \ref{SectionConstructionMethod}. More specifically, we first define in Subsection \ref{SubsectionConstructionsConvexSubdivision} a specific convex triangulation of the simplex  $S_d^{n}$. Using the ingredients $\{P^k_d\}$ of the Cooking Theorem, we also define in Subsection \ref{SubsectionConstructionsCoefficientsChoice} a certain family $ \{\tilde{Q}^n_d \}_{d\in\NN}$ of real Laurent polynomials. We then apply in Subsection \ref{SubsectionConstructionsDefinitionPnd} Viro's patchworking method to $\{\tilde{Q}^{n}_d\}_{d\in\NN}$ and the convex triangulation we devised in Subsection \ref{SubsectionConstructionsConvexSubdivision} and obtain  a family $\{Q^n_d\}_{d\in\NN}$ of real Laurent polynomials.

We prove the Cooking Theorem \ref{TheoremConstructionsMainTheoremConstructions} in  Section \ref{SectionComputingAsymptoticBettiNumbers}. We start by introducing in Subsection \ref{SubsectionConstructionsPreliminaries} the key notions of linking numbers and axes associated to a collection of homological cycles - the use of axes allows us to prove that certain collections of cycles are independent in the homology of the real part of the hypersurfaces that we consider, and therefore that its Betti numbers are large enough. We also show that enough ``convenient" cycles and axes can be found in the hypersurfaces used as ingredients for the Cooking Theorem. In Subsections \ref{SubsectionConstructionsCyclesSuspension} and \ref{SubsectionConstructionsCyclesJoin}, we explain how the cycles and axes that were found in the ingredients are joined and suspended in the hypersurfaces corresponding to $\{Q^n_d\}_{d\in\NN}$ to give rise to new cycles and axes. Finally, we count those cycles and axes and prove that there are asymptotically enough of them for the Cooking Theorem \ref{TheoremConstructionsMainTheoremConstructions} to be true. 

In Subsection \ref{SubsectionConstructionsHodgeNumbersProperties}, we make some useful observations regarding Hodge numbers, their asymptotic behaviour and their relations to more combinatorial objects, such as Eulerian numbers and hypercube slices. We then introduce in Subsection \ref{SubsectionConstructionsNotationsAndKnownResults} the families of projective real algebraic
hypersurfaces constructed by Itenberg and Viro in \cite{IV} and by Brugall\'e in \cite{Brugalle2006}. They are the main ingredients of our first family of asymptotic constructions using the Cooking Theorem - we describe it and prove that it satisfies Theorem \ref{TheoremConstructionsApplicationThm} in Subsection \ref{SubsectionConstructionsFirstConstruction}. 
We recursively build in Subsection \ref{SubsectionConstructionsSecondConstruction} our second family of asymptotic constructions. The Betti numbers of the hypersurfaces thus obtained are described by a recursive formula - we associate them with some ad hoc probability distributions and use probabilistic and analytical techniques (including a variant of the Local Limit Theorem) to understand their asymptotics and prove Theorem \ref{TheoremConstructionsBonneAsymptotique}, of which Theorem \ref{CorollaryConstructionsBonneAsymptotique} is a simple corollary.

Explicit approximations of the largest $c_i^n$ (using the notations of Theorem \ref{TheoremConstructionsApplicationThm}) that we were able to get for small $n$ are given in Section \ref{SectionExplicitComputations}. Finally, some closing observations are made in Section \ref{SectionConclusion}. An index of notation can also be found before the bibliography.


\subsection*{Acknowledgements}
The author is very grateful to Fr\'ed\'eric Bihan, Ilia Itenberg and Oleg Viro for helpful discussions, as well as to Mark Gross for his support and advice and to the anonymous referee for his suggestions.
\section{Viro's patchworking method}\label{SectionPatchwork}

We give a quick description of Viro's method so as to have the necessary definitions, mostly paraphrasing Viro's own exposition in \cite{ViroPatchworking}. All omitted details can be found there, or in Itenberg's, G. Mikhalkin's and E. Shustin's notes on Tropical Algebraic Geometry \cite{TextBookTropicalGeometry}. See \cite{FultonToric} for more on toric varieties.

In what follows, $K$ can be either $\R$ or $\C$. Let $U^1_\C \subset \C$ be the unit circle, $U^1_\R$ be $\{ 1,-1 \}$ and define $U^n_K:=(U^1_K)^n$ for $n\in \NN$. We also use the notation $z^\lambda:=z_1^{\lambda_1}\ldots z_n^{\lambda_n}$, where $z=(z_1\ldots,z_n)\in\C$ and $\lambda=(\lambda_1,\ldots,\lambda_n)\in \Z$.

Let $\Delta \subset \R^n$ be a full-dimensional polytope with integer vertices; we denote as $K\Delta$ the toric variety to which the normal fan $\Sigma$ of $\Delta$ gives rise.
The inclusion $\Delta \subset \R^n$ determines an embedding $(K^*)^n\subset K\Delta$, and there is a natural action of the torus $(K^*)^n\subset K\Delta$ on itself by multiplication, which can be naturally extended to an action $S: K\Delta \times (K^*)^n   \longrightarrow K\Delta$ on $K\Delta$.
Moreover, the variety $K\Delta$ is stratified along the closures of the orbits of the action of the algebraic torus, and those strata are in  bijection with the faces of $\Delta$.
Let $\Gamma$ be a face of $\Delta$, and denote by $K\Gamma$ the corresponding stratum of $K \Delta$. It can be shown that $K\Gamma$ is isomorphic to the toric variety to which $\Gamma$, seen as a full-dimensional polytope in the vector space that it spans, gives rise, which justifies the notation.

There is a stratified (along its intersections with the strata $K\Gamma$) subspace of $K\Delta$, denoted as $\R_+ \Delta$, which corresponds to the points in $K \Delta$ with real nonnegative coordinates (this can be given a precise meaning with the definition of $K\Delta$).
We can see $K\Delta$ as a quotient of $\R_+ \Delta \times U^n_K$ via the map $S: \R_+ \Delta \times U^n_K \longrightarrow K\Delta$, where $S$ is the restriction to $\R_+ \Delta \times U^n_K$ of the action $S: K\Delta \times (K^*)^n \longrightarrow K\Delta$ mentioned above.

As stratified topological spaces, $\R_+ \Delta$ and the polytope $\Delta$ are homeomorphic, for example via the Atiyah moment map $M :\R_+ \Delta \longrightarrow \Delta$ (see \cite{AtiyahMomentMap}).
If $x\in (\R_+)^n\subset \R_+\Delta$ and $w_1,\ldots, w_k \in \Z^n$ are such that their convex hull is $\Delta$, then
\begin{equation*}\label{MomentMap}
M(x)=\frac{\sum_{i=1}^k|x^{w_i}|w_i}{\sum_{i=1}^k|x^{w_i}|}.
\end{equation*}

Thus we have the following map
\begin{equation*}
  \Phi :  \Delta \times U^n_K  \xlongrightarrow{M^{-1}\times id}  \R_+ \Delta \times U^n_K \xlongrightarrow{S} K\Delta
\end{equation*}
through which $K\Delta$ is seen as a quotient of $\Delta \times U^n_K $ (when $K=\R$, this quotient can in fact be quite nicely described in terms of an appropriate gluing of the faces of $\R_+ \Delta \times U^n_\R $ - see \cite{ViroPatchworking}). It restricts to a homeomorphism from $  \mathring{\Delta} \times U^n_K$ to $(K^*)^n \subset K\Delta$.

Let $P$ be as above a real Laurent polynomial in $n$ variables and $\Delta \subset \R^n$ be a full-dimensional polytope with integer vertices.
The stratified topological pair $(\Delta \times U^n_K ,v)$, where $v=\Phi^{-1}(V_{K\Delta}(P))$, is called a \textit{chart} of $P$. A slightly different definition exists, where $M$ can be replaced in the definition of $\Phi$ by any "nice enough" homeomorphism.

When there is no possible confusion, we sometimes refer to $v$ itself as the chart (as opposed to the pair $(\Delta \times U^n_K ,v)$), and we denote it as $Chart_{\Delta \times U^n_K}(P)$.
By extension, we also write $Chart_{\mathring{\Delta} \times U^n_K}(P) :=Chart_{\Delta \times U^n_K}(P)\cap (\mathring{\Delta} \times U^n_K)$, where $\mathring{\Delta}$ is the relative interior of $\Delta$ in $\R^n$.

A \textit{convex subdivision} (or \textit{regular subdivision}) $T$ of an $n$-dimensional convex polytope $\Delta\subset \R^n$ with integer vertices is a finite family $\{\Delta_i\}_{i\in I}$ of $n$-dimensional convex polytopes with integer vertices such that:
\begin{itemize}
    \item $\bigcup_{i\in I} \Delta_i = \Delta$.
    \item For all indices $i, j \in I$, the intersection $\Delta_i \cap \Delta_j$ is either empty or a face of both $\Delta_i$ and $\Delta_j$.
    \item There is a piecewise linear convex function $\mu:\Delta \longrightarrow \R$ such that the domains of linearity of $\mu$ are exactly the polytopes $\Delta_i$.
\end{itemize}

If each polytope $\Delta_i$ in the subdivision is a simplex, we say that $T$ is a \textit{convex triangulation}. 
Moreover, if each $\Delta_i$ is a simplex of minimum Euclidean volume $\frac{1}{n!}$, we call $T$ a \textit{convex primitive triangulation}.

All is set to state Viro's Main Patchwork Theorem:
let $\Delta\subset \R^n$ be a convex polytope with integer vertices and let $Q_1, \ldots, Q_s$ be completely nondegenerate real Laurent polynomials in $n$ variables such that $\{\Delta(Q_1), \ldots, \Delta(Q_s)\}$ is a convex subdivision of $\Delta$. Suppose moreover that $Q_i^{\Delta(Q_i) \cap \Delta(Q_j)}=Q_j^{\Delta(Q_i) \cap \Delta(Q_j)}$ for all $i,j \in \{ 1,\ldots, s\}$.
This means that there exists a unique real Laurent polynomial $P(z)=\sum_{\lambda \in \Delta \cap \Z^n } c_\lambda z^\lambda$  such that $P^{\Delta(Q_i)}=Q_i$ for all $i$.
Let $\mu:\Delta \longrightarrow \R$ be a piecewise linear function certifying the convexity of the subdivision, and consider the family of real Laurent polynomials $\{P_t\}_{t\in \R_{>0}}$, where $P_t(z):=\sum_{\lambda \in \Delta \cap \Z^n } c_\lambda t^{\mu(\lambda)} z^\lambda$. Let $(\Delta(Q_i)\times U_K^n,v_i)$ be the chart of $Q_i$, and $(\Delta \times U_K^n, v_t)$ be the chart of $P_t$.
\begin{thm}[Main Patchwork Theorem]\label{TheoremPatchworkMainPatchworkTHM}
The union $ \bigcup_{i=1}^s v_i$ is a submanifold of $\Delta \times U_K^n$, smooth in $\mathring{\Delta}\times U_K^n$ and with boundary in  $\partial \Delta \times U_K^n$ (and corners in the boundary for $n\geq 3 $).

Moreover, for any $t>0$ small enough, $\bigcup_{i=1}^s v_i$ is isotopic to $v_t$ in $\Delta \times U_K^n$ by an isotopy which leaves $\Gamma \times U_K^n$ invariant for each face $\Gamma \subset \Delta$.
\end{thm}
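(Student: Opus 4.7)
The plan is to separate the two conclusions, since they require quite different inputs. For the submanifold assertion, I first observe that each individual chart $v_i$ is already a submanifold with corners of $\Delta(Q_i)\times U_K^n$: complete nondegeneracy of $Q_i$ guarantees smoothness of $V_{K\Gamma}(Q_i^\Gamma)$ inside every toric stratum $K\Gamma$, for $\Gamma$ a face of $\Delta(Q_i)$, and the chart construction based on $\Phi$ is transverse to $\Gamma\times U_K^n$ by design. When two pieces $\Delta(Q_i)$ and $\Delta(Q_j)$ share a face $\Gamma$, the compatibility $Q_i^\Gamma=Q_j^\Gamma$ forces $v_i\cap(\Gamma\times U_K^n)=v_j\cap(\Gamma\times U_K^n)$, so $\bigcup_i v_i$ glues into a submanifold with corners of $\Delta\times U_K^n$.

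For the isotopy assertion, the plan is to exploit the $t$-dependent torus rescalings dual to the subdivision. Fix a top-dimensional piece $\Delta_i$ and let $\ell_i(\lambda)=\langle\xi_i,\lambda\rangle+c_i$ be the affine function coinciding with $\mu$ on $\Delta_i$. Consider the torus automorphism $\phi_{i,t}:z\mapsto(t^{\xi_{i,1}}z_1,\ldots,t^{\xi_{i,n}}z_n)$, and compute
\[
t^{-c_i}\,P_t(\phi_{i,t}(z))=\sum_{\lambda\in\Delta\cap\Z^n}c_\lambda\,t^{\mu(\lambda)-\ell_i(\lambda)}z^\lambda.
\]
By convexity of $\mu$ and the assumption that its linearity domains are exactly the $\Delta_j$, the exponent $\mu(\lambda)-\ell_i(\lambda)$ is nonnegative and vanishes precisely on $\Delta_i\cap\Z^n$, so this rescaled polynomial tends to $Q_i$ as $t\to 0$. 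Consequently, in the region of $(K^*)^n$ where $\phi_{i,t}$ dominates, the zero locus of $P_t$ is identified, through $\phi_{i,t}$, with a small perturbation of $V_{(K^*)^n}(Q_i)$, and an implicit function argument for $t$ small yields a local isotopy back to $v_i$.

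The hard part is globalizing these local isotopies into a single one on $\Delta\times U_K^n$ that is the identity on each face $\Gamma\times U_K^n$. My plan is to partition $(K^*)^n\subset K\Delta$ according to the tropical picture: under the logarithmic map $z\mapsto\log|z|/|\log t|$, the amoeba of $V_{(K^*)^n}(P_t)$ Hausdorff-converges, as $t\to 0$, to the tropical hypersurface dual to the subdivision, and away from this skeleton exactly one rescaling $\phi_{i,t}$ dominates. Where the regions associated to $\Delta_i$ and $\Delta_j$ meet along a common face $\Gamma$, the two candidate limits of $t^{-c}P_t\circ\phi_{\cdot,t}$ automatically agree on $\Gamma$ thanks to $Q_i^\Gamma=Q_j^\Gamma$, which is exactly what is needed to patch the local isotopies via a partition of unity subordinate to the tropical stratification. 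Extending the resulting torus isotopy continuously to the toric boundary through the quotient description $\Phi$ provides the required face-preserving isotopy between $\bigcup_iv_i$ and $v_t$ for all sufficiently small $t>0$.
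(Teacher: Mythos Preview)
The paper does not prove this theorem at all: Theorem~\ref{TheoremPatchworkMainPatchworkTHM} is stated as Viro's Main Patchwork Theorem and attributed to \cite{ViroPatchworking}, with the surrounding text explicitly saying that ``all omitted details can be found there.'' So there is no proof in the paper to compare against.

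That said, your sketch follows the standard route due to Viro. The rescaling computation is exactly the right engine: the change of variables $\phi_{i,t}$ turns $P_t$ into a perturbation of $Q_i$ on the region dominated by $\Delta_i$, and complete nondegeneracy lets the implicit function theorem produce a local isotopy. Where your outline becomes genuinely thin is the globalization step. Invoking the tropical limit and a partition of unity subordinate to the tropical stratification is heuristically correct but hides real work: you must control what happens in the overlap regions where several $\phi_{i,t}$ compete (corresponding to faces of the subdivision of codimension $\geq 1$), and there the limiting polynomial is a truncation $Q_i^\Gamma$ in fewer effective variables, so the implicit-function isotopy is only transverse to a lower-dimensional stratum and does not by itself match the neighbouring pieces. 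Viro's original argument handles this by working inductively on the face structure of the subdivision and using the toric compactification carefully, rather than by a single partition-of-unity patching. Your extension to the boundary ``through the quotient description $\Phi$'' is likewise a placeholder: the map $\Phi$ is not a diffeomorphism on the boundary strata, and the face-preserving condition on the isotopy has to be arranged, not just asserted. None of this is fatal to the strategy, but as written the globalization paragraph is a plan rather than a proof.
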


We say that such a polynomial $P_t$ (with $t>0$ small enough) has been obtained by patchworking the polynomials $Q_1, \ldots, Q_s$, or that it is a patchwork of them.

In particular, Theorem \ref{TheoremPatchworkMainPatchworkTHM} allows us to recover the topology of the pairs $((K^*)^n,V_{(K^*)^n}(P_t))$ and $(K_\Delta, V_{K\Delta}(P_t))$ from that of the pairs $(K_{\Delta(Q_i)},V_{K_{\Delta(Q_i)}}(Q_i))$.

Note that though the function $\mu$ plays an important role in the definition of the family of polynomials $\{P_t\}_{t\in \R_{>0}}$, its choice (among the piecewise linear functions inducing the same convex subdivision) does not affect the topology of $(K_\Delta, V_{K\Delta}(P_t))$.

A special case of Viro's method, called \textit{combinatorial patchworking}, can be distinguished.

Let $P$ be a real Laurent polynomial in $n$ variables. If $P$ has exactly $n+1$ monomials with non-zero coefficients and if its Newton polytope $\Delta (P)$ is a non-degenerate $n$-dimensional simplex, we call $P$ a \textit{simplicial} real polynomial. In particular, the monomials of $P$ and the vertices of $\Delta(P)$ are in bijection.

If the real polynomials $Q_1,\ldots,Q_s$  that are being patchworked are all simplicial, which in particular implies that the convex subdivision $\{\Delta(Q_1), \ldots, \Delta(Q_s)\}$ is a triangulation, the construction is called a combinatorial patchworking. If the subdivision $\{\Delta(Q_1), \ldots, \Delta(Q_s)\}$  is a primitive triangulation, we call the construction a \textit{primitive combinatorial patchworking}.

What makes this case special is that the real chart $Chart_{\Delta(P) \times U^n_\R}(P) $ of a real simplicial polynomial $P$ admits a very simple description, which depends only on the signs of the coefficients of $P$.

Viro's method has been generalized by B. Sturmfels in \cite{SturmfelsCompleteIntersectionsCombinatorial} to complete intersections in the combinatorial case, and then to complete intersections in the general case by Bihan in \cite{BihanCompleteIntersections}.

\section{The construction method}\label{SectionConstructionMethod}

\subsection{Preliminaries}

Let $n\geq 2$. As in the hypotheses of the Cooking Theorem \ref{TheoremConstructionsMainTheoremConstructions}, for $k=1,\ldots,n-1$, consider a family  $\{P^k_d \}_{d\in \NN}$ of completely nondegenerate real Laurent polynomials such that $\Delta(P^k_d) = S^k_d$.

As above, let $S_d^k = \left\{(x_1,\ldots,x_k) \in \R^k \;|\; x_i\geq 0 \; \text{for } i=1,\ldots,k \text{ and }  \sum_{i=1}^k x_i\leq d \right\}$ denote the simplex of side $d$ and dimension $k$.
\comment{Let $H^k_i:=\left\{(x_1,\ldots,x_k) \in \R^k | x_i=0 \right\}$ for $i=1,\ldots,k$ and $H^k_{d,0}:=\left\{(x_1,\ldots,x_k) \in \R^k | \sum_{i=1}^k x_i= d\right\}$.
For any set of indices $I\subset \left\{0,1,\ldots,k\right\}$, define $S^k_{d,I}:=S^k_d \cap (\bigcap _{i\in I} H^k_i)$ if $0\not\in I$ and $S^k_{d,I}:=S^k_d \cap H^k_{d,0} \cap (\bigcap _{i\in I\backslash \left\{0\right\}} H^k_i)$ if $0\in I$.}
For any set of indices $I\subset \left\{0,1,\ldots,k\right\}$, define $$S^k_{d,I}:=S^k_d \cap \left\{(x_1,\ldots,x_k) \in \R^k \;|\; x_i=0 \text{ for all } i\in I\right\}$$ if $0\not\in I$ and $$S^k_{d,I}:=S^k_d \cap \left\{(x_1,\ldots,x_k) \in \R^k\; |\; x_i=0 \text{ for all } i\in I\backslash\{0\}, \; \sum_{i=1}^k x_i= d\right\}$$ if $0\in I$.

We first define in Subsection \ref{SubsectionConstructionsConvexSubdivision} a specific convex triangulation of $S_d^{n-1}$, which we extend to a convex triangulation of $S_d^{n}$.

In Subsection $\ref{SubsectionConstructionsCoefficientsChoice}$, we then define a family $ \{\tilde{Q}^n_d \}_{d\in\NN}$ of real Laurent polynomials such that $\Delta(\tilde{Q}^{n}_d) = S^{n}_d$. Moreover, for any $m=0,\ldots,d-n$ and any $I\subsetneq \left\{0,1,\ldots,n-1\right\}$, the homology of the family of real projective hypersurfaces in $\PP^{n-1-|I|}$ associated to the truncation of the polynomials $\{\tilde{Q}^n_d\}_{d\in\NN}$ to the simplices $S^{n}_{d,I}\cap \{x_n=m \}$ behaves asymptotically in $d$ as the homology of the family of hypersurfaces associated to $\{P^{n-1-|I|}_d \}_{d\in \NN}$ (this will be given more precise meaning later on).

Finally, applying Viro's method to $\{\tilde{Q}^{n}_d\}_{d\in\NN}$ and the convex triangulation we devised, we obtain in Subsection \ref{SubsectionConstructionsDefinitionPnd} a family $\{Q^n_d\}_{d\in\NN}$ of real Laurent polynomials that fulfills the conditions of the Cooking Theorem \ref{TheoremConstructionsMainTheoremConstructions} (which is proved in Section \ref{SectionComputingAsymptoticBettiNumbers}).

\subsection{A convex triangulation of   \texorpdfstring{$S^{n}_d$}{Snd}}\label{SubsectionConstructionsConvexSubdivision}

Given a finite set $\Lambda \subset \Z^k$ and a function $f:\Lambda  \longrightarrow \R$, we define  $\tilde{\Phi}(f):\Conv(\Lambda) \longrightarrow \R$ as the function whose graph is the lower convex hull in $\R^{k+1}$ of the graph of $f$. Note that $\tilde{\Phi}(f)$ always defines a convex subdivision of $\Conv(\Lambda)$.

We define by induction a convex subdivision of $S^{n-1}_d$.
\begin{lemma}\label{LemmaConstructionsConvexSubdivision1}
For any $n\geq 2$ and any $d\geq n$, there exists a piecewise linear convex function $\mu^{n-1}_d:S^{n-1}_d \longrightarrow \R$ with the following properties:
\begin{itemize}
    \item It defines a convex triangulation of $S^{n-1}_d$.
    \item  The simplex \[ \left\{(x_1,\ldots,x_{n-1}) \in \R^{n-1} \text{ s.t. } \sum_{i=1}^{n-1} x_i\leq d-1  \text{ and } x_i\geq 1 \; \text{for }   i=1,    \ldots,n-1   \right\}  \] is one of the (maximal) linearity domains of $\mu^{n-1}_d$ in $S^{n-1}_d $. 
    \item More generally, let $\Gamma$ be any of the faces of dimension $k$ of $S^{n-1}_d $, and let $\Psi :\R^k \longrightarrow \R^{n-1}$ be any affine embedding that maps bijectively the vertices of $S^k_d$ to those of $\Gamma$. Then the pullback to $S^k_d$ by $\Psi$ of the restriction of $\mu^{n-1}_d$ to $\Gamma$ is such that the sub-simplex $\{(x_1,\ldots,x_k) \in \R^{k} | \sum_{i=1}^k x_i\leq d-1, \; x_i\geq 1 \; \text{for }  i=1,\ldots,k, \} \subset S^k_d$ is one of its (maximal) linearity domains.
\end{itemize}
\end{lemma}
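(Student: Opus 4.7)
I would proceed by induction on $n$. The base case $n = 2$ is immediate: define $\mu^1_d:[0,d]\to\R$ by $\mu^1_d(0)=\mu^1_d(d)=1$ and $\mu^1_d(k)=0$ for $k=1,\dots,d-1$, extended piecewise linearly. This is convex and induces the triangulation $\{[0,1],[1,d-1],[d-1,d]\}$ of $S^1_d$, whose middle cell is the required inner sub-simplex; the face condition on the two endpoints is vacuous.

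For the inductive step, assume $\mu^k_d$ has been constructed for every $k<n-1$ with the three stated properties. I would define $\mu^{n-1}_d$ by specifying its values on the lattice points of $S^{n-1}_d$ and then taking the lower convex hull. Explicitly, for $\lambda\in S^{n-1}_d\cap\Z^{n-1}$,
\[
  \mu^{n-1}_d(\lambda)\;:=\;C\cdot\mathrm{depth}(\lambda)\;+\;\epsilon(\lambda),
\]
where $\mathrm{depth}(\lambda)$ is the number of facets of $S^{n-1}_d$ containing $\lambda$ (so $\mathrm{depth}(\lambda)=0$ exactly when $\lambda$ lies in the inner simplex $\Sigma^{n-1}:=\{x_i\geq 1,\,\sum_j x_j\leq d-1\}$), the constant $C>0$ is taken large, and $\epsilon$ is a small convex perturbation built recursively so that, for every face $\Gamma$ of dimension $k$, its restriction pulled back to $S^k_d$ via the affine identification $\Psi_\Gamma$ coincides with the perturbation component of $\mu^k_d$. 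Such a recursive specification is self-consistent on intersections of faces because the data on a lower-dimensional face is in turn determined by the lower-dimensional $\mu^{k'}_d$, which itself satisfies the same recursion.

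The three properties then follow quickly. Convexity is automatic as a lower convex hull. Taking $C$ large enough forces every lattice point outside $\Sigma^{n-1}$ to lie well above the common small value $\epsilon_0$ of $\epsilon$ on $\Sigma^{n-1}$, so the flat plane at height $\epsilon_0$ supports the hull along $\Sigma^{n-1}$ and makes $\Sigma^{n-1}$ a maximal linearity domain. For a face $\Gamma$ of dimension $k$, restricting the lower convex hull on $S^{n-1}_d$ to $\Gamma$ equals the lower convex hull on $\Gamma$ of the restricted heights (because any convex combination of lattice points of $S^{n-1}_d$ lying in a boundary face $\Gamma$ must use only lattice points of $\Gamma$); the identity $\mathrm{depth}_{S^{n-1}_d}(\lambda)=\mathrm{depth}_\Gamma(\lambda)+(n-1-k)$ on $\Gamma$ (the shift counts the facets of $S^{n-1}_d$ containing $\Gamma$) shows that these restricted heights differ from those defining $\mu^k_d$ by the additive constant $C(n-1-k)$, so they induce the same subdivision on $S^k_d$, with the inner $k$-simplex as a maximal linearity domain by the induction hypothesis.

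The main obstacle is the triangulation condition: the pure depth function produces polyhedral but not simplicial cells in the transition regions, since collections of lattice points of several different depths are typically coplanar (for instance, the four points $(0,0),(1,0),(0,1),(1,1)$ near a vertex of $S^{n-1}_d$ in the two-dimensional case). Genericity of $\epsilon$ breaks these degeneracies and refines every cell into simplices without affecting the flat region over $\Sigma^{n-1}$; the recursive compatibility of $\epsilon$ across faces imposes only finitely many constraints on each lattice point, all satisfiable on a dense open set, so a consistent simultaneous choice exists.
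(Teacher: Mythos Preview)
Your argument has a real gap in the triangulation step. The constraints you place on $\epsilon$ --- constant equal to $\epsilon_0$ on every lattice point of $\Sigma^{n-1}$, and fully determined on $\partial S^{n-1}_d$ by the inductive data --- exhaust all the degrees of freedom, so the ``genericity'' you appeal to at the end is not actually available. Concretely, take $n=3$ and use your explicit base case $\mu^1_d$ (value $1$ at the endpoints, $0$ on $\{1,\dots,d-1\}$). Then your height function is $2C$ at the three vertices of $S^2_d$, $C$ at every interior lattice point of an edge, and $\epsilon_0$ at every lattice point of $\Sigma^2$. Look at the strip between the edge $\{y=0\}$ of $S^2_d$ and the edge $\{y=1\}$ of $\Sigma^2$: every lifted point $(k,0,C)$ with $1\le k\le d-1$ and $(k,1,\epsilon_0)$ with $1\le k\le d-2$ lies on the single plane $z=C-(C-\epsilon_0)y$. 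The resulting cell of the regular subdivision is the trapezoid with vertices $(1,0),(d-1,0),(d-2,1),(1,1)$, which is not a simplex once $d\ge 4$ --- and this holds for \emph{every} choice of the one remaining parameter $\epsilon_0$. The same mechanism produces prisms and other non-simplicial cells in the collar for larger $n$.

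The paper's construction is close in spirit but avoids this by prescribing heights only at the $k+1$ \emph{vertices} of the inner simplex, chosen generically and independently, rather than a common value on all its lattice points. With only those finitely many interior data points and generic heights, the lower hull is simplicial, while the inner simplex is still a single linearity domain simply because it is a simplex and only its vertex heights are specified. Your depth-function framework can be repaired along the same lines --- drop the constancy on $\Sigma^{n-1}$ and put generic small values at its vertices only --- but then the depth term is doing no essential work and you have essentially reproduced the paper's argument.
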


\begin{figure}
\begin{center}
\includegraphics[scale=0.3]{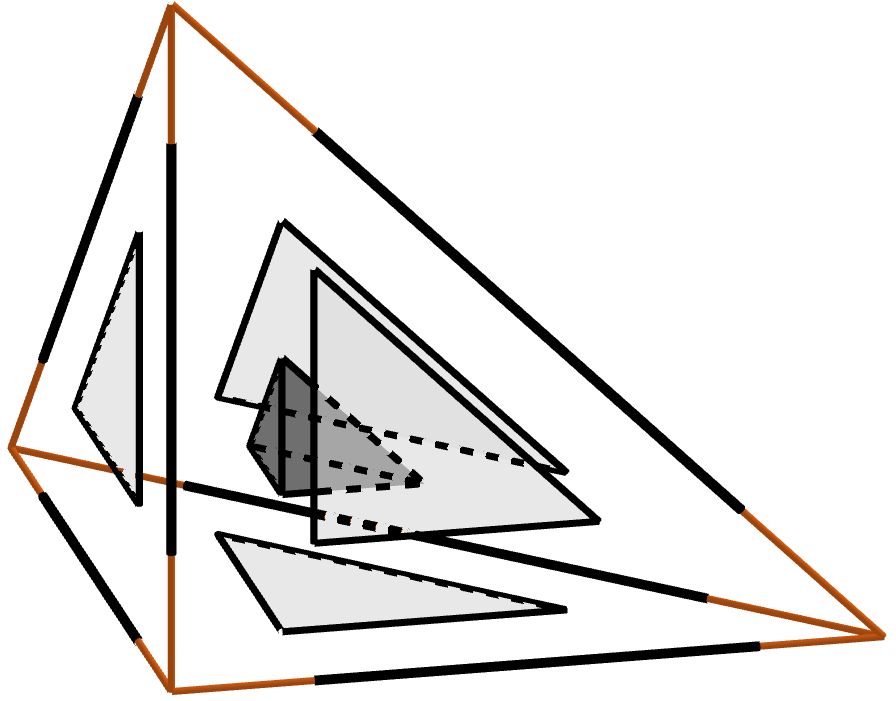}
\end{center}
\caption{For $n-1 =3$, the sub-simplices of dimension $1,2$ and $3$ of $S^3_d$ that have to be maximal linearity domains of $\mu_d^{3}$ for it to satisfy the conditions of Lemma \ref{LemmaConstructionsConvexSubdivision1} are indicated in black and grey.}
\label{FigureConstructionsConditionsLemmaMu}
\end{figure}

\begin{proof}
The sub-simplices of $S^{n-1}_d$ that have to be maximal linearity domains of $\mu^{n-1}_d$ for it to satisfy the conditions of the lemma are illustrated in Figure \ref{FigureConstructionsConditionsLemmaMu} in the case $n-1=3$.

Starting from $k=0$, we will recursively build piecewise linear functions $\mu_k$ with the following properties: $\mu_k$ is defined on the union of the faces of $S^{n-1}_d$ of dimension less than or equal to $k$,  the function $\mu_k$ is strictly positive, the restriction of $\mu_k$ to any face $\Gamma$ of dimension $i\leq k$ is convex and induces a convex triangulation of $\Gamma$, and if $\Psi :\R^i \longrightarrow \R^{n-1}$ is any affine embedding that maps bijectively the vertices of $S^i_d$ to those of $\Gamma$ (such an embedding maps integer points to integer points), then the pullback to $S^i_d$ by $\Psi$ of the restriction of $\mu_k$ to $\Gamma$ is such that the sub-simplex $\{(x_1,\ldots,x_i) \in \R^{i} | \sum_{j=1}^i x_j\leq d-1, \;  x_j\geq 1 \; \text{for }  j=1,\ldots,i \} \subset S^i_d$ is one of its (maximal) linearity domains.

Let $\mu_0$ be constant and equal to $1$ on the vertices of $S^{n-1}_d$.
Suppose that $\mu_{k-1}$ has been defined, and let us define $\mu_k$ (for $k \leq n-1$).

Let $\Gamma$ be any face of dimension $k$ of $S^{n-1}_d$, and choose an affine embedding $\Psi :\R^k \longrightarrow \R^{n-1}$  that maps bijectively the vertices of $S^k_d$ to those of $\Gamma$. The function $\mu_{k-1}$ is defined on the faces of dimension $i\leq k-1$ of $\Gamma$. Through $\Psi$ we identify $S^k_d$ with  $\Gamma$ for the remainder of this paragraph in order to simplify notations (in particular, we see $\mu_{k-1}$ as being defined on the faces of dimension $i\leq k-1$ of $S^k_d$, as in the upper left corner of Figure \ref{FigureConstructionsSubdivisionN2Image2}).
Define $\tilde{\mu}$ as taking generic, strictly positive and very small values on the $k+1$ vertices of $\{(x_1,\ldots,x_k) \in \R^{k} |\sum_{j=1}^k x_j\leq d-1,\;  x_j\geq 1 \; \text{for }  j=1,\ldots,k  \} \subset S^k_d$, and equal to $\mu_{k-1}$ on the faces of dimension $i\leq k-1$ of $S^k_d$. Define $\mu := \tilde{\Phi}(\tilde{\mu}):S^k_d \longrightarrow \R $.
The function $\mu$ coincides with $\mu_{k-1}$ on the faces of dimension $i\leq k-1$ of $S^k_d$ - in particular, it induces the same triangulation of those faces. The convex subdivision that it induces on $S^k_d$ is a triangulation, and for small enough values of $\tilde{\mu}$ on its vertices, $\{(x_1,\ldots,x_k) \in \R^{k} |\sum_{j=1}^k x_j\leq d-1, \;  x_j\geq 1 \; \text{for }  j=1,\ldots,k  \} \subset S^k_d$ is one of its (maximal) linearity domains (see Figure \ref{FigureConstructionsSubdivisionN2Image2}).
We define $\mu_k$ on the face $\Gamma$ as the pushforward of $\mu$ to $\Gamma$ (via the identification $\Gamma \cong S^k_d$ used at the beginning of the paragraph).

\begin{figure}
\begin{center}
\includegraphics[scale=0.3]{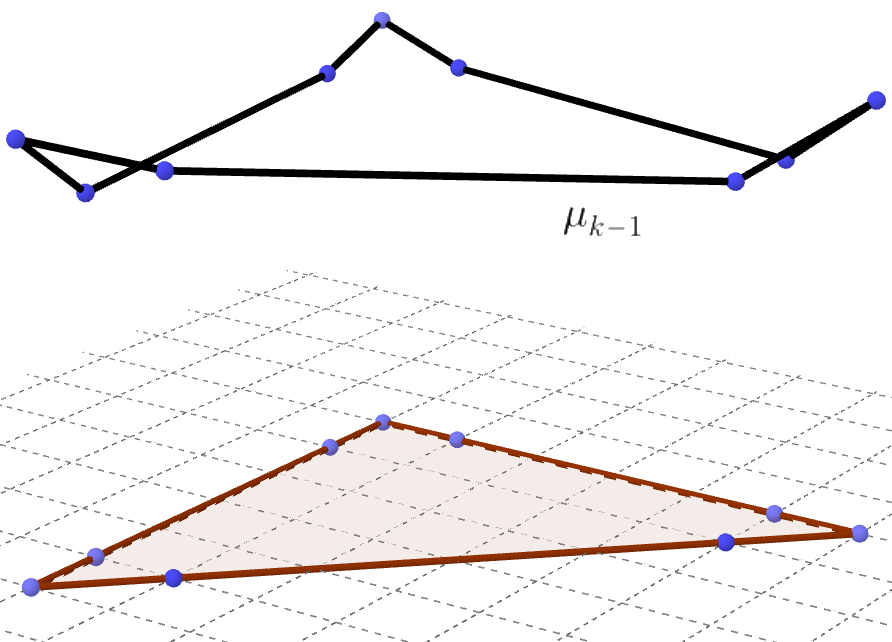}
\includegraphics[scale=0.3]{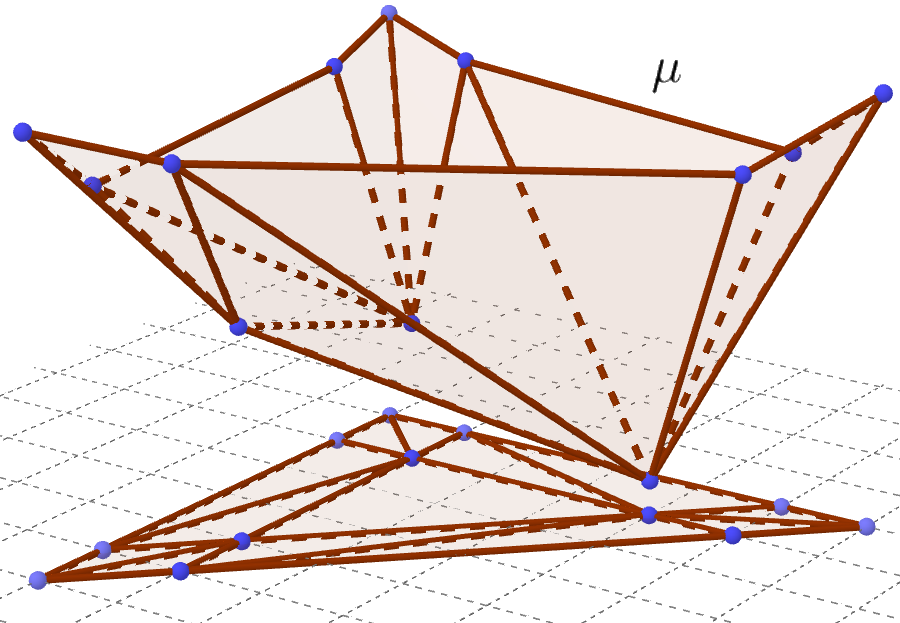}
\includegraphics[scale=0.3]{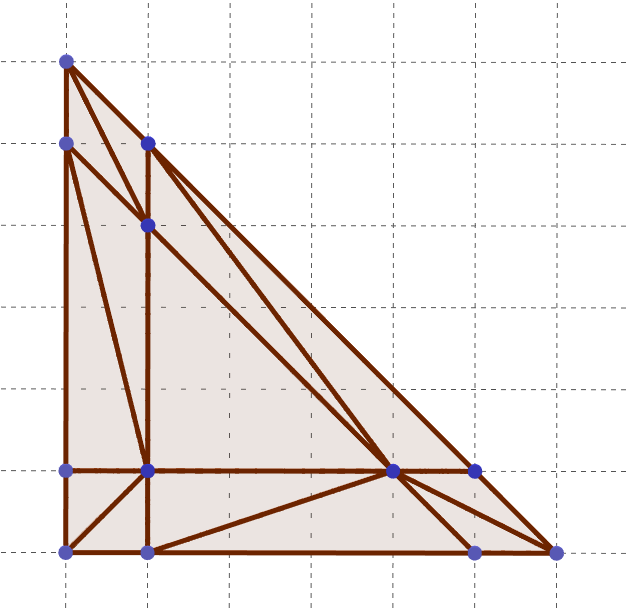}
\end{center}

\caption{
For $k=2$ and $d=6$, the graph of $\mu_{k-1}$ and the graph of $\mu$ on the face $\Gamma$ at the top, and the triangulation induced by $\mu$ on $\Gamma$ below.}
\label{FigureConstructionsSubdivisionN2Image2}
\end{figure}

We proceed similarly on all other faces of dimension $k$ of $S^{n-1}_d$; hence we have defined $\mu_k$.
We let $\mu^{n-1}_d$ be equal to $\mu_{n-1}$.

\end{proof}

We now want to extend the convex triangulation on $S^{n-1}_d$ induced by $\mu_d^{n-1}$ to $S^n_d$ in a clever way, so that it contains some subpolytopes of $S^n_d$ that will play an important role later on.

For $m=0,\ldots,d-1$, define $S^n_{d,m}:=S^n_d \cap \{x_n=m \}$ and $S^n_{d,m+}:=S^n_d \cap \{x_n\in [m,m+1] \}$.

We also need to define the simplices $R^{n}_{d,m,i} \subset S^n_{d,m}$, for $m=1,\ldots,d-n$ and $i=1,\ldots,n-1$. We want $R^{n}_{d,m,i}$ to be an $i$-dimensional simplex contained in the interior of one of the $i$-dimensional faces of the $m$-th ''floor" $S^n_{d,m} $. The face to which it belongs depends  on the parity of $m$.

More precisely, for $m=1,\ldots,d-n$ odd and $i=1,\ldots,n-2$, let $R^{n}_{d,m,i}=\{(x_1,\ldots,x_i,0,\ldots,0,m) \in \R^{n} |  x_j\geq 1 \; \text{for } j=1,\ldots,i, \allowbreak \; \sum_{j=1}^{n-1} x_j\leq d-m-1 \} \subset S^n_{d,m}$.

\sloppy For $m=0,\ldots,d-n$ even and $i=1,\ldots,n-2$, denote by $R^{n}_{d,m,i}=\{(0,\ldots,0,x_{n-1-i},\ldots,x_{n-1},m)  \in \R^{n} | \ x_j\geq 1 \; \text{for } j= n-i-1,\ldots,n-1,$ $\sum_{j=1}^{n-1} x_j= d-m \} \subset S^n_{d,m}$.

For $i=n-1$ and $m=0,\ldots,d-n-1$ (no matter whether $m$ is odd or even), define also $R^{n}_{d,m,n-1}=\{(x_1,\ldots,x_{n-1},m) \in \R^{n} |  x_j\geq 1 \; \text{for } j=1,\ldots,n-1, \; \sum_{j=1}^{n-1} x_j\leq d-m-1 \} \subset S^n_{d,m}$.

This is illustrated in Figure \ref{FigureConstructionsDefR} in the case $n=3$.
In the next lemma, we consider for each $m$ the $n$-dimensional join of $R^{n}_{d,m,i}$ and $R^{n}_{d,m+1,n-1-i}$, as well as the cones of $R^{n}_{d,m,n-1}$ with  either the points $(d-m-1)\cdot e_{n-1} + (m+1)\cdot e_n$ and $(d-m+1)\cdot e_{n-1} + (m-1)\cdot e_n$ or the points $ (m+1)\cdot e_n$ and $ (m-1)\cdot e_n$ (where $e_i$ is the $i$-th vector of the standard basis of $\R^n$), depending on the parity of $m$. The reason why we use the same notation for $R^{n}_{d,m,i}$ regardless of the parity of $m$ is that what we do later on with those joins and cones does not depend on that parity; distinguishing between the two cases would create needless complications.

\begin{figure}
\begin{center}
\includegraphics[scale=0.5]{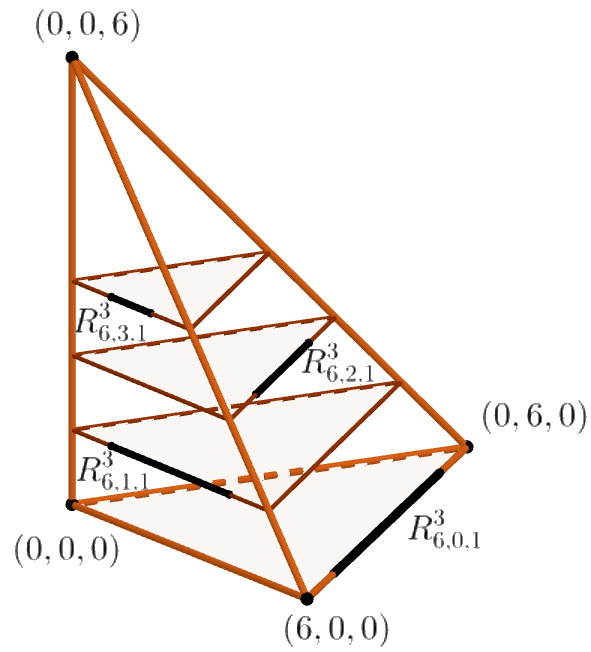}
\includegraphics[scale=0.5]{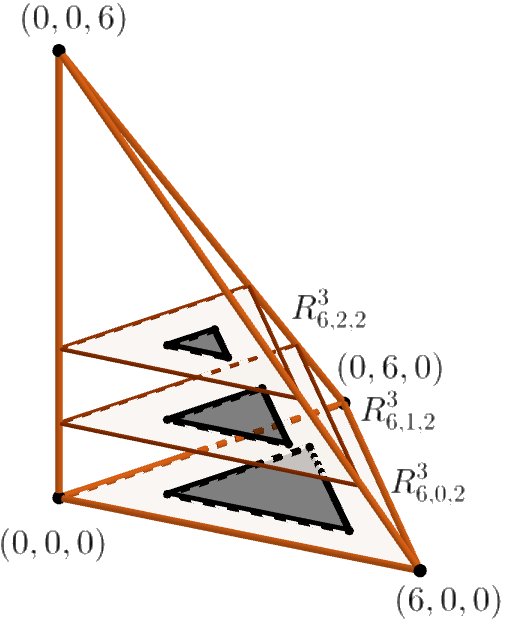}
\end{center}
\caption{In black, the subsets $R^{3}_{6,0,1}$, $R^{3}_{6,1,1}$, $R^{3}_{6,2,1}$  and $R^{3}_{6,3,1}$ of $S^3_6$ on the left and the subsets $R^{3}_{6,0,2}$, $R^{3}_{6,1,2}$ and $R^{3}_{6,2,2}$ on the right.
}
\label{FigureConstructionsDefR}
\end{figure}

\begin{lemma}\label{LemmaConstructionsConvexSubdivision2}
For $n\geq 2$, there exists a triangulation $T$ of $S^n_{d}$ that has the following properties:
\begin{itemize}
    \item For $m=1,\ldots,d-n-1$ odd, the cone of $R^{n}_{d,m,n-1}$ with the vertex $(d-m-1)\cdot e_{n-1} + (m+1)\cdot e_n$ 
    and the cone of $R^{n}_{d,m,n-1}$ with the vertex  $(d-m+1)\cdot e_{n-1} + (m-1)\cdot e_n$  appear in $T$.
    \item For $m=0,\ldots,d-n-1$ even, the cone of $R^{n}_{d,m,n-1}$ with the vertex $(m+1)\cdot e_n$ 
    and the cone of $R^{n}_{d,m,n-1}$ with the vertex $(m-1)\cdot e_n$ (if $m\geq 2$) appear in $T$.
    \item For $m=0,\ldots,d-n-1$ and $i=1,\ldots,n-2$, the join of $R^{n}_{d,m,i}$ with $R^{n}_{d,m+1,n-1-i}$ appears in $T$.
\end{itemize}

\end{lemma}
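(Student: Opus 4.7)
The plan is to construct a convex piecewise-linear function $\mu^n_d$ on $S^n_d$ whose induced regular triangulation $T$ contains the prescribed joins and cones as maximal cells. First, I observe that if one sets $R^n_{d,m,0}$ to be the single-vertex simplex $\{m\cdot e_n\}$ for $m$ odd and $\{(d-m) e_{n-1} + m e_n\}$ for $m$ even, then the three bullet points of the lemma can be unified into the single requirement that, for every $m \in \{0,\ldots,d-n-1\}$ and $i \in \{0,\ldots,n-1\}$, the join $R^n_{d,m,i} * R^n_{d,m+1,n-1-i}$ is a cell of $T$. The strategy is then to layer the construction of Lemma \ref{LemmaConstructionsConvexSubdivision1} along the $x_n$-axis.

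Concretely, for each $m \in \{0, \ldots, d-1\}$, I identify the floor $S^n_{d,m}$ with $S^{n-1}_{d-m}$ via an affine bijection $\Psi_m$ whose orientation depends on the parity of $m$: it is arranged so that the inner simplex $R^n_{d,m,n-1}$ corresponds to the ``shrunken'' sub-simplex guaranteed by Lemma \ref{LemmaConstructionsConvexSubdivision1}, and so that each smaller $R^n_{d,m,i}$ (for $i = 1, \ldots, n-2$) sits inside the appropriate codimension-$(n-1-i)$ sub-face of the floor. The third bullet of Lemma \ref{LemmaConstructionsConvexSubdivision1} then ensures that each $R^n_{d,m,i}$ is a linearity domain of the pushforward of $\mu^{n-1}_{d-m}$ by $\Psi_m$, which I call $\tilde{\mu}_m$. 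On the integer points of $S^n_d$ I define
\[
\mu^n_d(v) := A \cdot g(m) + \tilde{\mu}_m(v), \qquad v \in S^n_{d,m} \cap \Z^n,
\]
where $g$ is a strictly convex function (e.g. $g(m) = m^2$) and $A > 0$ is a constant, and extend $\mu^n_d$ to $S^n_d$ by taking the lower convex hull $\tilde{\Phi}(\mu^n_d)$.

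For $A$ large enough, the strict convexity in the $x_n$-direction dominates the within-floor perturbations, so the restriction of $T$ to each floor $S^n_{d,m}$ coincides with the triangulation produced by Lemma \ref{LemmaConstructionsConvexSubdivision1}, while each slab $S^n_d \cap \{m \leq x_n \leq m+1\}$ is triangulated into $n$-simplices of the form $\sigma * \tau$, where $\sigma$ and $\tau$ are complementary-dimensional cells of the floor-$m$ and floor-$(m+1)$ triangulations. By design of the parity-dependent identifications, the pair $(R^n_{d,m,i}, R^n_{d,m+1,n-1-i})$ consists of simplices lying in affinely complementary corners of consecutive floors, so their join is an honest $n$-simplex and hence a cell of $T$ by the preceding description.

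The main obstacle is verifying that each of these joins really is a maximal linearity domain of $\tilde{\Phi}(\mu^n_d)$, i.e.\ that no further subdivision occurs inside the slabs. This amounts to checking that the values of $\tilde{\mu}_m$ and $\tilde{\mu}_{m+1}$ at the $n+1$ vertices of each prospective join fit on a single affine function on $\R^{n+1}$; this can be arranged by choosing the identifications $\Psi_m$ carefully and, if necessary, by adjusting the internal choices in Lemma \ref{LemmaConstructionsConvexSubdivision1} (the freely chosen strictly positive values on the vertices of each ``shrunken'' sub-simplex) so that the requisite affine relations hold on each floor. Once this check goes through, convexity of $T$ follows automatically from the convexity of $\mu^n_d$.
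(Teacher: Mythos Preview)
Your approach has a genuine gap at the point you yourself flag as ``the main obstacle''. You correctly argue that for $A$ large the regular subdivision separates into slabs, and that within each slab the maximal cells are joins $\sigma * \tau$ of cells from the two floor triangulations. But a triangulation of the slab contains only a \emph{specific} subset of all possible complementary-dimensional joins; which ones appear is dictated by the relative heights of $\tilde\mu_m$ versus $\tilde\mu_{m+1}$, and you have not shown that the particular pairs $(R^n_{d,m,i},\,R^n_{d,m+1,n-1-i})$ are among them. Your sentence ``their join is an honest $n$-simplex and hence a cell of $T$'' is a non sequitur. Moreover, your diagnosis of what needs checking is off: the condition that the $n+1$ vertex values ``fit on a single affine function'' is vacuous, since $n+1$ affinely independent points always determine a unique affine function. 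The real condition is that this affine function supports the graph of $\mu^n_d$ from below over the entire join, and your suggestion to ``adjust the internal choices in Lemma~\ref{LemmaConstructionsConvexSubdivision1}'' does not address that.

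The paper avoids this difficulty by separating the construction into two steps. In the proof of Lemma~\ref{LemmaConstructionsConvexSubdivision2} (which does \emph{not} yet claim convexity), each slab $S^n_{d,m+}$ is first subdivided into $n$ large simplices by the standard staircase triangulation of a prism, with the direction of the staircase alternating with the parity of $m$; the parity-dependent definition of the $R^n_{d,m,i}$'s is arranged precisely so that $R^n_{d,m,i}$ and $R^n_{d,m+1,n-1-i}$ lie in complementary faces of \emph{one} of these large staircase simplices. That large simplex is then refined by taking the join of the floor triangulations coming from Lemma~\ref{LemmaConstructionsConvexSubdivision1}, which guarantees that the desired join is a cell. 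Convexity is handled separately in Lemma~\ref{LemmaConstructionsConvexSubdivision3}, by repeatedly invoking the Technical Lemma~\ref{LemmaConstructionsTechnicalLemmaSubdivision} to refine a coarse convex subdivision while preserving regularity. Your single-height-function approach could probably be pushed through, but doing so would amount to encoding the staircase order into the heights --- essentially redoing the paper's two-step argument in disguise.
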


\begin{figure}
\begin{center}
\includegraphics[scale=0.4]{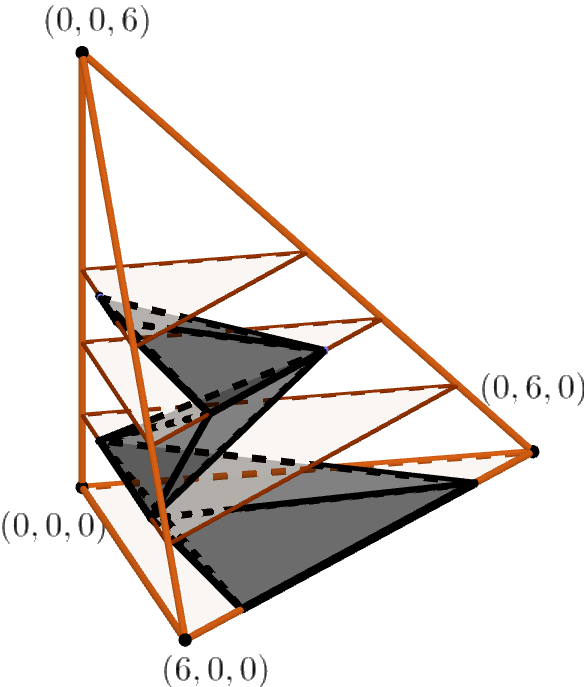}
\includegraphics[scale=0.4]{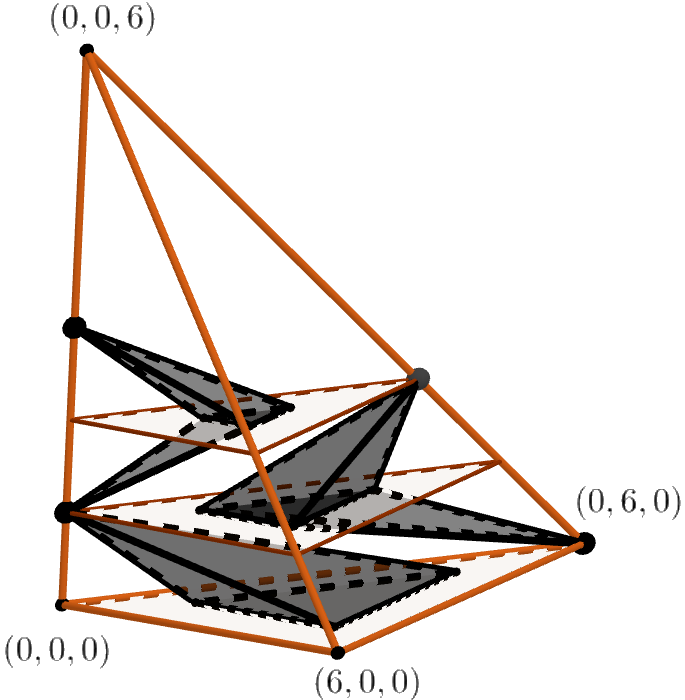}
\end{center}
\caption{The joins (respectively, the cones) that have to appear in the triangulation $T$ for it to satisfy the conditions of Lemma \ref{LemmaConstructionsConvexSubdivision2} are shown in black and grey on the left (respectively, the right).}
\label{FigureConstructionsConditionsJoinsSuspensions}
\end{figure}

\begin{proof}

The cones and joins that have to appear in the triangulation $T$ for it to satisfy the conditions of the lemma are shown in Figure \ref{FigureConstructionsConditionsJoinsSuspensions} in the case $n=3, d=6$.
The idea is simply to use the triangulation defined in Lemma \ref{LemmaConstructionsConvexSubdivision1} to refine a classic decomposition of the slices $S^n_{d,m,+}$ into cones and joins.

If $d<n+1$, choose any convex subdivision on $S^n_d$ (all conditions are automatically satisfied and it matters not, as we are only interested in asymptotic behaviors).

If $d\geq n+1$, for $m=0,\ldots,d-n$, choose functions $\mu^{n-1}_{d-m}$ satisfying the conditions of Lemma \ref{LemmaConstructionsConvexSubdivision1}, and triangulate $S^n_{d,m}$ with the convex subdivision induced by $\mu^{n-1}_{d-m}$ (via the natural identification between $S^n_{d,m}$ and $S^{n-1}_{d-m}$ given by the projection on the first $n-1$ coordinates).

Then for $m=0,\dots,d-n-1$ even, triangulate $S^n_{d,m+}$ thus: subdivide it into the $n$ simplices
\begin{equation*}
\footnotesize
\begin{gathered}
   Conv\left(\left\{(m+1)\cdot e_n \right\} \; \cup \; \left\{m\cdot e_n \right\} \; \cup \; \left\{m\cdot e_n + (d-m)\cdot e_i\right\}_{i = 1,\ldots,n-1} \right),  \\
     Conv\left(\left\{(m+1)\cdot e_n \right\} \; \cup \; \left\{(m+1)\cdot e_n + (d-m-1)\cdot e_1\right\}  \; \cup \; \left\{m\cdot e_n + (d-m)\cdot e_i\right\}_{i = 1,\ldots,n-1} \right),\\
 \ldots,Conv\left(\left\{(m+1)\cdot e_n \right\} \; \cup \; \left\{(m+1)\cdot e_n + (d-m-1)\cdot e_i\right\}_{1\leq i\leq j} \; \cup \; \{m\cdot e_n + (d-m)\cdot e_i\}_{i\geq j}\right)   , \\
\ldots ,   Conv\left(\left\{(m+1)\cdot e_n \right\} \; \cup \; \left\{(m+1)\cdot e_n + (d-m-1)\cdot e_i\right\}_{i=1,\ldots,n-1} \; \cup \; \{m\cdot e_n + (d-m)\cdot e_{n-1}\}\right) 
\end{gathered}
\end{equation*}
(this is a classical way of triangulating the topological product of a simplex and a closed interval).
Each of these simplices $S$ is obtained as the join of a $k$-dimensional face $\Gamma_k$ of $S^n_{d,m+1}$ with a $n-k-1$-dimensional face $\Gamma_{n-k-1}$ of $S^n_{d,m}$, for $k=0,\ldots,n-1$. See the left side of Figure \ref{FigureConstructionsSubdivisionN2Image3}.

\begin{figure}
\begin{center}
\includegraphics[scale=0.24]{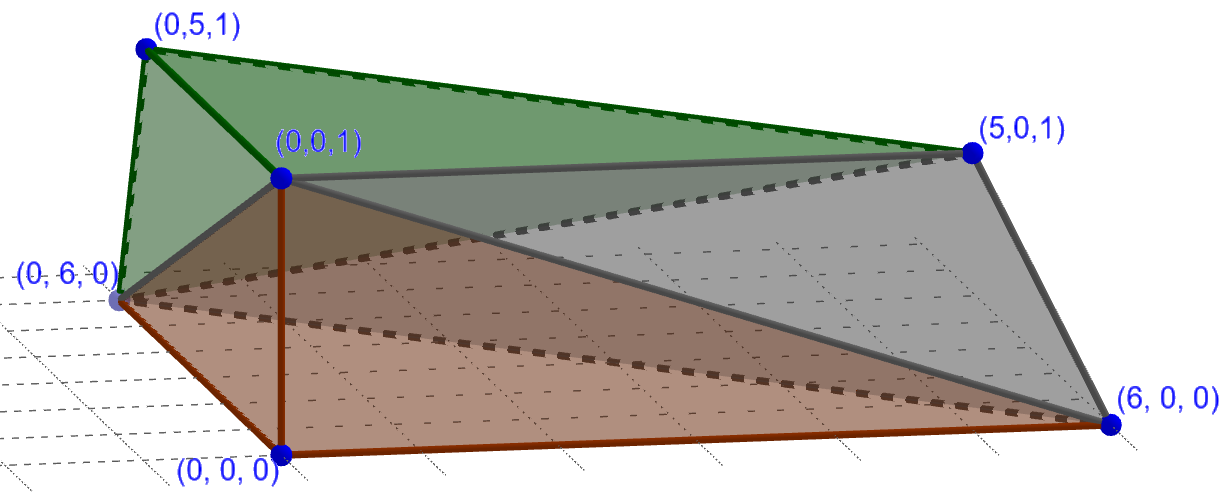}
\includegraphics[scale=0.24]{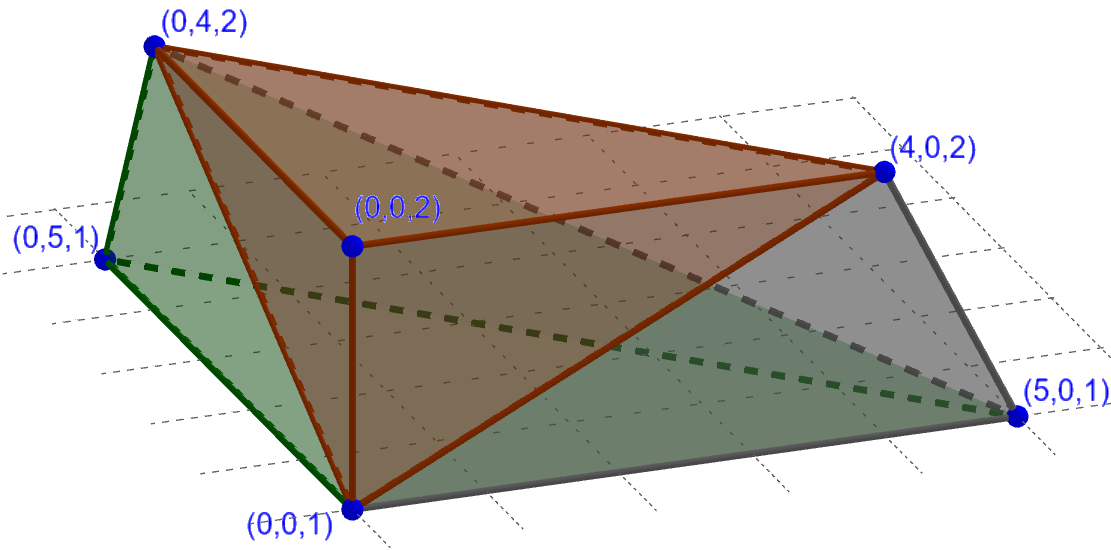}
\end{center}
\caption{
For $n=3$ and $d=6$: on the left, the subdivision $\tilde{T}$ restricted to $S^3_{6,0+}$, on the right, the subdivision $\tilde{T}$ restricted to $S^3_{6,1+}$.
}
\label{FigureConstructionsSubdivisionN2Image3}
\end{figure}

Now for $m=1,\dots,d-n-1$ odd, subdivide $S^n_{d,m+}$ as the union of the $n$ simplices

\begin{equation*}
\footnotesize
\begin{gathered}
   Conv\left(\left\{m\cdot e_n \right\} \; \cup \; \left\{(m+1)\cdot e_n \right\} \; \cup \; \left\{(m+1)\cdot e_n + (d-m-1)\cdot e_i\right\}_{i = 1,\ldots,n-1} \right),  \\
     Conv\left(\left\{m\cdot e_n \right\} \; \cup \; \left\{m\cdot e_n + (d-m)\cdot e_1\right\}  \; \cup \; \left\{(m+1)\cdot e_n + (d-m-1)\cdot e_i\right\}_{i = 1,\ldots,n-1} \right),\\
 \ldots,Conv\left(\left\{m\cdot e_n \right\} \; \cup \; \left\{m\cdot e_n + (d-m)\cdot e_i\right\}_{1\leq i\leq j} \; \cup \; \{(m+1)\cdot e_n + (d-m-1)\cdot e_i\}_{i\geq j}\right)   , \\
\ldots ,   Conv\left(\left\{m\cdot e_n \right\} \; \cup \; \left\{m\cdot e_n + (d-m)\cdot e_i\right\}_{i=1,\ldots,n-1} \; \cup \; \{(m+1)\cdot e_n + (d-m-1)\cdot e_{n-1}\}\right) 
\end{gathered}
\end{equation*}
(the roles of $m$ and $m+1$ have been reversed). See the right side of Figure \ref{FigureConstructionsSubdivisionN2Image3}.
Call the triangulation thus defined $\tilde{T}$ (see Figure \ref{FigureConstructionsGrosseTriangulation}).

We further triangulate each simplex thus obtained as the join of a $k$-dimensional face $\Gamma_k$ of $S^n_{d,m+1}$ with a $n-k-1$-dimensional face $\Gamma_{n-k-1}$ of $S^n_{d,m}$ using the join of the previously defined triangulations of $\Gamma_k$ and $\Gamma_{n-k-1}$.

Choose any triangulation of $S^n_d \cap \{x_n\in [d-n,d] \}$ which extends that on $S^n_{d,d-n}$.

\begin{figure}
\begin{center}
\includegraphics[scale=0.6]{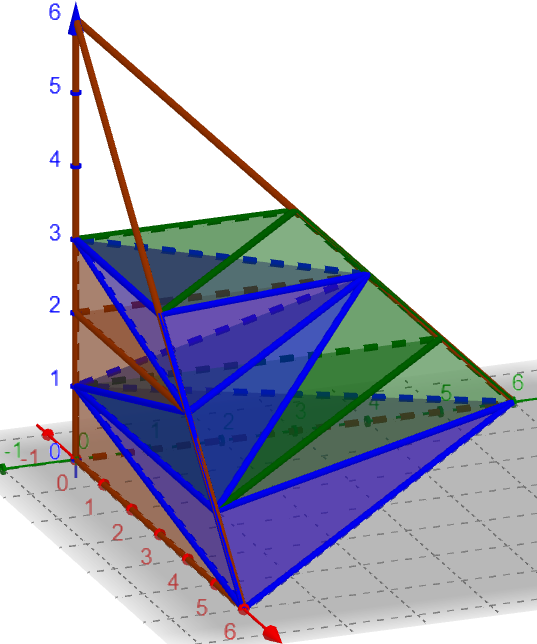}

\end{center}
\caption{
The triangulation $\tilde{T}$ on $S^3_6\cap[0,3]$.}
\label{FigureConstructionsGrosseTriangulation}
\end{figure}

Any triangulation $T$ built this way clearly satisfies the conditions of the Lemma.

\end{proof}

It remains to show that such triangulations can be required to be convex, which is the case.

\begin{lemma}\label{LemmaConstructionsConvexSubdivision3}
For any $n\geq 2$ and any $d\geq n$, there exists a piecewise linear convex function $\tilde{\mu}^{n}_d:S^{n}_d \longrightarrow \R$ such that it gives rises to a convex triangulation $T$ of $S^n_d$ which satisfies the conditions of Lemma $\ref{LemmaConstructionsConvexSubdivision2}$.

\end{lemma}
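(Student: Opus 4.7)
The plan is to exhibit $\tilde{\mu}^n_d$ as $\tilde{\Phi}(h)$, the lower envelope of a carefully designed height function $h$ on the integer points of $S^n_d$, and then verify that its regular subdivision coincides with the triangulation $T$ of Lemma \ref{LemmaConstructionsConvexSubdivision2}. The key observation is that $T$ has a natural three-level hierarchical structure: the coarsest level divides $S^n_d$ into the slabs $S^n_{d,m+}$; the intermediate level refines each slab into the prescribed $n$-simplex staircase whose orientation depends on the parity of $m$; and the finest level refines each staircase simplex --- being the join of complementary faces of $S^n_{d,m}$ and $S^n_{d,m+1}$ --- according to the join of the floor triangulations supplied by the $\mu^{n-1}_{d-m}$'s of Lemma \ref{LemmaConstructionsConvexSubdivision1}. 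This hierarchy dictates a three-scale definition of $h$.

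Concretely, for hierarchically small positive constants $A \gg \varepsilon_1 \gg \varepsilon_2 > 0$, I would set, for integer points $(x_1,\ldots,x_{n-1},m)$ with $m \in \{0,\ldots,d-n\}$,
\[
h(x_1,\ldots,x_{n-1},m) \;:=\; A\cdot m^2 \;+\; (-1)^{m+1}\varepsilon_1\, L(x_1,\ldots,x_{n-1}) \;+\; \varepsilon_2\cdot \mu^{n-1}_{d-m}(x_1,\ldots,x_{n-1}),
\]
where $L(x_1,\ldots,x_{n-1}) = \sum_{i=1}^{n-1} \alpha_i x_i$ is a linear form with generic coefficients $0 < \alpha_1 < \cdots < \alpha_{n-1}$ chosen so that the ordering of heights on each floor realizes the staircase direction prescribed by Lemma \ref{LemmaConstructionsConvexSubdivision2}. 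On the cap $S^n_d \cap \{x_n \geq d-n\}$, where any convex triangulation is admissible, extend $h$ arbitrarily subject to keeping $\tilde{\Phi}(h)$ convex. By construction $\tilde{\mu}^n_d := \tilde{\Phi}(h)$ is convex.

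The verification proceeds scale by scale, each refinement controlled by the preceding one. First, for $A$ sufficiently large relative to $\varepsilon_1, \varepsilon_2$, the strict convexity of $A \cdot m^2$ in $m$ forces $\tilde{\Phi}(h)$ to pass through every floor $\{x_n = m\}$, so each slab $S^n_{d,m+}$ is a union of linearity domains of $\tilde{\mu}^n_d$. Second, for $\varepsilon_1/A$ small enough, a direct computation (generalizing the straightforward two-dimensional check: interpolate the heights at the $n+1$ vertices of any prescribed staircase simplex by an affine function $\ell$, and show that $\ell$ is strictly smaller than $h$ at the remaining slab vertices) yields that the staircase simplices of Lemma \ref{LemmaConstructionsConvexSubdivision2} are exactly the lower faces of $\tilde{\Phi}(h)$ within each slab; here the alternation $(-1)^{m+1}$ matches the parity-dependent orientation and the monotonic $\alpha_i$'s fix the order in which upper-floor vertices are introduced and lower-floor vertices removed. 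Third, for $\varepsilon_2/\varepsilon_1$ small enough, the perturbation $\varepsilon_2 \cdot \mu^{n-1}_{d-m}$ refines each staircase simplex $\Conv(\Gamma_k \cup \Gamma_{n-k-1})$ (with $\Gamma_k \subset S^n_{d,m+1}$ and $\Gamma_{n-k-1} \subset S^n_{d,m}$) into the join of the triangulations of $\Gamma_k$ and $\Gamma_{n-k-1}$ from Lemma \ref{LemmaConstructionsConvexSubdivision1}, since on each floor the perturbation acts as the convex function $\mu^{n-1}_{d-m}$ modulo a linear term.

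The main obstacle is ensuring that the three scales do not interfere --- that the finer perturbations do not destroy the combinatorics imposed by the coarser ones. Because $d$ is fixed and all relevant polytopes have finitely many integer vertices, this amounts to checking a finite set of strict linear inequalities (one for each non-staircase face to be excluded, one for each candidate refinement edge on a floor), which can all be simultaneously satisfied by taking $A \gg \varepsilon_1 \gg \varepsilon_2$; no uniformity in $d$ is needed. The degenerate case $d < n+1$ is trivial, as the conditions of Lemma \ref{LemmaConstructionsConvexSubdivision2} are then vacuous and any convex triangulation suffices.
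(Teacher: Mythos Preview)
Your proposal is correct and follows essentially the same three-level hierarchical strategy as the paper (slabs, then staircase, then floor refinement). The only difference is packaging: the paper isolates the ``refine a regular subdivision along one of its faces while preserving regularity'' step into the Technical Lemma~\ref{LemmaConstructionsTechnicalLemmaSubdivision} and applies it iteratively, whereas you write down the global height function $A m^2 + (-1)^{m+1}\varepsilon_1 L + \varepsilon_2\,\mu^{n-1}_{d-m}$ in one shot and argue non-interference via $A \gg \varepsilon_1 \gg \varepsilon_2$. Your direct construction is effectively what a proof of Lemma~\ref{LemmaConstructionsTechnicalLemmaSubdivision} (which the paper omits as ``straightforward and tedious'') would produce when specialized to this situation, so the two arguments are really the same idea presented at different levels of abstraction.
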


\begin{proof}

We first consider a convex subdivision of  $S^n_d \cap \{x_n\in [0,d-n] \}$ such that the domains of linearity of the associated convex function are exactly the slices $S^n_{d,m+}$ for $m=0,\ldots,d-n-1$ (a function identically equal to $m^2$ on $S^n_{d,m}$ does the trick).

Now consider on each $S^n_{d,m+}$ (for $m=0,\ldots,d-n-1$) the convex triangulation into $n$ simplices described in the proof of Lemma $\ref{LemmaConstructionsConvexSubdivision2}$. If $m$ is even, let $f$ be defined on the $2n$ vertices of  $S^n_{d,m+}$ as being identically equal to $0$ on $S^n_{d,m}\cup \{(m+1)\cdot e_n\}$, and equal to $i$ on $(m+1)\cdot e_n + (d-m-1)\cdot e_i$ (for $i=1,\ldots,n-1$). Then $\tilde{\Phi}(f):S^n_{d,m+} \longrightarrow \R$ gives the desired triangulation (and similarly for $m$ odd).

Using that and applying repeatedly the technical Lemma \ref{LemmaConstructionsTechnicalLemmaSubdivision} (in the notations of the Lemma, $\Delta$ is $S^n_d \cap \{x_n\in [0,d-n] \}$ and $\Gamma$ is $S^n_{d,m+}$, for successive $m=0,\ldots,d-n-1$), we obtain a convex triangulation $\tilde{T}$ of $S^n_d \cap \{x_n\in [0,d-n] \}$ as built in the proof of Lemma $\ref{LemmaConstructionsConvexSubdivision2}$.

For $m=0,\ldots,d-n$, let $\mu^{n-1}_{d-m}:S^n_{d,m} \longrightarrow \R$ be a function satisfying the conditions of Lemma \ref{LemmaConstructionsConvexSubdivision1} (where we have identified $S^n_{d,m}$ with $S^n_{d-m}$ via the projection on the first $n-1$ coordinates).

We need to further subdivide the simplices obtained as joins and cones (see the proof of Lemma \ref{LemmaConstructionsConvexSubdivision2}) along the triangulations induced by the functions $\mu^{n-1}_{d-m}$.

Once again, we apply repeatedly Lemma \ref{LemmaConstructionsTechnicalLemmaSubdivision}, this time to  $S^n_d \cap \{x_n\in [0,d-n] \}$ and $S^n_{d,m}$ (as $\Delta$ and $\Gamma$, respectively, in the notations of the Lemma) for $m=0,\ldots,d-n$.

We get a convex triangulation of $S^n_d \cap \{x_n\in [0,d-n] \}$  which is a refinement of $\tilde{T}$ and which coincides with the triangulation induced by the functions $\mu^{n-1}_{d-m}$ on $S^n_{d,m} $, and a piecewise linear convex function  $\mu: S^n_d \cap \{x_n\in [0,d-n] \} \longrightarrow \R$ certifying the convexity of this triangulation. Define $\tilde{\mu}^n_d :\{d\cdot e_n\} \cup (S^n_d \cap \{x_n\in [0,d-n] \}) \longrightarrow \R$ as being equal to $\mu$ on $S^n_d \cap \{x_n\in [0,d-n] \}$ and equal to some large enough $R\in \R$ on $\{d\cdot e_n\}$. Then $\tilde{\Phi}(\tilde{\mu}^n_d)$ extends the triangulation induced by $\mu$ to a convex triangulation $T$ on $S^n_d$, which is as wanted.

\end{proof}

We now state the technical result used in the proof of Lemma \ref{LemmaConstructionsConvexSubdivision3}, a useful statement that allows us to refine a convex subdivision using another convex subdivision of one of its faces while preserving its convexity.

\begin{lemma}[Technical Lemma]\label{LemmaConstructionsTechnicalLemmaSubdivision}
Let $\Delta \subset \R^n $ be a convex (bounded) polytope with integer vertices, and let $\mu:\Delta \longrightarrow \R$ be such that $\mu = \tilde{\Phi}(\mu|_{ \Delta \cap \Z^n })$. Let $\Gamma \subset \Delta$ be a (not necessarily top-dimensional) face of the convex subdivision induced by $\mu$ on $\Delta$. Let $\nu:\Gamma \longrightarrow \R$ be such that $\nu = \tilde{\Phi}(\nu|_{ \Gamma\cap \Z^n })$. Then there exists a function $\xi:\Delta \longrightarrow \R$ so that:
\begin{enumerate}
    \item $\xi = \tilde{\Phi}(\xi|_{ \Delta \cap \Z^n })$ (hence $\xi$ is piecewise linear convex and gives rise to a convex subdivision of $\Delta$).
        \item $\xi |_{(\Delta \backslash \Gamma)\cap \Z^n}=\mu |_{(\Delta \backslash \Gamma)\cap \Z^n}$. 
    \item The convex subdivision induced by $\xi$ on $\Delta$ is a refinement of the one induced by $\mu$.
    \item The convex subdivision induced by $\xi$ on $\Gamma$ is the same as the one induced by $\nu$.
\end{enumerate}
\end{lemma}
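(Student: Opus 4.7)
The idea is to perturb $\mu$ at the integer points of $\Gamma$ by a small multiple of $\nu$ and take the resulting lower convex hull. Adding a constant to $\nu$ (which affects neither its convexity nor the induced subdivision), we may assume $\nu \geq 0$ on $\Gamma$. For $\epsilon > 0$ to be fixed, define
\[ f(v) := \mu(v) + \epsilon \nu(v) \text{ for } v \in \Gamma \cap \Z^n, \qquad f(v) := \mu(v) \text{ otherwise}, \]
and set $\xi := \tilde{\Phi}(f)$. The crux is the following key claim: for $\epsilon$ small enough, $\xi(v) = f(v)$ at every $v \in \Delta \cap \Z^n$. Once this is known, property (1) holds since $\xi = \tilde{\Phi}(f) = \tilde{\Phi}(\xi|_{\Delta \cap \Z^n})$, and property (2) is immediate from $\xi(v) = \mu(v)$ for $v \notin \Gamma$.

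For the key claim, note first that $\mu$ is itself a convex function with $\mu \leq f$, so $\mu \leq \xi \leq f$ on $\Delta$; in particular $\xi = \mu = f$ at integer points off $\Gamma$. For $v \in \Gamma \cap \Z^n$, any convex decomposition $v = \sum_i \alpha_i w_i$ with $w_i \in \Delta \cap \Z^n$ yields
\[ \sum_i \alpha_i f(w_i) - f(v) = \Bigl[\sum_i \alpha_i \mu(w_i) - \mu(v)\Bigr] + \epsilon \Bigl[\sum_{w_i \in \Gamma} \alpha_i \nu(w_i) - \nu(v)\Bigr]. \]
The first bracket is non-negative by convexity of $\mu$. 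If it is strictly positive, the whole expression is $\geq 0$ for $\epsilon$ small, uniformly in the decomposition since (by Carath\'eodory's theorem) it suffices to consider the finitely many simplices of $\Delta \cap \Z^n$ containing $v$. If the first bracket vanishes, then the $(w_i, \mu(w_i))$ all lie on a common face of the graph of $\mu$, so the $w_i$ belong to a single cell $C$ of $\mu$'s subdivision that contains $v$; since $\Gamma$ is a face of $C$ and $v \in \Gamma$, choosing a linear functional $\ell$ with $\{\ell = c\} \cap C = \Gamma$ and $\ell \geq c$ on $C$ gives $c = \ell(v) = \sum_i \alpha_i \ell(w_i) \geq c$, with equality forcing $\ell(w_i) = c$, i.e.\ $w_i \in \Gamma$ for all $i$. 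The second bracket is then $\geq 0$ by convexity of $\nu$, using $\nu = \tilde{\Phi}(\nu|_{\Gamma \cap \Z^n})$.

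The same supporting hyperplane argument shows that for any $x \in \Gamma$ the infimum defining $\xi(x)$ can be realized with all $w_i \in \Gamma$, so $\xi|_\Gamma = \mu|_\Gamma + \epsilon \nu$; since $\mu|_\Gamma$ is affine, this gives property (4). For property (3), on a top-dimensional cell $C$ of $\mu$'s subdivision not meeting $\Gamma$, $\xi = \mu$ at all integer points of $C$, forcing $\xi = \mu$ on $C$ (using both $\xi \geq \mu$ and convexity of $\xi$ below the affine $\mu|_C$); on a cell $C$ meeting $\Gamma$, $f$ still equals $\mu$ at all integer points of $\partial C \setminus \Gamma$, so the crease of $\mu$ along $\partial C$ persists for $\epsilon$ small, and the only refinement introduced within $C$ comes from the subdivision of $C \cap \Gamma$. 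The main obstacle is the uniform choice of $\epsilon$ together with the supporting hyperplane argument in the equality case; once these are secured, the rest is routine inspection of lower convex hulls.
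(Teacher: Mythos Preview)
The paper omits the proof entirely (``The proof is both straightforward and tedious. We omit it''), so there is nothing to compare against; your construction $\xi=\tilde\Phi(\mu+\epsilon\,\nu\cdot 1_\Gamma)$ is the natural one and your outline is essentially correct.

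One imprecision worth fixing: in the equality case of the key claim you write ``$\Gamma$ is a face of $C$'', but if $\Gamma$ is itself a maximal cell adjacent to $C$ this can fail. The right statement is that the \emph{minimal} face $D$ of the $\mu$-subdivision containing $v$ lies in $\Gamma$ (since $\Gamma$ is a face containing $v$); taking a generic supporting affine $\ell$ at $v$ with $\{\ell=\mu\}=D$, your argument then forces all $w_i\in D\subset\Gamma$, and the rest goes through. Your justification of~(3) is also a bit hand-wavy; the cleanest route is the standard secondary-fan fact that any sufficiently small perturbation of a height function can only refine the induced regular subdivision, which immediately gives the refinement and makes the verification of~(4) via a supporting functional $\ell$ with $\{\ell=\mu\}=\Gamma$ straightforward.
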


\begin{proof}
The proof is both straightforward and tedious. We omit it.
\end{proof}

\subsection{Choosing the coefficients of \texorpdfstring{$\tilde{Q}^{n}_d$}{tildeQnd} }\label{SubsectionConstructionsCoefficientsChoice}

For any Laurent polynomial $P$ in $k$ variables, we write $P (z)=\sum _{\lambda \in \Delta(P)\cap\Z^k} c_P(\lambda)z^\lambda$ (where some coefficients $c_{P}(\lambda)\in \R$ can be $0$).
We use the notations of Section \ref{SectionPatchwork}.
In particular, given two real Laurent polynomials in $k$ variables $P_1$ and $P_2$, we say that their charts are homeomorphic if there is a homeomorphism of stratified topological spaces between the pairs $(\Delta(P_1)\times U^k_\R,Chart_{\Delta(P_1)\times U^k_\R}(P_1))$ and $(\Delta(P_2)\times U^k_\R,Chart_{\Delta(P_2)\times U^k_\R}(P_2))$ - remember that the chart of $P_i$ is actually defined as the pair $(\Delta(P_i)\times U^k_\R,Chart_{\Delta(P_i)\times U^k_\R}(P_i))$.

\begin{lemma}\label{LemmaConstructionsCoefficients}
Let $n\geq 2$ and $d\geq n$. Given a convex triangulation $T$ of $S^n_d $ that satisfies the conditions of Lemma \ref{LemmaConstructionsConvexSubdivision2} and completely nondegenerate real Laurent polynomials $\{P^k_i\}_{i=0}^{d-1-k} $ such that $\Delta(P^k_i) = S^k_i$ (for $k=1,\ldots,n-1$), there exists a real Laurent polynomial $\tilde{Q}^{n}_d$ such that:
\begin{enumerate}
    \item $\Delta(\tilde{Q}^{n}_d) = S^n_d $.
    \item  For each simplex $S$ of the triangulation $T$, the truncation $\tilde{Q}^{n}_d|^S$ is completely nondegenerate.
    \item For $m=0,\ldots,d-n$ and $k=1,\ldots,n-1$ and for any lattice-respecting identification $R^{n}_{d,m,k} \cong S^k_{d-m-1-k}$ which lets us see $\tilde{Q}^{n}_d|^{R^{n}_{d,m,k}}$ as a polynomial $G$ in $k$ variables, the charts of $G$ and $P^k_{d-m-1-k}$ are homeomorphic.
    \item For $m=0,\ldots,d-n$ even,  the only coefficient of the truncation $\tilde{Q}^{n}_d|^{\{m\cdot e_n + (d-m)\cdot e_{n-1}  \}}$ is strictly positive.
    \item  For $m=1,\ldots,d-n$ odd, the only coefficient of the truncation $\tilde{Q}^{n}_d|^{ \{m\cdot e_n\}}$ is strictly positive.
\end{enumerate}
\end{lemma}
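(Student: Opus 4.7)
The plan is to specify the coefficients $c_{\tilde{Q}^n_d}(\lambda)$ one lattice point at a time and then verify the five listed properties. I would first check that conditions (3), (4), and (5) impose constraints on pairwise disjoint subsets of $S^n_d\cap\Z^n$, so that they are mutually compatible. For distinct values of $m$, the simplices $R^n_{d,m,k}$ lie in distinct hyperplanes $\{x_n = m\}$ and are hence disjoint. For fixed $m$ and distinct $k$, the defining inequalities of the $R^n_{d,m,k}$ force different subsets of coordinates to vanish versus to be at least $1$, so no lattice point belongs to two of them. The distinguished vertices of (4) and (5) can be directly checked not to belong to any $R^n_{d,m',k}$.

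Next, I would set the constrained coefficients explicitly. For each pair $(m,k)$, fix a lattice-respecting affine identification $\Psi_{m,k}: S^k_{d-m-1-k}\to R^n_{d,m,k}$ and define $c_{\tilde{Q}^n_d}(\Psi_{m,k}(\lambda)) := c_{P^k_{d-m-1-k}}(\lambda)$ for every $\lambda \in S^k_{d-m-1-k}\cap \Z^k$. With this choice, the truncation $\tilde{Q}^n_d|^{R^n_{d,m,k}}$, viewed via $\Psi_{m,k}$ as a polynomial in $k$ variables, is literally equal to $P^k_{d-m-1-k}$, so the two charts coincide (a fortiori are homeomorphic), establishing (3). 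At the distinguished vertices of (4) and (5), I assign any strictly positive real value, which immediately ensures those two conditions.

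For every remaining lattice point of $S^n_d\cap\Z^n$, I would choose a nonzero real coefficient generically. This ensures condition (1) in full, by nonvanishing at every vertex of $S^n_d$. To verify (2), fix a simplex $S\in T$ and a face $\Gamma$ of $S$: we need $V_{(\C^*)^{\dim\Gamma}}(\tilde{Q}^n_d|^\Gamma)$ to be nonsingular, which is a Zariski-open condition on the coefficients $\{c_{\tilde{Q}^n_d}(\lambda)\}_{\lambda\in\Gamma\cap\Z^n}$. If $\Gamma\subseteq R^n_{d,m,k}$ for some $(m,k)$, then via $\Psi_{m,k}$ the truncation $\tilde{Q}^n_d|^\Gamma$ is a truncation of $P^k_{d-m-1-k}$ to a face of its Newton polytope, and nonsingularity follows from the complete nondegeneracy of $P^k_{d-m-1-k}$ assumed in the hypothesis. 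Otherwise, $\Gamma$ contains at least one free lattice point, so the Zariski-open condition becomes nontrivial in the corresponding free coefficient and can be realized by a generic choice. Since $T$ has only finitely many simplices, each with finitely many faces, the resulting finite collection of Zariski-open conditions is simultaneously satisfied for a generic choice of free coefficients.

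The main obstacle is the last verification — specifically, showing that every face $\Gamma$ of every simplex of $T$ not entirely contained in some $R^n_{d,m,k}$ contains at least one free lattice point. This requires a short case analysis based on the description of the simplices of $T$ given in Lemma \ref{LemmaConstructionsConvexSubdivision2}: such simplices are either joins $R^n_{d,m,i}\ast R^n_{d,m+1,n-1-i}$ or cones of $R^n_{d,m,n-1}$ over the distinguished vertices appearing in (4) or (5). A face of such a simplex that is not contained in a single $R^n_{d,m,k}$ must either mix lattice points from different floors $\{x_n = m\}$ and $\{x_n = m+1\}$, or include a lattice point in the interior of the large cone of $R^n_{d,m,n-1}$; in either situation one verifies directly that at least one such lattice point escapes every constrained subset and is hence free.
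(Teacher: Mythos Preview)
Your argument has a genuine gap in the final case analysis. The dichotomy ``either $\Gamma\subseteq R^n_{d,m,k}$ for some $(m,k)$, or $\Gamma$ contains a free lattice point'' is false. Consider a cone simplex $\{v\}\star R^n_{d,m,n-1}$ with apex $v=(m+1)\cdot e_n$ (for $m$ even). Any face of the form $\{v\}\star F$, with $F$ a proper face of $R^n_{d,m,n-1}$, has last coordinate $x_n\in\{m,m+1\}$ at every lattice point, so its lattice points are exactly $v$ together with the lattice points of $F$; all of these are constrained, yet the face is not contained in any single $R^n_{d,m',k}$. The same phenomenon occurs for the join simplices $R^n_{d,m,k}\star R^n_{d,m+1,n-1-k}$: every lattice point has $x_n\in\{m,m+1\}$ and thus lies in one of the two $R$'s, so no face mixing the two floors contains a free lattice point. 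Your sketch also only addresses the distinguished simplices from Lemma~\ref{LemmaConstructionsConvexSubdivision2}, whereas $T$ contains many other simplices whose faces you do not discuss.

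The paper sidesteps this entirely by a different mechanism: it first assigns the coefficients exactly as you do, and then applies a small generic perturbation to \emph{all} coefficients simultaneously, not just the free ones. Condition~(2) then holds because complete nondegeneracy is Zariski-open and non-empty in the full coefficient space; conditions~(4) and~(5) survive because positivity is open; and crucially, condition~(3) survives because it only asks that the charts be \emph{homeomorphic}, and the chart of a completely nondegenerate polynomial is stable under small perturbations of its coefficients. Your approach of freezing the constrained coefficients could be salvaged, but it would require proving directly that $P_1(x)+y\,P_2(z)$ and $c\cdot y + P(x)$ are completely nondegenerate whenever $P_1,P_2,P$ are (which is true, via a gradient computation), and then separately handling all the non-distinguished simplices of $T$.
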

\begin{remark}
The key idea here is that we want $\tilde{Q}^n_d$ to behave ``similarly to" (in the sense of condition (3) of the lemma) the polynomials $\{P^k_{i}\}_{i=0}^{d-1-k}$ when truncated with respect to well-chosen sub-simplices of dimension $k$ of its Newton polytope. As we defined the sets $R^n_{d,m,k}$ as simplices strictly contained in the relative interior of the face of $S^n_{d,m}$ of minimal dimension that contains them, they are disjoint from each other, and we can thus ask for this condition to hold for all $k$ simultaneously, despite there being no constraint of compatibility on the polynomials $\{P^k_{i}\}_{k,i}$.
\end{remark}

\begin{proof}
We define a function $\tilde{c}:S^n_d \cap \Z^n \longrightarrow \R$.

Set $\tilde{c}(m\cdot e_n + (d-m)\cdot e_{n-1} )=1$ for $m=0,\ldots,d-n$ even and $\tilde{c}(m\cdot e_n)=1$ for $m=1,\ldots,d-n$ odd.

For $m=0,\ldots,d-n$ and $k=1,\ldots,n-1$, choose a lattice respecting identification $R^{n}_{d,m,k} \cong S^k_{d-m-1-k}$, and via this identification, set $\tilde{c}(\lambda)=c_{P^k_{d-m-1-k}}(\lambda)$ for any $\lambda\in R^{n}_{d,m,k} \cap \Z^n $.

For any other $\lambda \in S^n_d$, pick an arbitrary non-zero value for $\tilde{c}(\lambda)$.

Now all conditions, except a priori Condition 2, are satisfied by the polynomial $P (z):=\sum _{\lambda \in S^n_d\cap\Z^n} \tilde{c}(\lambda)z^\lambda$.
As observed in \cite{ViroPatchworking}, among all polynomials with a given Newton polytope, nondegenerate polynomials form an (Zariski) open set.
Moreover, as each $P^k_i$ is nondegenerate, the hypersurface $V_{K\Delta(P^k_i)}(P^k_i)$ is smooth, and a small perturbation of the coefficients of $P^k_i$ will not change the topology of its chart.

With those two observations in mind, we can define $c$ as a small generic perturbation of $\tilde{c}$ such that all conditions are fulfilled by $\sum _{\lambda \in S^n_d\cap\Z^n} c(\lambda)z^\lambda$, and set $\tilde{Q}^n_d(z):=\sum _{\lambda \in S^n_d\cap\Z^n} c(\lambda)z^\lambda$.

\end{proof}

\subsection{Defining \texorpdfstring{$Q^{n}_d$}{Qnd} using the Patchwork}\label{SubsectionConstructionsDefinitionPnd}

Making use of the results of the two previous subsections, we get the following proposition.

\begin{proposition}[Construction Proposition]\label{PropositionConstructionsMethod}
Let $n\geq 2$. For $k=1,\ldots,n-1$, consider a family  $\{P^k_d \}_{d\in \NN}$ of completely nondegenerate real Laurent polynomials such that $\Delta(P^k_i) = S^k_i$ .

Then for all $d\geq n$, there exists a completely nondegenerate real Laurent polynomial $Q^n_d$, with $\Delta(Q^n_d)=S^n_d$, such that:

\begin{enumerate}
    \item $Q^n_d$ is obtained via a patchworking of a family $\Sigma$ of completely nondegenerate real Laurent polynomials.
    
     \item For $m=0,\ldots,d-n-1$ and $k=1,\ldots,n-2$, there are polynomials  $F^k_m \in \Sigma$ such that the chart of $F^k_m$ is homeomorphic to the chart of $G$, for some polynomial
    \begin{equation*}
        G: (x_1,\ldots,x_{n-1},x_n) \mapsto  \tilde{P}^k_{d-m-1-k}(x_1,\ldots,x_k)+ x_n\tilde{P}^{n-1-k}_{d-m-n+k}(x_{k+1},\ldots,x_{n-1}),  
    \end{equation*}
    where each $\tilde{P}^k_{i}$ is itself such that $\Delta(\tilde{P}^k_{i})$ is a translate of $S^k_i$ and that its chart is homeomorphic to the chart of $P^k_{i}$.

    
    \item For $m=1,\ldots,d-n-1$, there are polynomials $G^+_m, G^-_m \in \Sigma$ such that the $n$-dimensional simplices $\Delta(G^+_m)$ and $\Delta(G^-_m)$ have a ($n-1$)-dimensional face in common, and such that the gluing of their charts
    $$((\Delta(G^+_m)\cup\Delta(G^-_m))\times U^n_\R,Chart_{\Delta(G^+_m)\times U^n_\R}(G^+_m)\cup Chart_{\Delta(G^-_m)\times U^n_\R}(G^-_m))$$
    is homeomorphic as a (stratified) pair to the gluing of charts
    $$((\Delta(\tilde{G}^+_m)\cup\Delta(\tilde{G}^-_m))\times U^n_\R,Chart_{\Delta(\tilde{G}^+_m)\times U^n_\R}(\tilde{G}^+_m)\cup Chart_{\Delta(\tilde{G}^-_m)\times U^n_\R}(\tilde{G}^-_m)),$$
    where
    $$\tilde{G}^+_m: (x_1,\ldots,x_{n-1},x_n) \mapsto \tilde{P}^{n-1}_{d-m-n}(x_1,\ldots, x_{n-1})+\gamma^+_m \cdot x_n,$$
    $$\tilde{G}^-_m: (x_1,\ldots,x_{n-1},x_n) \mapsto \tilde{P}^{n-1}_{d-m-n}(x_1,\ldots, x_{n-1})+\gamma^-_m \cdot x_n^{-1},$$
    $\gamma^+_m$ and $\gamma^- _m$ are some strictly positive constant, and each $\tilde{P}^{n-1}_{i}$ is itself such that $\Delta(\tilde{P}^{n-1}_{i})$ is a translate of $S^{n-1}_i$ and that its chart is homeomorphic to the chart of $P^{n-1}_i$.
    
    

    \item The interiors of the simplices $\Delta(F^k_m)$, $\Delta(G^+_l)$ and $\Delta(G^-_p)$ are disjoint for all $k$, $m$, $l$ and $p$.
    
\end{enumerate}

Additionally,  if each $P^k_d$ was obtained by combinatorial patchworking, there exists such a polynomial $Q^n_d$ that can also be obtained by combinatorial patchworking. 
\end{proposition}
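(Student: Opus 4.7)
The plan is to assemble the three pieces already built in Subsections \ref{SubsectionConstructionsConvexSubdivision}--\ref{SubsectionConstructionsCoefficientsChoice}: apply Lemma \ref{LemmaConstructionsConvexSubdivision3} to pick a convex triangulation $T$ of $S^n_d$, certified by a piecewise linear convex function $\tilde{\mu}^n_d$, that contains the joins $\Delta^{join}_{m,k} := \Conv(R^n_{d,m,k} \cup R^n_{d,m+1,n-1-k})$ and the cones $\Delta^{cone}_{m,\pm} := \Conv(R^n_{d,m,n-1} \cup \{v^\pm_m\})$ among its top-dimensional simplices, where $v^\pm_m$ are the apex vertices prescribed in Lemma \ref{LemmaConstructionsConvexSubdivision2}; then apply Lemma \ref{LemmaConstructionsCoefficients} with this $T$ and the ingredients $\{P^k_d\}$ to obtain a real Laurent polynomial $\tilde{Q}^n_d$ of Newton polytope $S^n_d$ that is completely nondegenerate on each top-dimensional simplex of $T$, whose truncation to each $R^n_{d,m,k}$ replicates the chart of $P^k_{d-m-1-k}$, and whose apex coefficients at the $v^\pm_m$ are strictly positive. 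The family $\Sigma := \{\tilde{Q}^n_d|^S : S \text{ top-dim simplex of } T\}$ then consists of completely nondegenerate polynomials whose truncations automatically agree along common faces (being restrictions of a single polynomial), so Viro's Main Patchwork Theorem \ref{TheoremPatchworkMainPatchworkTHM}, applied with the certifying function $\tilde{\mu}^n_d$, produces a completely nondegenerate $Q^n_d := P_t$ for any sufficiently small $t > 0$. Item (1) is then immediate by construction, and item (4) follows at once once $F^k_m$ and $G^\pm_m$ are identified below with pairwise distinct top-dimensional simplices of $T$.

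Items (2) and (3) carry the main technical content. For (2), I set $F^k_m := \tilde{Q}^n_d|^{\Delta^{join}_{m,k}}$. The crucial structural observation is that $\Delta^{join}_{m,k}$ is sandwiched between the parallel hyperplanes $\{x_n = m\}$ and $\{x_n = m+1\}$, so every integer point of $\Delta^{join}_{m,k}$ lies in its two ``floors'' $R^n_{d,m,k} \cup R^n_{d,m+1,n-1-k}$. Consequently the nonzero monomials of $F^k_m$ are concentrated on these two bases of the join, and by item (3) of Lemma \ref{LemmaConstructionsCoefficients} their coefficients agree, after lattice-preserving identifications $R^n_{d,m,k} \cong S^k_{d-m-1-k}$ and $R^n_{d,m+1,n-1-k} \cong S^{n-1-k}_{d-m-n+k}$, with those of $P^k_{d-m-1-k}$ and $P^{n-1-k}_{d-m-n+k}$ respectively. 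An affine $GL_n(\Z)$-change of variables then exhibits $F^k_m$ as being of the form $G$ in the statement, up to a chart-preserving monomial substitution. For (3), I set $G^\pm_m := \tilde{Q}^n_d|^{\Delta^{cone}_{m,\pm}}$: by the same slicing argument its nonzero coefficients lie in $R^n_{d,m,n-1}$ (producing the $\tilde{P}^{n-1}_{d-m-n}$ part via Lemma \ref{LemmaConstructionsCoefficients}) together with a single strictly positive coefficient $\gamma^\pm_m$ at the apex $v^\pm_m$; since $\Delta^{cone}_{m,+}$ and $\Delta^{cone}_{m,-}$ share the face $R^n_{d,m,n-1}$, the gluing of their charts matches, after the same lattice-preserving identification, the gluing of the charts of $\tilde{G}^+_m$ and $\tilde{G}^-_m$.

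For the final combinatorial patchworking assertion, assume each $P^k_d$ is itself a combinatorial patchwork with primitive triangulation $T^k_d$ of $S^k_d$. I refine $T$ to a convex primitive triangulation $T'$ whose restriction to each $R^n_{d,m,k}$ (via the lattice-respecting identification) coincides with $T^k_{d-m-1-k}$, by repeated application of Lemma \ref{LemmaConstructionsTechnicalLemmaSubdivision} as in the proof of Lemma \ref{LemmaConstructionsConvexSubdivision3}. I then pick one nonzero coefficient per integer point of $S^n_d$ so that every top-dimensional truncation with respect to $T'$ is simplicial, which produces a combinatorial patchwork $Q^n_d$ whose associated truncations at the relevant joins and cones are again of the required ``join'' and ``cone-over-$P^{n-1}$'' type. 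I expect the main obstacle throughout to be the careful bookkeeping involved in making all of the lattice-preserving identifications of item (3) of Lemma \ref{LemmaConstructionsCoefficients} and the compatible primitive refinements simultaneous and consistent; this is tedious but conceptually routine once the structural lemmas are in hand.
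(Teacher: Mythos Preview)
Your proof is correct and follows essentially the same route as the paper: invoke Lemma~\ref{LemmaConstructionsConvexSubdivision3} for the triangulation, Lemma~\ref{LemmaConstructionsCoefficients} for the coefficients, then patchwork; your explicit observation that all lattice points of the joins and cones lie on the two floors is exactly what underlies the paper's decomposition $F^k_m = \tilde{Q}^n_d|^{R^n_{d,m,k}} + \tilde{Q}^n_d|^{R^n_{d,m+1,n-1-k}}$. One small correction: in the combinatorial-patchwork addendum you assume the ingredient triangulations $T^k_d$ are \emph{primitive} and then seek a primitive refinement $T'$, but neither is warranted---combinatorial patchworking only requires simplicial pieces, and Lemma~\ref{LemmaConstructionsTechnicalLemmaSubdivision} produces convex refinements, not primitive ones---so simply drop the word ``primitive'' throughout that paragraph and the argument goes through as in the paper.
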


\begin{remark}
Intuitively, the chart of the polynomials $F^k_m$ should be thought of as a join of sorts of the charts of $P^k_{d-m-1-k}$ and $P^{n-1-k}_{d-m-n+k}$, while the union of the charts of $G^+_m$ and $G^-_m$ corresponds to a suspension of sorts of the chart of $P^{n-1}_{d-m-n}$. The interior of their Newton polytopes being disjoint means that they contribute mostly independently to the homology of the hypersurface defined by $Q^n_d$. This will be made rigorous in the next section.
\end{remark}

\begin{proof}

Let $d\geq n$.
By Lemma \ref{LemmaConstructionsConvexSubdivision3}, there exists a convex triangulation $T$ that satisfies the conditions of Lemma \ref{LemmaConstructionsConvexSubdivision2} and a convex function $\tilde{\mu}^n_d:S^n_d\longrightarrow \R$ which certifies its convexity.
The Newton polytopes of the polynomials $F^k_m$ (respectively $G^+_m$ and $G^-_m$) are going to be the joins (respectively the cones) that have to appear in $T$ for it to fulfill the conditions of Lemma \ref{LemmaConstructionsConvexSubdivision2} and that were illustrated in Figure \ref{FigureConstructionsConditionsJoinsSuspensions} - the Newton polytopes of the polynomials $F^k_m$ can be seen on the left of the figure, and the Newton polytopes of the polynomials $G^+_m$ and $G^-_m$ can be seen on the right.

The triangulation $T$ and the polynomials $\{P^k_i\}_{i=0}^{d-1-k}$ satisfy the hypotheses of Lemma \ref{LemmaConstructionsCoefficients}, which yields a polynomial $\tilde{Q} ^n_d$ satisfying its conditions. We can apply Viro's Patchwork Theorem \ref{TheoremPatchworkMainPatchworkTHM} to $T$, $\tilde{\mu}^n_d$ and $\tilde{Q} ^n_d$ (which plays the role of $P$ in the notations of Section \ref{SectionPatchwork}) to get a family of polynomials $\{P_t\}_{t\in \R_{>0}}$, and let $Q^n_d$ be any $P_t$ with $t$ small enough for the conclusions of Patchwork Theorem \ref{TheoremPatchworkMainPatchworkTHM} to apply. Let us show that $Q^n_d$ satisfies all required conditions.

Condition 1 is trivially satisfied. 

For $m=0,\ldots,d-n-1$ and $k=1,\ldots,n-2$, polynomial  $F^k_m$ is defined as the truncation $\tilde{Q} ^n_d|^{R^n_{d,m,k}\star  R^n_{d,m+1,n-1-k}}$, where $\star$ denotes the join. Observe that  $F^k_m= \tilde{Q} ^n_d|^{R^n_{d,m,k}}+ \tilde{Q} ^n_d|^{  R^n_{d,m+1,n-1-k}}$.

A suitable affine isomorphism $\Z^n \longrightarrow \Z^n$, extended to $\R^n$, will map $R^n_{d,m,k}$ to
$$\left\{(x_1,\ldots,x_k,0,\ldots,0,m) \in \R^{n}\; |\;  x_j\geq 1 \; \text{for } j=1,\ldots,k, \; \sum_{j=1}^{n-1} x_j\leq d-m-1 \right\} \subset S^n_{d,m}$$ and $R^n_{d,m+1,n-1-k}$ to
\begin{align*}
\Bigg\{(0,\ldots,0,x_{k+1},\ldots,x_{n-1},m+1) \in \R^{n} \;|  \;x_j\geq 1 \; \text{for } j=k+1,\ldots,n-1,  \\ \; \sum_{j=1}^{n-1} x_j\leq d-m-2 \Bigg\} \subset S^n_{d,m+1}. 
\end{align*}
This linear transformation induces an isomorphic change of coordinates $(K^*)^n \longrightarrow (K^*)^n$. In particular, that change of coordinates maps $\tilde{Q} ^n_d|^{R^n_{d,m,k}}$  (respectively, $\tilde{Q} ^n_d|^{R^n_{d,m+1,n-1-k}}$) to $x_n^m\cdot \tilde{P}^k_{d-m-1-k}$, where $\tilde{P}^k_{d-m-1-k}$ is a polynomial in $k$ variables  (respectively, $x_n^{m+1}\cdot\tilde{P}^{n-1-k}_{d-m-n+k}$ with $\tilde{P}^{n-1-k}_{d-m-n+k}$ a polynomial in $n-1-k$ variables), and  $\tilde{Q} ^n_d|^{R^n_{d,m,k}\star  R^n_{d,m+1,n-1-k}}$ to $x_n^m\cdot\tilde{P}^k_{d-m-1-k}+x_n^{m+1}\cdot\tilde{P}^{n-1-k}_{d-m-n+k}$.

Now $\tilde{P}^k_{d-m-1-k}$ has been obtained from $\tilde{Q} ^n_d|^{R^n_{d,m,k}}$ via an isomorphic change of coordinates, and $\tilde{Q} ^n_d|^{R^n_{d,m,k}}$ itself was obtained from $P^k_{d-m-1-k} $ via an isomorphic change of coordinates (since $\tilde{Q} ^n_d$ satisfies to Condition 3 of Lemma \ref{LemmaConstructionsCoefficients}) and a small generic perturbation, so that the topology of the associated (via the change of coordinates) hypersurface  would not change. Hence the chart of $\tilde{P}^k_{d-m-1-k}$ is homeomorphic to the chart of $P^k_{d-m-1-k}$.
The same applies to $\tilde{P}^{n-1-k}_{d-m-n+k}$.

Finally, there is a trivial homeomorphism of pairs between the toric variety and hypersurface induced by $F^k_m$ and those induced by $\tilde{P}^k_{d-m-1-k}+x_n\tilde{P}^{n-1-k}_{d-m-n+k}$, hence between the corresponding charts and ambient spaces as well. This proves Condition 2.

For $m=2,\ldots,d-n-1$ even, polynomial  $G_m^+$ (respectively, $G_m^-$) is defined as the truncation $\tilde{Q} ^n_d|^{ (m+1)\cdot e_n \star  R^n_{d,m,n-1}}$ (respectively, the truncation $\tilde{Q} ^n_d|^{(m-1)\cdot e_n \star  R^n_{d,m,n-1}}$).

For $m=1,\ldots,d-n-1$ odd, polynomial  $G_m^+$ (respectively, $G_m^-$) is defined as the truncation $\tilde{Q} ^n_d|^{\left((m+1)\cdot e_n + (d-m-1)\cdot e_{n-1}\right) \star  R^n_{d,m,n-1}}$ (respectively, the truncation $\tilde{Q} ^n_d|^{\left((m-1)\cdot e_n + (d-m+1)\cdot e_{n-1}\right)  \star  R^n_{d,m,n-1}}$).

The same type of arguments as above yield Condition 3, and condition 4 is an evident consequence of the definitions of the polynomials $F^k_m$, $G^+_m$ and $G^-_m$.

If each $P^k_d$ was obtained by combinatorial patchworking, it is easy to show, using repeatedly Lemma \ref{LemmaConstructionsTechnicalLemmaSubdivision}, that the triangulation $T$ can be refined into a convex triangulation $T'$ such that its restriction to each $R^n_{d,m,k}$ corresponds to the triangulation used to define the corresponding polynomial $P^k_{d-m-1-k}$  (via the proper identifications).
Likewise, the proof of Lemma \ref{LemmaConstructionsCoefficients} only has to be adapted in that the coefficients of $\tilde{Q}^n_d$ have to be chosen so that the truncation $\tilde{Q}^n_d|^{S}$ is completely nondegenerate for each simplex of the refined triangulation $T'$, which is once again a condition generically satisfied. 

Then the Patchwork can be applied to $T'$ and $\tilde{Q}^n_d$, and the same conclusions as above stand for the resulting polynomial $Q^n_d$.
\end{proof}

\section{Computing asymptotic Betti numbers}\label{SectionComputingAsymptoticBettiNumbers}
In this section, we are proving the Cooking Theorem \ref{TheoremConstructionsMainTheoremConstructions}; more specifically, that families of real Laurent polynomials obtained in the Construction Proposition \ref{PropositionConstructionsMethod}  using the ingredients in the statement of the Cooking Theorem do satisfy Formula \ref{FormulaConstructionsMainFormula}.
Briefly and informally summarized, we start by showing in Subsection \ref{SubsectionConstructionsPreliminaries} that  ``most" of the cycles of a real algebraic hypersurface can be represented by chains that verify useful conditions that make them easy to manipulate. In Subsections \ref{SubsectionConstructionsCyclesSuspension} and \ref{SubsectionConstructionsCyclesSuspension}, we show how such easy-to-manipulate chains in the hypersurfaces used as ingredients for our construction give rise to many new cycles in some special areas of the resulting hypersurfaces. Finally, in Subsection \ref{SubsectionConstructionsCountingCycles}, we prove by using linking numbers that enough of these cycles are independent in the homology of the resulting hypersurfaces, thence completing the proof of the Cooking Theorem.

\subsection{Preliminaries}\label{SubsectionConstructionsPreliminaries}

We first state a useful simplifying fact.


\begin{lemma}\label{LemmaConstructionsEquivalenceOuvertFerme}
For every $k\geq 1$, there is a constant $C(k)>0$ such that for all completely nondegenerate polynomials $P$ in $k$ variables and degree $d\geq 1$ such that $\Delta(P) =S^k_d$, we have the following inequalities:

\begin{align}\label{FormulaConstructionsEquivalenceOuverFerme}
    |b_i(V_{\R\PP^k}(P)) - b_i(V_{(\R*)^k}(P))|\leq C(k)d^{k-1},\\
        |b_i(V_{\R^k}(P)) - b_i(V_{(\R*)^k}(P))|\leq C(k)d^{k-1}.\nonumber
\end{align}

\end{lemma}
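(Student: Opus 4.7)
The approach is to apply Poincar\'e duality (with $\Z_2$-coefficients) together with the long exact sequence of a pair. By complete nondegeneracy of $P$, the variety $X := V_{\R\PP^k}(P)$ is a smooth compact closed $(k-1)$-manifold, and both $W_1 := V_{(\R^*)^k}(P) \subset W_2 := V_{\R^k}(P) \subset X$ are open submanifolds of $X$ (since $(\R^*)^k$ and $\R^k$ are open in $\R\PP^k$). Set $Y_j := X \setminus W_j$, so $(X,Y_j)$ are good semi-algebraic pairs: excision then gives $H_i(X,Y_j) \cong H_i^{BM}(W_j)$, and Poincar\'e duality yields $H_i^{BM}(W_j) \cong H_{k-1-i}(W_j)$ together with $H_i(X) \cong H_{k-1-i}(X)$.

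\textbf{Main step.} Plugging these identifications into the long exact sequence
\[
\cdots \to H_i(Y_j) \to H_i(X) \to H_i(X, Y_j) \to H_{i-1}(Y_j) \to \cdots
\]
and extracting the standard rank bounds from exactness yields, after setting $l = k-1-i$,
\[
|b_l(X) - b_l(W_j)| \;\leq\; \max\!\bigl(b_{k-1-l}(Y_j),\, b_{k-2-l}(Y_j)\bigr) \;\leq\; b_*(Y_j).
\]
The two inequalities of the lemma then follow (the second via the triangle inequality applied to $W_1 \subset W_2 \subset X$) once one establishes $b_*(Y_j) = O(d^{k-1})$ for $j=1,2$.

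\textbf{Bounding $b_*(Y_j)$ and main obstacle.} The toric stratification of $\R\PP^k$ yields $Y_1 = \bigsqcup_{\Gamma \subsetneq S^k_d} V_{(\R^*)^{\dim \Gamma}}(P^\Gamma)$ and $Y_2 = V_{\R\PP^{k-1}}(P^{\Gamma_\infty})$, where $\Gamma_\infty := S^k_d \cap \{\sum_i x_i = d\}$. For any face $\Gamma$ of dimension $j \leq k-1$, $P^\Gamma$ is completely nondegenerate, so Smith-Thom combined with the Danilov-Khovansky formula for the total Betti number of a nondegenerate complex hypersurface in $(\C^*)^j$ (a torus analog of \eqref{ComplexPartTotalHomology}, polynomial of degree $j$ in $d$) gives $b_*(V_{(\R^*)^j}(P^\Gamma)) = O(d^j) = O(d^{k-1})$. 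Since $Y_1$ is the union of the $k+1$ closures $V_{\R\PP^{k-1}}(P^\Gamma)$ associated to its top-dimensional strata, and all mutual intersections are of the same form (attached to subfaces of $S^k_d$), an iterated Mayer-Vietoris argument over this finite closed cover gives $b_*(Y_1) = O(d^{k-1})$; the estimate for $Y_2$ is immediate as it is itself one such piece. The only delicate point is the iteration: the pieces meet along yet lower-dimensional strata of the same form, so one must verify inductively that all contributing intersections remain of the type $V_{(\R^*)^{\dim \Gamma'}}(P^{\Gamma'})$ for some $\Gamma' \subsetneq S^k_d$; this is automatic from the stratification, and since the number of strata is bounded uniformly in $k$ (independent of $d$), the resulting constant $C(k)$ depends only on $k$.
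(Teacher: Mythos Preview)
Your proof is correct. The paper gives only a one-line sketch (``a straightforward combination of the Smith--Thom inequality, Equation~(\ref{ComplexPartTotalHomology}), and repeated applications of the Mayer--Vietoris sequence''), and your argument fleshes this out with the same essential ingredients: Smith--Thom to pass from real to complex, the Danilov--Khovanski\u{\i}/projective Betti number formula to get the $O(d^{k-1})$ bound on the boundary pieces, and an inductive Mayer--Vietoris over the facet decomposition of $Y_1$.

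The one organizational difference is that for the main comparison $|b_i(X)-b_i(W_j)|$ you use the long exact sequence of the pair $(X,Y_j)$ together with Poincar\'e--Lefschetz duality (via $H_i(X,Y_j)\cong H_i^{BM}(W_j)\cong H_{k-1-i}(W_j)$), rather than Mayer--Vietoris directly. This is a perfectly clean alternative: it isolates the error term as $b_*(Y_j)$ in one step, whereas a pure Mayer--Vietoris approach would require choosing an open cover (e.g.\ by a tubular neighborhood of $Y_j$) and controlling the homology of the overlap. Your route is arguably tidier; the paper's phrasing is broad enough to cover either implementation.
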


\begin{proof}
The proof is a straightforward combination of the Smith-Thom inequality \ref{SmithThom}, Equation (\ref{ComplexPartTotalHomology}) (which describes the dimension of the total homology of the complex part of a smooth real projective algebraic hypersurface) and repeated applications of the Mayer-Vietoris sequence.
\end{proof}

\begin{remark}\label{RemarkConstructionsEquivalenceOuvertFerme}
As explained in Section \ref{SectionPatchwork}, the projective space $\R \PP^k$ can be seen, via an isomorphism, as an appropriate quotient of $S^k_d \times U^k_\R$; the same is also  true of $\R^k$.
This immediately implies corresponding statements regarding the homology of the charts of $P$ in $S^k_d \times U^k_\R$ and its relevant quotients. We also know that the pairs $(\mathring{S^k_d} \times U^k_\R ,Chart_{\mathring{S^k_d} \times U^k_\R }(P))$ and $(S^k_d \times U^k_\R ,Chart_{S^k_d \times U^k_\R }(P))$ are homotopy equivalent.
\end{remark}

As we are only interested in the asymptotical behavior in $d$ (in the sense described in Section \ref{SectionIntroduction}) of the Betti numbers, Lemma \ref{LemmaConstructionsEquivalenceOuvertFerme} means that we can often ignore the distinction between the homology of a given hypersurface in $(\R^*)^k$ and that of the corresponding hypersurface in $\R \PP^k$.

Let $k,i\geq 0$, and $X$ be a submanifold of $\R^k$. Given homology classes $\alpha \in \tilde{H}_i(X)$ and $\beta \in \tilde{H}_{k-1-i}(\R^k \backslash X)$ (where $\tilde{H}$ indicates the reduced homology), the linking number $l(\alpha, \beta)$ is well defined as the transversal intersection (in $\R^k$) number (in $\Z_2$) of any cycle $a \in \alpha$ and any $k-i$ chain $m$ in $\R^k$, called a \textit{membrane}, such that $\partial m =b$, where $b$ is a cycle in $\beta$ (a membrane can always be found, since any cycle is a boundary in the trivial reduced homology of $\R^k$).
It can be shown that it is equivalent to consider the transversal intersection number of $b$  with a membrane of $a$.
We can adapt this operation to the non-reduced homology by taking the exact same definition when $k-1-i\neq 0$, and restricting the linking number to  $H_{k-1}(X)\times \ker (H_0 (\R^k \backslash X) \longrightarrow H_0(\R^k))$ when $k-1-i=0$, as any cycle in $\R^k \backslash X$ whose class belongs to $\ker (H_0 (\R^k \backslash X) \longrightarrow H_0(\R^k)) $ admits a membrane in $\R^k$. In fact, $\ker (H_0 (\R^k \backslash X) \longrightarrow H_0(\R^k)) $ and $\tilde{H}_0(\R^k \backslash X)$ are naturally isomorphic.
See \cite{FomenkoFuchs} for more details on linking numbers.

This definition can be easily generalized, in our particular case to pairs $(Y,X)$ where $X\subset Y$ and $Y$ is a disjoint union of convex subsets of $\R^k$.
Given such a pair $(Y,X)$ and a collection of homology classes $\alpha_1,\ldots,\alpha_r$ in $H_i(X)$, we say that classes $\beta_1,\ldots,\beta_r$ in $ H_{k-1-i}(Y\backslash X)$ (respectively, in $\ker (H_0 (Y \backslash X) \longrightarrow H_0(Y))$ if $k-1-i=0$) are \textit{axes} for the collection $\alpha_1,\ldots,\alpha_r$ if for any $i,j$ we have $l(\alpha_i,\beta_j)=\delta_{i,j}\in \Z_2$.
As the linking number is a $\Z_2$-bilinear product, this implies, in particular, that the classes $\alpha_1,\ldots,\alpha_r$ are linearly independent.

We want to prove lower bounds on the Betti numbers of the hypersurfaces constructed using the Construction Proposition \ref{PropositionConstructionsMethod} by finding enough cycles in the hypersurfaces and by showing that their homology classes are independent using suitable axes, in the spirit of \cite{IV}. To do so, we rely on the following Lemma to obtain cycles and axes in sufficient numbers and such that they can each be represented by chains contained in a connected component of $\mathring{S^k_d} \times U_\R^k$ (which makes them easy to manipulate).

\begin{lemma}\label{LemmaConstructionCyclesEtAxesBienInclus}
For all $k\geq 1$, there is a constant $D(k)>0$ with the following property:

Let $P$ be a completely nondegenerate polynomial of degree $d\geq 1$ in $k$ variables such that $\Delta(P) =S^k_d$, and let $i \in \{0,1,\ldots,k-1\}$.
Then there exists
$$r\geq b_i( V_{(\R^*)^k}(P)) - D(k) d^{k-1}$$
\sloppy such that we can find classes $\alpha_1,\ldots,\alpha_r$ in $ H_i(Chart_{\mathring{S^k_d} \times U_\R^k}(P))$ and $\beta_1,\ldots,\beta_r $ in $ H_{k-1-i}((\mathring{S^k_d} \times U_\R^k) \backslash Chart_{\mathring{S^k_d} \times U_\R^k}(P))$
(respectively, in $\ker (H_0 ((\mathring{S^k_d} \times U_\R^k) \backslash Chart_{\mathring{S^k_d} \times U_\R^k}(P)) \allowbreak \longrightarrow H_0(\mathring{S^k_d} \times U_\R^k))$ if $k-1-i=0$) whose linking numbers in $\mathring{S^k_d} \times U_\R^k$ satisfy $l(\alpha_s,\beta_t)=\delta_{s,t}\in \Z_2$ (the classes $\beta_t$ are axes for the classes $\alpha_s$).


Moreover, we can ask that there be cycles $b_1\in\beta_1, \ldots, b_r\in \beta_r$ and $a_1\in \alpha_1,\ldots,a_r\in \alpha_r$ such that the sign of $P$ is constant on each $b_j$ (when evaluated via the identification $\mathring{S^k_d} \times U_\R^k \cong (\R^*)^k $) and such that each $b_j$ and each $a_j$ is contained in a single connected component of $\mathring{S^k_d} \times U_\R^k$.

\end{lemma}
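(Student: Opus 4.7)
The plan is to decompose $(\R^*)^k \cong \mathring{S^k_d}\times U^k_\R$ into its $2^k$ orthant components, work within each orthant using Alexander duality after a suitable compactification, and control all error terms uniformly by $O(d^{k-1})$ using Lemma \ref{LemmaConstructionsEquivalenceOuvertFerme} together with repeated Mayer--Vietoris arguments.

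First, by Remark \ref{RemarkConstructionsEquivalenceOuvertFerme}, it is enough to find cycles in $V := V_{(\R^*)^k}(P)$ and axes in $(\R^*)^k \setminus V$ with the required properties, the linking being computed in the chosen connected component of $(\R^*)^k \cong \mathring{S^k_d}\times U^k_\R$. Index the components by $\epsilon \in U^k_\R$, so $(\R^*)^k = \bigsqcup_\epsilon C_\epsilon$ with $C_\epsilon \cong \R^k$, and write $V_\epsilon := V \cap C_\epsilon$. Betti numbers add: $b_i(V) = \sum_\epsilon b_i(V_\epsilon)$. Thus it suffices to produce, inside each fixed $C_\epsilon$, a collection of $r_\epsilon$ classes $\alpha^\epsilon_s \in H_i(V_\epsilon)$ and axes $\beta^\epsilon_s$ in the complement, with $r_\epsilon \geq b_i(V_\epsilon) - C_\epsilon(k) d^{k-1}$; summing over $\epsilon$ and absorbing constants into $D(k)$ will then give the desired inequality. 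The sign-constancy of $P$ along $b_j$ is automatic once $b_j$ is supported in a single component of the complement $C_\epsilon \setminus V_\epsilon$, since $\mathrm{sgn}(P)$ is locally constant there; and the single-component requirement for the $a_j$'s and $b_j$'s will be built in by construction.

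Fix $\epsilon$ and identify $C_\epsilon$ with $\R^k$. The submanifold $V_\epsilon \subset C_\epsilon$ is smooth of codimension one but typically non-compact. Let $\overline{C_\epsilon}$ denote the closure of $C_\epsilon$ in $\R\PP^k$ (a closed cell homeomorphic to $\overline{S^k_d}$) and $\overline{V_\epsilon}$ the closure of $V_\epsilon$ there; then $\overline{V_\epsilon} \setminus V_\epsilon$ lies in $\partial \overline{C_\epsilon}$, which is a union of pieces of projective real algebraic hypersurfaces in ambient dimension at most $k-1$, of degree at most $d$, whose total Betti number is $O(d^{k-1})$ by Lemma \ref{LemmaConstructionsEquivalenceOuvertFerme} applied recursively in dimension. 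Pass to a one-point compactification $\widehat{C_\epsilon} \cong S^k$ by collapsing $\partial \overline{C_\epsilon}$ to a point $\infty$, and let $\widehat{V_\epsilon} := \overline{V_\epsilon}/(\overline{V_\epsilon} \cap \partial \overline{C_\epsilon}) \subset \widehat{C_\epsilon}$. The Mayer--Vietoris / long exact sequence of the pair $(\widehat{V_\epsilon}, \{\infty\})$ together with the bound above shows that $H_i(V_\epsilon)$ and $H_i(\widehat{V_\epsilon})$ (with a basepoint convention when $i=0$) agree up to a $\Z_2$-subspace of dimension $O(d^{k-1})$.

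Now Alexander duality in $S^k = \widehat{C_\epsilon}$ gives
\[
\tilde{H}_i(\widehat{V_\epsilon}) \;\cong\; \tilde{H}_{k-1-i}\bigl(\widehat{C_\epsilon}\setminus \widehat{V_\epsilon}\bigr),
\]
and this isomorphism is realized by the linking pairing in $S^k$; pulling it back to $C_\epsilon$ identifies each surviving $\alpha^\epsilon_s \in H_i(V_\epsilon)$ with a dual class $\beta^\epsilon_s$ in $C_\epsilon \setminus V_\epsilon$, satisfying $l(\alpha^\epsilon_s,\beta^\epsilon_t) = \delta_{s,t}$ computed in $C_\epsilon \cong \R^k \subset \mathring{S^k_d}\times U^k_\R$. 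By construction each $\alpha^\epsilon_s$ and each $\beta^\epsilon_s$ is supported in $C_\epsilon$; shrinking representatives slightly away from $\partial \overline{C_\epsilon}$ and $V_\epsilon$ (using that $V_\epsilon$ is a closed smooth submanifold) makes the chains $a_j$ and $b_j$ compact inside a single connected component of $\mathring{S^k_d}\times U^k_\R$ and of its complement, respectively, delivering the sign-constancy condition on $b_j$.

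The main technical obstacle is the bookkeeping at infinity: establishing that the kernel and cokernel of the map $H_i(V_\epsilon) \to H_i(\widehat{V_\epsilon})$ induced by compactification (and symmetrically in the complement) are both of dimension $O(d^{k-1})$, uniformly in $P$. This will follow from an induction on $k$, applying Lemma \ref{LemmaConstructionsEquivalenceOuvertFerme} to the truncations $P^{\Gamma}$ on each face $\Gamma \subset \partial S^k_d$ to bound the Betti numbers of the ``boundary at infinity'' strata, combined with standard Mayer--Vietoris pasting across the $O(1)$ faces of $\overline{C_\epsilon}$. Choosing $D(k)$ to absorb all the implicit constants and summing over $\epsilon \in U^k_\R$ then yields the stated bound.
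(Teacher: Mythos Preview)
Your overall strategy is the paper's: work orthant by orthant, invoke Alexander duality in $S^k$, and absorb the defect into the $O(d^{k-1})$ Betti numbers of the boundary strata. The paper implements the compactification differently and, I would argue, more simply: instead of forming the quotient $\widehat{V_\epsilon}$, it uses that the \emph{closed} chart $X_\epsilon := Chart_{S^k_d\times U^k_\R}(P)\cap\Delta_\epsilon$ is already a compact manifold with corners, embeds $\Delta_\epsilon\cong S^k_d\subset\R^k\subset S^k$, and applies Alexander duality directly to $X_\epsilon\subset S^k$. A Mayer--Vietoris argument with one auxiliary contractible open set $U\supset S^k\setminus\Delta_\epsilon$ then bounds the cokernel of $H_{k-1-i}(\mathring{\Delta_\epsilon}\setminus X_\epsilon)\to H_{k-1-i}(S^k\setminus X_\epsilon)$ by $b_*(\partial\Delta_\epsilon\setminus X_\epsilon)$. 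Your quotient route is viable (note that $\widehat{C_\epsilon}\setminus\widehat{V_\epsilon}=C_\epsilon\setminus V_\epsilon$ on the nose, so no ``pulling back'' is needed on the axis side), but you should say why $\widehat{V_\epsilon}$ is locally contractible at $\infty$ so that Alexander duality applies; this holds because a neighbourhood of $\infty$ is the open cone on $\partial\overline{V_\epsilon}$.

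There are two genuine gaps. First, the sign constancy of $P$ on $b_j$ does \emph{not} follow from ``shrinking representatives'': a cycle in $C_\epsilon\setminus V_\epsilon$ can meet several connected components on which $P$ has opposite signs, and no small perturbation changes that. The paper's fix is to write each representing cycle as a sum of its connected components, note that each component is itself a cycle, and observe that these connected cycles span at least the same subspace of $\tilde H_{k-1-i}(S^k\setminus X_\epsilon)$; one then rechooses the $\beta^\epsilon_t$ among them. Second, you do not treat the case $k-1-i=0$, where the lemma demands $\beta_t\in\ker\bigl(H_0((\mathring{S^k_d}\times U^k_\R)\setminus X)\to H_0(\mathring{S^k_d}\times U^k_\R)\bigr)$. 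The paper fixes reference points $p_+,p_-\in C_\epsilon\setminus V_\epsilon$ with $P(p_\pm)$ of the indicated sign and replaces each axis point $p_t$ by the $0$-cycle $p_t+p_{\delta_t}$ (where $\delta_t=\mathrm{sgn}\,P(p_t)$); this lands in the required kernel and keeps $P$ of constant sign on the cycle, at the cost of at most one class per orthant.
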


\begin{proof}
The main idea of the demonstration is to apply 
Alexander duality (see \cite{FomenkoFuchs}) to each connected component of $\mathring{S^k_d} \times U_\R^k$ and its intersection with the chart $Chart_{\mathring{S^k_d} \times U_\R^k}(P)$ by seeing them as subsets of the sphere $S^k$ - this yields cycles and axes in $S^k$ in sufficient numbers. Using a few topological tricks, it is then easy to show that enough of these are also in $\mathring{S^k_d} \times U_\R^k$ and satisfy the conditions of the Lemma.

We know that the inclusion $(\mathring{S^k_d} \times U_\R^k, Chart_{\mathring{S^k_d} \times U_\R^k}(P) )\xhookrightarrow{in} (S^k_d \times U_\R^k, Chart_{S^k_d \times U_\R^k}(P) )$ is a homotopy equivalence of pairs, and that in particular, $H_i(Chart_{\mathring{S^k_d} \times U_\R^k}(P)) \xlongrightarrow{in_*} H_i(Chart_{S^k_d \times U_\R^k}(P))$ is an isomorphism.

Let $\epsilon \in U^k_\R$ and consider the quadrant $\Delta_\epsilon:=S^k_d \times\{ \epsilon \} $, which is one of the $2^k$ connected components of $S^k_d\times U^k_\R$. Let also $X_\epsilon := Chart_{S^k_d \times U_\R^k}(P)\cap \Delta_\epsilon$.

See $\Delta_\epsilon$ as a subset of $\R^k$ by identifying it with $S^k_d \subset \R^k$, and see $\R^k$ as a subset of the sphere $S^k$ (via Alexandroff's compactification). Consider
$$U:=S^k\backslash \left(\{(x_1,\ldots,x_k) \in \R^{k} \;| \; x_j\geq \delta  \; \forall j=1,\ldots,k, \; \sum_{j=1}^{k} x_j\leq d-\delta \} \cup Chart_{S^k_d \times U_\R^k}(P)\right) $$
for some $\delta>0$ (see Figure \ref{FigureConstructionsEnsembleU}). If $\delta$ is small enough (as $X_\epsilon$ is a manifold with boundaries in $\partial \Delta_\epsilon$, which it intersects transversally), which we assume to be the case, $U$ can be retracted to $S^k\backslash S^k_d$ and is thus contractible. We can also assume that $U\cap (\mathring{\Delta_\epsilon} \backslash X_\epsilon)$ is homotopically equivalent to $\partial \Delta_\epsilon \backslash X_\epsilon$.

By considering the Mayer-Vietoris sequence of the sets $U$, $\mathring{\Delta_\epsilon} \backslash X_\epsilon$, $U\cup \mathring{\Delta_\epsilon} \backslash X_\epsilon = S^k\backslash X_\epsilon$ and  $U\cap (\mathring{\Delta_\epsilon} \backslash X_\epsilon)\cong \partial \Delta_\epsilon \backslash X_\epsilon$, we see that the morphism
\begin{equation*}
i_*:H_{k-1-i}(\mathring{\Delta_\epsilon} \backslash X_\epsilon)\longrightarrow H_{k-1-i}(S^k\backslash X_\epsilon)  
\end{equation*}
induced by the inclusion has a cokernel of dimension at most $b_*(\partial \Delta_\epsilon \backslash X_\epsilon)$.

Alexander duality (see \cite{FomenkoFuchs}) can be applied to $X_\epsilon \subset S^k$, as it is compact and locally contractible.
It states that the product
$$ l: \tilde{H}_{k-1-i}(S^k\backslash X_\epsilon)\times\tilde{H}_i(X_\epsilon) \longrightarrow\Z_2$$
where as above $l$ is the linking number (defined as in $\R^k$), is non-degenerate.
In particular, $\tilde{H}_{k-1-i}(S^k\backslash X_\epsilon)$ and  $\tilde{H}_i(X_\epsilon)$ are of the same dimension $\dim \left(\tilde{H}_i(X_\epsilon) \right)\geq b_i(X_\epsilon)-1$.

\begin{figure}
\begin{center}
\includegraphics[scale=0.65]{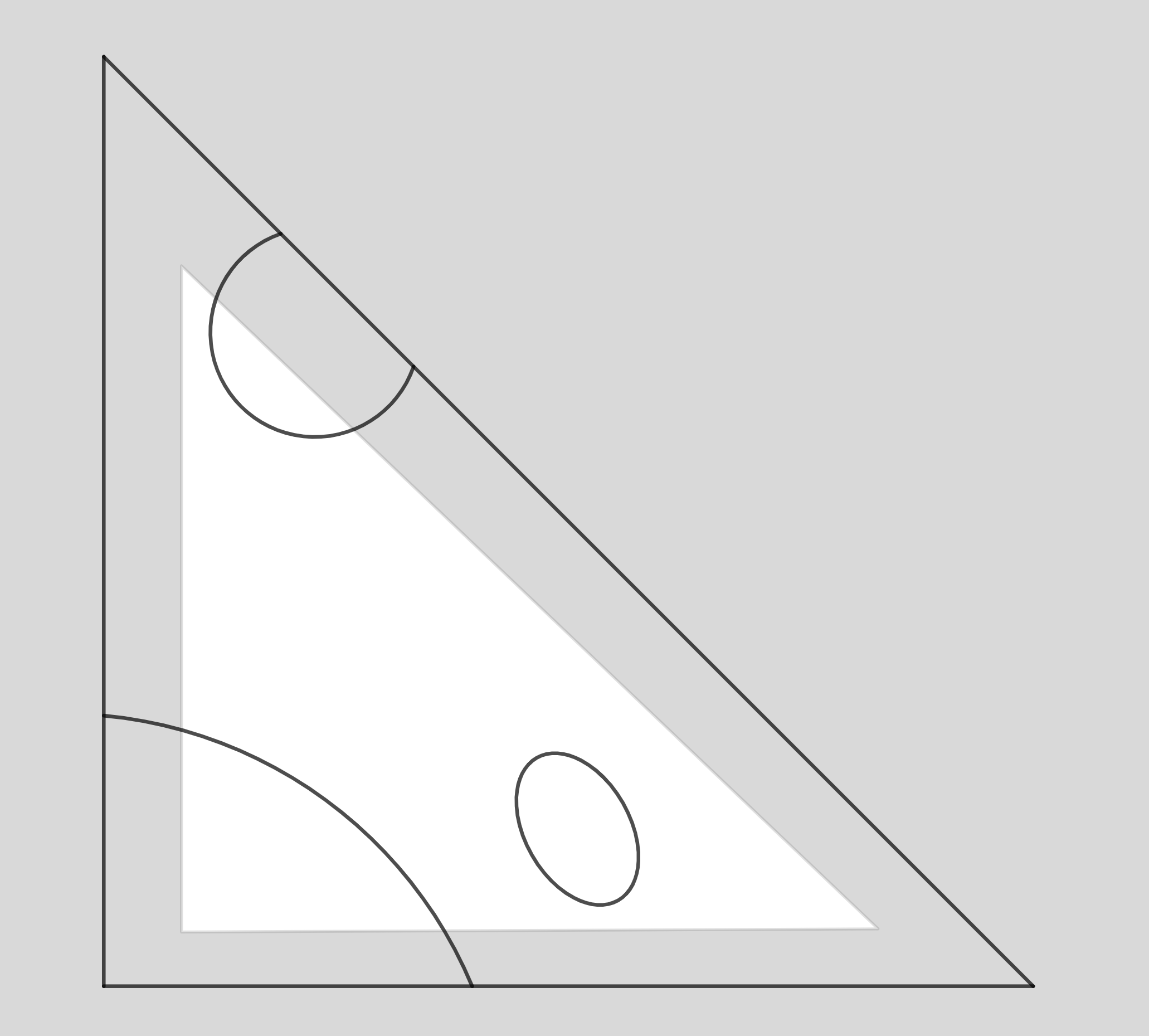}

\end{center}
\caption{
In light grey, the set $U$ illustrated.}
\label{FigureConstructionsEnsembleU}
\end{figure}

If $k-1-i>0$, the reduced and regular homology of $S^k\backslash X_\epsilon$ are equal
\begin{equation*}
\tilde{H}_{k-1-i}(S^k\backslash X_\epsilon) = H_{k-1-i}(S^k\backslash X_\epsilon),
\end{equation*}
and one can find  $s_\epsilon \geq b_i(X_\epsilon) - 1- b_*(\partial \Delta_\epsilon \backslash X_\epsilon)$ classes $\beta^\epsilon_1,\ldots,\beta^\epsilon_{s_\epsilon} \in H_{k-1-i}(\mathring{\Delta_\epsilon} \backslash X_\epsilon) $ such that the classes $i_*(\beta^\epsilon_1),\ldots,i_*(\beta^\epsilon_{s_\epsilon}) \in \tilde{H}_{k-1-i}(S^k\backslash X_\epsilon)$ are linearly independent.

Let $b_{t,1}+\ldots + b_{t,i_t}$ be a chain representing $\beta^\epsilon_t$ (for $t=1,\ldots,s_\epsilon$), where each $b_{t,j}$ is connected. As each $b_{t,j}$ is also a cycle, and since the subspace of $\tilde{H}_{k-1-i}(S^k\backslash X_\epsilon)$ generated by $\{i_*[b_{t,j}]|t=1,\ldots,s_\epsilon, \; j=1,\ldots,i_t\}$ contains the subspace generated by $\{i_*(\beta^\epsilon_1),\dots,i_*(\beta^\epsilon_{s_\epsilon})\} $, we can redefine the classes $\beta^\epsilon_t$ and assume that they can each be represented by a connected cycle $b^\epsilon_t \in \beta^\epsilon_t$. In particular, $P$ has constant sign on each cycle $b^\epsilon_t$.
As $H_{k-1-i}(\mathring{\Delta_\epsilon})$ is trivial, each $b^\epsilon_t$ also admits a membrane in $\Delta_\epsilon$.

If $k-1-i=0$, one can likewise find $s_\epsilon \geq b_i(X_\epsilon) - 1- b_*(\partial \Delta_\epsilon \backslash X_\epsilon)$ classes $\beta^\epsilon_1,\ldots,\beta^\epsilon_{s_\epsilon} \in H_{0}(\mathring{\Delta_\epsilon} \backslash X_\epsilon) $ such that the classes $i_*(\beta^\epsilon_1),\ldots,i_*(\beta^\epsilon_{s_\epsilon}) \in H_{0}(S^k\backslash X_\epsilon)$ are linearly independent (note that this time, the classes belong to the non-reduced homology group). Similarly to the previous case, we can assume that each  $\beta^\epsilon_t$ can be represented by a point $p_t$ in $ \mathring{\Delta_\epsilon} \backslash X_\epsilon$. Denote by $\delta_t \in \{+,-\}$ the sign of $P(p_t)$.
Choose $p_+,p_- \in \mathring{\Delta_\epsilon} \backslash X_\epsilon $ such that $P$ takes positive value on $p_+$ and negative value on $p_-$ (if $P$ has constant sign on $\mathring{\Delta_\epsilon} $, then $X_\epsilon=\emptyset$, $s_\epsilon=0$ and we have nothing to do).

Now consider the family of classes $[p_1 + p_{\delta_1}],\ldots,[p_{s_\epsilon} + p_{\delta_{s_\epsilon}}] \in H_0(\mathring{\Delta_\epsilon} \backslash X_\epsilon)$.
The family $i_*[p_1 + p_{\delta_1}],\ldots,i_*[p_{s_\epsilon} + p_{\delta_{s_\epsilon}}]\in \text{im} \left(\tilde{H}_{0}(S^k\backslash X_\epsilon)  \xhookrightarrow{} H_{0}(S^k\backslash X_\epsilon) \right)$
has rank at least ${s_\epsilon}-1$; by taking out one element (without loss of generality, the one numbered ${s_\epsilon}$), we can once again assume that it is independent.
Redefine $\beta^\epsilon_t:=[p_t + p_{\delta_t}]$ for $t=1,\ldots,{s_\epsilon}-1$. Now $\beta^\epsilon_t\in \ker (H_0 ((\mathring{S^k_d} \times U_\R^k) \backslash Chart_{\mathring{S^k_d} \times U_\R^k}(P)) \longrightarrow H_0(\mathring{S^k_d} \times U_\R^k)) $ (hence we can use it to compute linking numbers) and it can be represented by cycles on which $P$ has constant sign.

Applying Alexander duality to $X_\epsilon \subset S^k$, and using the fact that $\tilde{H}_i(\mathring{\Delta_\epsilon} \cap X_\epsilon) \xlongrightarrow{in_*} \tilde{H}_i(X_\epsilon) $ is an isomorphism (as is the case when considering the entire space $S^k_d\times U_\R^k = \bigcup_\epsilon \Delta_\epsilon$), we can now find classes $\alpha^\epsilon_1,\ldots \alpha^\epsilon_{{s_\epsilon}-1} \in \tilde{H}_i(\mathring{\Delta_\epsilon} \cap X_\epsilon)$ such that their linking number in $S^k$ satisfies $l_{S^k}(in_*(\alpha^\epsilon_s),i_*(\beta^\epsilon_t))=\delta_{s,t}$ for $s,t\in \{1,\ldots,{s_\epsilon}-1\}$.

Now consider the sets $B:=\bigcup _{\epsilon \in U_\R ^k} \{\beta_1^\epsilon,\ldots,\beta_{s_\epsilon -1}\} \subset  H_{k-1-i}((\mathring{S^k_d} \times U_\R^k) \backslash Chart_{\mathring{S^k_d} \times U_\R^k}(P))$ and  $A:=\bigcup _{\epsilon \in U_\R ^k} \{\alpha_1^\epsilon,\ldots,\alpha_{s_\epsilon -1}\} \subset H_i(Chart_{\mathring{S^k_d} \times U_\R^k}(P))$. Note that we identify the reduced classes $\alpha_l^\epsilon$ with the corresponding non-reduced classes.

Let us compute the linking number of $\beta \in B$ and $\alpha \in A$ in $\mathring{S^k_d}\times U^k_\R$. There exists $\epsilon_1,\epsilon_2 \in U^k_\R $ and indices $s,t$ such that $\alpha = \alpha_s^{\epsilon_1}$ and $\beta=\beta_t^{\epsilon_2}$. As explained above, $\beta$ can be represented by a chain $b$ such that it is the boundary of a membrane $m$ in $\mathring{\Delta_{\epsilon_2}}$. Let $a\in \alpha$ be a cycle in $\mathring{\Delta}_{\epsilon_1}$. The linking number in $\mathring{S^k_d}\times U^k_\R$ of $\beta$ and $\alpha$ is the (transversal) intersection number of $m$ and $a$.
If $\epsilon_1 \neq \epsilon_2$, this intersection is necessarily empty.
If $\epsilon_1=\epsilon_2$, $m$ is also a membrane for $b$ in $S^k$ (via the inclusion $\Delta_{\epsilon_1} \subset S^k$), so the linking number in $\mathring{S^k_d}\times U^k_\R$ is the same as in $S^k$ (the intersection number of $a$ and $m$), and thus equal to $\delta_{s,t}$.

We can rename the elements of $B$ (respectively, $A$) as $\beta_1,\ldots,\beta_r$ (respectively, $\alpha_1,\ldots,\alpha_r$), where $r:=\sum_{\epsilon\in U^k_\R} (s_\epsilon -1)$.
We have shown that the elements of the sets $B$ and $A$ are as required in the statement of the lemma. We only have to prove that we have enough of them.

We see that $$r=\sum_\epsilon (s_\epsilon -1)\geq \sum_\epsilon ( b_i(X_\epsilon) - (b_*(\partial \Delta_\epsilon \backslash X_\epsilon)+2))=b_i(V_{(\R^*)^k}(P)) - 2\cdot2^k -\sum_\epsilon b_*(\partial \Delta_\epsilon \backslash X_\epsilon).$$

For a given $\epsilon \in U_\R^k$, $\partial \Delta_\epsilon$ is homeomorphic to the ($k-1$)-sphere. We can once again apply Alexander duality to see that $b_*(\partial \Delta_\epsilon \backslash X_\epsilon)\leq b_*( \partial \Delta_\epsilon \cap X_\epsilon)+1$. Moreover, using arguments similar to those in the proof of Lemma \ref{LemmaConstructionsEquivalenceOuvertFerme}, one can show that there exists $D_1(k)$, depending only on $k$, such that $b_*(  \partial \Delta_\epsilon \cap X_\epsilon )\leq D_1(k) d^{k-1}$.

Hence $$r\geq b_i(V_{(\R^*)^k}(P)) - 2\cdot2^k -\sum_\epsilon b_*(\partial \Delta_\epsilon \backslash X_\epsilon) \geq b_i(V_{(\R^*)^k}(P)) -2^k(3+D_1(k)d^{k-1}).$$

By setting $D(k):=2^k(3+D_1(k))$, we can conclude.

\end{proof}
\begin{remark}
In the light of Lemma \ref{LemmaConstructionsEquivalenceOuvertFerme} and Remark \ref{RemarkConstructionsEquivalenceOuvertFerme}, the condition $r\geq b_i( V_{(\R^*)^k}(P)) - D(k) d^{k-1}$ in the statement can be indifferently replaced by $r\geq b_i( V_{\R\PP^k}(P)) - D(k) d^{k-1}$.

\end{remark}

\begin{remark}
This can easily be generalized to polynomials whose Newton polytope is not a simplex.
\end{remark}

\subsection{Finding cycles in a suspension}\label{SubsectionConstructionsCyclesSuspension}

The next two propositions are based on rather simple ideas, but the many indices and small technical details involved make for long demonstrations. We include a short summary of each proof at their beginning, and do not give the full proof of the second proposition.

We want to find a lower bound on the number of cycles and axes associated to each of the ``pieces" $G_m^+$, $G_m^-$ and $F^k_m$ from the Construction Proposition \ref{PropositionConstructionsMethod} using Lemma \ref{LemmaConstructionCyclesEtAxesBienInclus}.
We start with the case corresponding to $G_m^{\pm}$.
The case corresponding to $F^k_m$ is considered in the next subsection.

\begin{proposition}\label{PropositionConstructionsHomologySuspension}
For all $k\geq 1$, there is a constant $E(k)>0$ with the following property:
Let $P$ be a completely nondegenerate polynomial of degree  $d\geq 1$  in $k$ variables such that $\Delta(P)$ is a translate of $S^k_d$, and let $i \in \{0,1,\ldots,k\}$. Let $\lambda^{+},\lambda^{-} \in \R_{>0}$.
Write
$$G^+: (x_1,\ldots,x_{k},x_{k+1}) \mapsto P(x_1,\ldots, x_{k})+\lambda^+ \cdot x_{k+1}$$
and
$$G^-: (x_1,\ldots,x_{k},x_{k+1}) \mapsto P(x_1,\ldots, x_{k})+\lambda^- \cdot x_{k+1}^{-1}.$$

Define $X:=( Chart_{\Delta(G^+) \times U_\R^{k+1}}(G^+)  \cup (Chart_{\Delta(G^-) \times U_\R^{k+1}}(G^-) ) \cap Int(\Delta(G^+) \cup\Delta(G^-))\times U_\R^{k+1} ) $
(here, $\Delta(G^+)$ and  $\Delta(G^-)$ are seen as subsets of the same ambient space $\R^{k+1}$; they are ($k+1$)-simplices with a common $k$-face $\Delta(P)$).

Then there exists
$$r\geq b_i( V_{(\R^*)^k}(P))+b_{i-1}( V_{(\R^*)^k}(P)) - E(k) d^{k-1}$$
such that we can find classes $\alpha_1,\ldots,\alpha_r$ in $ H_i(X)$
and $\beta_1,\ldots,\beta_r $ in $ H_{k-i}(Int(\Delta(G^+) \cup\Delta(G^-))\times U_\R^{k+1}  \backslash X)$
(respectively, in $\ker (H_0 (Int(\Delta(G^+) \cup\Delta(G^-))\times U_\R^{k+1}  \backslash X) \longrightarrow H_0(Int(\Delta(G^+) \cup\Delta(G^-))\times U_\R^{k+1})$ if $k-i=0$) whose linking numbers in $Int(\Delta(G^+) \cup\Delta(G^-))\times U_\R^{k+1}$ satisfy $l(\alpha_s,\beta_t)=\delta_{s,t}\in \Z_2$ (the classes $\beta_t$ are axes for the classes $\alpha_s$).

\end{proposition}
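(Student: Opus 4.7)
The plan is to exploit the special structure of $G^+$ and $G^-$: since $G^+$ is linear in $x_{k+1}$ and $G^-$ is linear in $x_{k+1}^{-1}$, the hypersurface $V_{(\R^*)^{k+1}}(G^+)$ is the graph $x_{k+1} = -P(x)/\lambda^+$ over $(\R^*)^k \setminus V_{(\R^*)^k}(P)$, and similarly for $G^-$. In the chart picture inside $\mathrm{Int}(\Delta(G^+)\cup\Delta(G^-))\times U^{k+1}_\R$, this means $X$ decomposes, sign-component by sign-component, as a kind of ``double'' of a region bounded by (copies of) $V_{(\R^*)^k}(P)$ sitting on the common face $\Delta(P)\times U^{k+1}_\R$. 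Morally this double structure is what should produce a $b_i + b_{i-1}$ count: ``parallel'' cycles contributing $b_i$ and ``suspended'' cycles contributing $b_{i-1}$.

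I would first invoke Lemma \ref{LemmaConstructionCyclesEtAxesBienInclus} twice, once in dimension $i$ and once in dimension $i-1$, applied to $P$. This produces, for $j\in\{i-1,i\}$, families of $r_j \geq b_j(V_{(\R^*)^k}(P)) - D(k)d^{k-1}$ cycle-axis pairs $(\alpha^j_s, \beta^j_s)$ in the chart of $P$, with each representative cycle/axis contained in a single connected component of $\mathring{\Delta(P)}\times U^k_\R$ and with $P$ of constant sign on every $b^j_s$. Viewing $\mathring{\Delta(P)}\times U^k_\R \cong (\R^*)^k$ via the moment map makes these representatives concrete.

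Next I would build two kinds of $i$-cycles in $X$. For each $i$-cycle $a^i_s$ in $V(P)$, the ``parallel'' cycle $\tilde a^i_s$ is obtained by pushing $a^i_s$ slightly into $X^+$ along the graph $x_{k+1}=-P(x)/\lambda^+$ (one chooses a tubular neighborhood of $a^i_s$ in $V(P)$ and on each side takes a translate inside $X^+$, glued along the original $a^i_s$ lying on the common face). For each $(i-1)$-cycle $a^{i-1}_s$, the ``suspended'' cycle $\hat a^{i-1}_s$ is built as follows: over a small transverse arc $\tau$ in $\mathring{\Delta(P)}\times U^k_\R$ crossing $V(P)$ exactly at $a^{i-1}_s$, the chart $X$ traces out $a^{i-1}_s\times\tau$ split between $X^+$ (on one side of $V(P)$) and $X^-$ (on the other side), meeting along $a^{i-1}_s\subset Y:=Chart(P)$; this produces a genuine $i$-cycle because the boundary pieces coming from $X^+$ and $X^-$ cancel along $a^{i-1}_s$. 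The matching axes are obtained symmetrically: for $\tilde a^i_s$ one uses the axis $\beta^i_s$ lifted into a transverse $(k-i)$-arc; for $\hat a^{i-1}_s$ one uses $\beta^{i-1}_s$ viewed as a $(k-i)$-cycle in $M\setminus X$ via the same graph parametrization, possibly closed off using the two apex vertices of the bipyramid.

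Finally, I would verify $l(\alpha_s,\beta_t)=\delta_{s,t}$ by direct geometric inspection: the linking pairings between parallel and suspended cycles and their respective axes reduce, via the product/graph structure, to the linking pairings of Lemma \ref{LemmaConstructionCyclesEtAxesBienInclus} for $P$ (which are $\delta_{s,t}$ by construction), multiplied by a transverse intersection count equal to $1$. Off-diagonal pairings between the two families vanish by a dimensional/support argument, using crucially that each representative stays in a single sign component. Then $r = r_i + r_{i-1}$ and one takes $E(k) := 2D(k)$. The main obstacle is the careful bookkeeping of sign components and of the chart's behavior near the common face $\Delta(P)$: one must check that the push-off and the suspension actually close up, and that the axes constructed by lifting $\beta^j_s$ land in the correct complement $M\setminus X$ without spurious intersections. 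These are technical but routine verifications analogous to those in the proof of Lemma \ref{LemmaConstructionCyclesEtAxesBienInclus}.
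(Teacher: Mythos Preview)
Your overall strategy matches the paper's exactly: apply Lemma~\ref{LemmaConstructionCyclesEtAxesBienInclus} twice (in degrees $i$ and $i-1$) to obtain $r_i+r_{i-1}$ cycle/axis pairs, then produce two types of $i$-cycles in $X$ (included and suspended) paired with two types of $(k-i)$-axes (suspended and included respectively), verify the $2\times 2$ block-diagonal linking matrix, and set $E(k)=2D(k)$.

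There is, however, a genuine gap in your construction of the suspended $i$-cycle from an $(i-1)$-cycle $c$. The chain $c\times\tau$ with $\tau$ a transverse arc has boundary $c\times\partial\tau$, and nothing in the geometry of $X$ makes those two copies of $c$ bound: at the endpoints of $\tau$ the graph $x_{k+1}=-P(x)/\lambda^{\pm}$ sits at generic nonzero height, so the $G^+$ and $G^-$ halves do not meet there, and ``closing off using the apex vertices'' would take you to the boundary of the bipyramid, which is excluded from the open interior. What the paper does instead is take an $i$-dimensional \emph{membrane} $n$ for $c$ in $\mathring{\Delta(P)}\times U^k_\R$ (available because Lemma~\ref{LemmaConstructionCyclesEtAxesBienInclus} produces $c$ as the image of a reduced class), and defines the suspended cycle $Sc$ as the sum of four pieces, one in each $\Gamma^{\epsilon_1}_{\epsilon_2}$, namely the graph $x_{k+1}=-P(x)$ over $\{x\in n:\epsilon_2 P(x)\le 0\}$. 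The boundary contributions then cancel in pairs along the common face $\{P=0\}$. Dually, the axis $Sb$ for the included $i$-cycle is built from a membrane $m$ for the original axis $b$: one extends $b$ in the $x_{k+1}$-direction to large height $R$ and caps with a translated copy of $m$ there (where $G^\pm$ is bounded away from zero because $P$ has constant sign on $b$ by Lemma~\ref{LemmaConstructionCyclesEtAxesBienInclus}). Conversely, the axis for the suspended cycle $Sc$ is simply the original $(k-i)$-axis $d$, placed in the copy $X_0^{-\rho}$ indexed by the sign opposite to that of $P$ on $d$; no closing-off is needed. The membranes $n$ and $m$ are the essential missing ingredients in your sketch; once they are in place, the linking-number verifications proceed along the lines you indicate.
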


\begin{proof}
The main idea here is that for each homology class of degree $j$ in $V_{(\R^*)^n}(P)$, there is a class of degree $j$ in the hypersurface corresponding to the patchworking of $G^+$ and $G^-$ (which comes from the inclusion of the original class), and another class of degree $j+1$ corresponding to some kind of suspension of a cycle representing the original class. The same can be said of the classes in the complement of the hypersurface that we use as axes. By proceeding carefully, we can reach that those new classes still have the right linking numbers properties.

Define $X_0 :=X\cap ( \Delta(P) \times U_\R^{k+1}) \subset Int(\Delta(G^+) \cup\Delta(G^-))\times U_\R^{k+1}  $, as well as $X_0^+:= X\cap ( \Delta(P) \times U_\R^{k} \times \{1\})$ and $X_0^-:= X\cap ( \Delta(P) \times U_\R^{k} \times \{-1\})$. Both $X_0^+$ and $X_0^-$ are copies of $Chart_{\mathring{\Delta}(P)\times U_\R^k}(P)$, and $X_0=X_0^+ \cup X_0^-$.

Observe that if $\Delta(P)$ is a translate of $S^k_d$ rather than $S^k_d$ itself, there is a monomial $x^\omega$ such that $\Delta(x^\omega P) =S^k_d$. Moreover, $x^\omega P$ and $P$ give rise to the same hypersurface in $(\R^*)^k$, hence in the toric varieties $\R\Delta(x^\omega P)$ and $\R\Delta(P)$; finally, the pairs $(\Delta(x^\omega P)\times U^k_\R,Chart_{\Delta(x^\omega P)\times U^k_\R}(x^\omega P))$ and $(\Delta(P)\times U^k_\R,Chart_{\Delta(P)\times U^k_\R}(P))$ are trivially isomorphic.
This nuance has no impact on the rest of the proof either.

Using an isomorphic change of variables, we can assume $\lambda^\pm$ to be equal to $1$.

Under the above assumption, note also that the change of variables $(x_1\ldots,x_k,x_{k+1}) \mapsto (x_1\ldots,x_k,x_{k+1}^{-1})$ (well defined on $(\R^*)^{k+1}$) induces an homeomorphism of pairs between $( \Delta(G^+) \times U_\R^{k+1}, Chart_{\Delta(G^+) \times U_\R^{k+1}}(G^+)  )$ and $ (\Delta(G^-) \times U_\R^{k+1}, Chart_{\Delta(G^-) \times U_\R^{k+1}}(G^-))$ (corresponding simply to a vertical symmetry of $\Delta(G^+)$).

Using Lemma \ref{LemmaConstructionCyclesEtAxesBienInclus}, we can produce:
\begin{itemize}
    \item classes $\tilde{\alpha}_1,\ldots,\tilde{\alpha}_{r_1} \in H_i(Chart_{\mathring{\Delta}(P) \times U_\R^{k} }(P))$ and $\tilde{\beta}_1,\ldots,\tilde{\beta}_{r_1} $ in $ H_{k-1-i}( (\mathring{\Delta}(P) \times U_\R^{k}) \backslash Chart_{\mathring{\Delta}(P) \times U_\R^{k} }(P))$
    (respectively, in $\ker (H_0 ((\mathring{\Delta}(P) \times U_\R^{k} ) \backslash Chart_{\mathring{\Delta}(P) \times U_\R^{k} }(P)) \allowbreak \longrightarrow H_0(\mathring{\Delta}(P) \times U_\R^{k} )$ if $k-1-i=0$),
    as well as cycles $\tilde{a}_1\in \tilde{\alpha}_1,\ldots,\tilde{a}_{r_1}\in \tilde{\alpha}_{r_1} $ and $\tilde{b}_1\in \tilde{\beta}_1,\ldots,\tilde{b}_{r_1}\in \tilde{\beta}_{r_1} $
    
     \item classes $\tilde{\gamma}_1,\ldots,\tilde{\gamma}_{r_2} \in H_{i-1}(Chart_{\mathring{\Delta}(P) \times U_\R^{k} }(P))$ and $\tilde{\delta}_1,\ldots,\tilde{\delta}_{r_2} $ in $ H_{k-i}( (\mathring{\Delta}(P) \times U_\R^{k} ) \backslash Chart_{\mathring{\Delta}(P) \times U_\R^{k} }(P))$
     (respectively, in $\ker (H_0 ((\mathring{\Delta}(P) \times U_\R^{k} ) \backslash Chart_{\mathring{\Delta}(P) \times U_\R^{k} }(P)) \allowbreak \longrightarrow  H_0(\mathring{\Delta}(P) \times U_\R^{k})$ if $k-i=0$),
     as well as cycles $\tilde{c}_1\in \tilde{\gamma}_1,\ldots,\tilde{c}_{r_2}\in \tilde{\gamma}_{r_2} $ and $\tilde{d}_1\in \tilde{\delta}_1,\ldots,\tilde{d}_{r_2}\in \tilde{\delta}_{r_2} $

\end{itemize}
where each pair of families of classes and associated cycles verifies the conditions of Lemma \ref{LemmaConstructionCyclesEtAxesBienInclus} (the classes $\tilde{\beta}_j$ are axes to the classes $\tilde{\alpha}_j$, $P$ has constant sign over each cycle $\tilde{b}_j$ or $\tilde{d}_j$, each cycle $\tilde{a}_j$, $\tilde{b}_j$, $\tilde{c}_j$ or $\tilde{d}_j$ is contained in a single quadrant $\mathring{\Delta}(P) \times \{\epsilon\}$, etc.), $r_1 = \max(b_{i}(V_{(\R^*)^k}(P)) -D(k)d^{k-1},0)$ and $r_2=\max(b_{i-1}(V_{(\R^*)^k}(P)) -D(k)d^{k-1},0)$.

Moreover, if $i-1>0$, observe that each $\tilde{c}_t$ is a boundary in $\mathring{\Delta}(P) \times U_\R^{k}$. If $i-1=-1$, $r_2=0$ and it is also (trivially) true.
If $i-1=0$, we can still assume this to be the case: going back to the proof of Lemma \ref{LemmaConstructionCyclesEtAxesBienInclus}, we see that the homology classes it yields (the classes $\tilde{\gamma}_t$ in this case) are all the images of reduced homology classes, and as a result can be represented by chains $\tilde{c}_t$ that are boundaries in the ambient space $\mathring{\Delta}(P) \times U_\R^{k}$ and satisfy all the conditions of the lemma.


We also define
\begin{itemize}
    
    \item classes $\alpha^\pm_1,\ldots,\alpha^\pm_{r_1} \in H_i(X_0^\pm)$ and $\beta^\pm_1,\ldots,\beta^\pm_{r_1} $ in $ H_{k-1-i}( (\mathring{\Delta}(P) \times U_\R^{k} \times \{\pm 1\}) \backslash X_0^\pm)$
    (respectively, in $\ker (H_0 ((\mathring{\Delta}(P) \times U_\R^{k} \times \{\pm 1\}) \backslash X_0^\pm) \longrightarrow H_0(\mathring{\Delta}(P) \times U_\R^{k} \times \{\pm 1\})$ if $k-1-i=0$)
    as well as cycles $a^\pm_1\in \alpha^\pm_1,\ldots,a^\pm_{r_1}\in \alpha^\pm_{r_1} $ and $b^\pm_1\in \beta^\pm_1,\ldots,b^\pm_{r_1}\in \beta^\pm_{r_1} $ as copies of $\tilde{\alpha}_t, \tilde{\beta}_t, \tilde{a}_t$ and $\tilde{b}_t$ in $X_0^\pm$ and $(\mathring{\Delta}(P) \times U_\R^{k} \times \{\pm 1\}) \backslash X_0^\pm$ via the identification of $(\mathring{\Delta}(P)\times U_\R^k\times\{ \pm 1\}, X_0^\pm)$ with $(\mathring{\Delta}(P)\times U_\R^k, Chart_{\mathring{\Delta}(P)\times U_\R^k}(P))$.
     
    \item classes $\gamma^\pm_1,\ldots,\gamma^\pm_{r_2} \in H_{i-1}(X_0^\pm)$ and $\delta^\pm_1,\ldots,\delta^\pm_{r_2} $ in $ H_{k-i}( (\mathring{\Delta}(P) \times U_\R^{k} \times \{\pm 1\}) \backslash X_0^\pm)$
    (respectively, in $\ker (H_0 ((\mathring{\Delta}(P) \times U_\R^{k} \times \{\pm 1\}) \backslash X_0^\pm) \longrightarrow H_0(\mathring{\Delta}(P) \times U_\R^{k} \times \{\pm 1\})$ if $k-i=0$)
    as well as cycles $c^\pm_1\in \gamma^\pm_1,\ldots,c^\pm_{r_2}\in \gamma^\pm_{r_2} $ and $d^\pm_1\in \delta^\pm_1,\ldots,d^\pm_{r_2}\in \delta^\pm_{r_2} $ as copies of $\tilde{\gamma}_t, \tilde{\delta}_t, \tilde{c}_t$ and $\tilde{d}_t$ in $X_0^\pm$ and $(\mathring{\Delta}(P) \times U_\R^{k} \times \{\pm 1\}) \backslash X_0^\pm$ via the identification of $(\mathring{\Delta}(P)\times U_\R^k\times\{ \pm 1\}, X_0^\pm)$ with $(\mathring{\Delta}(P)\times U_\R^k, Chart_{\mathring{\Delta}(P)\times U_\R^k}(P))$.
     
\end{itemize}

Consider the sets $\Gamma^+_{\pm}=(Int(\Delta(G^+)\cup \Delta(G^-) ) \cap \Delta(G^+))\times U_\R ^{k}\times \{\pm\}$ and  $\Gamma^-_{\pm}=(Int(\Delta(G^+)\cup \Delta(G^-) ) \cap \Delta(G^-))\times U_\R ^{k}\times \{\pm\}$ and notice that there are pair homeomorphisms (coming from the definitions of charts and toric varieties)
\begin{itemize}
    \item $\phi^+_+:(\Gamma^+_+,\Gamma^+_+ \cap X)  \longrightarrow ((\R^*)^k \times \R_{\geq 0}, V_{(\R^*)^k \times \R_{\geq 0}}(G^+))$,
    \item $\phi^+_-:(\Gamma^+_-,\Gamma^+_- \cap X) \longrightarrow ((\R^*)^k \times \R_{\leq 0}, V_{(\R^*)^k \times \R_{\leq 0}}(G^+))$,
   
   as well as

    \item $\phi^-_+:(\Gamma^-_+,\Gamma^-_+ \cap X)  \longrightarrow ((\R^*)^k \times \R_{\geq 0}, V_{(\R^*)^k \times \R_{\geq 0}}(G^+))$,
    \item $\phi^-_-:(\Gamma^-_-,\Gamma^-_- \cap X)  \longrightarrow ((\R^*)^k \times \R_{\leq 0}, V_{(\R^*)^k \times \R_{\leq 0}}(G^+))$
\end{itemize}
induced by the change of variables $(x_1\ldots,x_k,x_{k+1}) \mapsto (x_1\ldots,x_k,x_{k+1}^{-1})$ aforementioned. We use the same notation for the restriction of these homeomorphisms to one of the elements of the corresponding pair.

Each of our $r$ homology classes in $H_i(X)$ will be of one of two types: either the image in $H_i(X)$ of a class of $H_i(X_0)$ (with the suspension of the associated axis), or the suspension of a class of $H_{i-1}(X_0)$ (with the associated axis remaining the same). We proceed in that order.

Let $t\in \{1,\ldots,r_1\}$.
By definition, $P$ has constant sign $\epsilon_t \in \{+,-\}$ when evaluated over the cycles $b_t^{+}$ and $b_t^{-}$ via the proper identifications. Let $m_t$ be a ($k-i $)-membrane in $\Gamma^+_{\epsilon_t}$ whose boundary is $b_t^{\epsilon_t}$.
We define a chain $\tilde{Sb}_t^+$ in $( (\R^*)^k \times \{\epsilon_t x_{k+1} \geq 0\}) \backslash V_{(\R^*)^k \times \{\epsilon_t x_{k+1} \geq 0\}}(G^+) $ as
\begin{gather*}
    \{(x_1,\ldots,x_k,x_{k+1}) |(x_1,\ldots,x_k,0) \in \phi^+_{\epsilon_t} (b_t^{\epsilon_t}), \; \epsilon_t x_{k+1} \in [0,R] \} \\ \bigcup \; \{(x_1,\ldots,x_k,\epsilon_t R) |(x_1,\ldots,x_k,0) \in \phi^+_{\epsilon_t}(m_t) \}  ,
\end{gather*}
for $R>0$ large enough that $\tilde{Sb}_t^+$ does not intersect  $V_{(\R^*)^k \times \{\epsilon_t x_{k+1} \geq 0\}}(G^+)$ (indeed, we have that $\epsilon_t G^+(x_1,\ldots,x_k,\epsilon_t R)=\epsilon_t(P(x_1,\ldots,x_k) + \epsilon_t R)  $ is strictly positive for any $(x_1,\ldots,x_k,0) \in \phi^+_{\epsilon_t}(m_t)$ for $R$ large enough, as $m_t$ is compact).
We let the ($k-i$)-chain $Sb_t^+$ in $\Gamma^+_{\epsilon_t} \backslash X$ be $(\phi^+_{\epsilon_t})^{-1}(\tilde{Sb}_t^+) $.
We also define the ($k-i+1$)-chain
$$M_t^+:= (\phi^+_{\epsilon_t})^{-1}(\{(x_1,\ldots,x_k,x_{k+1}) |(x_1,\ldots,x_k,0) \in \phi^+_{\epsilon_t}(m_t),\; \epsilon_t x_{k+1} \in [0,R]  \}) $$
where $R$ is the same as above.

\begin{figure}
\begin{center}
\includegraphics[scale=1.05]{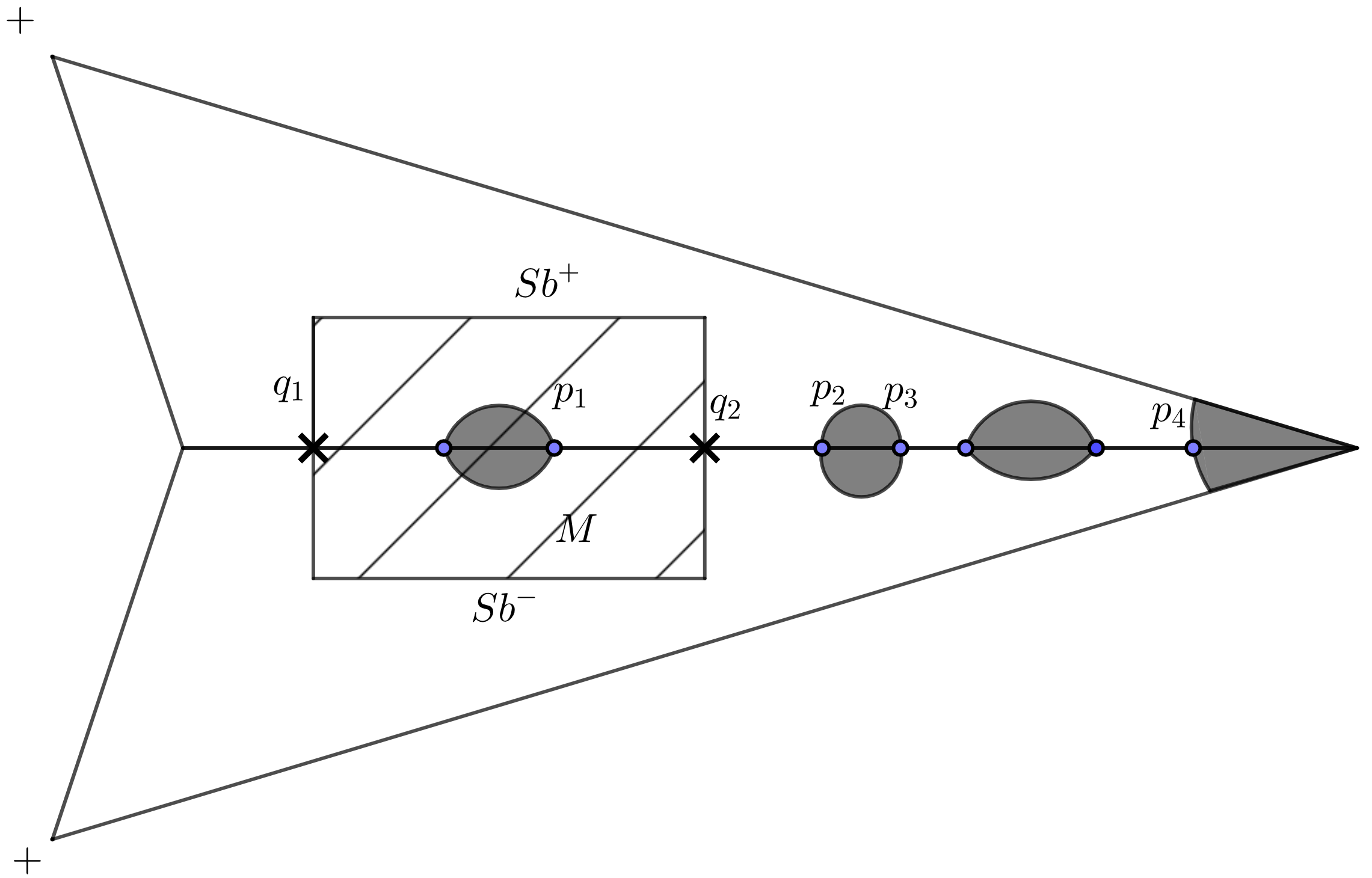}

\end{center}
\caption{
For $k=1$ and $i=0$, the suspension of the axis $\beta=[b]= [q_1 +q_2]$ in $\Delta(G^+)\cup \Delta(G^-)\times \{1\}\times\{1\}$. The cycle is $\alpha =[a]=[p_1+p_2+p_3+p_4]$.
In light grey, the preimage by $\phi ^\pm _+$ of
$(G^\pm)^{-1
} \{ y \leq 0 \}$. The hatched area corresponds to the membrane $M$.}
\label{FigureConstructionsSuspensionAxe}
\end{figure}

We apply the exact same procedure in $\Gamma ^-_{\epsilon_t}$ to get the ($k-i$)-chain $Sb_t^-$ in  $\Gamma^-_{\epsilon_t} \backslash X$ and the ($k-i+1$)-chain $M_t^-$ in  $\Gamma^-_{\epsilon_t}$.

Now we define $Sb_t:=Sb_t^+ + Sb_t^-$ (seen as a chain in $(Int(\Delta(G^+)\cup \Delta(G^-))\times U^{k+1}_\R)\backslash X$) and $M_t:=M_t^+ +M_t^-$ (seen as a chain in $Int(\Delta(G^+)\cup \Delta(G^-))\times U^{k+1}_\R$).
The chain $Sb_t$ is a ($k-i$)-cycle in $(Int(\Delta(G^+)\cup \Delta(G^-))\times U^{k+1}_\R)\backslash X$, and $\partial M_t =Sb_t$, hence $M_t$ can be used as a membrane for $Sb_t$. See Figure \ref{FigureConstructionsSuspensionAxe} for an illustration of this procedure.

We set $A:=\{[a_1^{\epsilon_1}],\ldots,[a_{r_1}^{\epsilon_{r_1}}]\} \subset H_i(X)$ (where we see the cycle $a^{\epsilon_t}_t$ as a cycle in $X$ via the inclusion $X_0^{\epsilon_t} \hookrightarrow X$) and $B:=\{[Sb_1],\ldots,[Sb_{r_1}]\} \subset H_{k-i}((Int(\Delta(G^+)\cup \Delta(G^-))\times U^{k+1}_\R)\backslash X)$ .
The elements of $B$ are axes to the elements of $A$: indeed, let $s,t\in \{1,\ldots,r_1\}$. The linking number $l([a_t^{\epsilon_t}],[Sb_s])$ is equal to the intersection number of $a_t^{\epsilon_t}$ and $M_s$. As $a_t^{\epsilon_t}$ is contained in $X_0$, this number is equal to the intersection number of $a_t^{\epsilon_t}$ and $M_s\cap X_0 = m_s$, which is by definition equal to $\delta_{s,t}$.

We now define the classes of degree $i$ obtained by suspending ($i-1$)-cycles.
Let $t\in \{1,\ldots,r_2\}$, and $\tilde{n}_t$ be a $i$-membrane in $\mathring{\Delta}(P) \times U_\R^{k}$ for $\tilde{c}_t$. Name $n_t^+$ and $n_t^-$ the copies of $\tilde{n}_t$ in $X_0^+$ and $X_0 ^-$ respectively; we have $\partial n_t^\pm = c_t^\pm$.

We define four $i$-chains $Sc_t^{\epsilon_1,\epsilon_2} \subset \Gamma^{\epsilon_1}_{\epsilon_2} \cap X$ (for $\epsilon_1,\epsilon_2 \in \{+,-\}$) as
\vspace{-22.5pt}
\begin{align*}
    Sc_t^{\epsilon_1,\epsilon_2}:=(\phi^{\epsilon_1}_{\epsilon_2})^{-1}(\{(x_1,\ldots,x_k,- P(x_1,\ldots,x_k)) \; |\; (x_1,\ldots,x_k,0) \in \phi^{\epsilon_1}_{\epsilon_2} (n_t^{\epsilon_2}) , \; \\ \epsilon_2 P(x_1,\ldots,x_k) \leq 0 \})
\end{align*}
as well as four corresponding ($i+1$)-chains $N_t^{\epsilon_1,\epsilon_2} \subset \Gamma^{\epsilon_1}_{\epsilon_2} $ as 
\begin{align*}
   N_t^{\epsilon_1,\epsilon_2}:=(\phi^{\epsilon_1}_{\epsilon_2})^{-1}(\{(x_1,\ldots,x_k,x_{k+1}) \;|\;(x_1,\ldots,x_k,0) \in \phi^{\epsilon_1}_{\epsilon_2} (n_t^{\epsilon_2}) , \\ \epsilon_2 P(x_1,\ldots,x_k) \leq -\epsilon_2 x_{k+1}\leq 0 \}). 
\end{align*}

We define $Sc_t:= Sc_t^{+,+}+Sc_t^{+,-}+Sc_t^{-,+}+Sc_t^{-,-}$ (seen as a chain in $X$) and $N_t:= N_t^{+,+}+N_t^{+,-}+N_t^{-,+}+N_t^{-,-}$ (seen as a chain in $Int(\Delta(G^+)\cup \Delta(G^-))\times U^{k+1}_\R$). Note that $Sc_t$ is a cycle and that $\partial N_t = Sc_t$, hence $N_t$ can be used as a membrane for $Sc_t$. See Figure \ref{FigureConstructionsSuspensionCycle} for an illustration of this procedure.

\begin{figure}
\begin{center}
\includegraphics[scale=1.3]{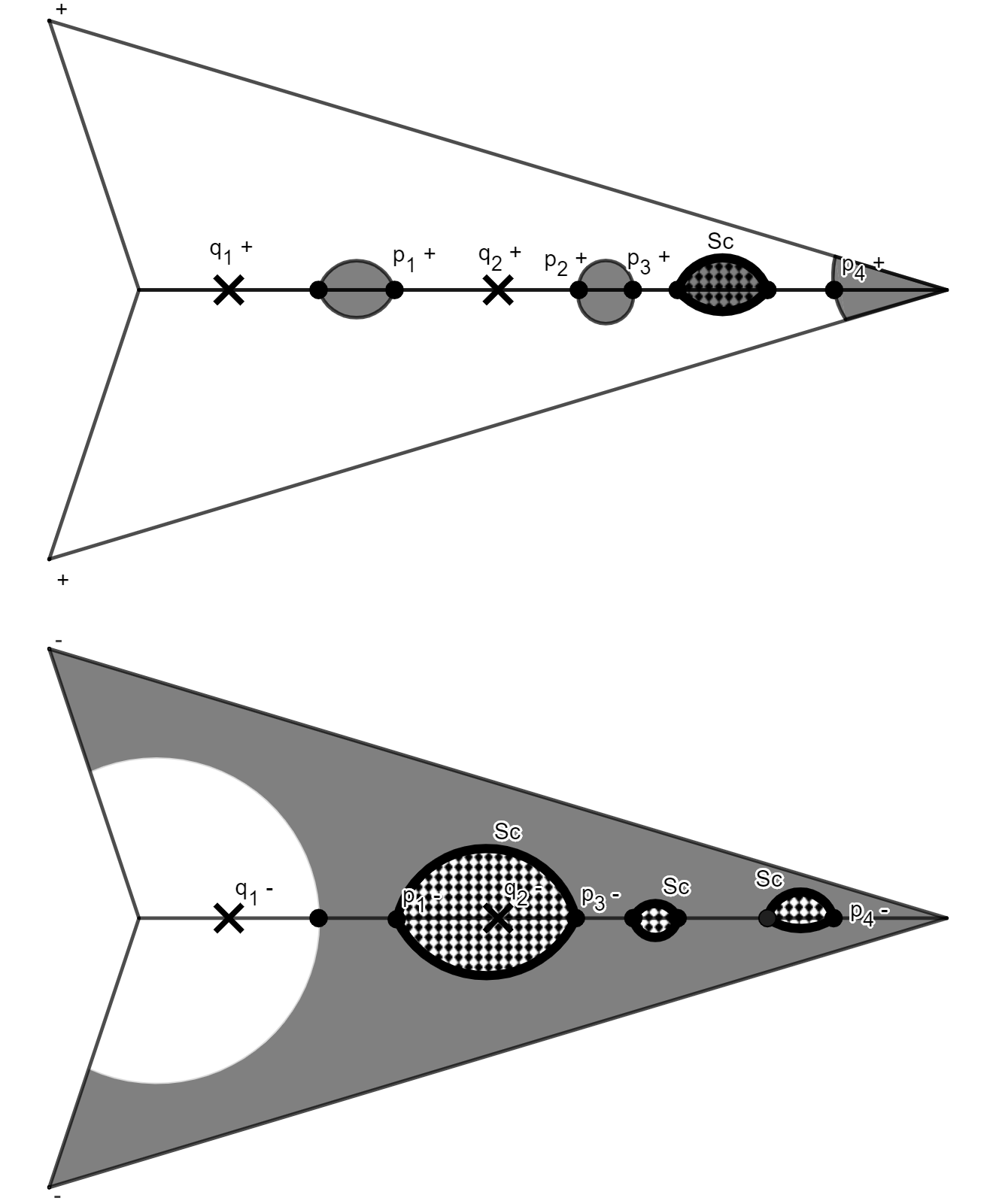}

\end{center}
\caption{
For $k=1$ and $i=1$, the thick black line is the suspension of the cycle $\gamma=[c]= [p_1 +p_2+p_3+p_4]$ in $\Delta(G^+)\cup \Delta(G^-)\times \{1\}\times\{1,-1\}$. The axis is $\delta =[d]=[q_1+q_2]$.
In light grey, the preimage by $\phi ^\pm _+$ of
$(G^\pm)^{-1
} \{ y \leq 0 \}$. The dotted areas correspond to the membrane $N$.}
\label{FigureConstructionsSuspensionCycle}
\end{figure}

By definition, $P$ has constant sign $\rho_t \in \{+,-\}$ when evaluated over the cycles $d_t^{+}$ and $d_t^{-}$.
Define $C:=\{[Sc_1],\ldots,[Sc_{r_2}]\} \subset H_i(X)$ and $D:=\{[d_1^{-\rho_1}],\ldots,[d_{r_2}^{-\rho_{r_2}}]\} \subset H_{k-i}((Int(\Delta(G^+)\cup \Delta(G^-))\times U^{k+1}_\R)\backslash X)$.
The elements of $D$ are axes to the elements of $C$: indeed, let $s,t\in \{1,\ldots,r_2\}$. The linking number $l([Sc_s],[d_t^{-\rho_t}])$ is equal to the intersection number of $N_s$ and $d_t^{-\rho_t}$.
As $d_t^{-\rho_t}$ is contained in $(\phi^{+}_{-\rho_t})^{-1}(\{(x_1,\ldots,x_k,0) | \rho_t P(x_1,\ldots,x_k) \geq 0\} )\subset X_0^{-\rho_t}$, this number is equal to the intersection number of $d_t^{-\rho_t}$ and
\begin{align*}
&N_s \cap (\phi^{+}_{-\rho_t})^{-1}(\{(x_1,\ldots,x_k,0)\;|\; \rho_t P(x_1,\ldots,x_k) \geq 0\} )= \\
&n_s^{-\rho_t}\cap (\phi^{+}_{-\rho_t})^{-1}(\{(x_1,\ldots,x_k,0) \;|\; \rho_t P(x_1,\ldots,x_k) \geq 0\} ),
\end{align*}
which is by definition equal to $\delta_{s,t}$.

We now want to show that the linking number of any element in $A$ and any element in $D$, as well as any element in $B$ and any element in $C$, is $0$.

First, let $[a_t^{\epsilon_t}] \in A$ and $[d_s^{-\rho_s}]\in D$.
Let $o$ be a membrane for $d_s^{-\rho_s}$ in
$\mathring{\Delta}(P) \times U_\R^{k} \times \{-\rho_s\}$. We can slightly rise $o$ and $d_s^{-\rho_s}$ in the following sense:
let
$$o_\lambda := (\phi^{+}_{-\rho_s})^{-1}(\{(x_1,\ldots,x_k,\lambda) \;|\;(x_1,\ldots,x_k,0) \in \phi^{+}_{-\rho_s} (o)  \})$$
and
$$(d_s^{-\rho_s})_\lambda:= (\phi^{+}_{-\rho_s})^{-1}(\{(x_1,\ldots,x_k,\lambda) \;|\;(x_1,\ldots,x_k,0) \in \phi^{+}_{-\rho_s} (d_s^{-\rho_s}).$$
We have $\partial o_\lambda = (d_s^{-\rho_s})_\lambda$ and for $\lambda>0$ small enough, we have $[(d_s^{-\rho_s})_\lambda]=[d_s^{-\rho_s}]\in H_{k-i}((Int(\Delta(G^+)\cup \Delta(G^-))\times U^{k+1}_\R)\backslash X) $ (observe that if $k-i=0$, we have $A=\emptyset$).
For such a small $\lambda$, the linking number of $[a_t^{\epsilon_t}]$ and $[d_s^{-\rho_s}]$ is equal to the intersection number of $o_\lambda$ and $a_t^{\epsilon_t}$, which is $0$ as $a_t^{\epsilon_t}$ is contained in $X_0$ and $o_\lambda$ does not intersect $X_0$.

Then, let $[Sb_t] \in B$ and $[Sc_s]\in C$.
As above, $N_s$ is a membrane for $Sc_s$. Let $\epsilon_t$ be as in the definition of $Sb_t$, and observe that $Sb_t \subset \Gamma^+_{\epsilon_t}\cup\Gamma^-_{\epsilon_t}$.
Observe moreover that
$$ Sb_t \cap \Gamma^{\pm}_{\epsilon_t} \subset (\phi^{\pm}_{\epsilon_t})^{-1}((G^{\pm})^{-1}(\{\epsilon_t y>0\})).$$
On the other hand, 
$$N_s \cap   \Gamma^{\pm}_{\epsilon_t} \subset(\phi^{\pm}_{\epsilon_t})^{-1}((G^{\pm})^{-1}(\{\epsilon_t y \leq 0\})).$$
Hence, the intersection number of $N_s$ and $Sb_t$, which is equal to the linking number of  $[Sc_s]$ and $[Sb_t]$, is $0$.

Note finally that the axes of $D$ were left untouched, and the axes of $B$ are of degree at least $1$; hence, if $k-1-i=0$, all axes in $B\cup D$ automatically belong to $\ker (H_0 (Int(\Delta(G^+) \cup\Delta(G^-))\times U_\R^{k+1}  \backslash X) \longrightarrow H_0(Int(\Delta(G^+) \cup\Delta(G^-))\times U_\R^{k+1})$.

Hence the classes of $A\cup C \subset H_i(X)$ and of $B\cup D \subset H_{k-i}(Int(\Delta(G^+) \cup\Delta(G^-))\times U_\R^{k+1}  \backslash X)$ (respectively, $ D \subset \ker (H_0 (Int(\Delta(G^+) \cup\Delta(G^-))\times U_\R^{k+1}  \backslash X) \longrightarrow H_0(Int(\Delta(G^+) \cup\Delta(G^-))\times U_\R^{k+1})$ if $k-i=0$) satisfy all the conditions of the Proposition. We only have to verify that we have enough of them.

We have found $r_1+r_2 =\max(b_{i}(V_{(\R^*)^k}(P)) -D(k)d^{k-1},0)+\max(b_{i-1}(V_{(\R^*)^k}(P)) -D(k)d^{k-1},0) \geq b_{i}(V_{(\R^*)^k}(P)) +b_{i-1}(V_{(\R^*)^k}(P)) -2D(k)d^{k-1}$ such pair of classes. By setting $E(k):=2D(k)$, we can conclude.

\end{proof}

\subsection{Finding cycles in a join}\label{SubsectionConstructionsCyclesJoin}

We state a similar result concerning the join of two polynomials (corresponding to the polynomials $F^k_m$ from the Construction Proposition \ref{PropositionConstructionsMethod}):

\begin{proposition}\label{PropositionConstructionsHomologyJoin}
For all $n\geq 3$, there is a constant $F(n)>0$ with the following property:
Let $k_1,k_2\geq 1$ be such that $k_1+k_2 =n-1$, and let $i\in \{0,\ldots,n-1\}$.

Let also $P_1$ (respectively, $P_2$) be a completely nondegenerate polynomials in $k_1$ variables (respectively, $k_2$ variables) and degree $d_1\geq 1$ (respectively, $d_2\geq1$) such that $\Delta(P_1)$ is a translate of $S^{k_1}_{d_1}$ (respectively, $\Delta(P_2)$ is a translate of $S^{k_2}_{d_2}$).

Write
$$P: (x_1,\ldots,x_{k_1},y_1,\ldots,y_{k_2},z) \mapsto P_1(x_1,\ldots, x_{k_1})+ z \cdot P_2(y_1,\ldots, y_{k_2}).$$

Define $\Delta_1:=\Delta((x_1,\ldots,x_{k_1},y_1,\ldots,y_{k_2},z) \mapsto P_1(x_1,\ldots, x_{k_1}))\subset \R^n$,

$\Delta_2:= \Delta((x_1,\ldots,x_{k_1},y_1,\ldots,y_{k_2},z) \mapsto z \cdot P_2(y_1,\ldots, y_{k_2}))\subset \R^n$ and 
$\Delta:=\Delta(P)\subset \R^n$. Observe that $\Delta= \Delta_1 \star \Delta_2$, where $\star$ is as above the join.

Define $X:=Chart_{(\mathring{\Delta}_1\star \mathring{\Delta}_2) \times U_\R^{n}}(P)$.

Then there exists
\begin{equation}\label{FormulaConstructionsCyclesJoin}
 r\geq \sum_{j=0}^{i-1} b_j( V_{(\R^*)^{k_1}}(P_1))\cdot b_{i-1-j}( V_{(\R^*)^{k_2}}(P_2)) - F(n) \max{(d_1,d_2)}^{n-2}   
\end{equation}
such that we can find classes $\alpha_1,\ldots,\alpha_r$ in $ H_i(X)$
and $\beta_1,\ldots,\beta_r $ in $ H_{n-1-i}(\mathring{\Delta} \times U_\R^{n} \backslash X)$
such that their linking numbers in $(\mathring{\Delta}_1\star \mathring{\Delta}_2) \times U_\R^{n} $ satisfy $l(\alpha_s,\beta_t)=\delta_{s,t}\in \Z_2$ (the classes $\beta_t$ are axes for the classes $\alpha_s$).
\end{proposition}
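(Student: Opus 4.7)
My strategy parallels the proof of Proposition \ref{PropositionConstructionsHomologySuspension}: use Lemma \ref{LemmaConstructionCyclesEtAxesBienInclus} to extract well-behaved cycles and axes from each of $V_1 := V_{(\R^*)^{k_1}}(P_1)$ and $V_2 := V_{(\R^*)^{k_2}}(P_2)$, then combine them via a topological join construction dictated by the shape $P(x,y,z) = P_1(x) + zP_2(y)$. The moment map identifies $(\mathring{\Delta}_1\star\mathring{\Delta}_2)\times U_\R^n$ with $(\R^*)^n$ and $X$ with the hypersurface $\{P_1(x)+zP_2(y)=0\}\subset(\R^*)^n$. For each $j = 0,\ldots,i-1$, Lemma \ref{LemmaConstructionCyclesEtAxesBienInclus} produces $r_1(j)\geq b_j(V_1)-D(k_1)d_1^{k_1-1}$ classes $\alpha_s^{(1)}\in H_j(V_1)$ with axes $\beta_s^{(1)}\in H_{k_1-1-j}((\R^*)^{k_1}\setminus V_1)$, represented by cycles $a_s^{(1)}, b_s^{(1)}$ each lying in a single quadrant of $(\R^*)^{k_1}$ (with $P_1$ of constant nonzero sign on each $b_s^{(1)}$), and analogously $r_2(i-1-j) \geq b_{i-1-j}(V_2)- D(k_2)d_2^{k_2-1}$ classes with representatives $a_t^{(2)}, b_t^{(2)}$ for $P_2$. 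All these cycles admit membranes $n_s^{(1)}, n_t^{(2)}$ (for the $a$'s) and $m_s^{(1)}, m_t^{(2)}$ (for the $b$'s) in their respective contractible quadrants.

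The crucial geometric observation is that $P$ vanishes identically on $V_1\times V_2\times\R^*\subset (\R^*)^n$. For each pair $(s,t)$, I build a cycle $C_{s,t}\in H_i(X)$ by capping off the $i$-dimensional cylinder $a_s^{(1)}\times a_t^{(2)}\times [z_0,z_1]\subset X$ (with $0<z_0\ll 1\ll z_1$). At $z=z_1$ the slice $X\cap\{z=z_1\}$ is close to $(\R^*)^{k_1}\times V_2$, and the top boundary slice is capped by a chain in $X$ projecting (through this proximity) to $n_s^{(1)}\times a_t^{(2)}$, of dimension $(j+1)+(i-1-j) = i$; symmetrically, the bottom slice is capped via a chain projecting to $a_s^{(1)}\times n_t^{(2)}$ inside $X\cap\{z\approx z_0\}\simeq V_1\times(\R^*)^{k_2}$. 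The two cap boundaries match the cylinder boundary modulo $2$, producing a genuine $i$-cycle. For the axis $E_{s,t}\in H_{n-1-i}((\R^*)^n\setminus X)$, the analogous construction takes a topological join of $b_s^{(1)}$ and $b_t^{(2)}$, of dimension $(k_1-1-j)+(k_2-1-(i-1-j))+1 = n-1-i$: realized as $b_s^{(1)}\times b_t^{(2)}\times I$ for a $1$-chain $I\subset\R^*$ chosen to avoid the single value $z = -P_1/P_2$ where $P$ would vanish on $b_s^{(1)}\times b_t^{(2)}$, with the endpoints absorbed by auxiliary caps inside the complement. A careful choice of $z$-sign, symmetrizing between the two components of $\R^*$ in the spirit of the kernel trick at the end of the proof of Lemma \ref{LemmaConstructionCyclesEtAxesBienInclus}, places $[E_{s,t}]$ in $\ker(H_{n-1-i}((\R^*)^n\setminus X)\to H_{n-1-i}((\R^*)^n))$, so the linking number is well-defined.

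The linking relation $l(C_{s,t},E_{s',t'}) = \delta_{s,s'}\delta_{t,t'}$ is verified using a product-type membrane $\mu_{s,t}$ for $E_{s,t}$ in $(\R^*)^n$ of the form $m_s^{(1)}\times m_t^{(2)}\times J$, where $J$ is a $1$-chain in $\R^*$ with the appropriate boundary. Its transverse intersection with $C_{s',t'}$ factors, by the product structure, into the individual transverse intersections $m_s^{(1)}\cap a_{s'}^{(1)} = \delta_{s,s'}$ in $(\R^*)^{k_1}$ and $m_t^{(2)}\cap a_{t'}^{(2)} = \delta_{t,t'}$ in $(\R^*)^{k_2}$, once the $z$-factor of $\mu_{s,t}$ is arranged to meet the cylinder of $C_{s',t'}$ transversally in a single generic point and to miss the caps entirely. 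Summing over $j$,
\[
r = \sum_{j=0}^{i-1} r_1(j)\,r_2(i-1-j) \geq \sum_{j=0}^{i-1} b_j(V_1)\,b_{i-1-j}(V_2) - F(n)\max(d_1,d_2)^{n-2},
\]
where $F(n)$ absorbs the cross terms arising upon expanding the product, using the crude estimates $b_\ell(V_\ell) = O(d_\ell^{k_\ell})$ coming from Smith--Thom and Equation~(\ref{ComplexPartTotalHomology}).

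The main obstacle, as in the suspension case, is the rigorous construction of the caps for $C_{s,t}$ and their analogues for $E_{s,t}$: one must argue that the deformation retractions of $X$ near the faces $\{z\to 0\}$ and $\{z\to\infty\}$ onto $V_1\times(\R^*)^{k_2}$ and $(\R^*)^{k_1}\times V_2$ allow the caps to be realized inside $X$ after a generic perturbation, keep track of which component of $\R^*$ each ingredient of each cycle, axis, and membrane inhabits, and verify that all sign conventions conspire to yield the desired Kronecker delta rather than an accidental zero or an off-diagonal contribution. This is precisely the long but structurally routine bookkeeping that is carried out in detail for the suspension case in Proposition \ref{PropositionConstructionsHomologySuspension}, and which the authors invoke by analogy to justify omitting the full proof here.
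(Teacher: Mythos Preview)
Your proposal is correct and follows essentially the same approach as the paper: the paper's proof sketch describes exactly this join-of-cycles/join-of-axes construction (illustrated in Figure~\ref{FigureConstructionsJoinCyclesAxes}), invokes Lemma~\ref{LemmaConstructionCyclesEtAxesBienInclus} for the input cycles and axes, and omits the detailed bookkeeping by referring to the author's thesis. Your realization of the join as a capped cylinder and your linking-number verification via product membranes are precisely the kind of explicit implementation the paper defers.
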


\begin{remark}
Remark that the sum in Formula (\ref{FormulaConstructionsCyclesJoin}) is trivial if $i=0,n-1$. Hence, unlike in previous statements, we do not ask that the axes belong to $\ker (H_0 ((\mathring{\Delta}_1\star \mathring{\Delta}_2) \times U_\R^{n} \backslash X) \longrightarrow H_0((\mathring{\Delta}_1\star \mathring{\Delta}_2) \times U_\R^{n} )  )$ if $n-1-i=0$.
\end{remark}
\begin{proof}
The main idea here is that for each $j$-cycle in $V_{(\R^*)^{k_1}}(P_1)$ and ($i-j-1$)-cycle in $V_{(\R^*)^{k_2}}(P_2)$, we can build a $j$-cycle in $X$ by taking the join of the two cycles. If we are cautious enough, we can proceed similarly with the cycles used as axes, and have all classes built in that fashion have the required linking numbers properties. This is illustrated in Figure \ref{FigureConstructionsJoinCyclesAxes}.
As the full proof is even more laborious than that of Proposition \ref{PropositionConstructionsHomologySuspension}, we omit it.
It can be found, with all necessary details, in the author's thesis \cite{ArnalThesis}[Proposition 5.3.7].

\begin{figure}
\begin{center}
\includegraphics[scale=0.55]{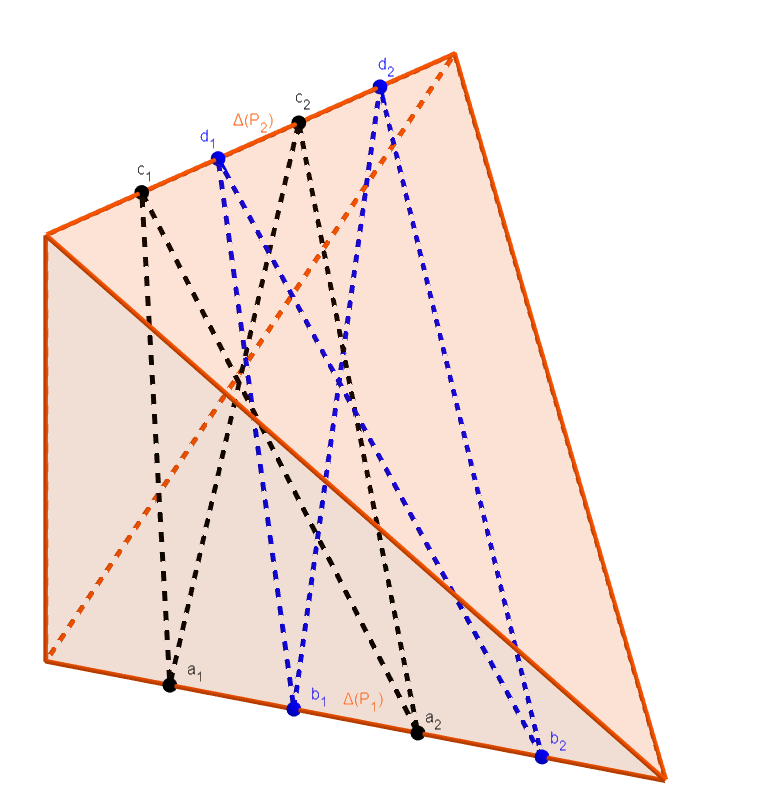}

\end{center}
\caption{
For $k_1=k_2=1$, $i=1$ and $j=0$, the join of cycles $a=a_1+a_2$ and $c=c_1+ c_2$ and the join of axes $b=b_1+b_2$ and $d=d_1+d_2$.}
\label{FigureConstructionsJoinCyclesAxes}
\end{figure}

\end{proof}

\subsection{Counting cycles}\label{SubsectionConstructionsCountingCycles}

We are now ready to prove the Cooking Theorem, which we state again.

\newtheorem*{thmTricheConstructions}{Theorem \ref{TheoremConstructionsMainTheoremConstructions}}

\begin{thmTricheConstructions}[Cooking Theorem]
Let $n\geq 2$. For $k=1,\ldots,n-1$, let $\{P^k_{d}\}_{d\in \NN}$ be a family of completely nondegenerate real Laurent polynomials in $k$ variables, such that $P^k_d$ is of degree $d$ and that the Newton polytope $\Delta(P^k_d)$ of $P^k_d$ is $S^k_d$.
Suppose additionally that for $k=1,\ldots,n-1$ and $i=0,\ldots,k-1$,
 $$b_i(V_{\R\PP ^k}(P^k_d))\overset{k}{\geq} x_i^k \cdot d^k$$
for some $x_i^k \in \R_{\geq 0}$.
Then there exists a family $\{Q^n_d\}_{d\in\NN}$ of completely nondegenerate real Laurent polynomials in $n$ variables such that $\Delta(Q^n_d)=S^n_d$ and such that for $i=0,\ldots,n-1$ 
\begin{equation*}
b_i(V_{\R\PP ^n}(Q^n_d))   \overset{n}{\geq}   \frac{1}{n}(x_i^{n-1}+x_{i-1}^{n-1} +\sum_{k=1}^{n-2}\sum_{j=0}^{i-1} x_j^k \cdot x_{i-1-j}^{n-1-k}) \cdot d^n,
\end{equation*}
where $x_j^k$ is set to be $0$ for $j\not \in \{0,\ldots,k-1\}$.

Moreover, if the families $\{P^k_{d}\}_{d\in \NN}$ were obtained using the combinatorial case of the Patchwork for all $k$, then the family $\{Q^n_{d}\}_{d\in \NN}$ can also be obtained by combinatorial patchworking.

If each family $\{P^k_{d}\}_{d\in \NN}$ (for $k=1,\ldots,n-1$) is such that the associated family of projective hypersurfaces is asymptotically maximal, then the family of projective hypersurfaces associated to $\{Q^n_{d}\}_{d\in \NN}$ is also asymptotically maximal.

\end{thmTricheConstructions}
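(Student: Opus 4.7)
The plan is to combine the Construction Proposition \ref{PropositionConstructionsMethod} with the cycle/axis counts from Propositions \ref{PropositionConstructionsHomologySuspension} and \ref{PropositionConstructionsHomologyJoin}, then argue that, because the relevant Newton polytopes have disjoint interiors, all the cycles produced in the different ``pieces'' are simultaneously independent in the homology of $V_{\R\PP^n}(Q^n_d)$.

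First, I would invoke the Construction Proposition to obtain, for each $d \geq n$, a completely nondegenerate polynomial $Q^n_d$ with $\Delta(Q^n_d)=S^n_d$, together with its patchwork ingredients: the join-type polynomials $F^k_m$ (for $m=0,\ldots,d-n-1$, $k=1,\ldots,n-2$) and the suspension-pair polynomials $(G^+_m,G^-_m)$ (for $m=1,\ldots,d-n-1$). By Viro's Main Patchwork Theorem \ref{TheoremPatchworkMainPatchworkTHM}, the chart of $Q^n_d$ in $S^n_d\times U^n_\R$ is isotopic to the union of the charts of these pieces. Passing to the quotient that defines $\R\PP^n$ and using Lemma \ref{LemmaConstructionsEquivalenceOuvertFerme} together with Remark \ref{RemarkConstructionsEquivalenceOuvertFerme}, I can work throughout with the open charts $Chart_{\mathring{\Delta}\times U^n_\R}(\cdot)$ of each piece, paying only the negligible error $\mathcal{O}(d^{n-1})$ when translating back and forth between the torus, affine, and projective pictures.

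Next, for each piece I extract cycles and axes. For each suspension pair $(G^+_m,G^-_m)$, condition (3) of the Construction Proposition says the union of charts is homeomorphic (as a pair) to the suspension setup of Proposition \ref{PropositionConstructionsHomologySuspension} applied to the polynomial $\tilde P^{n-1}_{d-m-n}$, whose chart is homeomorphic to that of $P^{n-1}_{d-m-n}$. So I get at least
\[
b_i\bigl(V_{\R\PP^{n-1}}(P^{n-1}_{d-m-n})\bigr)+b_{i-1}\bigl(V_{\R\PP^{n-1}}(P^{n-1}_{d-m-n})\bigr)-\mathcal{O}((d-m)^{n-2})
\]
pairs of cycles and axes, localized in $\mathring{\Delta}(G^+_m\cup G^-_m)\times U^n_\R$. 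Similarly, for each $F^k_m$, condition (2) lets me apply Proposition \ref{PropositionConstructionsHomologyJoin} to obtain at least
\[
\sum_{j=0}^{i-1}b_j\bigl(V_{\R\PP^k}(P^k_{d-m-1-k})\bigr)\,b_{i-1-j}\bigl(V_{\R\PP^{n-1-k}}(P^{n-1-k}_{d-m-n+k})\bigr)-\mathcal{O}(d^{n-2})
\]
pairs, localized in $\mathring{\Delta}(F^k_m)\times U^n_\R$.

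The key step, and the main obstacle, is proving global independence of all these classes. Here I use condition (4) of the Construction Proposition: the interiors of the Newton polytopes $\Delta(F^k_m)$, $\Delta(G^+_l\cup G^-_l)$ are pairwise disjoint in $S^n_d$. Because each cycle and each axis produced by the two propositions is supported inside the corresponding open set $\mathring\Delta\times U^n_\R$, any axis in piece $A$ admits a membrane contained in the closed region $\Delta_A\times U^n_\R$ (this is how the membranes are built in the proofs of Propositions \ref{PropositionConstructionsHomologySuspension} and \ref{PropositionConstructionsHomologyJoin}). Such a membrane cannot meet a cycle supported in $\mathring\Delta_B\times U^n_\R$ for a different piece $B$, since $\mathring\Delta_A\cap\mathring\Delta_B=\emptyset$ and cycles in $B$ avoid $\partial\Delta_B\times U^n_\R$. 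Therefore cross-linking numbers vanish, while intra-piece linking numbers are $\delta_{s,t}$ by construction. By the standard axis argument (Subsection \ref{SubsectionConstructionsPreliminaries}), the full collection of cycles is linearly independent in $H_i(V_{\R\PP^n}(Q^n_d))$, modulo the $\mathcal{O}(d^{n-1})$ discrepancy between chart and projective hypersurface. The isotopy in Theorem \ref{TheoremPatchworkMainPatchworkTHM} preserves the pair structure piece by piece, so this independence transports to the patchwork.

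It then remains to sum the lower bounds. Using the hypothesis $b_j(V_{\R\PP^k}(P^k_d))\overset{k}{\geq} x^k_j\cdot d^k$, the total count is at least
\begin{align*}
&\sum_{m=1}^{d-n-1}\bigl(x^{n-1}_i+x^{n-1}_{i-1}\bigr)(d-m-n)^{n-1}\\
&\qquad+\sum_{k=1}^{n-2}\sum_{m=0}^{d-n-1}\sum_{j=0}^{i-1}x^k_j\,x^{n-1-k}_{i-1-j}(d-m-1-k)^k(d-m-n+k)^{n-1-k},
\end{align*}
up to $\mathcal{O}(d^{n-1})$. Each inner sum over $m$ behaves, as $d\to\infty$, like $\int_0^d t^{n-1}\,dt=d^n/n$ to leading order, yielding the desired leading coefficient $\tfrac{1}{n}\bigl(x^{n-1}_i+x^{n-1}_{i-1}+\sum_{k,j}x^k_j x^{n-1-k}_{i-1-j}\bigr)$. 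The ``combinatorial patchworking'' clause follows from the corresponding clause in the Construction Proposition. Finally, for asymptotic maximality, summing the leading coefficient over $i\in\{0,\ldots,n-1\}$ yields $\tfrac{1}{n}\bigl(2+(n-2)\sum_k(\sum_j x^k_j)(\sum_l x^{n-1-k}_l)\bigr)=\tfrac{1}{n}(2+(n-2))=1$ when every input family is asymptotically maximal, matching the Smith--Thom ceiling $d^n$ for the real part.
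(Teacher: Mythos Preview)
Your proposal is correct and follows essentially the same approach as the paper's own proof: invoke the Construction Proposition, apply Propositions \ref{PropositionConstructionsHomologySuspension} and \ref{PropositionConstructionsHomologyJoin} to each suspension and join piece, use the disjointness of the Newton polytope interiors to kill the cross-linking numbers, and sum the resulting lower bounds over $m$ and $k$. The only cosmetic differences are that the paper keeps the membranes in the \emph{interiors} $\mathring\Delta_A\times U^n_\R$ (slightly cleaner than your closed-region version, though your argument still works since simplices in a triangulation meet only along shared faces), and the paper defers the asymptotic-maximality computation to Lemma \ref{LemmaConstructionsAsmptoticallyMaximalPreserve} rather than doing it inline as you do.
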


\begin{proof}
We simply need to apply Propositions \ref{PropositionConstructionsHomologySuspension} and \ref{PropositionConstructionsHomologyJoin} to the polynomials appearing in the Construction Proposition \ref{PropositionConstructionsMethod}.
This gives us a collection of cycles and axes, and by showing that there are asymptotically enough of them, we prove the statement. The suspensions (the polynomials $G_m^+$ and $G_m^-$ in the Construction Proposition) yield the term $\frac{1}{n}(x_i^{n-1}+x_{i-1}^{n-1}) \cdot d^n$ in the statement, while the joins (the polynomials $F^k_m$ in the Construction Proposition) yield the term 
$\frac{1}{n}(\sum_{k=1}^{n-2}\sum_{j=0}^{i-1} x_j^k \cdot x_{i-1-j}^{n-1-k}) \cdot d^n$.

More precisely, for $d\leq n+1$, let $Q^n_d$ be any completely nondegenerate real Laurent polynomial obtained by combinatorial patchworking such that $\Delta(Q^n_d)=S^n_d$ (the choice of $Q^n_d$ matters not, as we are only interested in asymptotic properties), and let $\{Q^n_d\}_{d>n+1}$ be a family of completely nondegenerate real Laurent polynomials such that $\Delta(Q^n_d)=S^n_d$ and satisfying the conclusions of the Construction Proposition \ref{PropositionConstructionsMethod} with regard to the polynomials $P^k_d$ (for $k=1,\ldots,n-1$). We use the same notations as there.

As stated in the Construction Proposition, if the families $\{P^k_{d}\}_{d\in \NN}$ were obtained using combinatorial patchworking, we can assume this to also be the case for $\{Q^n_d\}_{d\in \NN}$.

We will show that $\{Q^n_d\}_{d\in\NN}$ is as wanted. Let $\tilde{C}\geq 0 $ be such that for all $j=0,\ldots,n-1$ and all $k=1,\ldots,n-1$,
we have
$$b_j(V_{\R\PP ^k}(P^k_d))\geq x_j^k \cdot d^k - \tilde{C} d^{k-1},$$
where we set $x_j^k$ to be $0$ if $j\geq k$. Let also $i\in \{0,\ldots,n-1\}$.

We know, from the Main Patchwork Theorem \ref{TheoremPatchworkMainPatchworkTHM}, that the topology of the pairs $(S^n_d\times U^n_\R,Chart_{S^n_d\times U^n_\R}(Q_d^n))$ is the same as that of $(S^n_d\times U^n_\R , v)$, where $v\subset S^n_d \times U_\R^n$ is obtained by appropriately gluing the charts of all polynomials $P_i\in \Sigma$ appearing in the patchworking.

For $m=0,\ldots,d-n-1$ and $k=1,\ldots,n-2$, we consider the polynomial $F^k_m \in \Sigma$.
Based on Condition 2 of the Construction Proposition \ref{PropositionConstructionsMethod}, we know that the chart of $F^k_m$ is homeomorphic to the chart of some polynomial $P$ which satisfies the hypotheses of Proposition \ref{PropositionConstructionsHomologyJoin}, with some polynomials  $\tilde{P}^k_{d-m-1-k}$ and $\tilde{P}^{n-1-k}_{d-m-n+k}$ (whose charts are homeomorphic to those of $P^k_{d-m-1-k}$ and $P^{n-1-k}_{d-m-n+k}$) playing the roles of $P_1$ and $P_2$ in the notations of Proposition \ref{PropositionConstructionsHomologyJoin}. In other words and loosely speaking, $F^k_m$ is the join of $\tilde{P}^k_{d-m-1-k}$ and $\tilde{P}^{n-1-k}_{d-m-n+k}$.
Then the proposition implies that there exists
\begin{align}\label{FormulaConstructionsLowerBoundProof1}
&r(F_m^k) \geq \sum_{j=0}^{i-1} b_j( V_{(\R^*)^{k}}(\tilde{P}^k_{d-m-1-k}))\cdot b_{i-1-j}( V_{(\R^*)^{n-1-k}}(\tilde{P}^{n-1-k}_{d-m-n+k}))\\
&- F(n) \max{(d-m-1-k,d-m-n+k)}^{n-2}\nonumber
\end{align}
such that we can find classes $\tilde{\alpha}_1,\ldots,\tilde{\alpha}_{r(F_m
^k)}$ in $ H_i(Chart_{\Delta(P)\times U^n_\R}(P))$
and $\tilde{\beta}_1,\ldots,\tilde{\beta}_{r(F_m
^k)} $ in $ H_{n-1-i}(\mathring{\Delta}(P)\times U^n_\R \backslash Chart_{\Delta(P)\times U^n_\R}(P))$ such that their linking numbers in $\Delta(P)\times U^n_\R $ satisfy $l(\tilde{\alpha}_s,\tilde{\beta}_t)=\delta_{s,t}\in \Z_2$.

We pull back these classes via the pair homeomorphism to get classes $\alpha^{F_m^k}_1,\ldots,\alpha^{F_m^k}_{r(F_m
^k)}$ in $ H_i(Chart_{\Delta(F_m^k)\times U^n_\R}(F_m^k))$
and $\beta^{F_m^k}_1,\ldots,\beta^{F_m^k}_{r(F_m
^k)} $ in $ H_{n-1-i}(\mathring{\Delta}(F_m^k)\times U^n_\R \backslash Chart_{\Delta(F_m^k)\times U^n_\R}(F_m^k))$ such that their linking numbers in $\Delta(F_m^k)\times U^n_\R $ satisfy $l(\alpha^{F_m^k}_s,\beta^{F_m^k}_t)=\delta_{s,t}\in \Z_2$.

\sloppy Moreover, observe that by the definition of polynomials $\tilde{P}
^k_{d-m-1-k}$ and $\tilde{P}^{n-1-k}_{d-m-n+k}$ from Condition 2 of the Construction Proposition \ref{PropositionConstructionsMethod} (whose charts were assumed to be homeomorphic to those of $P^k_{d-m-1-k}$ and $P^{n-1-k}_{d-m-n+k}$), we have
$b_j( V_{(\R^*)^{k}}(\tilde{P}^k_{d-m-1-k}))= \allowbreak b_j( V_{(\R^*)^{k}}(P^k_{d-m-1-k}))$ and $b_{i-1-j}( V_{(\R^*)^{n-1-k}}(\tilde{P}^{n-1-k}_{d-m-n+k}))=b_{i-1-j}( V_{(\R^*)^{n-1-k}}(P^{n-1-k}_{d-m-n+k}))$, which means that we can rewrite Inequality (\ref{FormulaConstructionsLowerBoundProof1}) as
\begin{align*}
&r(F_m^k) \geq
\sum_{j=0}^{i-1} b_j( V_{(\R^*)^{k}}(P^k_{d-m-1-k}))\cdot b_{i-1-j}( V_{(\R^*)^{n-1-k}}(P^{n-1-k}_{d-m-n+k}))\\
&- F(n) \max{(d-m-1-k,d-m-n+k)}^{n-2}  \geq \\
&\left[ \sum_{j=0}^{i-1} (x_j^k \cdot (d-m-1-k)^k - \tilde{C} \cdot (d-m-1-k)^{k-1})\cdot \right.\\
&\left.\cdot(x_{i-1-j}^{n-1-k} \cdot(d-m-n+k)^{n-1-k} - \tilde{C}\cdot (d-m-n+k)^{n-2-k})\right]\\
&-F(n) \max{(d-m-1-k,d-m-n+k)}^{n-2}
\end{align*}
for any $d$ large enough that $x_j^k \cdot (d-m-1-k)^k \geq \tilde{C} \cdot (d-m-1-k)^{k-1}$ and $x_{i-1-j}^{n-1-k} \cdot (d-m-n+k)^{n-1-k} \geq \tilde{C}\cdot (d-m-n+k)^{n-2-k}$ for all $j=0,\ldots,i-1$.

Define $C_1 := n(2\tilde{C}\max \{x_j^l | l=1,\ldots, n-1, j=0,\ldots,l-1\} + F(n))$.

Then we have 
\begin{align}\label{FormulaConstructionsLowerBoundProof2}
&r(F_m^k) \geq  \left(\sum_{j=0}^{i-1} x_j^k x_{i-1-j}^{n-1-k}\cdot (d-m-1-k)^k (d-m-n+k)^{n-1-k}\right) \\
&- C_1\max{(d-m-1-k,d-m-n+k)}^{n-2} \geq \nonumber\\
&\left(\sum_{j=0}^{i-1} x_j^k x_{i-1-j}^{n-1-k}\right)\cdot (d-m-n)^{n-1} - C_1(d-m-1)^{n-2}\nonumber
\end{align}
for all $d$ large enough; if we replace $C_1$ by  $\tilde{C}_1 \geq C_1$ large enough, we can assume this to be the case for all $d\geq n+1$ (and we do). 

For $m=1,\ldots, d-n-1$, we also consider the pair of polynomials $G^+_m,G^-_m \in \Sigma$ (still using the notations of the Construction Proposition \ref{PropositionConstructionsMethod}).

We know that there exist polynomials $\tilde{G}^+_m$, $\tilde{G}^-_m$ and $\tilde{P}^{n-1}_{d-m-n}$ (with the chart of $\tilde{P}^{n-1}_{d-m-n}$ homeomorphic to the chart of $P^{n-1}_{d-m-n}$) that satisfy the hypotheses of Proposition \ref{PropositionConstructionsHomologySuspension} (where $\tilde{G}^+_m$ and $\tilde{G}^-_m$ correspond to $G^+$ and $G^-$ and $\tilde{P}^{n-1}_{d-m-n}$ to $P$ in the notations of the proposition) such that the pair $((\Delta(G^+_m)\cup\Delta(G^-_m))\times U^n_\R,\allowbreak Chart_{\Delta(G^+_m)\times U^n_\R}(G^+_m)\cup Chart_{\Delta(G^-_m)\times U^n_\R}(G^-_m))$ is homeomorphic to $((\Delta(\tilde{G}^+_m)\cup\Delta(\tilde{G}^-_m))\times U^n_\R,\allowbreak Chart_{\Delta(\tilde{G}^+_m)\times U^n_\R}(\tilde{G}^+_m)\cup Chart_{\Delta(\tilde{G}^-_m)\times U^n_\R}(\tilde{G}^-_m))$.
Loosely speaking, this gluing of charts is homeomorphic to a suspension of the chart of $P^{n-1}_{d-m-n}$.


Then the proposition implies that there exists
\begin{align}\label{FormulaConstructionsLowerBoundProof3}
s(G_m)\geq b_i( V_{(\R^*)^{n-1}}(\tilde{P}^{n-1}_{d-m-n}))+b_{i-1}( V_{(\R^*)^{n-1}}(\tilde{P}^{n-1}_{d-m-n})) - E(n-1) (d-m-n)^{n-2}    
\end{align}
such that we can find classes $\tilde{\alpha}_1,\ldots,\tilde{\alpha}_{s(G_m)}$ in $$ H_i\left(Chart_{\Delta(\tilde{G}^+_m)\times U^n_\R}(\tilde{G}^+_m)\cup Chart_{\Delta(\tilde{G}^-_m)\times U^n_\R}(\tilde{G}^-_m)\right)$$
and $\tilde{\beta}_1,\ldots,\tilde{\beta}_{s(G_m)} $ in {\small$$ H_{n-1-i}\left(\left[Int(\Delta(\tilde{G}^+_m) \cup\Delta(\tilde{G}^-_m))\times U_\R^{n} \right] \backslash \left[ (Chart_{\Delta(\tilde{G}^+_m)\times U^n_\R}(\tilde{G}^+_m)\cup Chart_{\Delta(\tilde{G}^-_m)\times U^n_\R}(\tilde{G}^-_m))\right] \right)$$  }
(respectively, in the kernel of 
\begin{gather*}
H_0 \left(\left[Int(\Delta(\tilde{G}^+_m) \cup\Delta(\tilde{G}^-_m))\times U_\R^{n}\right]  \backslash \left[ (Chart_{\Delta(\tilde{G}^+_m)\times U^n_\R}(\tilde{G}^+_m)\cup Chart_{\Delta(\tilde{G}^-_m)\times U^n_\R}(\tilde{G}^-_m))\right]\right) \\ \longrightarrow \ H_0\left(Int(\Delta(\tilde{G}^+_m) \cup\Delta(\tilde{G}^-_m))\times U_\R^{n}  \right)
\end{gather*} if $n-1-i=0$) whose linking numbers in $Int(\Delta(\tilde{G}^+_m) \cup\Delta(\tilde{G}^-_m))\times U_\R^{n}$ satisfy $l(\tilde{\alpha}_s,\tilde{\beta}_t)=\delta_{s,t}\in \Z_2$.

We pull back these classes via the pair homeomorphism to get classes $\alpha^{G_m}_1,\ldots,\alpha^{G_m}_{s(G_m)}$ in $$ H_i\left(Chart_{\Delta(G^+_m)\times U^n_\R}(G^+_m)\cup Chart_{\Delta(G^-_m)\times U^n_\R}(G^-_m)\right)$$
and $\beta^{G_m}_1,\ldots,\beta^{G_m}_{s(G_m)} $ in {\small $$ H_{n-1-i}\left(\left[Int(\Delta(G^+_m)\cup\Delta(G^-_m))\times U^n_\R \right]\backslash \left[Chart_{\Delta(G^+_m)\times U^n_\R}(G^+_m)\cup Chart_{\Delta(G^-_m)\times U^n_\R}(G^-_m)\right]\right)$$ }
(respectively, in the kernel of 
\begin{gather*}
     H_0 \left(\left[Int(\Delta(G^+_m) \cup\Delta(G^-_m))\times U_\R^{n}  \right]\backslash \left[Chart_{\Delta(G^+_m)\times U^n_\R}(G^+_m)\cup Chart_{\Delta(G^-_m)\times U^n_\R}(G^-_m)\right]\right) \\ \longrightarrow H_0\left(Int(\Delta(G^+_m) \cup\Delta(G^-_m))\times U_\R^{n}  \right)
\end{gather*} if $n-1-i=0$) such that their linking numbers in $Int(\Delta(G^+_m)\cup\Delta(G^-_m))\times U^n_\R$ satisfy $l(\alpha^{G_m}_s,\beta^{G_m}_t)=\delta_{s,t}\in \Z_2$.

Moreover, observe that by the definition of polynomials $\tilde{P}^{n-1}_{d-m-n}$ (whose charts were assumed to be homeomorphic to those of the polynomials $P^{n-1}_{d-m-n}$) from Condition 3 of the Construction Proposition \ref{PropositionConstructionsMethod}, we have
$b_i( V_{(\R^*)^{n-1}}(\tilde{P}^{n-1}_{d-m-n}))=b_i( V_{(\R^*)^{n-1}}(P^{n-1}_{d-m-n}))$ and $b_{i-1}( V_{(\R^*)^{n-1}}(\tilde{P}^{n-1}_{d-m-n}))=b_{i-1}( V_{(\R^*)^{n-1}}(P^{n-1}_{d-m-n}))$, which means that we can rewrite Inequality (\ref{FormulaConstructionsLowerBoundProof3}) as
\begin{align}\label{FormulaConstructionsLowerBoundProof4}
&s(G_m)\geq b_i(V_{(\R^*)^{n-1}}(\tilde{P}^{n-1}_{d-m-n}))+b_{i-1}( V_{(\R^*)^{n-1}}(\tilde{P}^{n-1}_{d-m-n})) - E(n-1) (d-m-n)^{n-2} = \nonumber\\
&b_i( V_{(\R^*)^{n-1}}(P^{n-1}_{d-m-n}))+b_{i-1}( V_{(\R^*)^{n-1}}(P^{n-1}_{d-m-n})) - E(n-1) (d-m-n)^{n-2} \geq \nonumber\\
& x_i^{n-1} \cdot (d-m-n)^{n-1} -\tilde{C}(d-m-n)^{n-2}+ x_{i-1}^{n-1} \cdot (d-m-n)^{n-1} -\tilde{C}(d-m-n)^{n-2}\nonumber\\
&-E(n-1) (d-m-n)^{n-2}= \\
& (x_i^{n-1} + x_{i-1}^{n-1})\cdot (d-m-n)^{n-1} - (2\tilde{C} + E(n-1))\cdot(d-m-n)^{n-2}=\nonumber\\
&(x_i^{n-1} + x_{i-1}^{n-1})\cdot (d-m-n)^{n-1} - C_2\cdot(d-m-n)^{n-2}\nonumber
\end{align}
by setting $C_2:=2\tilde{C} + E(n-1)$.

Now consider the image of the classes $\alpha_s^{F_k^m} $ and $\alpha_s^{G_m}$ (for all $s,k,m$ for which they were defined) in $H_i(v) $ (via the inclusion), where $v\subset S^n_d$ is as above a gluing of the charts of the polynomials of $\Sigma$. Similarly, consider the image of the axes $\beta_t^{F_m^k}$ and $\beta_t^{G_m}$ in $H_{n-1-i}(\mathring{S}^n_d\times U^n _\R \backslash v)$ via the inclusion (respectively, in $\ker (H_0 (\mathring{S}^n_d\times U^n _\R \backslash v) \longrightarrow H_0(\mathring{S}^n_d\times U^n _\R )$ if $n-1-i=0$). We keep the same notations for the images of the classes by the inclusion.

As each axis $\beta_t^{F_m^k}$ or $\beta_t^{G_m}$ is contained in the interior of  $\Delta(F^k_m)\times U^n_\R$ or $\Delta(G^+_m)\cup\Delta(G^-_m)\times U^n_\R$ and is a boundary in that interior, we can find for each a membrane also contained in that interior.
As the interiors of these polytopes are all disjoint (see Condition 4 of the Construction Proposition \ref{PropositionConstructionsMethod}), this shows that the linking number in $S^n_d\times U^n_\R$ of $\beta_t^{F_{m_1}^{k_2}}$ with any $\alpha_s^{F_{m_2}^{k_2}}$ is $\delta_{t,s}\delta_{m_1,m_2}\delta_{k_1,k_2}$, and its linking number with any $\alpha_s^{G_m}$ is $0$.
Similarly, the linking number in $S^n_d\times U^n_\R$ of $\beta_t^{G_{m_1}}$ with any $\alpha_s^{G_{m_2}}$ is $\delta_{t,s}\delta_{m_1,m_2}$, and its linking number with any $\alpha_s^{F^{k}_{m_2}}$ is $0$.

This shows that the elements of $B:=\{\beta_t^{F_m^k}| t=1,\ldots,r(F_m^k), m=0,\ldots,d-n-1, k=1,\ldots,n-2\}\cup\{\beta_t^{G_m}|t=1,\ldots,s(G_m), m=1,\ldots,d-n-1\}$ (with $B\subset H_{n-1-i}(\mathring{S}^n_d\times U^n _\R \backslash v)$, respectively $B\subset \ker (H_0 (\mathring{S}^n_d\times U^n _\R \backslash v) \longrightarrow H_0(\mathring{S}^n_d\times U^n _\R )$ if $n-1-i=0$) are axes to the elements of $A:=\{\alpha_t^{F_m^k}| t=1,\ldots,r(F_m^k), m=0,\ldots,d-n-1, k=1,\ldots,n-2\}\cup\{\alpha_t^{G_m}|t=1,\ldots,s(G_m), m=1,\ldots,d-n-1\}\subset H_i(v)$. In particular, this implies that $b_i(v)\geq |A|$.

We know that $v$ is homeomorphic to $Chart_{S^n_d\times U^n_\R}(Q^n_d) $, which is itself homotopy equivalent to $V_{(\R^*)^n}(Q^n_d)$. Finally, we know from Lemma \ref{LemmaConstructionsEquivalenceOuvertFerme} that there is a constant $C(n)$ (dependent only on $n$) such that $b_i(V_{\R\PP^n}(Q^n_d))\geq b_i(V_{(\R^*)^n}(Q^n_d)) - C(n)d^{n-1} $.
Hence we get that
\begin{align*}
&b_i(V_{\R\PP^n}(Q^n_d))\geq |A| - C(n)d^{n-1}=\\
&\sum_{m=1}^{d-n-1} s(G_m) + \sum_{m=0}^{d-n-1}\sum_{k=1}^{n-2}r(F^k_m) - C(n)d^{n-1}.
\end{align*}

Using the fact that for all $l,p\geq 1$, we have $\sum_{q=1}^p q^l \geq \frac{p^{l+1}}{l+1} - C_3(l)p^l $ for some constant $C_3(l)>0$, and going back to Inequality (\ref{FormulaConstructionsLowerBoundProof4}), we get
{\small
\begin{align*}
&\sum_{m=1}^{d-n-1} s(G_m) \geq 
\sum_{m=1}^{d-n-1} (x_i^{n-1} + x_{i-1}^{n-1})\cdot (d-m-n)^{n-1} - C_2\cdot(d-m-n)^{n-2} \geq \\
&\frac{x_i^{n-1} + x_{i-1}^{n-1}}{n}(d-n-1)^n - (x_i^{n-1} + x_{i-1}^{n-1})C_3(n-1)\cdot(d-n-1)^{n-1} - C_2\cdot(d-n-1)^{n-1}.
\end{align*} }
Observe that each $x_j^s$ is less than, or equal to $1$. Moreover, $(d-n)^n \geq (d-n-1)^n \geq d^n - C_4(n)d^{n-1}$ for some constant $C_4(n)>0$.
We can also set $C_5(n):=\frac{2}{n}C_4(n)+2C_3(n-1)+C_2$, and have
$$ \sum_{m=1}^{d-n-1} s(G_m) \geq \frac{x_i^{n-1} + x_{i-1}^{n-1}}{n}d^n - C_5(n) d^{n-1}. $$

Going back to Inequality (\ref{FormulaConstructionsLowerBoundProof2}), we get
\begin{align*}
&\sum_{m=0}^{d-n-1}\sum_{k=1}^{n-2}r(F^k_m) \geq 
\sum_{m=0}^{d-n-1}\sum_{k=1}^{n-2}\left(\left(\sum_{j=0}^{i-1} x_j^k x_{i-1-j}^{n-1-k}\right)\cdot (d-m-n)^{n-1} - \tilde{C}_1\cdot(d-m-1)^{n-2}\right)\\& \geq 
\sum_{k=1}^{n-2} \left( \left(\sum_{j=0}^{i-1} x_j^k x_{i-1-j}^{n-1-k}\right)  \left( \frac{1}{n}(d-n)^{n} - C_3(n-1)\cdot (d-n)^{n-1}  \right) -\tilde{C}_1\cdot ( d-1)^{n-1} \right).
\end{align*}
Set $C_6(n):=nC_4(n)+n^2 C_3(n-1)+n\tilde{C}_1$. We can write
\begin{align*}
&\sum_{m=0}^{d-n-1}\sum_{k=1}^{n-2}r(F^k_m) \geq 
\left( \sum_{k=1}^{n-2} \sum_{j=0}^{i-1} x_j^k x_{i-1-j}^{n-1-k}\frac{d^n}{n} \right) - C_6(n)d^{n-1}.
\end{align*}

Hence we can conclude that 
\begin{align*}
&b_i(V_{\R\PP^n}(Q^n_d))\geq  \sum_{m=1}^{d-n-1} s(G_m) + \sum_{m=0}^{d-n-1}\sum_{k=1}^{n-2}r(F^k_m) - C(n)d^{n-1}\geq \\
&\frac{d^n}{n}(x_i^{n-1} + x_{i-1}^{n-1} +\sum_{k=1}^{n-2} \sum_{j=0}^{i-1} x_j^k x_{i-1-j}^{n-1-k} ) - (C_5(n) +C_6(n) + C(n)) d^{n-1},
\end{align*}
which is what we wanted to prove.

The statement regarding asymptotic maximality is a direct application of Lemma \ref{LemmaConstructionsAsmptoticallyMaximalPreserve} below.

\end{proof}

\section{Asymptotically large Betti numbers  in arbitrary dimension and index}\sectionmark{Asymptotically large Betti numbers}\label{SectionAsymptoticallyLargeBettiNumbers}
Before proceeding with the proof of Theorems \ref{TheoremConstructionsApplicationThm} and \ref{TheoremConstructionsBonneAsymptotique}, we make in Subsection \ref{SubsectionConstructionsHodgeNumbersProperties} a few observations concerning Hodge numbers and their relations to some combinatorial concepts; there are indeed many connections between Hodge numbers of algebraic varieties and interesting objects in combinatorics, some of which can be found in M. Baker's survey \cite{HodgeCombinatoricsSurvey}. We also prove results which we later use in Subsection \ref{SubsectionConstructionsFirstConstruction} to show  that some families of real projective  algebraic hypersurfaces that we define using the Cooking Theorem have appropriately large asymptotic Betti numbers, thereby proving Theorems \ref{TheoremConstructionsApplicationThm} and \ref{TheoremConstructionsBonneAsymptotique}.

\subsection{Asymptotic Hodge numbers and combinatorics}\label{SubsectionConstructionsHodgeNumbersProperties}

Let $X_d^n$ be a smooth real algebraic hypersurface of degree $d$ in $\C \PP^n$. Then we have 
\begin{equation}\label{FormulaConstructionsAsymptoticHodgeNumbers}
h^{p,n-1-p}(\C X^n_d)=\sum_{i=0}^{n+1}(-1)^i 
\binom{n+1}{i}\binom{d(p+1)-(d-1)i-1}{n}+\delta_{n-1,2p}
\end{equation}
for $p=0,\ldots,n-1$, where $h^{p,n-1-p}(\C X^n_d)$ is the $(p,n-1-p)$-th Hodge number of $\C X^n_d$ and $\binom{k_1}{k_2}=0$ if $k_1<k_2$ (see \cite{DanilovKhovansky}).

Note that $\sum_{i=0}^{n+1}(-1)^i 
\binom{n+1}{i}\binom{d(p+1)-(d-1)i-1}{n}$ is also equal to the number of ordered ($n+1$)-partitions of $d(p+1)$ such that each of the summands belongs to $\{1,\ldots,d-1\}$. Indeed, the sum can be interpreted as the number of ordered ($n+1$)-partitions of $d(p+1)$ such that each of the summands is greater than or equal to $1$, minus the number of ordered ($n+1$)-partitions of $d(p+1)$ such that each of the summand is greater than or equal to $1$ and at least one of the summand is greater than or equal to $d$. This is expressed using the inclusion-exclusion formula applied to the sets $A_I$, where $A_I$ (for $I\subset \{1,\ldots,n+1\}$) is the set of ordered ($n+1$)-partitions of $d(p+1)$ such that each of the summand is greater than or equal to $1$ and the summands $a_j$ are greater than or equal to $d$ for any $j\in I$ (in the formula, $i$ corresponds to $|I|$).

The asymptotic behavior of such expressions is an interesting topic in itself, related to lattice paths, hypergeometric functions and some probabilistic notions (see for example \cite{BoundedPartitions}).

Note that a more geometric interpretation of Formula (\ref{FormulaConstructionsAsymptoticHodgeNumbers}) can also be given,  as  $\sum_{i=0}^{n+1}(-1)^i\binom{n+1}{i}\binom{d(p+1)-(d-1)i-1}{n}$ is equal to the number of interior integer points in the section of the cube $[0,d]^{n+1}\subset \R^{n+1}$ by the hyperplane $\{\sum_{i=0}^{n+1} x_i = (p+1)d\} \subset \R^{n+1}$. 

For $n\in \NN$, $p\in\{0,\ldots,n-1\}$ and $i\in\{0,\ldots,n-1\}$ given, the expression $\binom{d(p+1)-(d-1)i-1}{n} =\binom{d(p+1-i)+i-1}{n}  $ is a polynomial in $d$ of degree $n$ whose monomial of highest degree is $d^n \frac{(p+1-i)^n}{n!}$ if $i<p+1$, and a constant (in $d$) for $d$ large enough otherwise.
Hence, as $d$ goes to infinity, $h^{p,n-1-p}(\C X^n_d)$ is a polynomial in degree $n$ whose monomial of highest degree is
\begin{equation}\label{FormulaConstructionsAsymptoticHodgeNumbers2}
\frac{d^n}{n!}\left(\sum_{i=0}^{p}(-1)^i\binom{n+1}{i} (p+1-i)^n\right).
\end{equation}
As in Section \ref{SectionIntroduction}, we define $a_p^n := \frac{1}{n!}\left(\sum_{i=0}^{p}(-1)^i\binom{n+1}{i} (p+1-i)^n\right)$ for $p\in\{0,\ldots,n-1\}$. For convenience, we also define $a_p^n := 0$ for any $p\in \Z \backslash\{0,\ldots,n-1\} $. Observe that Formula (\ref{ComplexPartTotalHomology}) implies that $\sum_{p\in\Z} a_p^n =1$ for any $n\geq 1$.

The theory of Ehrhart polynomials tells us that the number of interior integer points in the section of the cube $[0,d]^{n+1}\subset \R^{n+1}$ by the hyperplane $\{\sum_{i=0}^{n+1} x_i = (p+1)d\}$ is a polynomial in $d$ of degree $n$ whose leading coefficient is equal to the $n$-volume of the section of the cube $[0,1]^{n+1}\subset \R^{n+1}$ by the hyperplane $\{\sum_{i=0}^{n+1} x_i = p+1\}$ normalized by the lattice volume of $\{\sum_{i=0}^{n+1} x_i = p+1\}$ (which is $\sqrt{n+1}$). In other words, we have
$$a_p^n=\frac{1}{\sqrt{n+1}}Vol_n\left([0,1]^{n+1}\cap  \left\{\sum_{i=0}^{n+1} x_i = p+1\right\}\right).$$
Interesting questions can be asked about the volumes of high dimensional polytopes obtained in similar ways (see for example \cite{PolytopeVolume}).

Consider also that the $(n,p)$-th Eulerian number $E(n,p)$ (which is equal to the number of permutations of the set $\{1,\ldots,n\}$ in which exactly $p$ elements are greater than the previous element) admits the explicit expression
\begin{equation}\label{FormulaConstructionsEulerianNumber}
E(n,p)=  \sum_{i=0}^{p}(-1)^i\binom{n+1}{i} (p+1-i)^n.
\end{equation}
Hence we have $a_p^n=\frac{1}{n!}E(n,p)$ for $n\geq1$ and $p\geq 0$.

A function $p:\Z \longrightarrow \R_{\geq 0}$ is called \textit{log-concave} if $p(m)^2\geq p(m-1)p(m+1)$ for all $m\in \Z$, and if its support is a contiguous interval, \textit{i.e.} if there exist $m_1,m_2\in \Z$ such that $m_1<m_2$, $p(m) = 0 $ for all $m\leq m_1$ and all $m\geq m_2$, and $p(m)>0$ for all $m_1<m<m_2$. The second condition is sometimes omitted.
As is shown, for example, in \cite{EulerianNumbersLogConcave} (Section 6.5), the sequence of Eulerian numbers $\{E(n,p)\}_{p\in \Z}$ (for a given $n\geq 1$) is symmetric in $\frac{n-1}{2}$ and log-concave; moreover, it is $0$ for $p<0$, strictly increasing from $p=0$ to $p=\frac{n-1}{2} $ and strictly decreasing from $p=\frac{n-1}{2} $ to $p=n-1$ for $n$ odd (respectively, strictly increasing from $p=0$ to $p=\frac{n}{2}-1 $, strictly decreasing from $p=\frac{n}{2} $ to $p=n-1$, and $ E(n,\frac{n}{2}-1)=E(n,\frac{n}{2})$ for $n$ even), and $0$ for $p>n-1$. This naturally implies the corresponding statements for the sequence $\{a^n_p\}_{p\in \Z}$.

Log-concavity is an interesting notion, though we make no use of it here; a survey of the properties of log-concave functions and sequences (some of which are related to algebraic geometry) can be found in \cite{SurveyLogConcave}.

We want to consider the second order central finite differences of the coefficients $a^n_p$, \textit{i.e.} the sequence
$$ D^2a^n_p:=a^n_{p+1} -2a^n_{p} +a^n_{p-1}, $$
as it proves to be a useful notion in Subsection \ref{SubsectionConstructionsFirstConstruction}.

Eulerian numbers are known to satisfy the recursive relation
\begin{equation*}
 E(n,p)=(n-p)E(n-1,p-1) + (p+1)E(n-1,p)  
\end{equation*}
for all $n\geq1$ and $p\in \Z$.

Hence we can see that 
\begin{align*}
&n!D^2a^n_p= E(n,p+1)-2E(n,p)+E(n,p-1) = \\ &((n-p-1)E(n-1,p)+(p+2)E(n-1,p+1))\\
&-2((n-p)E(n-1,p-1)+ (p+1)E(n-1,p))\\
&+ ((n-p+1)E(n-1,p-2) + pE(n-1,p-1)) = \\
&(n-p+1)(E(n-1,p)-2E(n-1,p-1)+E(n-1,p-2)) \\
&+(p+2)(E(n-1,p+1)-2E(n-1,p)+E(n-1,p-1))=\\
&(n-p+1)(n-1)!D^2a^{n-1}_{p-1}+(p+2)(n-1)!D^2a^{n-1}_p.
\end{align*}
Therefore the finite difference $D^2a_p^n$ satisfies the following recursive relation:
\begin{align}\label{FormulaConstructionsRecursiveD2}
 D^2a^n_p  =\frac{n-p+1}{n}D^2a^{n-1}_{p-1}+\frac{p+2}{n}D^2a^{n-1}_p . 
\end{align}

Interestingly (though we make no use of that fact), we also see that the finite differences $E(n,p+1)-2E(n,p)+E(n,p-1)$ obey the same recursive relation as the Eulerian numbers themselves, up to a shift in parameter $p\rightarrow p+1$.

We already know that for $n\geq 1$, the sequence $\{D^2a^n_p\}_{p\in\Z}$ is symmetric in $\frac{n-1}{2}$. The following lemma gives us more precise information: 

\begin{lemma}\label{LemmaConstructionsSigneDifferenceFinie}
Let $n\geq 3$. If $n$ is odd (respectively, even), there exists $0\leq \tilde{p}_n< \frac{n-1}{2}$ (respectively, $0\leq \tilde{p}_n< \frac{n}{2}-1$) such that the sequence $\{D^2a^n_p\}_{p\in\Z}$ satisfies:
\begin{enumerate}
    \item $D^2a^n_p =0$ for $p\leq-2$ and $p\geq n+1$.
    \item $D^2a^n_p>0$ for $-1\leq p \leq \tilde{p}_n$ and $ n-1-\tilde{p}_n\leq p \leq n$.
    \item $D^2a^n_p\leq 0 $ for $p\in\{\tilde{p}_n+1, n-2-\tilde{p}_n\}$.
    \item $D^2a^n_p<0$ for $\tilde{p}_n+2\leq p \leq n-3-\tilde{p}_n$.
\end{enumerate}
Moreover, there cannot be two consecutive integers $n\geq 3$ such that $D^2a^n_{\tilde{p}_n+1}=D^2a^n_{n-2-\tilde{p}_n} = 0$.
\end{lemma}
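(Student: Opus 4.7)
The plan is to argue by induction on $n \geq 3$, with the base case $n = 3$ reduced to direct computation from the values of $a^3_0, a^3_1, a^3_2$ (yielding $\tilde{p}_3 = 0$). For the inductive step, the central tool is the recursive relation (\ref{FormulaConstructionsRecursiveD2}), whose coefficients $\frac{n-p+1}{n}$ and $\frac{p+2}{n}$ are strictly positive for $-1 \leq p \leq n$; the other essential ingredient is the strict unimodality of the Eulerian sequence $\{E(n,p)\}_{p}$.

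Property (1) is immediate from the support of $a^n_p$. For property (2), the induction hypothesis together with the positivity of the recursion coefficients yields $D^2a^n_p > 0$ whenever $-1 \leq p \leq \tilde{p}_{n-1}$, so $\tilde{p}_n \geq \tilde{p}_{n-1}$. The value $D^2a^n_{\tilde{p}_{n-1}+1}$ is a combination of one strictly positive and one non-positive term, and I will define $\tilde{p}_n$ by case split on its sign: $\tilde{p}_n := \tilde{p}_{n-1}+1$ if it is strictly positive, $\tilde{p}_n := \tilde{p}_{n-1}$ otherwise. Property (4) then follows because, for $\tilde{p}_n+2 \leq p \leq n-3-\tilde{p}_n$, both $D^2a^{n-1}_{p-1}$ and $D^2a^{n-1}_p$ are non-positive with at least one strictly negative (by the induction hypothesis for property (4)), so the recursion gives $D^2a^n_p < 0$. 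Property (3) at the two symmetric transition indices is then a direct consequence of the case definition of $\tilde{p}_n$, combined with the symmetry $D^2a^n_p = D^2a^n_{n-1-p}$ inherited from $a^n_p = a^n_{n-1-p}$.

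The strict upper bound on $\tilde{p}_n$ will be obtained by an explicit computation at the central index: for $n$ odd, the symmetry gives $D^2a^n_{(n-1)/2} = 2(a^n_{(n-3)/2} - a^n_{(n-1)/2}) < 0$, and for $n$ even, $D^2a^n_{n/2-1} = a^n_{n/2-2} - a^n_{n/2-1} < 0$, both by the strict increase of the Eulerian sequence toward its peak. These strict inequalities force $\tilde{p}_n + 1$ to lie strictly below the central index, yielding the claimed bounds $\tilde{p}_n < (n-1)/2$ and $\tilde{p}_n < n/2 - 1$ respectively.

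Finally, to rule out the ``consecutive zero'' configuration, suppose $D^2a^n_{\tilde{p}_n+1} = 0$. The previous paragraph forces $\tilde{p}_n+1$ to be strictly below the central index, so the interval of property (4) is non-empty and in particular contains $\tilde{p}_n+2$, whence $D^2a^n_{\tilde{p}_n+2} < 0$. Then the recursion gives $D^2a^{n+1}_{\tilde{p}_n+1} = \frac{n-\tilde{p}_n+1}{n+1}\,D^2a^n_{\tilde{p}_n} > 0$ (the second summand vanishing) and $D^2a^{n+1}_{\tilde{p}_n+2} = \frac{\tilde{p}_n+4}{n+1}\,D^2a^n_{\tilde{p}_n+2} < 0$ (the first summand vanishing), which forces $\tilde{p}_{n+1} = \tilde{p}_n+1$ and $D^2a^{n+1}_{\tilde{p}_{n+1}+1} < 0$, in particular nonzero. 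I expect the main obstacle to be the careful case bookkeeping for small $n$, where the strictly negative interior of property (4) may be empty and the two transition indices of property (3) may coincide at the central index, requiring separate verification (via the strict unimodality argument above) that those boundary values are automatically nonzero.
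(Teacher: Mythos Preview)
Your proposal is correct and follows essentially the same approach as the paper: induction on $n$ with base case $n=3$, the recursion (\ref{FormulaConstructionsRecursiveD2}) with its positive coefficients as the engine of the inductive step, a case split on the sign at the transition index to define $\tilde{p}_n$ (your split on the sign of $D^2a^n_{\tilde{p}_{n-1}+1}$ is logically equivalent to the paper's two-stage split on $D^2a^{n-1}_{\tilde{p}_{n-1}+1}$ and then $D^2a^n_{\tilde{p}_{n-1}+1}$), and the strict negativity at the central index via unimodality to get the upper bound on $\tilde{p}_n$. Your forward-direction argument for the ``no consecutive zeros'' claim is the same computation as the paper's backward-direction version, and your anticipated edge-case bookkeeping (empty interval in property~(4) for small $n$) is exactly what the paper leaves implicit.
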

\begin{proof}
We proceed by induction. The case $n=3$ can be directly computed (we have $a_0^3=a_2^3=\frac{1}{6}$ and $a_1^3=\frac{2}{3}$).

Suppose the statement true for $n-1$, and express $D^2a^n_p$ as $\frac{n-p+1}{n}D^2a^{n-1}_{p-1}+\frac{p+2}{n}D^2a^{n-1}_p $ using Formula (\ref{FormulaConstructionsRecursiveD2}).
By symmetry, we only need to consider $p\leq \frac{n-1}{2}$.
Condition 1 is clearly satisfied.

If $D^2a^{n-1}_{\tilde{p}_{n-1}+1}= 0 $, we set $\tilde{p}_n =\tilde{p}_{n-1}+1$. Then we have $D^2a^n_p>0$  for $-1\leq p \leq \tilde{p}_n$ and $ n-1-\tilde{p}_n\leq p \leq n$, and $D^2a^n_p<0$ for $\tilde{p}_n+1<p<n-2-\tilde{p}_n$. Note in particular that $D^2a^n_{\tilde{p}_n+1}=D^2a^n_{n-2-\tilde{p}_n} \neq 0$.

\begin{figure}
\begin{center}
\includegraphics[scale=1]{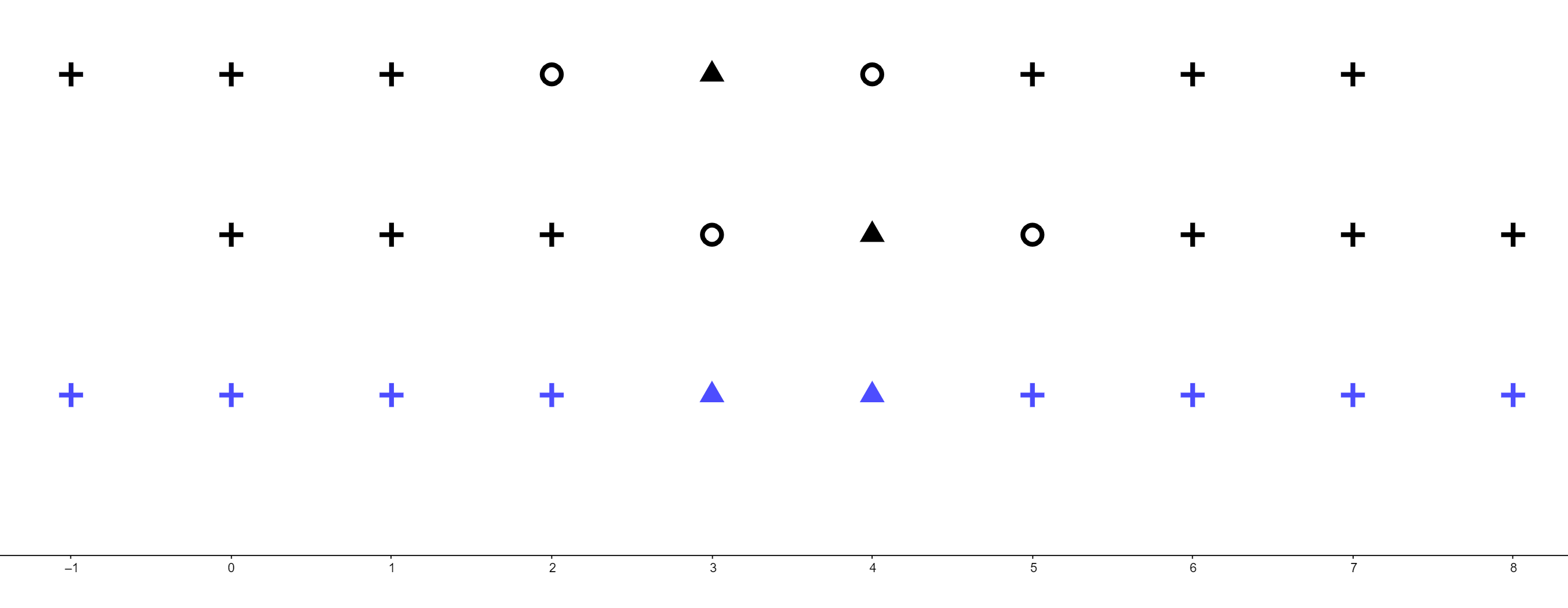}
\includegraphics[scale=1]{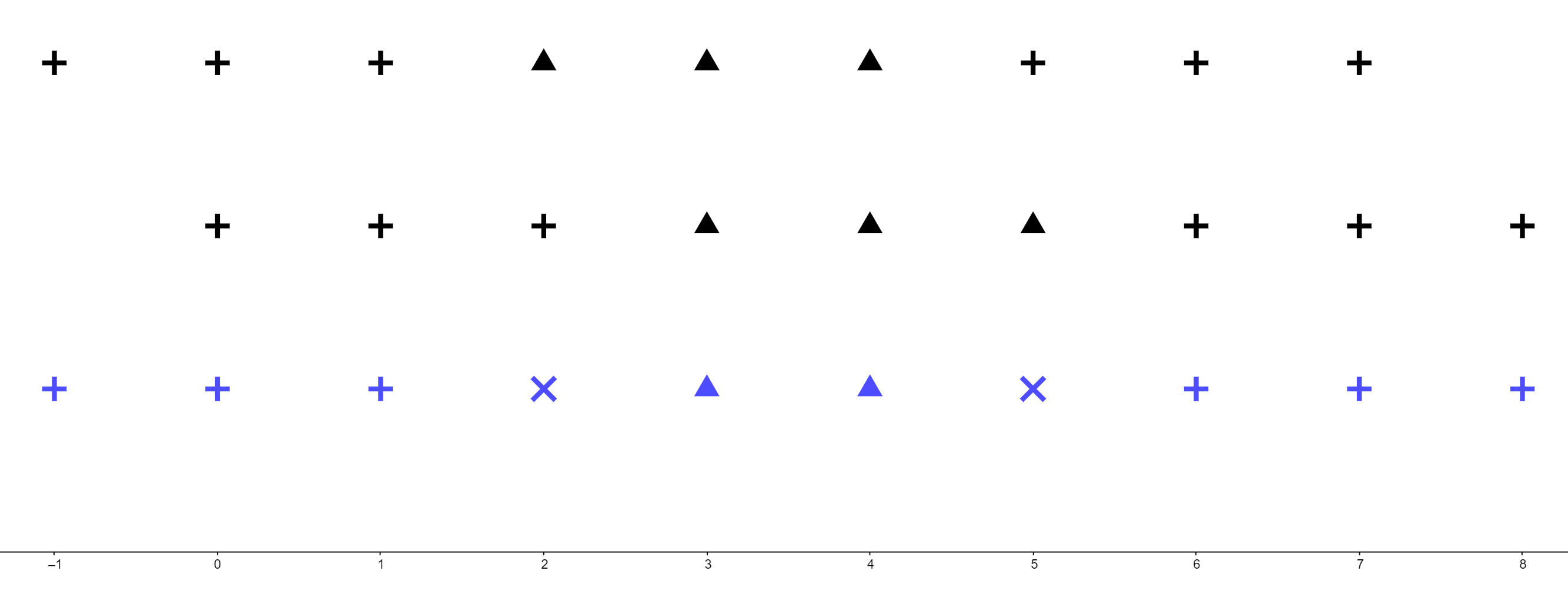}
\end{center}
\caption{In this figure, the $+$ signs correspond to a strictly positive sign, triangles to a strictly negative sign, the circles to $0$ and $\times$ to any of the three options. The first line represents the signs of the coefficients $\{\frac{n-p+1}{n}D^2a^{n-1}_p\}_{p\in\Z}$, the second line those of the coefficients $\{\frac{p+2}{n}D^2a^{n-1}_p\}_{p\in\Z}$ and the third line those of their sums, the coefficients $\{D^2a^n_p\}_{p\in\Z}$, in the case where $D^2a^{n-1}_{\tilde{p}_{n-1}+1}= 0$.
The three lines below depict the case where $D^2a^{n-1}_{\tilde{p}_{n-1}+1}< 0$.}
\label{FigureConstructionsSignesDifferentielle}
\end{figure}

If $D^2a^{n-1}_{\tilde{p}_{n-1}+1}< 0 $, set $\tilde{p}_n =\tilde{p}_{n-1}+1$ if $D^2a^{n}_{\tilde{p}_{n-1}+1} = \frac{n-\tilde{p}_{n-1}}{n}D^2a^{n-1}_{\tilde{p}_{n-1}}+\frac{\tilde{p}_{n-1}+3}{n}D^2a^{n-1}_{\tilde{p}_{n-1}+1}>0 $ and set $\tilde{p}_n =\tilde{p}_{n-1}$ if $D^2a^{n}_{\tilde{p}_{n-1}+1}\leq 0 $. Then we have $D^2a^n_p>0$  for $-1\leq p \leq \tilde{p}_n$, $D^2a^n_p\leq0$ for $p=\tilde{p}_n+1$, and $D^2a^n_p<0$ for $\tilde{p}_n+2\leq p \leq n-3-\tilde{p}_n$. This is illustrated in  Figure \ref{FigureConstructionsSignesDifferentielle}, which helps visualize how the signs of $\{D^2a^n_p\}_{p\in\Z}$ are computed from those of $\{D^2a^{n-1}\}_{p\in\Z}$ (the figure is meant to be a schematic representation, and does not necessarily correspond to any given $n$).

In both cases, we have $\tilde{p}_n\geq \tilde{p}_{n-1}\geq 0$, and $\tilde{p}_n$ is necessarily strictly smaller than $\frac{n}{2}-1$, as $D^2a^n_{\tilde{p}_n}>0$ and $D^2a^n_{\frac{n}{2}-1}<0$ if $n$ is even, as we know that $a^n_{\frac{n}{2}-2}<a^n_{\frac{n}{2}-1}=a^n_{\frac{n}{2}}>a^n_{\frac{n}{2}+1}$ (respectively, $D^2a^n_{\frac{n-1}{2}}<0$ if $n$ is odd, as we know that $a^n_{\frac{n-1}{2}-1}<a^n_{\frac{n-1}{2}}>a^n_{\frac{n-1}{2}+1}$). 

\end{proof}

\begin{remark}
Computations carried out on a computer suggest that $D^2a^n_{p}$ is in fact never $0$ between $-1$ and $n$; it admits two (symmetric) global maxima, and one global minimum in $p=\frac{n-1}{2}$ if $n$ is odd (respectively, two global minima in $p=\frac{n}{2}-1, \frac{n}{2}$ if $n$ is even).

It should be possible to prove this directly, though it does not appear to be a direct consequence of the log-concavity of the sequence.
\end{remark}


\begin{remark}\label{RemarkConstructionsAsymptoticsSecondDifferential}
We can describe more precisely the asymptotic behaviour of $D^2a^{n}_{p}$, though only its sign matters to us. Indeed, for any $s\in \R$, we have 
\begin{equation}
D^2a^{n}_{\left \lfloor {\frac{n-1}{2} + s\sqrt{n}} \right \rfloor}= \sqrt{\frac{6}{\pi}}\frac{12}{(n+1)^{\frac{3}{2}}} \exp\left(-6s^2\right)(12s^2-1) + \mathcal{O}\left({n^{-\frac{5}{2}}}\right).
\end{equation}
This can be proved by writing out  $D^2a^n_p= a^n_{p+1}-2a^n_p+a^n_{p-1}$ and by using a precise enough asymptotic estimate of the coefficients $a^n_p$. Such an estimate can for example be obtained via a small modification of the proof of Theorem 3.1 from \cite{BSplines}.
\end{remark}

Before ending this subsection, we formulate one last recursive relation related to the coefficients $a^n_p$ (for $n\geq 1$ and $p=0,\ldots,n-1 $), which we need later on. It can be immediately deduced from Itenberg's and Viro's work in \cite{IV} that we have 
\begin{equation}\label{FormulaConstructionsRelationRecursive}
a^n_p   =   \frac{1}{n}\left(a_p^{n-1}+a_{p-1}^{n-1} +\sum_{k=1}^{n-2}\sum_{j=0}^{p-1} a_j^k \cdot a_{p-1-j}^{n-1-k}\right) .
\end{equation}
Observe that this means that if every family of polynomials $\{P^k_d\}_{d\in\NN}$ in the Cooking Theorem \ref{TheoremConstructionsMainTheoremConstructions} is asymptotically standard, \textit{i.e.}  $b_i(V_{\R\PP ^k}(P^k_d))  \overset{k}{=} a_i^k\cdot d^k$, then so is $\{Q^n_d\}_{d\in\NN}$ (the asymptotic inequality in Formula (\ref{FormulaConstructionsMainFormula}) must be an equality because of the Smith-Thom inequality). This is the case considered in \cite{IV}.

In Theorem \ref{TheoremConstructionsBonneAsymptotique} below, we compute the asymptotics of various families of coefficients obeying the same recursive relation (\ref{FormulaConstructionsRelationRecursive}) as the coefficients $a^n_p$ for $n$ large enough, but with different initial parameters.

\subsection{Notations and known results}\label{SubsectionConstructionsNotationsAndKnownResults}
In this subsection, we define some notations, prove a useful lemma, and quote the results from Brugall\'e, Itenberg and Viro that will provide us with the main ingredients for our constructions.

When considering in what follows the asymptotic Betti numbers of the projective hypersurfaces associated to a family of completely nondegenerate real Laurent polynomials $\{Q^n_d\}_{d\in \NN}$, it is slightly more convenient to use the following notation: 
if $b_i(V_{\R\PP ^n}(Q^n_d))  \overset{n}{\geq} x_i^n\cdot d^n$ for some $x_i^n \geq 0$, we instead write $b_i(V_{\R\PP ^n}(Q^n_d))  \overset{n}{\geq} (a_i^n + t_i^n)\cdot d^n$, where $t_i^n=x_i^n-a_i ^n$.

If we rewrite the statement of Theorem \ref{TheoremConstructionsMainTheoremConstructions} with that convention, we get that for families $\{P^k_{d}\}_{d\in \NN}$ of completely nondegenerate real Laurent polynomials in $k$ variables (for $k=1,\ldots,n-1$), such that $P^k_d$ is of degree $d$ and that the Newton polytope $\Delta(P^k_d)$ of $P^k_d$ is $S^k_d$ and such that for $i=0,\ldots,k-1$,
 $$b_i(V_{\R\PP ^k}(P^k_d))\overset{k}{\geq} (a_i^k + t_i^k) \cdot d^k$$
for some $t_i^k \in \R$, there exists a family $\{Q^n_d\}_{d\in\NN}$ of completely nondegenerate real Laurent polynomials in $n$ variables such that $\Delta(Q^n_d)=S^n_d$ and such that for $i=0,\ldots,n-1$ 
\begin{equation}\label{FormulaConstructionsSommeResultante}
b_i(V_{\R\PP ^n}(Q^n_d))   \overset{n}{\geq}  \left( a^n_i + \frac{1}{n}\left(t_i^{n-1}+t_{i-1}^{n-1} +\sum_{k=1}^{n-2}\sum_{j=0}^{i-1} 2(t_j^k \cdot a_{i-1-j}^{n-1-k}) + t_j^k \cdot t_{i-1-j}^{n-1-k}\right)  \right) \cdot d^n,
\end{equation}
where $t_j^k$ is set to be $0$ for $j\not \in \{0,\ldots,k-1\}$.

Remember that a family of real smooth algebraic projective hypersurfaces $\{X^n_d\}_{d\in \NN}$ is aymptotically maximal if $b_*(\R X^n_d) \overset{n}{=} b_*(\C X^n_d) $. In particular, if $b_i(\R X^n_d)   \overset{n}{\geq}  \left( a^n_i + t^n_i  \right) \cdot d^n$ and $\sum_{i=0}^{n-1} t ^n_i =0$, we have $b_*(\R X^n_d)   \overset{n}{\geq} \sum_{i=0}^{n-1} \left( a^n_i + t^n_i  \right) \cdot d^n = d
^n \overset{n}{=} b_*(\C X^n_d) $, hence  $b_*(\R X^n_d) \overset{n}{=} b_*(\C X^n_d) $ (because of the Smith-Thom inequality) and $\{X^n_d\}_{d\in \NN}$ is asymptotically maximal.

We can now prove the following lemma regarding asymptotic maximality :
\begin{lemma}\label{LemmaConstructionsAsmptoticallyMaximalPreserve}
Let $n\geq 2$. For $k=1,\ldots,n-1$, let $\{P^k_{d}\}_{d\in \NN}$ be a family of completely nondegenerate real Laurent polynomials in $k$ variables such that $P^k_d$ is of degree $d$ and that the Newton polytope $\Delta(P^k_d)$ of $P^k_d$ is $S^k_d$ and such that for $i=0,\ldots,k-1$,
 \begin{equation}\label{FormulaConstructionsLemmePreservationAsymptoticite}
     b_i(V_{\R\PP ^k}(P^k_d))\overset{k}{\geq} (a_i^k + t_i^k) \cdot d^k
 \end{equation}
for some $t_i^k \in \R$, as in the Cooking Theorem \ref{TheoremConstructionsMainTheoremConstructions}.
Let $\{Q^n_d\}_{d\in \NN}$ be a family of polynomials cooked with the ingredients $\{P^k_{d}\}_{d\in \NN}$ using the Cooking Theorem.

Suppose additionally that each family $\{P^k_{d}\}_{d\in \NN}$ is such that  $\sum_{i=0}^{k-1} t ^k_i =0$, hence the associated family of projective hypersurfaces $\{V_{\PP ^k}(P^k_{d})\}_{d\in \NN}$ is asymptotically maximal. Then the sum over $i=0,\ldots,n-1$ of the coefficients
$$\frac{1}{n}\left(t_i^{n-1}+t_{i-1}^{n-1} +\sum_{k=1}^{n-2}\sum_{j=0}^{i-1} 2(t_j^k \cdot a_{i-1-j}^{n-1-k}) + t_j^k \cdot t_{i-1-j}^{n-1-k}\right) $$
from Formula \ref{FormulaConstructionsSommeResultante} is also $0$; in particular, the family of hypersurfaces $\{V_{\PP ^k}(Q^n_{d})\}_{d\in \NN}$ associated to $\{Q^n_d\}_{d\in \NN}$ is also asymptotically maximal, and the asymptotic inequality (\ref{FormulaConstructionsLemmePreservationAsymptoticite}) is an asymptotic equality.
\end{lemma}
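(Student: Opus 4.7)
The plan is a direct algebraic manipulation of the sum, followed by a Smith-Thom sandwiching argument. First I would observe that the quantity to be computed splits naturally into three pieces:
\[
\sum_{i=0}^{n-1}\frac{1}{n}\left(t_i^{n-1}+t_{i-1}^{n-1}\right),\qquad \sum_{i=0}^{n-1}\frac{2}{n}\sum_{k=1}^{n-2}\sum_{j=0}^{i-1} t_j^k\, a_{i-1-j}^{n-1-k},\qquad \sum_{i=0}^{n-1}\frac{1}{n}\sum_{k=1}^{n-2}\sum_{j=0}^{i-1} t_j^k\, t_{i-1-j}^{n-1-k}.
\]
For the first piece, the convention $t_j^{n-1}=0$ for $j\notin\{0,\ldots,n-2\}$ reduces the sum to $\tfrac{2}{n}\sum_{i=0}^{n-2} t_i^{n-1}$, which vanishes by the asymptotic maximality of $\{P^{n-1}_d\}_{d\in\NN}$.

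For the second and third pieces, I would swap the order of summation: letting $m=i-1-j$, the inner double sum $\sum_{i=0}^{n-1}\sum_{j=0}^{i-1}$ becomes (after accounting for the vanishing of $t_j^k$ outside $\{0,\ldots,k-1\}$ and of $a_m^{n-1-k}$ outside $\{0,\ldots,n-2-k\}$) a product $\left(\sum_{j} t_j^k\right)\left(\sum_{m} a_m^{n-1-k}\right)$, respectively $\left(\sum_j t_j^k\right)\left(\sum_m t_m^{n-1-k}\right)$. Since $\sum_j t_j^k = 0$ for every $k$ by hypothesis, both inner products vanish, so each of the last two pieces is zero. This establishes that the total correction sum is $0$.

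Having shown this, the asymptotic maximality of $\{V_{\PP^n}(Q^n_d)\}_{d\in\NN}$ follows by sandwiching: the Cooking Theorem gives $b_i(V_{\R\PP^n}(Q^n_d))\overset{n}{\geq}(a_i^n+s_i^n)\cdot d^n$ where $s_i^n$ denotes the coefficient appearing in \eqref{FormulaConstructionsSommeResultante}; summing over $i$ and using both $\sum_i a_i^n = 1$ (which follows from \eqref{ComplexPartTotalHomology}) and the vanishing $\sum_i s_i^n = 0$ just proved, we obtain $\sum_i b_i(V_{\R\PP^n}(Q^n_d))\overset{n}{\geq} d^n$. Combined with the Smith-Thom inequality \eqref{SmithThom} and \eqref{ComplexPartTotalHomology}, which give $\sum_i b_i(V_{\R\PP^n}(Q^n_d))\overset{n}{\leq} d^n$, we conclude asymptotic maximality.

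Finally, to upgrade the asymptotic inequality $b_i(V_{\R\PP^n}(Q^n_d))\overset{n}{\geq}(a_i^n+s_i^n)\cdot d^n$ to an equality, I would argue by contradiction: if some index $i_0$ satisfied a strict asymptotic inequality, summing over all $i$ would produce $\sum_i b_i(V_{\R\PP^n}(Q^n_d))$ strictly exceeding $d^n$ asymptotically, contradicting Smith-Thom. There is no real obstacle in this lemma; the only mildly delicate point is keeping track of the boundary conventions $t_j^k=a_j^k=0$ outside $\{0,\ldots,k-1\}$ when re-indexing the double sums, but once the change of variables $m=i-1-j$ is made the telescoping is transparent.
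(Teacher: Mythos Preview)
Your proposal is correct and takes essentially the same approach as the paper: split the correction term into pieces, use the vanishing conventions to extend the $i$-sums over all of $\Z$, swap the order of summation so that a factor $\sum_j t_j^k$ appears, and conclude by Smith--Thom. The paper's write-up is slightly terser (it groups the second and third pieces together and computes the inner $i$-sum as $2+0$ before invoking $\sum_j t_j^k=0$), but the argument is the same.
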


\begin{proof}
We know that for $i=0,\ldots, n-1$,
\begin{equation*}
b_i(V_{\R\PP ^n}(Q^n_d))   \overset{n}{\geq}  \left( a^n_i + \frac{1}{n}\left(t_i^{n-1}+t_{i-1}^{n-1} +\sum_{k=1}^{n-2}\sum_{j=0}^{i-1} 2(t_j^k \cdot a_{i-1-j}^{n-1-k}) + t_j^k \cdot t_{i-1-j}^{n-1-k}\right)  \right) \cdot d^n,
\end{equation*}
where $t_j^k$ and $a_j^k$ are set to be $0$ for $j\not \in \{0,\ldots,k-1\}$.
Set
$$t^n_i := \frac{1}{n}\left(t_i^{n-1}+t_{i-1}^{n-1} +\sum_{k=1}^{n-2}\sum_{j=0}^{i-1} 2(t_j^k \cdot a_{i-1-j}^{n-1-k}) + t_j^k \cdot t_{i-1-j}^{n-1-k}\right) $$ 
for $i\in \Z$; observe that $t^n_i=0$ if $i\not \in \{0,\ldots,n-1\}$.

Showing that $\sum_{i=0}^{n-1} t ^n_i=\sum_{i\in\Z} t ^n_i =0$ is enough to conclude (using as above the Smith-Thom inequality).

Indeed, we have
\begin{align*}
&\sum_{i=0}^{n-1} t ^n_i =\sum_{i\in\Z}  \frac{1}{n}\left(t_i^{n-1}+t_{i-1}^{n-1} +\sum_{k=1}^{n-2}\sum_{j=0}^{i-1} 2(t_j^k \cdot a_{i-1-j}^{n-1-k}) + t_j^k \cdot t_{i-1-j}^{n-1-k}\right) = \\
& \frac{1}{n}\left(\left(\sum_{i\in\Z}t_i^{n-1}\right)+\left(\sum_{i\in\Z}t_{i-1}^{n-1} \right)+ \sum_{k=1}^{n-2}\sum_{j\in \Z} t_j^k \cdot (2\sum_{i\in\Z} a_{i-1-j}^{n-1-k}+\sum_{i\in\Z}t_{i-1-j}^{n-1-k})\right)=\\
& \frac{1}{n}\left(0+0 + \sum_{k=1}^{n-2}\sum_{j\in \Z} t_j^k \cdot (2+0)\right)=
\frac{2}{n} \sum_{k=1}^{n-2}\sum_{j\in \Z} t_j^k =0.
\end{align*}
\end{proof}

We now quote (using our notations) two results that attest the existence of families of polynomials which we later use as ingredients for the Cooking Theorem to prove Theorem \ref{TheoremConstructionsApplicationThm}.
The first one was proved by Itenberg and Viro in \cite{IV}, and alluded to in Section \ref{SectionIntroduction}.

\begin{thm}[Itenberg, Viro]\label{TheoremConstructionsFamilleAsymptotiqueItenbergViro}
Let $k\geq1$. There exists a family $\{I^k_{d}\}_{d\in \NN}$ of completely nondegenerate real Laurent polynomials in $k$ variables obtained by combinatorial Patchwork such that $I^k_d$ is of degree $d$ and that the Newton polytope $\Delta(I^k_d)$ of $I^k_d$ is $S^k_d$ and such that for $i=0,\ldots,k-1$,
$$b_i(V_{\R\PP ^k}(I^k_d))\overset{k}{=} a_i^k \cdot d^k.$$
In particular, the families $\{V_{\PP ^k}(I^k_d)\}_{d\in \NN}$ are asymptotically maximal.
\end{thm}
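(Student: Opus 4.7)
The plan is to prove this by induction on $k$, using the Cooking Theorem \ref{TheoremConstructionsMainTheoremConstructions} as the inductive engine and the recursive identity (\ref{FormulaConstructionsRelationRecursive}) to identify the Hodge asymptotics with the output of the recursion. The fact that the Cooking Theorem was modelled on the Itenberg--Viro construction is precisely what makes this approach work: once one recognizes that the coefficients $a_i^n$ satisfy the same recursion as the lower bound produced by the Cooking Theorem, the theorem becomes essentially a formal consequence of the machinery already set up in the paper.

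\textbf{Base case} ($k=1$): Here $a_0^1 = 1$, so I need a family of polynomials of degree $d$ with Newton polytope $[0,d]$ whose number of real roots is asymptotically $d$. I would triangulate $[0,d]$ into the unit intervals $[i,i+1]$ for $i=0,\ldots,d-1$, assign alternating signs $+,-,+,-,\ldots$ to the integer vertices, and patchwork the resulting linear pieces using Viro's Main Patchwork Theorem \ref{TheoremPatchworkMainPatchworkTHM}. The resulting polynomial $I^1_d$ is completely nondegenerate, is obtained by combinatorial Patchwork by construction, and has a real root in each interval $(i,i+1)$, giving $b_0(V_{\R\PP^1}(I^1_d)) = d = a_0^1 \cdot d$.

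\textbf{Inductive step}: Assume families $\{I^j_d\}_d$ satisfying the conclusion have been built for $j=1,\ldots,n-1$ via combinatorial Patchwork. Apply the Cooking Theorem with ingredients $P^k_d := I^k_d$ and lower bounds $x_i^k := a_i^k$, which by the inductive hypothesis are valid. The Cooking Theorem yields a family $\{Q^n_d\}_d$, obtained by combinatorial Patchwork, with $\Delta(Q^n_d)=S^n_d$ and
\begin{equation*}
b_i(V_{\R\PP^n}(Q^n_d)) \overset{n}{\geq} \frac{1}{n}\left(a_i^{n-1}+a_{i-1}^{n-1} +\sum_{k=1}^{n-2}\sum_{j=0}^{i-1} a_j^k \cdot a_{i-1-j}^{n-1-k}\right) \cdot d^n = a_i^n \cdot d^n,
\end{equation*}
the final equality being exactly (\ref{FormulaConstructionsRelationRecursive}). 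Set $I^n_d := Q^n_d$.

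\textbf{Upgrading to equality}: Summing the asymptotic lower bounds over $i$ and invoking Smith--Thom (\ref{SmithThom}) together with (\ref{ComplexPartTotalHomology}) and $\sum_{i} a_i^n = 1$ gives
\begin{equation*}
\sum_i a_i^n \cdot d^n \overset{n}{\leq} \sum_i b_i(V_{\R\PP^n}(I^n_d)) \leq \sum_i b_i(V_{\C\PP^n}(I^n_d)) \overset{n}{=} d^n = \sum_i a_i^n \cdot d^n,
\end{equation*}
forcing all inequalities to be asymptotic equalities, hence $b_i(V_{\R\PP^n}(I^n_d)) \overset{n}{=} a_i^n \cdot d^n$ for each $i$ and, in particular, asymptotic maximality. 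The only genuinely nontrivial point in this scheme is the recursive identity (\ref{FormulaConstructionsRelationRecursive}) for $a_p^n$, which the author attributes directly to \cite{IV}; a self-contained verification could be obtained either from the closed form (\ref{FormulaConstructionsAsymptoticHodgeNumbers2}), from the Eulerian number recurrence underlying (\ref{FormulaConstructionsEulerianNumber}), or by interpreting $a_p^n$ as the normalized volume of a hypercube slice and matching the slice decomposition to the join--and--suspension structure produced by the Cooking Theorem. This identity is the arithmetic heart of the statement, and verifying it is the main obstacle in any approach that does not simply quote the original construction of Itenberg and Viro.
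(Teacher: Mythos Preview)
Your proof is correct, but note that the paper does not give its own proof of this statement: it is quoted as a result of Itenberg and Viro \cite{IV}, with no argument supplied. What you have written is therefore not a comparison target but an independent reconstruction.

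That said, the paper does explicitly remark, immediately after Formula~(\ref{FormulaConstructionsRelationRecursive}), that your inductive scheme works: ``if every family of polynomials $\{P^k_d\}_{d\in\NN}$ in the Cooking Theorem \ref{TheoremConstructionsMainTheoremConstructions} is asymptotically standard \ldots\ then so is $\{Q^n_d\}_{d\in\NN}$ \ldots\ This is the case considered in \cite{IV}.'' So your approach is the one the author has in mind as the natural internal rederivation, and the Smith--Thom squeeze you use to upgrade $\overset{n}{\geq}$ to $\overset{n}{=}$ is exactly the mechanism invoked elsewhere in the paper (e.g.\ Lemma~\ref{LemmaConstructionsAsmptoticallyMaximalPreserve}). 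The only residual dependence on \cite{IV} is, as you correctly flag, the identity~(\ref{FormulaConstructionsRelationRecursive}) itself; your suggested routes to an independent verification (Eulerian recursion or the hypercube-slice volume decomposition) are both viable, so the argument can be made fully self-contained if desired.
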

The hypersurfaces associated to the polynomials $I^k_d$ are asymptotically standard in the same sense as above. They serve as ``neutral" ingredients in what follows, in that they do not contribute to any difference from the asymptotically standard case.

The second result comes from \cite{Brugalle2006}, where Brugall{\'e} builds two families of completely nondegenerate real Laurent polynomials in $3$ variables such that the associated surfaces in $\PP ^3$ have exceptionally large asymptotic $b_0$ (respectively, $b_1$) using a method from Bihan's \cite{BihanAsymptotic}.

\begin{thm}[Brugall{\'e}]\label{TheoremConstructionsFamilleAsymptotiqueBrugalle}
There exist families $\{B^+_{d}\}_{d\in \NN}$ and $\{B^-_{d}\}_{d\in \NN}$ of completely nondegenerate real Laurent polynomials in $3$ variables such that $B^\pm_d$ is of degree $d$ and that the Newton polytope $\Delta(B^\pm_d)$ of $B^\pm_d$ is $S^3_d$ and such that
$$b_0(V_{\R\PP ^3}(B^+_d))\overset{3}{=} \frac{3}{8} \cdot d^3 =\left(\frac{1}{6}+ \frac{5}{24}\right) \cdot d^3 =\left(a^3_0+ \frac{5}{24}\right) \cdot d^3,$$
$$b_1(V_{\R\PP ^3}(B^+_d))\overset{3}{=} \frac{1}{4} \cdot d^3 =\left(\frac{2}{3}- \frac{5}{12}\right) \cdot d^3 =\left(a^3_1 -\frac{5}{12}\right) \cdot d^3,$$
$$b_0(V_{\R\PP ^3}(B^-_d))\overset{3}{=} \frac{1}{8} \cdot d^3 =\left(\frac{1}{6}- \frac{1}{24}\right) \cdot d^3 =\left(a^3_0- \frac{1}{24}\right) \cdot d^3$$
and
$$b_1(V_{\R\PP ^3}(B^-_d))\overset{3}{=} \frac{3}{4} \cdot d^3 =\left(\frac{2}{3}+ \frac{1}{12}\right) \cdot d^3 =\left(a^3_1+ \frac{1}{12}\right) \cdot d^3.$$
In particular, the families $\{V_{\PP ^3}(B^\pm _d)\}_{d\in \NN}$ are asymptotically maximal.
\end{thm}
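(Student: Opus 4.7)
The plan is to construct these families using Viro's patchworking method, following the strategy that Bihan developed in \cite{BihanAsymptotic} and that Brugall\'e adapted in \cite{Brugalle2006}. Since Theorem \ref{TheoremPatchworkMainPatchworkTHM} reduces the topology of the resulting hypersurface to that of a glued union of charts, the asymptotic Betti numbers are controlled by suitably chosen \emph{local models} that tile an asymptotically full volume portion of $S^3_d$.

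More concretely, the first step is to design, for each family, a small polytope $\Pi \subset \R^3$ of bounded size equipped with a convex triangulation and a sign distribution at the integer points, whose chart inside $\R \Pi$ contains a controlled number of 0-cycles (for $B^+_d$) or of 1-cycles (for $B^-_d$) accompanied by suitable axes in the complement. The second step is to tile the interior of $S^3_d$ by $\sim \kappa \cdot d^3$ translated copies of $\Pi$ for an explicit constant $\kappa$, leaving only a boundary layer of volume $\mathcal{O}(d^2)$ untouched, and to complete the triangulation there using an arbitrary primitive convex refinement, filling the remaining vertices with signs coming, say, from the Itenberg--Viro family $\{I^3_d\}_{d\in\NN}$ of Theorem \ref{TheoremConstructionsFamilleAsymptotiqueItenbergViro}. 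Convexity of the global subdivision can be enforced by repeated applications of Lemma \ref{LemmaConstructionsTechnicalLemmaSubdivision}.

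The third step is to count. Using the axis/linking number machinery of Lemma \ref{LemmaConstructionCyclesEtAxesBienInclus} (or its analogue in \cite{BihanAsymptotic}), each translated copy of $\Pi$ contributes an independent batch of cycles in $H_i(V_{\R \PP^3}(P))$ whose axes are supported inside the corresponding copy of $\Pi \times U^3_\R$; disjointness of the copies ensures the axes detect the cycles pairwise, so the total $b_i$ is bounded below by $\kappa \cdot d^3$ times the per-copy contribution, up to $\mathcal{O}(d^2)$. Calibrating $\kappa$ and the per-copy contribution yields the prescribed leading coefficients $\tfrac{3}{8}, \tfrac{1}{4}$ and $\tfrac{1}{8}, \tfrac{3}{4}$. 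Asymptotic maximality is then automatic: once $b_0 + b_1 + b_2 \overset{3}{=} d^3$ and the lower bounds on $b_0, b_1$ saturate the corresponding $\beta^3_i$ from \cite{BihanAsymptotic}, the Smith--Thom inequality \eqref{SmithThom} forces equality.

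The main obstacle is the design of the local model $\Pi$ together with its sign pattern, and the verification that it genuinely produces the advertised excess $\tfrac{5}{24}$ (respectively, deficit $\tfrac{1}{24}$) over the standard coefficient $a^3_0 = \tfrac{1}{6}$. This requires an explicit combinatorial analysis of the piecewise linear hypersurface inside a single copy of $\Pi \times U^3_\R$ and a careful bookkeeping of its connected components and first Betti numbers, which is exactly the content of Bihan's construction in \cite{BihanAsymptotic} refined by Brugall\'e in \cite{Brugalle2006}; rather than reproving it, we would simply invoke their result.
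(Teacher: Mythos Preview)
The paper does not prove this theorem at all: it is quoted verbatim as a result of Brugall\'e from \cite{Brugalle2006} (building on Bihan's method \cite{BihanAsymptotic}) in the subsection ``Notations and known results,'' and is used only as an input for the Cooking Theorem. So there is no proof in the paper to compare your sketch against; your final sentence, where you say you would ``simply invoke their result,'' is in fact exactly what the paper does.

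That said, your sketch of what the cited construction looks like is somewhat off. You describe the local model $\Pi$ as carrying ``a convex triangulation and a sign distribution at the integer points,'' and later speak of ``filling the remaining vertices with signs'' --- this is the language of \emph{combinatorial} (primitive) patchworking. Bihan's and Brugall\'e's constructions are not of this type: their local pieces are genuine non-simplicial polynomials, essentially cylinders over plane curves with carefully chosen extremal topology, glued via the general (non-combinatorial) version of Viro's theorem. The paper makes this distinction explicit in Section~\ref{SectionConclusion}, where it lists ``find combinatorial analogs to Brugall\'e's Theorem~\ref{TheoremConstructionsFamilleAsymptotiqueBrugalle}'' as an open problem. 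So while your high-level picture (tile most of $S^3_d$ by translates of a fixed block, count independent cycles via axes, handle the boundary as $\mathcal{O}(d^2)$) is correct in spirit, the specific mechanism you describe would not produce these particular coefficients, and no sign-distribution model achieving $\tfrac{3}{8}$ or $\tfrac{3}{4}$ is currently known.
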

\begin{remark}
Of course, Poincar{\'e} duality applies, as homology is considered with coefficients in $\Z_2$.
\end{remark}
\begin{remark}
As far as the author is aware, these are the largest asymptotic values for each respective Betti numbers of a smooth real projective algebraic surface to have been obtained to this day, which is why we choose to use them as ingredients in what follows.
\end{remark}
\begin{remark}\label{RemarkConstructionsBrugalleIntermediaire}
It is not particularly hard, though somewhat tedious, to show that for any $a\in [-\frac{1}{24},\frac{5}{24}]$, we can build (using the families $\{B^\pm_d\}_{d\in \NN}$) a family $\{P^3_d\}_{d\in \NN}$ of completely nondegenerate real Laurent polynomials in $3$ variables such that the Newton polytope $\Delta(P^{3}_{d})$ is $S^3_d$ and that for $i=0,1,2$, we have
$$b_i(V_{\R\PP ^n}(P^3_{d}))\overset{n}{=} x^3_i\cdot d^n$$
with $x^3_0=x^3_2 = \frac{1}{6}+a$ and $x_1^3=\frac{4}{6}-2a$.
The idea is to partition $S^3_d$ (for very large degrees $d$) into smaller, albeit still very large, simplices corresponding either to $B^+_{\tilde{d}}$ or to $B^-_{\tilde{d}}$, for some $\tilde{d}<d$ (with some interstitial space of asymptotically negligible volume).
The proportion $\lambda_d\in [0,1]$ (respectively, $1-\lambda_d \in [0,1]$) of the total volume of $S^3_d$ filled by simplices corresponding to $B^+_{\tilde{d}}$ (respectively, to $B^-_{\tilde{d}}$) must be such that $\lambda_d \frac{5}{24} - (1-\lambda_d) \frac{1}{24}$ converges to $a$ as $d \rightarrow \infty$.

\end{remark}

\subsection{The first construction}\label{SubsectionConstructionsFirstConstruction}

In this subsection, we describe the first of our two main families of constructions. It allows us to find for every dimension and index Betti numbers that are asymptotically (in $d$) strictly superior to the standard case, but not by a large margin: this enables us to prove Theorem \ref{TheoremConstructionsApplicationThm}.
The other one is described in Subsection \ref{SubsectionConstructionsSecondConstruction} and provides much better asymptotic (in $d$) lower bounds on the chosen Betti numbers, but only asymptotically (in $n$); it allows us to prove Theorem \ref{TheoremConstructionsBonneAsymptotique}.

The idea is to carefully pick families of polynomials $\{P^k_{d}\}_{d\in \NN}$ in $k$ variables (with $k$ small) as ingredients for the Cooking Theorem \ref{TheoremConstructionsMainTheoremConstructions} in order to get families of polynomials $\{Q^n_{d}\}_{d\in \NN}$ (with $n$ large) with interesting properties.

Given $n\geq 3$ and $i=0,\ldots,n-1$, the polynomials $\{P^k_{d}\}_{d\in \NN}$ (for $k=1,\ldots,n-1$) must be chosen so that the asymptotic Betti numbers of the associated families of projective hypersurfaces are such that the right-hand term of Formula (\ref{FormulaConstructionsSommeResultante}) is large.

As far as the author knows, few interesting (in that regard) families have yet been constructed in high ambient dimension. Hence, we must work our way up from dimension $3$, where we have Brugall{\'e}'s Theorem \ref{TheoremConstructionsFamilleAsymptotiqueBrugalle}, by recursively applying the Cooking Theorem: a family $\{Q^n_{d}\}_{d\in \NN}$ that we get as the result of one application of the theorem can serve as an ingredient for a construction in higher dimension.

Note that should a new family of hypersurfaces with interesting asymptotic Betti number be developed in a given dimension $n$, we could immediately use it as an ingredient to hopefully get new and interesting results in dimension $\tilde{n}>n$.

Observe also that since there is only one non-trivial Betti number in ambient dimension $1$, and two that are equal in ambient dimension $2$ (and hence both asymptotically smaller than or equal to $\frac{d^2}{2}$), nothing interesting can a priori be expected from the direct use of non-asymptotically standard families in ambient dimension $1$ and $2$.

In general, it is unclear how to choose the ingredients $\{P^k_{d}\}_{d\in \NN}$ so that a given Betti number is maximized in the resulting family of hypersurfaces, as Formula (\ref{FormulaConstructionsSommeResultante}) is fairly complicated; the trick that we use in the first construction, which is described in the proof of the following lemma, is to make it so that most terms in the formula are trivial, so that we can understand it better.
The results are most likely, in a sense, suboptimal, but they suffice for our purpose here.

\begin{lemma}\label{LemmaConstructionsMainConstruction}
For each $n\geq 8$, there exist families $\{H^{+,n}_{d}\}_{d\in \NN}$ and $\{H^{-,n}_{d}\}_{d\in \NN}$ of completely nondegenerate real Laurent polynomials in $n$ variables such that $H^{\pm,n}_{d}$ is of degree $d$ and that the Newton polytope $\Delta(H^{\pm,n}_{d})$ is $S^n_d$ and such that for $i=0,\ldots,n-1$, we have
$$b_i(V_{\R\PP ^n}(H^{+,n}_{d}))\overset{n}{=} \left( a^n_i + \frac{2}{n}\frac{5}{24}D^2a^{n-4}_{i-2} \right)\cdot d^n$$
and
$$b_i(V_{\R\PP ^n}(H^{-,n}_{d}))\overset{n}{=} \left( a^n_i - \frac{2}{n}\frac{1}{24}D^2a^{n-4}_{i-2} \right) \cdot d^n$$
Moreover, the family of hypersurfaces associated to each family is asymptotically maximal.
\end{lemma}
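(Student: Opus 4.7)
The plan is to obtain the two families by a single application of the Cooking Theorem \ref{TheoremConstructionsMainTheoremConstructions}, with carefully chosen ingredients. Take $P^k_d := I^k_d$ (the Itenberg--Viro asymptotically standard families from Theorem \ref{TheoremConstructionsFamilleAsymptotiqueItenbergViro}) for every $k \in \{1,\dots,n-1\}\setminus\{3\}$, and set $P^3_d := B^+_d$ to produce $H^{+,n}_d$ and $P^3_d := B^-_d$ to produce $H^{-,n}_d$ (Theorem \ref{TheoremConstructionsFamilleAsymptotiqueBrugalle}). All these ingredient families are asymptotically maximal, and by the Cooking Theorem the same is true of the cooked family provided we can verify the sum-to-zero condition on the $t^n_i$ (which will follow from the computation below), so Lemma \ref{LemmaConstructionsAsmptoticallyMaximalPreserve} will promote the $\overset{n}{\geq}$ of Formula \ref{FormulaConstructionsSommeResultante} into the claimed $\overset{n}{=}$.

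Switching to the $t^k_j = x^k_j - a^k_j$ convention, Theorem \ref{TheoremConstructionsFamilleAsymptotiqueItenbergViro} gives $t^k_j = 0$ for every $k \ne 3$, while Theorem \ref{TheoremConstructionsFamilleAsymptotiqueBrugalle} together with $\mathbb{Z}_2$ Poincar\'e duality on the surface $V_{\R\PP^3}(B^\pm_d)$ yields $t^3_j = \pm \varepsilon (1,-2,1)_j$ supported on $j \in \{0,1,2\}$, where $\varepsilon = \tfrac{5}{24}$ and the sign is $+$ for $B^+_d$, and $\varepsilon = \tfrac{1}{24}$ with sign $-$ for $B^-_d$.

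Now insert this into Formula \ref{FormulaConstructionsSommeResultante}. Because $n \geq 8$, we have $n-1 \neq 3$, so $t^{n-1}_i = t^{n-1}_{i-1} = 0$. In the double sum $\sum_{k=1}^{n-2}\sum_{j=0}^{i-1}$, the product $t^k_j \cdot t^{n-1-k}_{i-1-j}$ can be nonzero only if simultaneously $k = 3$ and $n-1-k = 3$, which is impossible when $n \ne 7$, and in particular for $n \geq 8$. The cross term $2\, t^k_j \cdot a^{n-1-k}_{i-1-j}$ survives exactly when $k=3$. Using the convention $a^{n-4}_p = 0$ for $p \notin \{0,\ldots,n-5\}$, we obtain
\begin{equation*}
t^n_i \;=\; \frac{2}{n}\sum_{j=0}^{2} t^3_j \, a^{n-4}_{i-1-j} \;=\; \pm\,\frac{2}{n}\,\varepsilon\,\bigl(a^{n-4}_{i-1} - 2 a^{n-4}_{i-2} + a^{n-4}_{i-3}\bigr) \;=\; \pm\,\frac{2}{n}\,\varepsilon\, D^2 a^{n-4}_{i-2},
\end{equation*}
for every $i \in \Z$, which is the statement of the lemma.

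Finally, summing the previous identity over $i$ telescopes to $\sum_i D^2 a^{n-4}_{i-2} = 0$, so $\sum_i t^n_i = 0$ and asymptotic maximality is preserved; Lemma \ref{LemmaConstructionsAsmptoticallyMaximalPreserve} upgrades the cooking inequality to an equality. The main bookkeeping hurdle is simply to check that the boundary values of $i$ (near $0$ and near $n-1$) are absorbed correctly by the zero-extension convention on $a^{n-4}_p$; everything else is a direct plug-in, made possible by the twin observations that all ingredients but one are asymptotically standard and that $n \geq 8$ prevents the pathological overlap $k = n-1-k = 3$.
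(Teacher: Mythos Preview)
Your proof is correct and follows essentially the same approach as the paper's: both apply the Cooking Theorem with Itenberg--Viro ingredients in all dimensions except $k=3$, where Brugall\'e's $B^\pm_d$ is used, and then exploit $n\geq 8$ to kill the $t^{n-1}$ and $t\cdot t$ terms in Formula~\ref{FormulaConstructionsSommeResultante}, leaving only the $k=3$ cross term which collapses to $D^2a^{n-4}_{i-2}$. Your handling of the asymptotic equality via $\sum_i t^n_i=0$ and Smith--Thom is exactly what the paper does as well.
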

\begin{remark}
The coefficients $\frac{1}{24}$ and $\frac{5}{24}$ come from Brugall{\'e}'s Theorem \ref{TheoremConstructionsFamilleAsymptotiqueBrugalle}; should we get better asymptotic results than in Theorem \ref{TheoremConstructionsFamilleAsymptotiqueBrugalle}, we would be able to immediately ``plug" them in and improve the asymptotics of Lemma \ref{LemmaConstructionsMainConstruction}.

\end{remark}

\begin{proof}
We define $\{H^{+,n}_{d}\}_{d\in \NN}$ first.

We apply the Cooking Theorem to the following ingredients $\{P^k_{d}\}_{d\in \NN}$ : let $P^k_d$ be equal to $I^k_d$ from Itenberg and Viro's Theorem \ref{TheoremConstructionsFamilleAsymptotiqueItenbergViro} for $k\in \{1,2,4,5,\ldots,n-1\}$, and let $P^3_d$ be equal to $B^+_d$ from Brugall{\'e}'s Theorem \ref{TheoremConstructionsFamilleAsymptotiqueBrugalle}.
Following the notations introduced with Formula (\ref{FormulaConstructionsSommeResultante}), we get that $t_0^3=t_2^3=\frac{5}{24}$ and $t_1^3=-\frac{5}{12} $, and $t_j^k=0$ for all other $k,j$ (by definition of the polynomials  $P^k_d$). Note in particular that as $ n>7$, $t^{n-1}_j=0$ and $t_j^k \cdot t_{i-1-j}^{n-1-k}=0$ for all $k,j$.

Hence application of the Cooking Theorem \ref{TheoremConstructionsMainTheoremConstructions} yields a family $\{H^{+,n}_{d}\}_{d\in \NN}$ such that
\begin{gather*}
b_i(V_{\R\PP ^n}(H^{+,n}_{d}))   \overset{n}{\geq}  \left( a^n_i + \frac{1}{n}\left(t_i^{n-1}+t_{i-1}^{n-1} +\sum_{k=1}^{n-2}\sum_{j=0}^{i-1} 2(t_j^k \cdot a_{i-1-j}^{n-1-k}) + t_j^k \cdot t_{i-1-j}^{n-1-k}\right)  \right) \cdot d^n =\\
\left( a^n_i + \frac{1}{n}\left(\sum_{j=0}^{i-1} 2(t_j^3 \cdot a_{i-1-j}^{n-4}) \right)  \right) \cdot d^n=  \left( a^n_i + \frac{2}{n}\frac{5}{24}\left( a_{i-1}^{n-4} -2 a_{i-2}^{n-4} + a_{i-3}^{n-4} \right)  \right) \cdot d^n=\\
 \left( a^n_i + \frac{2}{n}\frac{5}{24}D^2a^{n-4}_{i-2}  \right) \cdot d^n
\end{gather*}
for $i=0,\ldots,n-1$.

As the sum $\sum _{j\in \Z} D^2a^m_j $ of second order finite differences is $0$ for any $m\geq 1$, and in particular for $m=n-4$, we have as above that the family is asymptotically maximal because of the Smith-Thom inequality, and that the asymptotic inequality $b_i(V_{\R\PP ^n}(H^{+,n}_{d}))   \overset{n}{\geq} \left( a^n_i + \frac{2}{n}\frac{5}{24}D^2a^{n-4}_{i-2}  \right) \cdot d^n$ is in fact an asymptotic equality $b_i(V_{\R\PP ^n}(H^{+,n}_{d}))   \overset{n}{=} \left( a^n_i + \frac{2}{n}\frac{5}{24}D^2a^{n-4}_{i-2}  \right) \cdot d^n$.

The exact same proof, with $B^-_d$ replacing $B^+_d$ and $-\frac{1}{24}$ replacing $\frac{5}{24}$, yields the other case.
\end{proof}

As will become apparent in the proof of Theorem \ref{TheoremConstructionsApplicationThm}, we also need the following lemma.

\begin{lemma}\label{LemmaConstructionsConstructionBords}
For any $n\geq 3$, there exists a family $\{L^n_d\}_{d\in\NN}$ of completely nondegenerate real Laurent polynomials in $n$ variables such that $\Delta(L^n_d)=S^n_d$, that the associated family of real projective hypersurfaces is asymptotically maximal and that 
\begin{equation*}
b_0(V_{\R\PP^n }(L^n_d)) = b_{n-1}(V_{\R\PP^n }(L^n_d)) \overset{n}{\geq}  \left(a^n_0 + \frac{5}{24}\frac{3!}{n!}\right) \cdot d^n.   
\end{equation*}
\end{lemma}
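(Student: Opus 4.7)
The plan is to build $L^n_d$ recursively by repeatedly applying the Cooking Theorem \ref{TheoremConstructionsMainTheoremConstructions}, using Brugall\'e's family $\{B^+_d\}$ from Theorem \ref{TheoremConstructionsFamilleAsymptotiqueBrugalle} as the base case in dimension $3$ and Itenberg-Viro's asymptotically standard families $\{I^k_d\}$ from Theorem \ref{TheoremConstructionsFamilleAsymptotiqueItenbergViro} as auxiliary ingredients for all other dimensions. Concretely, I would set $L^3_d := B^+_d$ (so that $b_0(V_{\R\PP^3}(L^3_d))\overset{3}{=}\frac{3}{8}d^3$) and, for $n\geq 4$, define $L^n_d$ to be a polynomial output by the Cooking Theorem with ingredients $P^{n-1}_d:=L^{n-1}_d$ and $P^k_d:=I^k_d$ for $k=1,\ldots,n-2$.

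The crucial observation is that Formula \eqref{FormulaConstructionsMainFormula} collapses dramatically when $i=0$: the term $x_{-1}^{n-1}$ vanishes by convention and the double sum over $j\in\{0,\ldots,-1\}$ is empty, leaving only
\[
b_0(V_{\R\PP^n}(L^n_d))\overset{n}{\geq} \frac{1}{n}\,x_0^{n-1}\cdot d^n.
\]
An immediate induction on $n$ then shows $b_0(V_{\R\PP^n}(L^n_d))\overset{n}{\geq} \frac{9}{4\,n!}\,d^n$: the base case $n=3$ gives $\frac{9}{4\cdot 3!}=\frac{3}{8}$, and assuming the bound at step $n-1$ with $x_0^{n-1}=\frac{9}{4(n-1)!}$ yields $\frac{1}{n}\cdot\frac{9}{4(n-1)!}=\frac{9}{4n!}$ at step $n$. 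A short rewriting $\frac{9}{4n!}=\frac{1}{n!}+\frac{5\cdot 3!}{24\cdot n!}=a^n_0+\frac{5}{24}\frac{3!}{n!}$ gives exactly the claimed lower bound.

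For the $b_{n-1}$ equality, I would invoke Poincar\'e duality with $\Z_2$ coefficients: complete nondegeneracy of $L^n_d$ ensures that $V_{\R\PP^n}(L^n_d)$ is a smooth closed $(n-1)$-manifold, so $b_{n-1}=b_0$ automatically. Asymptotic maximality of the associated projective family is preserved by the Cooking Theorem (as stated in its conclusion, and formalized by Lemma \ref{LemmaConstructionsAsmptoticallyMaximalPreserve}), and propagates from the asymptotically maximal ingredients $\{B^+_d\}$ and $\{I^k_d\}$ through the induction.

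There is no real obstacle here: the argument is essentially bookkeeping, and the only points to verify carefully are (i) that Formula \eqref{FormulaConstructionsMainFormula} indeed collapses to the single term $\frac{1}{n}x_0^{n-1}d^n$ for $i=0$, so that nothing from the auxiliary asymptotically standard ingredients $I^k_d$ interferes with the $b_0$ contribution inherited from $B^+_d$, and (ii) that the recursion closes numerically, i.e.\ $c_n=c_{n-1}/n$ starting from $c_3=\frac{3}{8}$ produces $c_n=\frac{9}{4n!}=a^n_0+\frac{5}{24}\frac{3!}{n!}$.
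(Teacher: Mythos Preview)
Your proposal is correct and follows essentially the same approach as the paper: induction on $n$ with base case $L^3_d=B^+_d$ and, for $n\ge 4$, cooking $L^n_d$ from $L^{n-1}_d$ together with the asymptotically standard families $I^k_d$ for $k\le n-2$, then reading off the $i=0$ case of the Cooking Theorem's formula. The only cosmetic difference is that the paper phrases the recursion in the $t^k_i=x^k_i-a^k_i$ notation of Formula~\eqref{FormulaConstructionsSommeResultante}, while you work directly with $x_0^{n-1}$; the computations are identical.
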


\begin{proof}

We proceed by induction on $n$.
The case $n=3$ is simply Brugall{\'e}'s Theorem \ref{TheoremConstructionsFamilleAsymptotiqueBrugalle}.

Now let $n\geq 4$ and suppose that $\{L^{n-1}_d\}_{d\in\NN}$ has been defined.
We apply the Cooking Theorem \ref{TheoremConstructionsMainTheoremConstructions} to the following families of polynomials $\{P^k_{d}\}_{d\in \NN}$ : let $P^k_d$ be equal to $I^k_d$ from Itenberg and Viro's Theorem \ref{TheoremConstructionsFamilleAsymptotiqueItenbergViro} for $k\in \{1,\ldots,n-2\}$, and let $P^{n-1}_d$ be equal to $L^{n-1}_d$.
Following the notations introduced with Formula (\ref{FormulaConstructionsSommeResultante}), we have $t_0^{n-1}\geq\frac{5}{24}\frac{3!}{(n-1)!} $, and $t_j^k=0$ for all other $k<n-1$ (by definition of the polynomials  $P^k_d$).

Hence application of the Theorem \ref{TheoremConstructionsMainTheoremConstructions} yields a family $\{L^n_{d}\}_{d\in \NN}$ such that
\begin{align*}
&b_0(V_{\R\PP ^n}(L^n_d))   \overset{n}{\geq}  \left( a^n_0 + \frac{1}{n}\left(t_0^{n-1}+t_{0-1}^{n-1} +\sum_{k=1}^{n-2}\sum_{j=0}^{0-1} 2(t_j^k \cdot a_{0-1-j}^{n-1-k}) + t_j^k \cdot t_{0-1-j}^{n-1-k}\right)  \right) \cdot d^n \overset{n}{\geq}\\
& \left(a^n_0 + \frac{5}{24}\frac{3!}{n!}\right) \cdot d^n,
\end{align*}
as wanted.
\end{proof}

We can finally prove Theorem \ref{TheoremConstructionsApplicationThm}, which we state again.
\newtheorem*{thmTricheConstructions3}{Theorem \ref{TheoremConstructionsApplicationThm}}

\begin{thmTricheConstructions3}
For any $n\geq 3$ and any $i=0,\ldots,n-1$, there exists  $b^n_i>a^n_i$ and a family $\{Q^n_d\}_{d\in\NN}$ of completely nondegenerate real Laurent polynomials in $n$ variables such that $\Delta(Q^n_d)=S^n_d$, that the associated family of real projective hypersurfaces is asymptotically maximal and that 
\begin{equation*}
b_i(V_{\R\PP^n }(Q^n_d))   \overset{n}{\geq}  b^n_i \cdot d^n.   
\end{equation*}
\end{thmTricheConstructions3}

\begin{proof}
Let $n\geq 3$ and $i\in \{0,\ldots,n-1\}$.

We mainly rely on the construction from Lemma \ref{LemmaConstructionsMainConstruction} and the result of Lemma \ref{LemmaConstructionsSigneDifferenceFinie}.
However, as $D^2a^{n-4}_{p-2} =0$ for $p=0,n-1$ (the construction from Lemma \ref{LemmaConstructionsMainConstruction} cannot help us get a large number of connected components), we also need Lemma \ref{LemmaConstructionsConstructionBords}; moreover, as we were not able to show in Lemma \ref{LemmaConstructionsSigneDifferenceFinie} that the finite differential $D^2a^{n}_{p}$ is never $0$ between $k=-1$ and $k=n$, we need another ad hoc trick.

We assume that $n\geq 8$; the cases $n\leq 7$ are treated explicitly and in more details in Section \ref{SectionExplicitComputations}.

If $i\in \{0,n-1\}$, we define $Q^n_d$ as $L^n_d$ from Lemma \ref{LemmaConstructionsConstructionBords}, and set $b_i^n:=a^n_i + \frac{5}{24}\frac{3!}{n!}$ (remember that Poincar{\'e} duality applies) - this suffices.

Otherwise, consider $D^2a^{n-4}_{i-2}$. If it is strictly positive, define $Q^n_d$ as $H^{+,n}_d$ from Lemma \ref{LemmaConstructionsMainConstruction} and $b_i^n :=a^n_i + \frac{2}{n}\frac{5}{24}D^2a^{n-4}_{i-2} $; if it strictly negative, define $Q^n_d$ as $H^{-,n}_d$ from the same lemma and $b_i^n:=a^n_i - \frac{2}{n}\frac{1}{24}D^2a^{n-4}_{i-2}$. In both cases, we are done (using the statement of the lemma).

If we are unlucky, and $i$ is the only index in $\{1,\ldots,n-2\} $ such that $D^2a^{n-4}_{i-2}=0$, we know from Lemma \ref{LemmaConstructionsSigneDifferenceFinie} that $D^2a^{n-5}_{p}$ is never $0$ for $p\in \{-1,\ldots,n-5\}$. Moreover, Formula \ref{FormulaConstructionsRecursiveD2} tells us that 
$$D^2a^{n-4}_{i-2} =\frac{n-i-1}{n-4}D^2a^{n-5}_{i-3}+\frac{i}{n-4}D^2a^{n-5}_{i-2}.  $$
As at least one of the two terms is nonzero, both must be for $D^2a^{n-4}_{i-2}$ to be $0$.
Observe also that $\frac{n-i-1}{n-4}\neq \frac{i}{n-4}$, as otherwise $i=\frac{n-1}{2}$, in which case $D^2a^{n-4}_{i-2}=D^2a^{n-4}_{\frac{((n-4)-1)}{2}}\neq 0$ (the middle term is never $0$, see Lemma \ref{LemmaConstructionsSigneDifferenceFinie}). Hence $D^2a^{n-5}_{i-3}+D^2a^{n-5}_{i-2} \neq 0 $.

Now apply  the Cooking Theorem \ref{TheoremConstructionsMainTheoremConstructions} to the following ingredients $\{P^k_{d}\}_{d\in \NN}$ : let $P^k_d$ be equal to $I^k_d$ from Itenberg and Viro's Theorem \ref{TheoremConstructionsFamilleAsymptotiqueItenbergViro} for $k\in \{1,\ldots,n-2\}$, and let $P^{n-1}_d$ be equal to $H^{+,n-1}_d$ from Lemma \ref{LemmaConstructionsMainConstruction} if $D^2a^{n-5}_{i-3}+D^2a^{n-5}_{i-2} > 0 $ (respectively, equal to $H^{-,n-1}_d$ from Lemma \ref{LemmaConstructionsMainConstruction} if $D^2a^{n-5}_{i-3}+D^2a^{n-5}_{i-2} < 0 $).
Following the notations introduced with Formula (\ref{FormulaConstructionsSommeResultante}), we have $t_i^{n-1}= \frac{2}{n-1}\frac{5}{24}D^2 a^{n-5}_{i-2} $ and $t_{i-1}^{n-1}= \frac{2}{n-1}\frac{5}{24}D^2 a^{n-5}_{i-3} $ (respectively, $t_i^{n-1}= -\frac{2}{n-1}\frac{1}{24}D^2 a^{n-5}_{i-2} $ and $t_{i-1}^{n-1}= -\frac{2}{n-1}\frac{1}{24}D^2 a^{n-5}_{i-3} $), and $t_j^k=0$ for all other $k<n-1$ (by definition of the polynomials  $P^k_d$).

Hence application of the Cooking Theorem yields a family $\{Q^n_{d}\}_{d\in \NN}$ such that
\begin{align*}
&b_i(V_{\R\PP ^n}(Q^n_d))   \overset{n}{\geq}  \left( a^n_i + \frac{1}{n}\left(t_i^{n-1}+t_{i-1}^{n-1} +\sum_{k=1}^{n-2}\sum_{j=0}^{i-1} 2(t_j^k \cdot a_{i-1-j}^{n-1-k}) + t_j^k \cdot t_{i-1-j}^{n-1-k}\right)  \right) \cdot d^n =\\
& \left(a^n_i + \frac{1}{n}\frac{5}{24}\frac{2}{n-1}(D^2a^{n-5}_{i-2}+D^2a^{n-5}_{i-3})\right) \cdot d^n
\end{align*}
(respectively, $b_i(V_{\R\PP ^n}(Q^n_d))   \overset{n}{\geq} \left(a^n_i - \frac{1}{n}\frac{1}{24}\frac{2}{n-1}(D^2a^{n-5}_{i-2}+D^2a^{n-5}_{i-3})\right) \cdot d^n  $).

Define $b_i^n := a^n_i + \frac{1}{n}\frac{5}{24}\frac{2}{n-1}(D^2a^{n-5}_{i-2}+D^2a^{n-5}_{i-3})$ (respectively, $ b_i^n :=a^n_i - \frac{1}{n}\frac{1}{24}\frac{2}{n-1}(D^2a^{n-5}_{i-2}+D^2a^{n-5}_{i-3})$). This suffices.

In each case above, the family of hypersurfaces associated to the family $\{Q^n_d\}_{d\in \NN}$ is asymptotically maximal, as the family $\{Q^n_d\}_{d\in \NN}$ always comes from an application of the Cooking Theorem to families of polynomials such that the associated families of hypersurfaces are asymptotically maximal.

\end{proof}

Hence we found in all dimensions $n\geq 3$ and indices $i=0,\ldots,n-1$ Betti numbers that are asymptotically (in $d$) strictly greater than the standard case - however, the asymptotics (in $n$) of that surplus are not very good.

Indeed, the asymptotic behavior of the Eulerian numbers (and hence of the coefficients $a^n_p$) is known:
it was shown by G. Polya in \cite{Polya} (see \cite{SincFormulaEuler} for a more easily accessible reference) that for any $x\in \R$, we have
\begin{equation}\label{FormulaConstructionsPolya}
a^{n}_{\left \lfloor {\frac{n-1}{2} + x\sqrt{n}} \right \rfloor}= \sqrt{\frac{6}{\pi(n+1)}} \exp\left(-6x^2\right) + \mathcal{O}\left({n^{-\frac{3}{2}}}\right)
\end{equation}
(what he considered was actually the volume of hypercube slices, which is equivalent, as seen above).

On the other hand, the surplus that we found in Lemma \ref{LemmaConstructionsMainConstruction} in dimension $n\geq 3$ and index $1\leq i \leq n-2$ relative to the standard case was of the form $C\frac{1}{n}D^2a^{n-4}_{i-2}$, for some constant $C$.
We have already mentioned in Remark \ref{RemarkConstructionsAsymptoticsSecondDifferential} that for a fixed $s\in \R$,
$D^2a^{n}_{\left \lfloor {\frac{n-1}{2} + s\sqrt{n}} \right \rfloor}$ is $\mathcal{O}\left({n^{-\frac{3}{2}}}\right)$.
Hence, the ratio $\frac{b^n_i-a^n_i}{a^n _i}$, where $b^n_i$ comes from Theorem \ref{TheoremConstructionsApplicationThm}, is at most $\mathcal{O}\left({n^{-2}}\right)$. We achieve much better results in the next subsection.

\subsection{The second construction}\label{SubsectionConstructionsSecondConstruction}

In this subsection, we recall the statements of Theorem \ref{TheoremConstructionsBonneAsymptotique} and Theorem \ref{CorollaryConstructionsBonneAsymptotique} and prove them.
They yield good "asymptotic asymptotic" results, in the sense that we find two families of families of polynomials such that the associated Betti numbers are asymptotically of magnitude $c^n_i \cdot d^n$ and $d^n_i\cdot d^n$ respectively (as the degree $d$ goes to infinity), while the Betti numbers of the standard case are of magnitude $a^n_i\cdot d^n$, with $\frac{c^n_i}{a^n_i}$ and $\frac{d^n_i}{a^n_i}$ converging (as the dimension $n$ goes to infinity) to strictly positive constants.

To do so, we apply recursively the Cooking Theorem \ref{TheoremConstructionsMainTheoremConstructions} to the ingredients provided by Itenberg and Viro's Theorem \ref{TheoremConstructionsFamilleAsymptotiqueItenbergViro} and Brugall{\'e}'s Theorem \ref{TheoremConstructionsFamilleAsymptotiqueBrugalle}.
The construction itself is simple; the main difficulty lies in understanding the asymptotic behavior of the sequences recursively defined using Formula \ref{FormulaConstructionsMainFormula} that describe the asymptotic behavior of the Betti numbers, as we cannot expect most terms to be trivial, unlike in the proof of Theorem \ref{TheoremConstructionsApplicationThm}.
We succeed by applying probabilistic methods.

\newtheorem*{thmTricheConstructions4}{Theorem \ref{TheoremConstructionsBonneAsymptotique}}
\begin{thmTricheConstructions4}

Let $N\geq 1$. For $k=1,\ldots,N$, let $\{P^k_{d}\}_{d\in \NN}$ be a family of completely nondegenerate real Laurent polynomials in $k$ variables such  that the Newton polytope $\Delta(P^k_d)$ of $P^k_d$ is $S^k_d$.
Suppose additionally that for $k=1,\ldots,N$ and $i=0,\ldots,k-1$,
 $$b_i(V_{\R\PP ^k}(P^k_d))\overset{k}{=} x_i^k \cdot d^k$$
for some $x_i^k \in \R_{\geq 0}$ such that $\sum_{i=0}^{k-1}x_i^k =1$ (in particular, the family of projective hypersurfaces associated to each family $\{P^k_d\}_{d\in \NN}$ is asymptotically maximal). Set also $x_i^k$  to be $0$ for $i\not \in \{0,\ldots,k-1\}$.

Define
$$\sigma ^2 :=\frac{2}{(N+1)(N+2)} \left(\frac{1}{4} +\sum_{k=1}^N\sum_{i=0}^{k-1}x^k_i\left(i- \frac{k-1}{2}\right)^2 \right). $$

Then for every $n\geq N+1$ and any $i\in \Z$, there exist $x_i^n\in \R_{\geq 0}$  and a family $\{Q^n_d\}_{d\in\NN}$ of completely nondegenerate real Laurent polynomials in $n$ variables such that $\Delta(Q^n_d)=S^n_d$, that for $i\in \Z$ 
\begin{equation*}
b_i(V_{\R\PP ^n}(Q^n_d))   \overset{n}{=}   x_i^n \cdot d^n
\end{equation*}
and such that for any $m\in  \Z$ we have
\begin{equation}\label{FormulaConstructionsBonneAsymptotiquePreuve1}
x^n_m = \frac{1}{\sigma\sqrt{2\pi}}\frac{1}{\sqrt{n}} \exp\left(-\frac{\left(\frac{n-1}{2} - m\right)^2}{2n\sigma^2}\right) +o\left(n^{-\frac{1}{2}}\right),
\end{equation}
where the $o(1)$ error term is uniform in $m$.
The family of projective hypersurfaces associated to each family $\{Q^n_d\}_{d\in \NN}$ is also asymptotically maximal.

\end{thmTricheConstructions4}

\begin{remark}
As the error term is uniform in $m$, Formula (\ref{FormulaConstructionsBonneAsymptotiquePreuve1}) is equivalent to 
\begin{equation*}
x^n_{\left \lfloor {\frac{n-1}{2} + x\sqrt{n}} \right \rfloor} = \frac{1}{\sigma\sqrt{2\pi}}\frac{1}{\sqrt{n}}\exp\left(\frac{-x^2}{2\sigma^2}\right) + o\left({n^{-\frac{1}{2}}}\right),
\end{equation*}
for all $x\in  \R$ (with the error term uniform in $x$).
\end{remark}

\begin{remark}
Compare with the standard case of Formula (\ref{FormulaConstructionsPolya}). If we want to get comparatively large Betti numbers for $i$ near the middle index $\frac{n-1}{2}$, we need the variance $\sigma $ to be small. If we want large Betti numbers far from the center, we need it to be large.

As mentioned earlier, finding new constructions with interesting asymptotic Betti numbers in low dimensions would automatically yield improved (either particularly large or small) parameters $\sigma^2$.
\end{remark}

\begin{proof}
We define the families $\{Q^n_{d}\}_{d\in \NN}$ recursively, starting from $n=N+1$.

Let $n>N$ and suppose that families of polynomials $\{Q^m_{d}\}_{d\in \NN}$ and coefficients $x^m_i$ have already been defined for all $N<m<n$ and all $i\in \Z$.

We apply the Cooking Theorem \ref{TheoremConstructionsMainTheoremConstructions} to the ingredients $\{P^k_{d}\}_{d\in \NN}$ (for $k=1,\ldots,N$) and $\{Q^k_{d}\}_{d\in \NN}$ (for $k=N+1,\ldots,n-1$).
It yields a family $\{Q^{n}_{d}\}_{d\in \NN}$ such that
\begin{equation}\label{FormulaConstructionsInegaliteEgalite}
b_i(V_{\R\PP ^n}(Q^n_d))   \overset{n}{\geq}   \frac{1}{n}(x_i^{n-1}+x_{i-1}^{n-1} +\sum_{k=1}^{n-2}\sum_{j=0}^{i-1} x_j^k \cdot x_{i-1-j}^{n-1-k}) \cdot d^n
\end{equation}
for all $i\in \Z$.

Define $x_i^{n}:= \frac{1}{n}(x_i^{n-1}+x_{i-1}^{n-1} +\sum_{k=1}^{n-2}\sum_{j=0}^{i-1} x_j^k \cdot x_{i-1-j}^{n-1-k})$ for $i\in \Z$ (note that it is $0$ for $i\not \in \{0,\ldots,n-1\}$).
Lemma \ref{LemmaConstructionsAsmptoticallyMaximalPreserve} tells us that the family of hypersurfaces associated to $\{Q^{n}_{d}\}_{d\in \NN}$ is asymptotically maximal, that $\sum_{i\in \Z}x_i^{n} =1 $ and that the asymptotic inequality (\ref{FormulaConstructionsInegaliteEgalite}) is in fact an asymptotic equality. 

Suppose now that $\{Q^n_{d}\}_{d\in \NN}$ and $x^n_i$ have been defined for all $n>N$ and $i\in \Z$.
What is left to show is that the coefficients $x^n_i$ satisfy Formula (\ref{FormulaConstructionsBonneAsymptotiquePreuve1}).
This is, in fact, the hardest part, and a consequence of Proposition \ref{PropositionConstructionsCalculRecurrence} at the end of this section.

\end{proof}

The result below is a direct application of Theorem \ref{TheoremConstructionsBonneAsymptotique} to the two most extreme constructions known in dimension $3$, \textit{i.e.} those from Brugall{\'e}'s Theorem \ref{TheoremConstructionsFamilleAsymptotiqueBrugalle}.

\newtheorem*{thmTricheConstructions5}{Theorem \ref{CorollaryConstructionsBonneAsymptotique}}

\begin{thmTricheConstructions5}
For any $n\geq 3$, there exists families $\{F^{+,n}_{d}\}_{d\in \NN}$ and $\{F^{-,n}_{d}\}_{d\in \NN}$ of completely nondegenerate real Laurent polynomials in $n$ variables such that the Newton polytope $\Delta(F^{\pm,n}_{d})$ is $S^n_d$, as well as reals $c^n_i,d^n_i\in \R_{\geq 0}$ for any $i\in\Z$, such that for $i=0,\ldots,n-1$, we have
$$b_i(V_{\R\PP ^n}(F^{+,n}_{d}))\overset{n}{=} c^n_i\cdot d^n$$
and
$$b_i(V_{\R\PP ^n}(F^{-,n}_{d}))\overset{n}{=} d^n_i \cdot d^n.$$
Moreover, for all $x\in \R$, we have 
$$ c^n_{\left \lfloor {\frac{n-1}{2} + x\sqrt{n}} \right \rfloor} = \frac{2}{\sqrt{\pi}}\frac{1}{\sqrt{n}}\exp\left(-4x^2\right) + o\left({n^{-\frac{1}{2}}}\right)$$
and 
$$ d^n_{\left \lfloor {\frac{n-1}{2} + x\sqrt{n}} \right \rfloor} = \frac{\sqrt{20}}{\sqrt{3\pi}}\frac{1}{\sqrt{n}}\exp\left(\frac{-20x^2}{3}\right) + o\left({n^{-\frac{1}{2}}}\right),$$
where the error terms $o\left({n^{-\frac{1}{2}}}\right)$ are uniform in $x$.
The family of projective hypersurfaces associated to each family $\{F^{\pm,n}_{d}\}_{d\in \NN}$ is also asymptotically maximal.

\end{thmTricheConstructions5}

\begin{proof}
It is a trivial application of Theorem \ref{TheoremConstructionsBonneAsymptotique} to $N=3$ and the following families of polynomials: let $\{P^k_d\}_{d\in \NN}$ be $ \{I^k_d\}_{d\in \NN}$ from Itenberg and Viro's Theorem \ref{TheoremConstructionsFamilleAsymptotiqueItenbergViro} for $k=1,2$ and let  $\{P^3_d\}_{d\in \NN}$ be $\{B^\pm_d\}_{d\in \NN}$ from Brugall{\'e}'s Theorem \ref{TheoremConstructionsFamilleAsymptotiqueBrugalle} when defining $\{F^{\pm,n}_{d}\}_{d\in \NN}$.

We can directly compute that the variance $\sigma^2$ is equal to 
$$\frac{2}{(3+1)(3+2)} \left(\frac{1}{4} +0+2\frac{1}{2}\left(\frac{1}{2}\right)^2+2\left(\frac{1}{6}+\frac{5}{24}\right)1^2 \right)= \frac{1}{12} + \frac{1}{5}\frac{5}{24} = \frac{1}{8}$$
for $\{F^{+,n}_{d}\}_{d\in \NN}$ and to
$$\frac{2}{(3+1)(3+2)} \left(\frac{1}{4} +0+2\frac{1}{2}\left(\frac{1}{2}\right)^2+2\left(\frac{1}{6}-\frac{1}{24}\right)1^2 \right)= \frac{1}{12} - \frac{1}{5}\frac{1}{24} = \frac{3}{40}$$
for $\{F^{-,n}_{d}\}_{d\in \NN}$. 
This is enough to conclude.

\end{proof}
\begin{remark}
As noted in Remark \ref{RemarkConstructionsBrugalleIntermediaire},  we can easily get for any $a\in [-\frac{1}{24},\frac{5}{24}]$ a family $\{P^3_d\}_{d\in \NN}$ of completely nondegenerate real Laurent polynomials in $3$ variables such that the Newton polytope $\Delta(P^{3}_{d})$ is $S^3_d$ and that for $i=0,1,2$, we have
$$b_i(V_{\R\PP ^n}(F^{+,n}_{d}))\overset{n}{=} x^3_i\cdot d^n$$
with $x^3_0=x^3_2 = \frac{1}{6}+a$ and $x_1^3=\frac{4}{6}-2a$.
The same reasoning as in Theorem \ref{CorollaryConstructionsBonneAsymptotique} applied to  $ \{I^1_d\}_{d\in \NN}$, $ \{I^2_d\}_{d\in \NN}$ and such a family $\{P^3_d\}_{d\in \NN}$ yields a family $\{Q^n_d\}_{d\in \NN}$ for any $n\geq 4$ such that
$$b_i(V_{\R\PP ^n}(Q^{n}_{d}))\overset{n}{=} x^n_i\cdot d^n$$
and that
$$ x^n_{\left \lfloor {\frac{n-1}{2} + x\sqrt{n}} \right \rfloor} = \frac{1}{\sqrt{2\pi}\sqrt{\frac{1}{12} + \frac{a}{5}}}\frac{1}{\sqrt{n}}\exp\left(-\frac{x^2}{2(\frac{1}{12} + \frac{a}{5})}\right) + o\left({n^{-\frac{1}{2}}}\right).$$

Observe also that 
$$\frac{x^n_{\left \lfloor {\frac{n-1}{2} + x\sqrt{n}}\right \rfloor}}{a^n_{\left \lfloor {\frac{n-1}{2} + x\sqrt{n}}\right \rfloor}} = \frac{\sqrt{\frac{1}{12}}}{\sqrt{\frac{1}{12} + \frac{a}{5}}}\exp\left(-\frac{x^2}{2}\left( \frac{1}{\frac{1}{12} + \frac{a}{5}}- \frac{1}{\frac{1}{12}} \right) \right) +o(1)$$ is asymptotically strictly greater than 1 for $|x|<\sqrt{\frac{\ln(\frac{1}{12}) - \ln(\frac{1}{12}+\frac{a}{5})}{\frac{1}{\frac{1}{12} + \frac{a}{5}} -12}}$ and $a<0$ as well as for $|x|>\sqrt{\frac{\ln(\frac{1}{12}) - \ln(\frac{1}{12}+\frac{a}{5})}{\frac{1}{\frac{1}{12} + \frac{a}{5}} -12}}$ and $a>0$. The only $x$ for which we cannot get a strict asymptotic (in $n$) inequality are $\pm \lim_{a\rightarrow 0} \sqrt{\frac{\ln(\frac{1}{12}) - \ln(\frac{1}{12}+\frac{a}{5})}{\frac{1}{\frac{1}{12} + \frac{a}{5}} -12}}=\pm \frac{1}{12}$.

\end{remark}

The rest of the section is devoted to the proof of the following proposition, which describes the asymptotic behaviour of the coefficients $x^n_i$ recursively defined in the proof of Theorem \ref{TheoremConstructionsBonneAsymptotique}.
\begin{proposition}\label{PropositionConstructionsCalculRecurrence}
Let $N\geq1$.
For $n=1,\ldots,N$ and $i\in \Z$, let $x_i^n \in \R_{\geq 0}$ be such that $x_i^n=x^n_{\frac{n-1}{2}-i}$, that $\sum_{i\in \Z} x_i^n=1$ and that $x_i^n=0$ for $i\not\in\{0,\ldots,n-1\}$.

Recursively define $x_i^n\in \R_{\geq 0}$ for all $n>N$ as
$$x_i^{n}:= \frac{1}{n}\left(x_i^{n-1}+x_{i-1}^{n-1} +\sum_{k=1}^{n-2}\sum_{j=0}^{i-1} x_j^k \cdot x_{i-1-j}^{n-1-k}\right)$$ for $i\in \Z$ .

Define
$$\sigma ^2 :=\frac{2}{(N+1)(N+2)} \left(\frac{1}{4} +\sum_{k=1}^N\sum_{i=0}^{k-1}x^k_i\left(i- \frac{k-1}{2}\right)^2 \right). $$

Then
\begin{equation}\label{FormulaConstructionsBonneAsymptotiquePreuve2}
x^n_m = \frac{1}{\sigma\sqrt{2\pi}}\frac{1}{\sqrt{n}} \exp\left(-\frac{\left(\frac{n-1}{2} - m\right)^2}{2n\sigma^2}\right) +o\left(n^{-\frac{1}{2}}\right),
\end{equation}
where the $o(1)$ error term is uniform in $m$.
\end{proposition}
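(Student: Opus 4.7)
The plan is to interpret the coefficients $x^n_i$ as the mass function of a random variable $X_n$ on $\Z$ and prove a local central limit theorem by working with characteristic functions. Introduce the generating function $f_n(z) = \sum_i x^n_i z^i$ and the characteristic function of the centred variable $Y_n := X_n - (n-1)/2$, namely $\psi_n(t) = e^{-it(n-1)/2} f_n(e^{it})$. Multiplying the defining recursion by $z^i$, summing, and absorbing the shift $z^{(n-1)/2}$ turns it into
\[
n\psi_n(t) \;=\; 2\cos(t/2)\,\psi_{n-1}(t) \;+\; \sum_{k=1}^{n-2}\psi_k(t)\,\psi_{n-1-k}(t).
\]
A quick induction, starting from the hypothesised symmetry for $n\le N$, shows that $\mathbb{E}X_n = (n-1)/2$ for every $n$, so $\psi_n$ is real-valued and satisfies $\psi_n(t) = 1 - v_n t^2/2 + O(t^3)$ with $v_n = \mathrm{Var}(X_n)$.

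Matching the $t^2$-coefficients in the recursion yields
\[
n v_n \;=\; 2v_{n-1} + \tfrac12 + 2\sum_{k=1}^{n-2} v_k \;=\; 2V_{n-1} + \tfrac12,\qquad V_n := v_1+\cdots+v_n.
\]
Writing $nV_n = (n+2)V_{n-1} + 1/2$ and setting $c_n = V_n/((n+1)(n+2))$ telescopes this into $c_n - c_{n-1} = 1/(2n(n+1)(n+2))$. Using the partial-fraction identity $1/(k(k+1)(k+2)) = \tfrac12\bigl(1/(k(k+1)) - 1/((k+1)(k+2))\bigr)$ to telescope the tail series, I will obtain $c_\infty = (V_N + 1/4)/((N+1)(N+2)) = \sigma^2/2$, and hence $v_n/n \to \sigma^2$ — precisely the variance predicted by the Gaussian density in the statement.

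The local limit theorem itself proceeds by Fourier inversion:
\[
x^n_m \;=\; \frac{1}{2\pi\sqrt{n}}\int_{-\pi\sqrt{n}}^{\pi\sqrt{n}} e^{-isy}\,\psi_n(s/\sqrt{n})\,ds,\qquad y = (m-(n-1)/2)/\sqrt{n}.
\]
I will split this integral into three regimes. In the central regime $|s|\le A$, pointwise convergence $\psi_n(s/\sqrt{n}) \to e^{-\sigma^2 s^2/2}$ follows from the variance computation together with a uniform third-moment bound $\mathbb{E}|Y_n|^3 = O(n^{3/2})$ obtained by differentiating the recursion once more; bounded convergence then handles this piece. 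In the intermediate regime $A \le |s| \le \delta\sqrt{n}$, an inductive estimate $|\psi_n(t)| \le \exp(-c n t^2)$ for $|t|\le\delta$, based on $2\cos(t/2) \le 2 - c' t^2$ and a straightforward contribution from the convolution term, gives an integrable Gaussian tail that makes the contribution arbitrarily small as $A\to\infty$ uniformly in $n$.

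The main obstacle will be the outer regime $\delta\sqrt{n} \le |s| \le \pi\sqrt{n}$, which requires $\int_{\delta\le |t|\le\pi}|\psi_n(t)|\,dt = o(1/\sqrt{n})$. The essential input is that $|\cos(t/2)| \le c(\delta) < 1$ for $t\in[\delta,\pi]$, inherited from the Bernoulli-type $2\cos(t/2)$ factor coming from the "suspension" piece of the Cooking Theorem; plugging a working bound $|\psi_k(t)|\le B_k$ into the recursion gives $nB_n \le 2c(\delta)B_{n-1} + \sum_{k=1}^{n-2}B_k B_{n-1-k}$. A careful induction — first bootstrapping $B_n \to 0$ from the strict contraction in the linear term, then upgrading to a polynomial rate via the convolution identity $\sum_{k=1}^{n-2} 1/(k(n-1-k)) = 2H_{n-2}/(n-1)$ — should deliver $|\psi_n(t)| = O(\log n/n)$ uniformly in $t\in[\delta,\pi]$, which is comfortably sufficient. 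Combining the three regime estimates and multiplying through by $\sqrt{n}/(2\pi)$ produces the claimed Gaussian density with a $o(1)$ error that is uniform in $m$, as the target statement requires. The delicate part throughout will be tracking, through the quadratic convolution term of the recursion, the off-origin decay of $|\psi_n|$ that the $2\cos(t/2)$ factor alone produces.
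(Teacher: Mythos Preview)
Your framework is correct and genuinely different from the paper's. The recursion
\[
n\psi_n(t)=2\cos(t/2)\,\psi_{n-1}(t)+\sum_{k=1}^{n-2}\psi_k(t)\psi_{n-1-k}(t)
\]
is right, your variance computation is clean (more direct than the paper's route, which extracts $\sigma^2$ from the expectations of auxiliary counting variables), and the Fourier-inversion scheme is standard. The paper instead \emph{unrolls} the recursion completely: it writes the centred variable as a random sum $Z_{n}=\sum_{i=1}^{N+1}\sum_{r=1}^{\alpha_i^{n}}\tilde X_i^{\,r}$ of independent copies of the $N+1$ initial laws, where the random multiplicities $\alpha_i^n$ record how many leaves of each type appear when one recursively splits $\{1,\dots,n\}$. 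A separate lemma shows $\mathbb E[\alpha_i^n/n]=2/((N{+}1)(N{+}2))$ with $\mathrm{Var}(\alpha_i^n/n)=O(1/n)$; conditioning on $\{\alpha_1^n\ge a(N)n/2\}$ and using that $\tilde X_1$ is the $\pm\tfrac12$ Bernoulli then gives the dominating bound
\[
|\psi_n(t)|\le e^{-c\,n t^2}+O(1/n)\qquad(|t|\le\delta),
\]
with the $O(1/n)$ coming from Chebyshev on the complementary event. Your approach trades this probabilistic unrolling for a one-step induction on $\psi_n$, which is appealing but is exactly where the gap lies.

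The claimed inductive estimate $|\psi_n(t)|\le e^{-cnt^2}$ does not close. Feeding the hypothesis into the recursion gives
\[
n|\psi_n|\le 2\cos(t/2)\,e^{-c(n-1)t^2}+(n-2)\,e^{-c(n-1)t^2},
\]
and you would need $\bigl(n-2+2\cos(t/2)\bigr)\le n\,e^{-ct^2}$, i.e.\ $2-2\cos(t/2)\ge n(1-e^{-ct^2})$, which fails for every fixed $c>0$ once $n$ is large. The point is structural: the convolution contributes $n-2$ summands of the \emph{same} exponential order, exactly cancelling the single contraction from the $2\cos(t/2)$ factor. Trying the more honest ansatz $|\psi_k|\le e^{-ckt^2}+D/k$ does not help either: the cross terms $\sum_k e^{-ckt^2}\cdot D/(n{-}1{-}k)$ produce a $D\log n$ contribution on the left that cannot be absorbed into the constant $D$ on the right. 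In the paper's language, the decay of $|\psi_n|$ comes from having $\Theta(n)$ independent Bernoulli factors $\cos(t/2)$ in the product, and the \emph{number} of such factors is itself random with fluctuations of order $\sqrt n$; this is why an additive $O(1/n)$ error is unavoidable and why a naked exponential bound is too strong. Your outer-regime sketch (bootstrapping $B_n\to 0$, then upgrading via $\sum 1/(k(n{-}1{-}k))\sim 2\log n/n$) is on firmer ground and mirrors the paper's use of $|\cos(t/2)|\le c(\delta)<1$, but note that for $k\le N$ one may have $|\psi_k(t)|\equiv 1$ (e.g.\ $k=1$), so the contraction really has to be manufactured entirely from the $2\cos(t/2)$ term and the induction must be set up to absorb these $N$ boundary terms. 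To complete your argument you will either need to carry out the unrolling (which recovers the paper's $\alpha_i^n$ representation) or find a sharper inductive invariant, e.g.\ control on $\sum_{k\le n}|\psi_k(t)|$ rather than on $|\psi_n(t)|$ alone, that survives the quadratic convolution.
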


\begin{proof}
The core idea is to see the functions $x^n:i\mapsto x^n_i$ as discrete distributions, in order to apply probabilistic techniques (in particular a variant of the Local Limit Theorem).

Define $x_{-1}^{0}= x_0^0=\frac{1}{2}$ and $x_i^0=0$ for all $i\in \Z \backslash \{-1,0\}$. This formal trick allows us to rewrite $x_i^{n}$ as
\begin{equation*}
x^n_i =   \frac{1}{n}\sum_{k=0}^{n-1}\sum_{j\in \Z} x_j^k \cdot x_{i-1-j}^{n-1-k}
\end{equation*}
for all $i\in \Z$ and $n\geq N+1$.
Each family of coefficients $\{x^n_i\}_{i\in \Z}$ (for $n\geq 0$) defines a discrete distribution on $\Z$; the sum $\sum_{j\in \Z} x_j^k \cdot x_{i-1-j}^{n-1-k}$ is simply the probability density of the convolution of two such distributions in $i-1$. 
It can be directly checked that each distribution $\{x^n_i\}_{i\in \Z}$ is symmetric in $\frac{n-1}{2}$.

We recursively define distributions $\{\tilde{x}^n\}_{n\geq 1}$ over $\frac{1}{2}\Z$  thus: for $k=1,\ldots,N+1$, we set
\begin{equation}\label{FormulaConstructionsChangementDeVariables}
\tilde{x}^k_i=x^{k-1}_{i+\frac{k-2}{2}}.  
\end{equation}
Note the shifts in both indices.
Assume now that $\tilde{x}^k$ has been defined for any $k\leq n$ (for some $n\geq N+1$). 
Let $X_1,\ldots,X_n$ be independent random variables such that the probability density function of $X_k$ on $\R$ is $\tilde{x}^k$. Define a random variable $X_{n+1}$ as follows:
$$X_{n+1}=X_K+X_{n+1-K},$$
where $K$ is a uniform random variable on the set $\{1,\ldots,n\}$. Define $\tilde{x}^{n+1}$ as the probability density function of $X_{n+1}$ on $\frac{1}{2}\Z$. 

Hence for any $i\in \frac{1}{2}\Z$, we have 

$$\tilde{x}^{n+1}_i = \frac{1}{n}\sum_{k=1}^{n}\sum_{j\in \Z} \tilde{x}_j^k \cdot \tilde{x}_{i-j}^{n+1-k}.  $$
It is then trivial to show by induction that Formula (\ref{FormulaConstructionsChangementDeVariables}) holds for all $k\geq 1$. In particular,  $\tilde{x}^k$ takes non-trivial values only on $\frac{1}{2}+\Z$ for odd $k$, and only on $\Z$ for even $k$.

Notice that the variance of the distribution $\tilde{x}^{k}$ is the same as that of $x^{k-1}$, which is $\sum_{i\in \Z}x^{k-1}_i\left(i- \frac{k-2}{2}\right)^2$ (this is equal to $\frac{1}{4}$ for $k=1$), and that the distribution is symmetric in $0$. 
Now let $\{\tilde{X}^r_i | r\in \NN, i=1,\ldots, N+1\}$ be a family of independent random variables such that $\tilde{X}^r_i$ admits $\tilde{x}_i$ as a probability density function. For any $i=1,\ldots,N+1$, we also recursively define random variables $\alpha_i^n$ as follows: let $\alpha_i^k:=0$ for any $k\in \{1,\ldots,N+1\} \backslash \{i\}$ and $\alpha_i^i:=1$. For any $n\geq N+1$, $\alpha_i^{n+1} := \tilde{\alpha}_i ^{K} + \tilde{\alpha}_i^{n+1-K}$, where $K$ is a uniform random variable on the set $\{1,\ldots,n\}$ and $\{\tilde{\alpha}_i ^k\}_{k=1,\ldots,n}$ is a family of independent variables such that $\tilde{\alpha}_i ^k$ follows the same distribution as $\alpha_i ^k$.

Intuitively, $\alpha^{n+1}_i$ can be understood as such: consider the set $\{1,\ldots,n+1\}$. If $n+1\leq N+1$, you are done. Otherwise, randomly split it into two sets $\{1,\ldots,K\}$ and $\{K+1,\ldots,n+1\}$, and proceed similarly with each of these. Continue this procedure until you have obtained a partition of $\{1,\ldots,n+1\}$ into sets of cardinal less than or equal to $N+1$; the random variable $\alpha^{n+1}_i$ counts the number of sets of cardinal $i$ in that partition.

Let $Z_{n+1}$ be a random variable of distribution $\tilde{x}_{n+1}$, for $n\geq N+1$. Based on what we have seen of $\tilde{x}_{n+1}$, the variable $Z_{n+1}$ can be chosen so that there are independent random variables $\tilde{X}_1, \ldots,\tilde{X}_n$ (such that $\tilde{X}_k$ is of distribution $\tilde{x}_k$) and a uniform random variable $K$ on the set $\{1,\ldots,n\}$ such that $Z_{n+1}=\tilde{X}_K+\tilde{X}_{n+1-K}$. Moreover, each random variable $\tilde{X}_k$ can be chosen so that it satisfies the same condition (relative to the appropriate indices), as long as $k>N+1$. 
Hence $Z_{n+1}$ can be chosen (since we only concern ourselves with probability density functions in the statement of the lemma, and not particular random variables) so that by repeatedly decomposing it in that manner, it can be written as
$$Z_{n+1} = \sum _{i=1}^{N+1} \sum_{r=1}^{\alpha_i^{n+1}} \tilde{X}_i^r$$
if $n\geq N+1$.

Using Lemma \ref{LemmaConstructionsVAIndices} below (notice that there is a shift $N \rightarrow N+1$ due to the wording of the lemma's statement), we know that $\mathbb{E}[\frac{\alpha_i^{n}}{n}]=\frac{2}{(N+1)(N+2)}$ and $\text{Var}(\frac{\alpha_i^{n}}{n}) = \frac{C(N+1)}{n} \xlongrightarrow {n\rightarrow \infty} 0$ for $i=1,\ldots,N+1$.
Consequently, the sequence of random variables $\{\frac{\alpha_i^{n}}{n}\}_{n\in \NN}$ converges in probability to $a(N):=\frac{2}{(N+1)(N+2)}$.

We now prove some variant of the Local Limit Theorem to get Formula (\ref{FormulaConstructionsBonneAsymptotiquePreuve2}).
The main difficulty lies in the fact that the random variables  $\alpha_1^n, \ldots, \alpha_{N+1}^n$ are not independent.

We first compute the limit as $n\rightarrow \infty$ of the characteristic function of $\frac{Z_n}{\sqrt{n}}$.
We denote $\alpha^n:=(\alpha_1^n,\ldots,\alpha_{N+1}^n)$. The random variable $\alpha^n$ takes value in $\NN^{N+1}$.
We have
\begin{align*}
 \mathbb{E}\left[\exp\left(it\frac{1}{\sqrt{n}} Z_n\right)\right] =  &\sum_{k=(k_1,\ldots,k_{N+1})\in \NN^{N+1}}\PP (\alpha^n =k)\mathbb{E}\left[\exp\left(it\frac{1}{\sqrt{n}} Z_n\right)|\alpha^n = k\right]\\
& =\sum_{k\in \NN^{N+1} }\PP (\alpha^n =k)\prod _{i=1}^{N+1} \mathbb{E}\left[\exp \left(it \frac{1}{\sqrt{n}} \sum_{r=1}^{k_i} \tilde{X}_i^r \right)\right] \\
& =\sum_{k\in \NN^{N+1} }\PP (\alpha^n =k)\prod _{i=1}^{N+1} \left(1-\frac{t^2\text{Var}(\tilde{X}_i)}{2n}+ o\left(\frac{t^2}{n}\right)\right)^{  k_i}
\end{align*}
by independence of the random variables $\tilde{X}_i^r$ and Taylor expansion (where as above, $\tilde{X}_i$ is any random variable of law $\tilde{x}^i$).

Moreover, for any $\epsilon >0$, define $\NN^{N+1}_{\epsilon,n} :=\{(k_1,\ldots,k_{N+1}) \in \NN^{N+1} |\; | \frac{k_i}{n} - a(N)|<\epsilon a(N) \text{ for } i =1,\ldots, N+1  \}$. By hypothesis, for a given $\epsilon >0$, we have $\lim_{n\rightarrow \infty} \PP (\alpha^n \in \NN^{N+1}_{\epsilon,n} ) =1$.

Let us show that for any $t\in \R$, the characteristic function $\mathbb{E}\left[\exp\left(i\frac{t}{\sqrt{n}} Z_n\right)\right]$ converges to $\exp(-\frac{t^2}{2}\sum_{i=1}^{N+1}a(N) \text{Var}(\tilde{X}_i))$ as $n\rightarrow \infty$ . Indeed, choose $\epsilon >0$ and consider
\begin{align*}
& \mathbb{E}\left[\exp\left(i\frac{t}{\sqrt{n}} Z_n\right)\right] = &\PP (\alpha^n \not \in \NN_{\epsilon,n} ^{N+1} ) \mathbb{E}\left[\exp\left(i\frac{t}{\sqrt{n}} Z_n\right)|\alpha^n \not \in \NN_{\epsilon,n} ^{N+1} \right] + \\ & & \PP (\alpha^n \in \NN_{\epsilon,n} ^{N+1} ) \mathbb{E}\left[\exp\left(i\frac{t}{\sqrt{n}} Z_n\right)|\alpha^n  \in \NN_{\epsilon,n} ^{N+1} \right].
\end{align*}
The first term converges to $0$ as $n\rightarrow \infty$. Moreover, 
{\small
\begin{align*}
&  \PP (\alpha^n \in \NN_{\epsilon,n} ^{N+1} ) \mathbb{E}\left[\exp\left(i\frac{t}{\sqrt{n}} Z_n\right)|\alpha^n  \in \NN_{\epsilon,n} ^{N+1} \right] = \sum_{k\in \NN^{N+1}_{\epsilon,n}}\PP (\alpha^n =k)\mathbb{E}\left[\exp\left(i\frac{t}{\sqrt{n}} Z_n\right)|\alpha^n = k\right] =\\
&\sum_{k\in \NN^{N+1}_{\epsilon,n} }\PP (\alpha^n =k)\prod _{i=1}^{N+1} \left(1-\frac{t^2\text{Var}(\tilde{X}_i)}{2n}+ o\left(\frac{t^2}{n}\right)\right)^{  k_i} =\\
&\sum_{k\in \NN^{N+1}_{\epsilon,n} }\PP (\alpha^n =k)\prod _{i=1}^{N+1} \left(1-\frac{t^2\text{Var}(\tilde{X}_i)a(N)}{2na(N)}+ o\left(\frac{t^2}{n}\right)\right)^{  na(N)}  \cdot \\       &\cdot\left(\left(1-\frac{t^2\text{Var}\tilde{X}_i)a(N)}{2na(N)}+ o\left(\frac{t^2}{n}\right)\right)^{ na(N)}\right)^{ \frac{k_i - na(N) }{na(N) }} =\\
&\prod _{i=1}^{N+1} \left(1-\frac{t^2\text{Var}(\tilde{X}_i)a(N)}{2na(N)}+ o\left(\frac{t^2}{n}\right)\right)^{  na(N)} \Bigg[ \PP(\alpha^n \in \NN_{\epsilon,n} ^{N+1})  + \\
&  +  \sum_{k\in \NN^{N+1}_{\epsilon,n} }\PP (\alpha^n =k) \cdot \Bigg(\prod _{i=1}^{N+1}\Bigg(\left(1-\frac{t^2\text{Var}(\tilde{X}_i)a(N)}{2na(N)}+ o\left(\frac{t^2}{n}\right)\right)^{ na(N)}\Bigg)^{ \frac{k_i - na(N) }{na(N)}} -1 \Bigg) \Bigg].
\end{align*}}
As $n\rightarrow \infty$, we have $\PP(\alpha^n \in \NN_{\epsilon,n} ^{N+1}) \longrightarrow 1$ and $\prod _{i=1}^{N+1} \left(1-\frac{t^2\text{Var}(\tilde{X}_i)a(N)}{2na(N)}+ o\left(\frac{t^2}{n}\right)\right)^{  na(N)}\longrightarrow \exp(-\frac{t^2}{2}\sum_{i=1}^{N+1} \text{Var}(\tilde{X}_i)a(N))$. Moreover, $| \frac{k_i - na(N) }{na(N) }|<\epsilon$ for any $i=1,\ldots,N+1$ and any $k\in \NN_{\epsilon,n} ^{N+1}$. Hence there exists a function $f:\R_{>0}\longrightarrow \R_{>0}$ such that for $n$ large enough,
$$ \left|\prod _{i=1}^{N+1}\left(\left(1-\frac{t^2\text{Var}(\tilde{X}_i)a(N)}{2na(N)}+ o\left(\frac{t^2}{n}\right)\right)^{ na(N)}\right)^{ \frac{k_i - na(N) }{na(N)}} -1 \right| <f(\epsilon) $$
for any $\epsilon>0$ and $k\in \NN _{\epsilon,n} ^{N+1}$, and such that $\lim_{\epsilon \rightarrow 0} f(\epsilon)=0$.

As $ \epsilon>0$ can be chosen arbitrarily small,  this shows that $\lim _{n\rightarrow \infty} \mathbb{E}[\exp(i\frac{t}{\sqrt{n}} Z_n)]= \exp(-\frac{t^2}{2}a(N)\sum_{i=1}^{N+1} \text{Var}(\tilde{X}_i))$.
As in the statement of the proposition, let us write $$\sigma^2=a(N)\sum_{i=1}^{N+1} \text{Var}(\tilde{X}_i) =\frac{2}{(N+1)(N+2)} \left(\frac{1}{4} +\sum_{k=1}^N\sum_{i=0}^{k-1}x^k_i\left(i- \frac{k-1}{2}\right)^2 \right). $$

Note that using L\'evy's continuity theorem, this already shows that the sequence of distributions $\{\tilde{x}^n\}_{n\in \NN}$ converges in distribution to a normal distribution $\mathcal{N}(0,a(N)\sum_{i=1}^N \text{Var}(\tilde{X}_i))$.
As we want a local result, some additional work is still needed.

The remainder of our proof is inspired by the presentation of the Discrete Local Limit Theorem on Terence Tao's blog (\cite{Tao}).

By definition, for any $m\in \Z$, we have
$$x^n_{m}=\tilde{x}^{n+1}_{m-\frac{n-1}{2}}=\PP\left(Z_{n+1}=m-\frac{n-1}{2}\right)=\PP\left(Z_{n+1}+\frac{n-1}{2}=m\right). $$

As $Z_{n+1}+\frac{n-1}{2}$ only takes values in $\Z$ and $m$ is also an integer, we can write
$$ 1_{Z_{n+1}+\frac{n-1}{2} =m}=\frac{1}{2\pi}\int_{-\pi}^\pi \exp\left(it\left(Z_{n+1}+\frac{n-1}{2}\right)\right) \exp\left(-itm\right)dt,$$
which implies (using Fubini's theorem) that
$$\PP\left(Z_{n+1}+\frac{n-1}{2}=m\right) = \frac{1}{2\pi}\int_{-\pi}^\pi \mathbb{E}\left[\exp\left(itZ_{n+1}\right) \right]e^{it\frac{n-1}{2}} e^{-itm}dt $$
and then
\begin{equation*}
\begin{gathered}
\sqrt{n+1}\PP\left(Z_{n+1}+\frac{n-1}{2}=m\right) = \\ \frac{1}{2\pi}\int_{-\sqrt{n+1}\pi}^{\sqrt{n+1}\pi} \mathbb{E}\left[\exp \left(i\frac{x}{\sqrt{n+1}}Z_{n+1}\right) \right]e^{i\frac{x}{\sqrt{n+1}}\frac{n-1}{2}} e^{-i\frac{x}{\sqrt{n+1}}m}dx = \\
\frac{1}{2\pi}\int_{\R} 1_{|x|\leq \sqrt{n+1}\pi}\mathbb{E}\left[\exp \left(i\frac{x}{\sqrt{n+1}}Z_{n+1}\right) \right]e^{i\frac{x}{\sqrt{n+1}}\frac{n-1}{2}} e^{-i\frac{x}{\sqrt{n+1}}m}dx
\end{gathered}
\end{equation*}
using a change of variables.

We want to show that this expression converges to 
$$ \frac{1}{2\pi}\int_{\R} e^{-\frac{x^2 \sigma ^2}{2}}e^{i\frac{x}{\sqrt{n+1}}\frac{n-1}{2}} e^{-i\frac{x}{\sqrt{n+1}}m}dx = \frac{1}{\sigma\sqrt{2\pi}} \exp\left(-\frac{\left(\frac{n-1}{2} - m\right)^2}{2(n+1)\sigma^2}\right)$$
uniformly in $m$ (\textit{i.e.} that the difference between the two expressions is an $o(1)$ error term that can be upper-bounded uniformly in $m$).
 
We only need to show that
$$ \frac{1}{2\pi}\int_{\R} \left|1_{|x|\leq \sqrt{n+1}\pi}\mathbb{E}\left[\exp \left(i\frac{x}{\sqrt{n+1}}Z_{n+1}\right) \right] - e^{-\frac{x^2 \sigma ^2}{2}}  \right| dx$$
converges to $0$.

As we have proved above that $1_{|x|\leq \sqrt{n+1}\pi}\mathbb{E}\left[\exp \left(i\frac{x}{\sqrt{n+1}}Z_{n+1}\right) \right]$ converges (for a given $x\in \R$) to $ e^{-\frac{x^2 \sigma ^2}{2}} $ , we want to show that there exists an absolutely integrable function that dominates $\left|1_{|x|\leq \sqrt{n+1}\pi}\mathbb{E}\left[\exp \left(i\frac{x}{\sqrt{n+1}}Z_{n+1}\right) \right]  \right|$ so that we can conclude using the dominated convergence theorem (as  $x\mapsto e^{-\frac{x^2 \sigma ^2}{2}}$ is clearly absolutely integrable).

As above, we write
\begin{align*}
& \mathbb{E}\left[\exp\left(i \frac{x}{\sqrt{n+1}}Z_{n+1}\right)\right] =  \sum_{k\in \NN^{N+1} }\PP (\alpha^n =k)\prod _{i=1}^{N+1} \mathbb{E}\left[\exp \left(i \frac{x}{\sqrt{n+1}} \sum_{r=1}^{k_i} \tilde{X}_i^r \right)\right].
\end{align*}

For $i=1,\ldots,N+1$, either $\text{Var}(\tilde{X}_i)=0$ and the term $\mathbb{E}\left[\exp \left(i \frac{x}{\sqrt{n+1}} \sum_{r=1}^{k_i} \tilde{X}_i^r \right)\right]$ is always equal to $1$, or $\text{Var}(\tilde{X}_i)\neq 0$ and we write it as $\left(1-\frac{x^2\text{Var}(\tilde{X}_i)}{2(n+1)}+ o\left(\frac{x^2}{n+1}\right)\right)^{  k_i}$.

There exist two constants $\delta>0$ and $C_1>0$ such that 
$$\left|1-\frac{x^2\text{Var}(\tilde{X}_i)}{2(n+1)}+ o\left(\frac{x^2}{n+1}\right)\right|<e^{-C_1\frac{x^2}{n+1}}\leq 1$$
for all $|x|<\delta\sqrt{n+1}$ and all $i$ such that $\text{Var}(\tilde{X}_i)\neq 0$.
As $\text{Var}(\tilde{X}_1)=\frac{1}{4}$, there is at least one such $i$.

Define
$$\Omega_n:=\{k\in \NN^{N+1} | k_1\geq\frac{a(N)}{2}n\}.$$
Using Chebyshev's inequality and the fact that $\mathbb{E}[\frac{\alpha^{n+1}_1}{n+1}]=a(N)$ and $\text{Var}(\frac{\alpha^{n+1}}{n+1})=\frac{C(N+1)}{n+1}$, we see that 
$\PP(\alpha^{n+1} \not\in \Omega_{n+1})\leq \frac{C_2}{n+1}$ for some constant $C_2>0$.

Hence
{\small
\begin{align}\label{FormulaConstructionsBorneUniforme1}
&\left|1_{|x|\leq \delta\sqrt{n+1}}\mathbb{E}\left[\exp \left(i\frac{x}{\sqrt{n+1}}Z_{n+1}\right) \right]  \right| \leq \nonumber\\
&1_{|x|\leq \delta\sqrt{n+1}}\PP(\alpha^{n+1} \not \in \Omega_{n+1}) +
1_{|x|\leq \delta\sqrt{n+1}}\sum_{k\in \Omega_{n+1} }\PP (\alpha^{n+1} =k)\prod _{i=1}^{N+1}\left| \mathbb{E}\left[\exp \left(i \frac{x}{\sqrt{n+1}} \sum_{r=1}^{k_i} \tilde{X}_i^r \right)\right] \right| \leq \nonumber \\
&1_{|x|\leq \delta\sqrt{n+1}}\frac{C_2}{n+1} +  \sum_{k\in \Omega_{n+1} }\PP (\alpha^{n+1} =k) e^{-C_1\frac{x^2}{n+1}k_1}\leq 1_{|x|\leq \delta\sqrt{n+1}}\frac{C_2}{n+1} +  e^{-C_1\frac{x^2}{2}a(N)}\leq \nonumber\\
& C_2 \delta^2 \min \left(\frac{1}{x^2}, \frac{1}{\delta^2} \right) +e^{-C_1\frac{x^2}{2}a(N)},
\end{align} }
assuming (as we can) that $\delta<1$.

Suppose now that  we have $\left|\mathbb{E}\left[\exp \left(it\tilde{X}_1\right) \right]\right|  =1 $ for some $\delta\leq t\leq \pi$.
It means that $\exp \left(it\tilde{X}_1\right) $ is almost surely of constant argument, hence $t\tilde{X}_1$ almost surely takes values in $a+2\pi \Z$ for some $a\in \R$.
Thus $\tilde{X}_1$ takes values in $\frac{a}{t}+ \frac{2\pi}{t}\Z$.
But $\frac{2\pi}{t}\geq 2$, and by definition $\PP(\tilde{X}_1=\frac{1}{2})=\PP(\tilde{X}_1=-\frac{1}{2})=\frac{1}{2}$ (remember that $\tilde{X}_1$ comes from the "artificial" distribution $x^0$ defined at the beginning of the proof).
This is impossible.
Hence we have $\left|\mathbb{E}\left[\exp \left(it\tilde{X}_1\right) \right]  \right|<1 $ for any $\delta\leq t\leq \pi$, and by continuity and compacity there exists a constant $0<C_3<1$ such that $\left|\mathbb{E}\left[\exp \left(it\tilde{X}_1\right) \right]  \right|<C_3<1 $ for any $\delta\leq t\leq \pi$.

Now we can write
\begin{align*}
&\left|1_{\delta\sqrt{n+1}\leq|x|\leq \pi\sqrt{n+1}}\mathbb{E}\left[\exp \left(i\frac{x}{\sqrt{n+1}}Z_{n+1}\right) \right]  \right| \leq\\
&1_{\delta\sqrt{n+1}\leq|x|\leq \pi\sqrt{n+1}}\PP(\alpha^{n+1} \not \in \Omega_{n+1})+\\
&1_{\delta\sqrt{n+1}\leq|x|\leq \pi\sqrt{n+1}}\sum_{k\in \Omega_{n+1} }\PP (\alpha^{n+1} =k)\prod _{i=1}^{N+1}\left| \mathbb{E}\left[\exp \left(i \frac{x}{\sqrt{n+1}} \sum_{r=1}^{k_i} \tilde{X}_i^r \right)\right] \right|  \\
&\leq 1_{\delta\sqrt{n+1}\leq|x|\leq \pi\sqrt{n+1}}\frac{C_2}{n+1} +  1_{\delta\sqrt{n+1}\leq|x|\leq \pi\sqrt{n+1}}\sum_{k\in \Omega_{n+1} }\PP (\alpha^{n+1} =k) C_3^{k_1}\leq\\
&1_{|x|\leq \pi\sqrt{n+1}}\frac{C_2}{n+1} +  1_{|x|\leq \pi\sqrt{n+1}}C_3^{\frac{a(N)}{2}(n+1)}\leq C_2 \pi^2 \min \left(\frac{1}{x^2}, 1 \right) +\left(C_3 ^{\frac{a(N)}{2\pi^2}}\right)^{x^2}.
\end{align*}
This, together with Formula (\ref{FormulaConstructionsBorneUniforme1}), allows us to conclude:
we have shown that 
$$x^n_m =  \frac{1}{\sigma\sqrt{2\pi}}\frac{1}{\sqrt{n+1}} \exp\left(-\frac{\left(\frac{n-1}{2} - m\right)^2}{2(n+1)\sigma^2}\right) +o(n^{-\frac{1}{2}})$$
for some error term uniform in $m$.
We can finally observe (by distinguishing the cases where $\left(\frac{n-1}{2} - m\right)^2\leq n^{\frac{3}{2}}$ from the cases where $\left(\frac{n-1}{2} - m\right)^2\geq n^{\frac{3}{2}}$) that
$$\frac{1}{\sigma\sqrt{2\pi}}\frac{1}{\sqrt{n+1}} \exp\left(-\frac{\left(\frac{n-1}{2} - m\right)^2}{2(n+1)\sigma^2}\right) = \frac{1}{\sigma\sqrt{2\pi}}\frac{1}{\sqrt{n}} \exp\left(-\frac{\left(\frac{n-1}{2} - m\right)^2}{2n\sigma^2}\right) +o(n^{-\frac{1}{2}})$$
with the error term once again uniform in $m$ .

\end{proof}

\begin{lemma}\label{LemmaConstructionsVAIndices}
Let $N\geq 1$.
For any $i=1,\ldots,N$, recursively define the random variables $\alpha_i^n$ as follows:
$\alpha_i^k:=0$ for any $k\in \{1,\ldots,N\} \backslash \{i\}$ and $\alpha_i^i:=1$. For any $n\geq N$, $\alpha_i^{n+1} := \tilde{\alpha}_i ^{K} + \tilde{\alpha}_i^{n+1-K}$, where $K$ is a uniform random variable on the set $\{1,\ldots,n\}$ and $\{\tilde{\alpha}_i ^k\}_{k=1,\ldots,n}$ is a family of independent variables such that $\tilde{\alpha}_i ^k$ follows the same distribution as $\alpha_i ^k$.

Then for any $n\geq N+1$, we have 
$$\mathbb{E}[\alpha_i^{n}] = \frac{2n}{(N+1)N} $$
and
$$\text{Var}(\alpha_i^{n}) =C(N)n ,$$
where $C(N)$ is some constant that only depends on $N$.
\end{lemma}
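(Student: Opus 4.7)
The expectation is computed by a direct recursion. Setting $e_n^i := \mathbb{E}[\alpha_i^n]$, I plan to condition on the splitting point $K$ and use the independence of $\tilde{\alpha}_i^K$ and $\tilde{\alpha}_i^{n+1-K}$ to obtain
$$e_{n+1}^i = \frac{1}{n}\sum_{k=1}^n(e_k^i + e_{n+1-k}^i) = \frac{2}{n}\sum_{k=1}^n e_k^i$$
for every $n\geq N$. Introducing $S_n := \sum_{k=1}^n e_k^i$, the recurrence becomes $S_{n+1}=\frac{n+2}{n}S_n$, and since $e_k^i=\delta_{k,i}$ for $k\leq N$ we have $S_N=1$. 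A telescoping product then yields $S_n = \frac{n(n+1)}{N(N+1)}$ for $n\geq N$, from which $e_n^i = S_n-S_{n-1} = \frac{2n}{N(N+1)}$ for $n\geq N+1$ follows immediately.

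For the variance, the strategy is to set up an analogous recursion and control a small perturbation. Writing $v_n^i := \text{Var}(\alpha_i^n)$ and using the law of total variance together with the conditional independence of $\tilde{\alpha}_i^K$ and $\tilde{\alpha}_i^{n+1-K}$ given $K$, I would obtain
$$v_{n+1}^i = \frac{2}{n}\sum_{k=1}^n v_k^i + \epsilon_{n+1}, \qquad \epsilon_{n+1} := \text{Var}_K\bigl(e_K^i + e_{n+1-K}^i\bigr).$$
The crucial observation is that for $n\geq 2N$ and $N+1\leq k\leq n-N$, the explicit formula for $e_k^i$ derived above gives $e_k^i+e_{n+1-k}^i = \frac{2(n+1)}{N(N+1)} = e_{n+1}^i$, so these terms contribute nothing to $\epsilon_{n+1}$. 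The remaining at most $2N$ values of $k$ each produce a deviation from the mean of size $O_N(1)$, yielding $\epsilon_{n+1} = O_N(1/n)$.

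To extract the $O(n)$ bound on $v_n^i$ from this perturbed recursion, I would introduce $T_n := \sum_{k=1}^n v_k^i$ and $U_n := T_n/[n(n+1)]$. Summing the variance recurrence gives $T_{n+1}=\frac{n+2}{n}T_n + \epsilon_{n+1}$, which translates to $U_{n+1} = U_n + \epsilon_{n+1}/[(n+1)(n+2)] = U_n + O_N(1/n^3)$. Since this perturbation is summable, $U_n$ is uniformly bounded by some constant $C'(N)$; expanding $v_n = T_n - T_{n-1}$ as $(U_n-U_{n-1})\,n(n+1) + 2n\,U_{n-1}$ then yields $v_n = O_N(n)$, as claimed. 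The main technical point is the exact cancellation of the ``bulk'' terms in $\epsilon_{n+1}$, which relies entirely on the linearity in $n$ of $e_n^i$ established in the first step; without this cancellation, the perturbation could grow too quickly for the simple telescoping argument to give a linear bound.
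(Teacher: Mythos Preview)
Your treatment of the expectation is identical to the paper's. For the variance the approaches diverge. The paper works with second moments directly: it writes
\[
\mathbb{E}\bigl[(\alpha_i^{n+1})^2\bigr]=\frac{2}{n}\sum_{k=1}^n \mathbb{E}\bigl[(\alpha_i^{k})^2\bigr]+\frac{2}{n}\sum_{k=1}^n \mathbb{E}[\alpha_i^{k}]\,\mathbb{E}[\alpha_i^{n+1-k}],
\]
evaluates the cross term in closed form using the already-derived mean formula, and then shows that the corrected partial sum $g(n):=\sum_{k\le n}\mathbb{E}[(\alpha_i^k)^2]-\frac{2n(n+1)(2n-3)}{3N^2(N+1)^2}$ obeys the \emph{same} multiplicative recursion $g(n+1)=\frac{n+2}{n}\,g(n)$ as your $S_n$. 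Telescoping then yields an explicit expression for $\mathbb{E}[(\alpha_i^{n})^2]$ and hence the exact equality $\text{Var}(\alpha_i^n)=C(N)\,n$.

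Your law-of-total-variance route is correct, but as written it proves only $v_n^i=O_N(n)$, which is strictly weaker than the lemma's assertion: you merely bound $\epsilon_{n+1}=O_N(1/n)$ rather than compute it, so $U_n$ is shown to be bounded but not given in closed form, and your conclusion ``$v_n=O_N(n)$, as claimed'' overstates what has been established. The $O_N(n)$ bound is in fact all that is used downstream (only Chebyshev is applied to $\alpha_i^n/n$), so your argument would suffice for the application. If you want to match the exact statement, note that your own cancellation observation already gives, for $n\ge 2N$,
\[
\epsilon_{n+1}=\frac{2}{n}\sum_{k=1}^N\Bigl(\delta_{k,i}-\tfrac{2k}{N(N+1)}\Bigr)^2,
\]
an explicit constant over $n$; feeding this exact $c/n$ back into your recursion and telescoping $\sum_m \frac{1}{m(m+1)(m+2)}$ recovers a closed linear formula for $v_n$.
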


\begin{proof}

Let $i\in\{1,\ldots,N\} $ and observe that if $n\geq N$, then
$$\mathbb{E}[\alpha_i^{n+1}]=\frac{1}{n}\left(\sum_{k=1}^n \mathbb{E}[\tilde{\alpha}_i^{k}]+\mathbb{E}[\tilde{\alpha}_i^{n+1-k}] \right) =\frac{2}{n}\sum_{k=1}^n \mathbb{E}[\alpha_i^{k}] .$$
Hence
$$\sum_{k=1}^{n+1} \mathbb{E}[\alpha_i^{k}] = \sum_{k=1}^n \mathbb{E}[\alpha_i^{k}] + \mathbb{E}[\alpha_i^{n+1}] = \frac{n+2}{n}\sum_{k=1}^n \mathbb{E}[\alpha_i^{k}]$$
and necessarily 
$$\sum_{k=1}^{n+1} \mathbb{E}[\alpha_i^{k}] = \frac{n+2}{n}\frac{n+1}{n-1}\ldots \frac{N+3}{N+1} \frac{N+2}{N}\sum_{k=1}^{N} \mathbb{E}[\alpha_i^{k}] = \frac{(n+2)(n+1)}{(N+1)N}\sum_{k=1}^{N} \mathbb{E}[\alpha_i^{k}].$$
Moreover, we get from the definition of the random variables $\alpha^n_i$ that $\sum_{k=1}^{N} \mathbb{E}[\alpha_i^{k}]=1$: thus $\sum_{k=1}^{n+1} \mathbb{E}[\alpha_i^{k}] = \frac{(n+2)(n+1)}{(N+1)N},$ and finally
$$\mathbb{E}[\alpha_i^{n+1}] = \sum_{k=1}^{n+1} \mathbb{E}[\alpha_i^{k}] -\sum_{k=1}^{n} \mathbb{E}[\alpha_i^{k}] = \frac{(n+2)(n+1)}{(N+1)N} - \frac{(n+1)n}{(N+1)N} = \frac{2(n+1)}{(N+1)N} $$
for any $n\geq N$.

Let us now compute the variance of $\alpha_i^{n+1}$.
First observe that
\begin{align*}
&  \mathbb{E}[(\alpha_i^{n+1})^2]  =  \frac{1}{n}\left(\sum_{k=1}^n \mathbb{E}[(\tilde{\alpha}_i^{k}+\tilde{\alpha}_i^{n+1-k})^2] \right) = \frac{2}{n}\left(\sum_{k=1}^n \mathbb{E}[(\tilde{\alpha}_i^{k})^2]+\mathbb{E}[\tilde{\alpha}_i^{k}]\mathbb{E}[\tilde{\alpha}_i^{n+1-k}] \right)=\\
&\frac{2}{n}\left(\sum_{k=1}^n \mathbb{E}[(\alpha_i^{k})^2]+\sum_{k=1}^n \frac{4k(n+1-k)}{N^2(N+1)^2} \right) = \frac{2}{n}\left(\sum_{k=1}^n \mathbb{E}[(\alpha_i^{k})^2]+ \frac{2n(n+1)(n+2)}{3N^2(N+1)^2} \right)
\end{align*}
as the variables $\tilde{\alpha}_i^{k}$ are independent from each other.

Now define for any $n\geq 1$
$$g(n+1):=\sum_{k=1}^{n+1} \mathbb{E}[(\alpha_i^{k})^2] -\frac{2(n+1)(n+2)(2n-3)}{3N^2(N+1)^2},$$
as is natural to do in such a situation.

Then
\begin{align*}
&  g(n+1) = \sum_{k=1}^{n+1} \mathbb{E}[(\alpha_i^{k})^2] -\frac{2(n+1)(n+2)(2n-3)}{3N^2(N+1)^2} =\\
& \sum_{k=1}^{n} \mathbb{E}[(\alpha_i^{k})^2] +\frac{2}{n}\left(\sum_{k=1}^n \mathbb{E}[(\alpha_i^{k})^2]+ \frac{2n(n+1)(n+2)}{3N^2(N+1)^2} \right) -\frac{2(n+1)(n+2)(2n-3)}{3N^2(N+1)^2} = \\
&\frac{n+2}{n}\sum_{k=1}^n \mathbb{E}[(\alpha_i^{k})^2] + (2-(2n-3))\frac{2(n+1)(n+2)}{3N^2(N+1)^2}=\\
&\frac{n+2}{n}\sum_{k=1}^n \mathbb{E}[(\alpha_i^{k})^2] -\frac{2(n+1)(n+2)(2n-5)}{3N^2(N+1)^2}=\\
&\frac{n+2}{n}\left(\sum_{k=1}^n \mathbb{E}[(\alpha_i^{k})^2]-  \frac{2n(n+1)(2n-5)}{3N^2(N+1)^2} \right) = \frac{n+2}{n}g(n)
\end{align*}
for any $n\geq N$, hence as before,
$$g(n+1)= \frac{(n+2)(n+1)}{(N+1)N} g(N) .$$
Thus
\begin{align*}
& \mathbb{E}[(\alpha_i^{k})^2] = g(n+1)-g(n) +\frac{2(n+1)(n+2)(2n-3)}{3N^2(N+1)^2}-\frac{2n(n+1)(2n-5)}{3N^2(N+1)^2}=\\
&\frac{2(n+1)}{(N+1)N} g(N) +\frac{2(n+1)}{3N^2(N+1)^2}((n+2)(2n-3)-n(2n-5))=\\
&\frac{2(n+1)}{(N+1)N} g(N) +\frac{4(n+1)(n-1)}{N^2(N+1)^2}.
\end{align*}

From this, we see that for any $n\geq N$,
\begin{equation*}
\begin{gathered}
\text{Var}(\alpha_i^{n+1})=\mathbb{E}[(\alpha_i^{n+1})^2] -  (\mathbb{E}[\alpha_i^{n+1}])^2 = \\
\frac{2(n+1)}{N(N+1)} g(N) +\frac{4(n+1)(n-1)}{N^2(N+1)^2} - \frac{4(n+1)^2}{N^2(N+1)^2}=\\
\frac{2(n+1)}{N(N+1)} g(N) -\frac{8(n+1)}{N^2(N+1)^2} = (n+1)C(N)
\end{gathered}
\end{equation*}
for some finite constant $C(N):=\frac{2g(N)}{N(N+1)} -\frac{8}{N^2(N+1)^2}$ that only depends on $N$.

\end{proof}


\section{Some explicit computations}\label{SectionExplicitComputations}

\begin{figure}
\begin{center}
\includegraphics[scale=1]{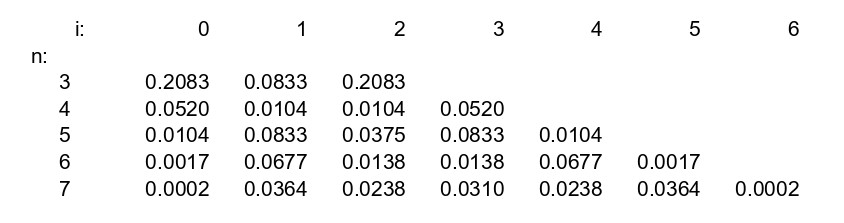}
\end{center}
\caption{The largest coefficients $t^n_i$ such that we succeeded in defining using the Cooking Theorem a family $\{Q^n_d\}_{d\in\NN}$ of completely nondegenerate real Laurent polynomials such that $\Delta(Q^n_d)=S^n_d$ and that $b_i(V_{\R\PP^n }(Q^n_d))  \protect\overset{n}{\geq}  \left(a^n_i + t^n_i \right) \cdot d^n. $}
\label{FigureConstructionsN7}
\end{figure}

Given $n\geq 3$, we can try to apply the Cooking Theorem \ref{TheoremConstructionsMainTheoremConstructions} to any combination of families of polynomials $\{P^k_d\}_{d\in\NN}$ for $k=1,\ldots, n-1$; these various combinations result in families of polynomials $\{Q^n_d\}_{d\in\NN}$ and associated hypersurfaces with a  priori distinct asymptotic Betti numbers, which we can in turn use to define new polynomials and hypersurfaces in ambient dimension $n+1$.
The total number of possibilities in dimension $n$ grows extremely fast as $n$ goes to infinity (faster than $C^{2^n}$ for some $C>1$), even with a low number of starting ingredients, \textit{i.e.} families of polynomials that are already known.
Given $n\geq 3$ and $i\in\{0,\ldots,n-1\}$, it is not yet clear how to pick the combination which will result in the largest asymptotic value for the $i$-th Betti numbers $b_i(V_{\R \PP ^n}(Q^n_d))$.

\begin{figure}
\begin{center}
\includegraphics[scale=1]{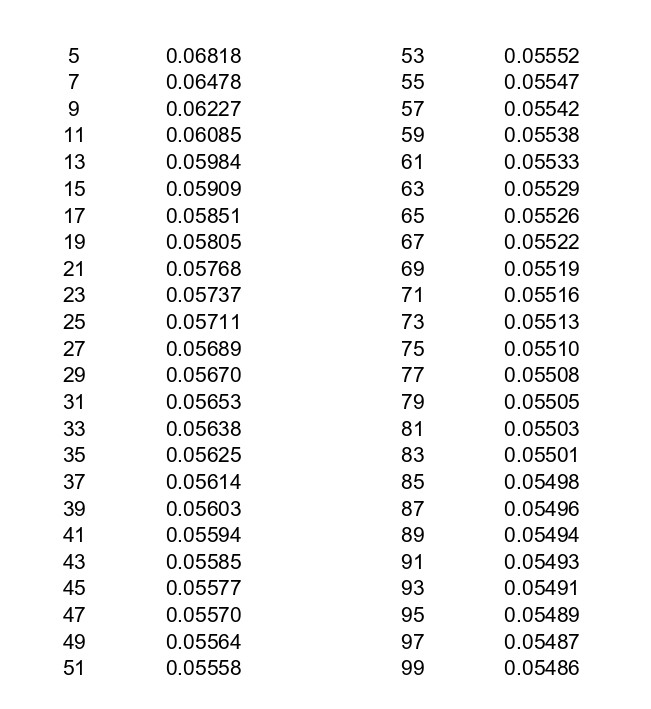}
\end{center}
\caption{For $n=5,\ldots, 99$ odd, the ratio $\frac{d_{\frac{n-1}{2}}^n-a^n_{\frac{n-1}{2}}}{a^n_{\frac{n-1}{2}}}$, where $d_{\frac{n-1}{2}}^n$ is as in Theorem \ref{CorollaryConstructionsBonneAsymptotique}. }
\label{FigureConstructionsMilieu}
\end{figure}

The author used a simple C++ program to test out each combination achievable in ambient dimension $n=4,5,6,7$ using both constructions from Theorem \ref{TheoremConstructionsFamilleAsymptotiqueBrugalle} by Brugall{\'e}  and the family of constructions from Theorem \ref{TheoremConstructionsFamilleAsymptotiqueItenbergViro} by Itenberg and Viro as building blocks.
Figure \ref{FigureConstructionsN7} shows, for $n=3,\ldots,7$ and $i\in\{0,\ldots,n-1\}$, the largest $t_i^n$ (rounded down to the $4$-th decimal) such that we were able to cook, using the Cooking Theorem \ref{TheoremConstructionsMainTheoremConstructions},  a family $\{Q^n_d\}_{d\in\NN}$ of completely nondegenerate real Laurent polynomials in $n$ variables such that $\Delta(Q^n_d)=S^n_d$, that the associated family of real projective hypersurfaces is asymptotically maximal (this is not an additional constraint, as all our ingredients are asymptotically maximal) and that 
\begin{equation*}
b_i(V_{\R\PP^n }(Q^n_d))  \overset{n}{\geq}  \left(a^n_i + t^n_i \right) \cdot d^n.   
\end{equation*}
In particular, it is enough to complete the proof of Theorem \ref{TheoremConstructionsApplicationThm} for $n\leq 7$.

In Figure \ref{FigureConstructionsMilieu}, we indicate, for $ 5\leq n\leq 99$ odd and $i\in\{0,\ldots,n-1\}$, the values of  $\frac{d_{\frac{n-1}{2}}^n-a^n_{\frac{n-1}{2}}}{a^n_{\frac{n-1}{2}}}$ (rounded down to the $5$-th decimal), where $d^n_{\frac{n-1}{2}}$ comes from Theorem \ref{CorollaryConstructionsBonneAsymptotique}.
The ratio appears to converge relatively fast to $\lim_{n\rightarrow \infty} \frac{d_{\frac{n-1}{2}}^n-a^n_{\frac{n-1}{2}}}{a^n_{\frac{n-1}{2}}}= \frac{\sqrt{10}}{3}-1 \cong 0.05409$.

\section{Conclusion}\label{SectionConclusion}
Using  Knudsen, Mumford and Waterman's celebrated result that every lattice polytope has a dilatation that admits a primitive triangulation (see \cite{KMW}), it is easy to see that the asymptotic theorems \ref{TheoremConstructionsApplicationThm} and \ref{CorollaryConstructionsBonneAsymptotique} can be adapted to the case where the ambient space is not $\PP^n$, but a more general toric variety generated by a non-singular polytope.

More interestingly, one could try to generalize the method to obtain complete intersections of hypersurfaces with large asymptotic Betti numbers.

One could also want to find combinatorial analogs to Brugall{\'e}'s Theorem \ref{TheoremConstructionsFamilleAsymptotiqueBrugalle}, \textit{i.e.} families $\{P^3_d\}_{d\in \NN}$ of completely nondegenerate real Laurent polynomials in $3$ variables obtained using the combinatorial Patchwork and such that the Newton polytope $\Delta(P^{3}_{d})$ is $S^3_d$ and that for $i=0,\ldots,2$, we have
$$b_i(V_{\R\PP ^n}(P^3_{d}))\overset{n}{=} x^3_i\cdot d^n$$
with $x^3_0=x^3_2 = \frac{1}{6}+a$ and $x_1^3=\frac{4}{6}-2a$ for non-zero $a$.
In particular, those families would have to be asymptotically maximal.

As the Cooking Theorem  respects the combinatorial nature of the ingredients it uses, this would automatically yield combinatorial versions of Theorems \ref{TheoremConstructionsApplicationThm} and \ref{TheoremConstructionsBonneAsymptotique}.

More generally, any new interesting asymptotic constructions (not necessarily combinatorial) might potentially yield new results when used (in a clever enough way) as ingredients for the Cooking Theorem.

\pagebreak
\printnomenclature

\bibliographystyle{alpha}
\bibliography{biblio}

\end{document}